\newcommand{\ZZ}{\mathbb{Z}}
\newcommand{\FF}{\mathbb{F}}
\newcommand{\CC}{\mathbb{C}}
\newcommand{\QQ}{\mathbb{Q}}
\newcommand{\RR}{\mathbb{R}}
\newcommand{\HH}{\mathbb{H}}
\newcommand{\EE}{\mathbb{E}}
\newcommand{\NN}{\mathbb{N}}
\newcommand{\TT}{\mathbb{T}}
\newcommand{\ev}{\operatorname{ev}}
\newcommand{\sg}{\mathfrak{s}}
\DeclareMathOperator{\sign}{sign}
\newtheorem{Theorem}{Theorem}[section]
\newtheorem{Lemma}[Theorem]{Lemma}
\newtheorem{Proposition}[Theorem]{Proposition}
\newtheorem{Corollary}[Theorem]{Corollary}
\newtheorem{Definition}[Theorem]{Definition}
\newtheorem{Conjecture}[Theorem]{Conjecture}
\title[Trivial multiple zeta values]{Trivial multiple zeta values in Tate algebras}
\date{August 19, 2020}
\author{O. Gezm\.{i}\c{s} \& F. Pellarin}
\address{O. Gezm\.{i}\c{s}\\Department of Mathematics, National Tsing Hua University, Hsinchu City 30042, Taiwan R.O.C.}
\address{F. Pellarin\\
Institut Camille Jordan, UMR 5208\\
Site de Saint-Etienne \\
23 rue du Dr. P. Michelon \\
42023 Saint-Etienne, France}
\thanks{The first author was supported by MOST Grant 109-2811-M-007-553}
\begin{document}

\maketitle

\begin{abstract}
We study {\em trivial multiple zeta values in Tate algebras}. These are particular examples of the {\em multiple zeta values in Tate algebras} introduced by the second author. If the number of variables involved is 'not large' in a way that is made precise in the paper, we can endow the set of trivial multiple zeta values with a structure of module over a non-commutative polynomial ring with coefficients in the rational fraction field over $\FF_q$. We determine the structure of this module in terms of generators and we show how in many cases, this is sufficient for the detection of linear relations between Thakur's multiple zeta values.
\end{abstract}

\tableofcontents

\section{Introduction}\label{introduction}

Let $p$ be a prime number, we set $q=p^e$ with $e\in\NN^*:=\NN\setminus\{0\}=\ZZ_{>0}$. We consider the local field $K_\infty=\FF_q((\frac{1}{\theta}))$ with $\theta$ an indeterminate. We denote by $$|\cdot|:K_\infty\rightarrow \RR_{\geq 0}$$ a non-trivial multiplicative valuation on $K_\infty$ (the image in $\RR_{>0}$ is of the form $c^\ZZ$ for $c>1$). The $\FF_q$-subalgebra $A:=\FF_q[\theta]$ of $K_\infty$ is discrete and cocompact. In particular, if $r>0$ and $(n_1,\ldots,n_r)\in(\NN^*)^r$, the series 
\begin{equation}\label{thakur-mzv}
\zeta_A(n_1,\ldots,n_r)=\sum_{\begin{smallmatrix}d_1>\cdots>d_r\geq 0\\ a_i\in A^+, |a_i|=|\theta^{d_i}|,\\ \forall i=1,\ldots,r\end{smallmatrix}}\frac{1}{a_1^{n_1}\cdots a_r^{n_r}}\end{equation}
converges in $K_\infty$, where $A^+$ denotes the subset of $A$ of monic polynomials in $\theta$. We denote by $\zeta_A(n_1,\ldots,n_r)$ its sum in $K_\infty$. These elements are often called {\em multiple zeta values of Thakur} as they have been first considered by Thakur in \cite{THA0}. They are the object of increasingly intensive study by several authors (the present paper only cites a part of the relevant articles in this topic). One of the main questions that push the research around these series is to give a theoretic interpretation for the structure of the subalgebra $\mathcal{Z}(K)$ of the $K$-algebra $K_\infty$
generated by $\zeta_A(0):=1$ and all the elements $\zeta_A(n_1,\ldots,n_r)$, where $K=\FF_q(\theta)$. The $K$-algebra $\mathcal{Z}(K)$ exhibits similarities with the $\QQ$-sub-algebra $\mathcal{Z}(\QQ)$ of $\RR$ generated by $1\in \RR$ and Euler-Zagier's multiple zeta values $\zeta(n_1,\ldots,n_r)\in\RR_{>0}$ (with $n_1\geq 2$, $n_2,\ldots,n_r\geq 1$). 

For instance, Thakur proved in \cite{THA2} that the $\FF_p$-sub-algebra of $K_\infty$ generated by $1\in K_\infty$ and all the elements in (\ref{thakur-mzv}) is equal to the $\FF_p$-vector space generated by these same elements. Furthermore, Chang proved in \cite{CHA} that the $K$-algebra generated by $1$ and the Thakur's multiple zeta values in (\ref{thakur-mzv})
is graded by the {\em weights} (see \S \ref{first-settings} for the definition of weight). The analogous property for $\mathcal{Z}(\QQ)$ is a conjecture (Goncharov's conjecture).
The periods of mixed Tate motives over $\ZZ$ are known to be polynomials with rational coefficients in $(2\pi \sqrt{-1})^{-1}$ and $\zeta(w)$ where $w$ corresponds to a {\em Lyndon word} (this is a striking consequence of Brown's \cite{BRO}). 

Similarly, Thakur formulated a very interesting hypothesis on a minimal set of generators of the $K$-algebra of his multiple zeta values, see \cite[Conjecture 8.2]{ThJTNB}. These series also occur as {\em periods} of Anderson's $A$-motives which are iterated extensions of tensor powers of 'Carlitz motives' and may therefore be called 'mixed Carlitz-Tate motives', 
see \cite{AND&THA1}. In more recent literature various Anderson's $A$-modules have been constructed to give Thakur's multiple zeta values an interpretation as entries of 'logarithms' of certain special points (see for example Anderson and Thakur, Chang and Mishiba, and even more recently, by Green and Ngo Dac, in \cite{AND&THA1,CHA&MIS1,CHA&MIS2,GRE&NGO} and the references therein). Some of these works and others that we do not mention have been crucial for example in important advances on transcendental properties, as well as on $\mathfrak{P}$-adic variants (with $\mathfrak{P}$ a place of $K$) that culminated with the proof of an analogue of a conjecture of Furusho, see \cite{CHA&MIS2}.

The study of Euler-Zagier multiple zeta values has gone deeper in the core of the structure of the $\QQ$-algebra $\mathcal{Z}(\QQ)$ in some sense as there are several conjectures available to help predicting part of its algebraic structure. The existence of 'double shuffle relations' illustrates this well. In \cite{IHA&KAN&ZAG} Ihara, Kaneko and Zagier highlight what we can call a 'folkloric conjecture' asserting that these relations, extended in a suitable way allowing divergent sums and integrals in the definitions, generate all the non-trivial algebraic relations between the multiple zeta values $\zeta(n_1,\ldots,n_r)\in\RR$. This conjecture has in fact been addressed or mentioned by several authors, more or less independently. We can mention Goncharov, Kontsevich, Racinet,  with the risk of missing others. Fortunately, the article \cite{IHA&KAN&ZAG} refers to enough many works to virtually cover them all.

There is no such an analogue statement for Thakur's multiple zeta values $\zeta_A(n_1,\ldots,n_r)\in K_\infty$; in fact, there is no analogue of integral shuffle relations available, to built a theory around double shuffle relations (but see Todd's conjecture in \cite{Todd}, see also \cite[Conjecture 8.3]{ThJTNB}). 
The aim of this paper is to introduce some kind of partial substitute of the 'double shuffle relations'. The construction we propose differs in many points from the classical setting and involves certain module structures over the non-commutative polynomial ring with coefficients in the rational fraction field over $\FF_q$. Here are the essential ingredients, collected in such a way that what follows can be seen as a plan of the paper (note however, that this is not a complete presentation of its content). 

\subsubsection*{First ingredient}
 In \cite{PEL1} the second author introduced a notion of {\em deformation of multiple zeta values} (of Thakur) in Tate algebras, which are entire functions of several variables $t_i$, $i\in\NN^*$. 
The definition is reviewed in \S \ref{mzvtate}. Here is an example in the case of one variable $t_j$ with $j>0$, to help the reader to learn the notation and the objects that are used in the paper:
$$\zeta_A\begin{pmatrix}\{j\} & \emptyset &\cdots & \emptyset \\ 
1 & q-1 & \cdots & q^{k-1}(q-1)\end{pmatrix}=\sum_{\begin{smallmatrix} a_1,\ldots,a_k\in A\\
\text{monic}\\
|a_1|>\cdots>|a_k|\end{smallmatrix}}\frac{a_1(t_j)}{a_1a_2^{q-1}\cdots a_{k}^{q^{k-1}(q-1)}}\in \widehat{K[t_j]}_{\|\cdot\|}.$$
Here, $a(t_j)\in\FF_q[t_j]$ is the image of $a$ by the unique injective $\FF_q$-algebra morphism
$A\rightarrow\FF_q[t_j]$ that sends $\theta$ to $t_j$. On the right of the above displayed formula, $\widehat{K[t_j]}_{\|\cdot\|}$ denotes the standard Tate algebra in the variable $t_j$, with coefficients in $K_\infty$, completion of $K[t_j]$ for the Gauss norm $\|\cdot\|$. Note that this multiple zeta value is determined by a 
set of data ('composition array') $\Big(\begin{smallmatrix}\{j\} & \emptyset &\cdots & \emptyset \\ 
1 & q-1 & \cdots & q^{k-1}(q-1)\end{smallmatrix}\Big)$ in which the first line is a $k$-tuple of finite subsets of the positive integers and the second line is a $k$-tuple of positive integers, like for 
Thakur's multiple zeta values that occur precisely when the first line is made of copies of the empty set, so that we have, comparing the notations of (\ref{thakur-mzv}) and ours:
$$\zeta_A\begin{pmatrix}\emptyset &\cdots & \emptyset \\ 
 n_1 & \cdots & n_r\end{pmatrix}=\zeta_A(n_1,\ldots,n_r).$$
In ibid. it was proved that the $\FF_p$-algebra 
and the $\FF_p$-vector space they generate, are equal. Let $\mathcal{Z}$ be this algebra. The corresponding product structure $\odot_\zeta$ on the free $\FF_p$-vector space $\boldsymbol{C}$ generated by 
these multiple zeta values in Tate algebras is called the {\em harmonic product}. Thanks to Chang's proof of the analogue of Goncharov's conjecture Theorem \ref{resultofchang} (original reference in \cite{CHA}), the algebra 
$\mathcal{Z}$ is graded by a monoid $\boldsymbol{S}$ introduced in Definition \ref{Monoid-of-weights}. The non-neutral elements of $\boldsymbol{S}$ are couples $(n,\Sigma)$ (often displayed `vertically' $\binom{\Sigma}{n}$) with $n\in\NN^*$ and $\Sigma:\NN^*\rightarrow\NN$ with finite support. We call them the {\em degrees.} We say that $\Sigma$ is a {\em finite subset} if $\Sigma(x)\in \{0,1\}$ for all $x \in \mathbb{N}^{*}$. In all the following, we will therefore identify, by abuse of notation, finite subsets of $\NN^*$ with their characteristic maps (See \S2.1 for more details). 
In the case of $\Sigma=\emptyset$ the constant zero map, the elements of $\mathcal{Z}$ which have degree 
$(n,\emptyset)$ are generated by Thakur's multiple zeta values. This determines a unitary subalgebra 
$\mathcal{Z}_\emptyset$ of $\mathcal{Z}$. Depending on the variables involved (i.e. depending on the choice of $\Sigma$), we have, more generally, graded $\mathcal{Z}_\emptyset$-modules $\mathcal{Z}_\Sigma$ (see (\ref{Sigma-space})) (\footnote{More precisely, graded modules over the graded algebra $\mathcal{Z}_\emptyset$.}).

\subsubsection*{Second ingredient}
In parallel, we introduce in \S \ref{Multiple-polylogarithms} a variant of the {\em multiple polylogarithms} considered by Chang in \cite{CHA}, functions of the variables $X_i$, $i\in\NN^*$. Here is an example of such series, in the variable $X_j$ with $j$ fixed:
$$\lambda_A\begin{pmatrix}\{j\} & \emptyset &\cdots & \emptyset \\ 
1 & q-1 & \cdots & q^{k-1}(q-1)\end{pmatrix}=\sum_{\begin{smallmatrix} a_1,\ldots,a_k\in A\\
\text{monic}\\
|a_1|>\cdots>|a_k|\end{smallmatrix}}\frac{X_j^{q^{\deg_\theta(a_1)}}}{a_1a_2^{q-1}\cdots a_{k}^{q^{k-1}(q-1)}}\in \widehat{K[X_j]}_{\|\cdot\|}^{\operatorname{lin}},$$
where $(\cdot)^{\operatorname{lin}}$ denotes the $\FF_q$-subspace of $\widehat{K[X_j]}_{\|\cdot\|}$ generated by the $\FF_q$-linear formal series.

They generate an $\FF_p$-algebra $\mathcal{L}$ which is also equal to the underlying $\FF_p$-vector space, so that we have on the $\boldsymbol{S}$-graded $\FF_p$-vector space $\boldsymbol{C}$, a multiplication 
product $\odot_\lambda$. But $\mathcal{L}(K)$, the $K$-algebra generated by our multiple polylogarithms, is not graded if we simply transfer the $\boldsymbol{S}$-grading of $\mathcal{Z}(K)$. We can still construct, similarly, graded $\mathcal{Z}_\emptyset(K)$-modules $\mathcal{L}_\Sigma(K)$ and one of our results is the following (see Theorem \ref{Z-L}).

\medskip

\noindent{\bf Theorem A.}{\em  ~If $\Sigma$ is a finite subset of $\NN^*$ of cardinality $<q$, then
there is an isomorphism of graded $\mathcal{Z}_\emptyset(K)$-modules $\mathcal{F}_\Sigma:
\mathcal{Z}_\Sigma(K)\rightarrow\mathcal{L}_\Sigma(K)$.}

\medskip

If the reader knows the reference \cite{ANG&PEL&TAV}, he/she might not be very surprised by this result. Similar constructions are performed there (the {\em star action} of a Tate algebra), with the target space being inside certain Banach algebras $\mathbb{B}_\Sigma$, notably with applications to class number formulas and Anderson's log-algebraic theorems (see also \cite{ANG&PEL&TAV}). However, an important difference with this reference is that the algebra $\mathcal{L}$ is tailored to also allow evaluation at $X_i=1$, $i\in\NN^*$, operation which is not well defined in the algebra $\mathbb{B}_\Sigma$, essentially because $1$ is not in the topological adherence of {\em Carlitz's circle} $\mathfrak{C}:=(-\theta)^{\frac{1}{q-1}}\FF_q[[\frac{1}{\theta}]]\subset\CC_\infty$ if $q\neq2$. 
The proof uses a formula by Perkins (\ref{eq1}) based on Lagrange's interpolation, that can be generalised to the case $|\Sigma|\geq q$.

To give an example of how Theorem A works and to allow the reader to have a first impression of the interplay of the main objects of the paper, we choose $\Sigma=\{j\}$ a singleton
(\footnote{We also use the notation $\sg$ for $\Sigma=\{j\}$ in the case of a singleton if we do not want to stress too much on the choice of $j$.}). Then, we have the element $$\zeta_A\binom{\{j\}}{1}=\prod_P\left(1-\frac{P(t_j)}{P}\right)^{-1}\in \widehat{K[t_j]}_{\|\cdot\|}$$ (completion for the Gauss valuation $\|\cdot\|$ extending $|\cdot|$, trivial on $\FF_q[t_j]$), the product being indexed by the irreducible monic elements of $A$. We have $\zeta_A\binom{\{j\}}{1}\in\mathcal{Z}_{\{j\}}$ (it has degree $\binom{\{j\}}{1}$) and 
$$\mathcal{F}_{\{j\}}\left(\zeta_A\binom{\{j\}}{1}\right)=\lambda_A\binom{\{j\}}{1}=\sum_{a\in A^+}a^{-1}X_j^{q^{\deg_\theta(a)}}\in\mathcal{L}_{\{j\}}$$ again of degree $\binom{\{j\}}{1}$ (the sum runs over the set $A^+$ of the monic elements of $A$), see (\ref{image-basic-zeta}). Also, with the explicit definition of $\mathcal{F}_\Sigma$ given in \S \ref{decomposition-theorem}, one sees easily that, for example,
$$\mathcal{F}_{\{j\}}\left(\zeta_A\begin{pmatrix}\{j\} & \emptyset &\cdots & \emptyset \\ 
1 & q-1 & \cdots & q^{k-1}(q-1)\end{pmatrix}\right)=\lambda_A\begin{pmatrix}\{j\} & \emptyset &\cdots & \emptyset \\ 
1 & q-1 & \cdots & q^{k-1}(q-1)\end{pmatrix},\quad k\geq 1.$$ However, it turns out that $\mathcal{F}_{\{j\}}(\zeta_A\binom{\{j\}}{q^k})\neq\lambda_A\binom{\{j\}}{q^k}$ if $k>0$. We also observe that 
Theorem A does not extend to the case $|\Sigma|\geq q$. See Remark \S \ref{limits-work} for more details.

\subsubsection*{Third ingredient}
In the $K$-algebra $\mathcal{Z}(K)$ we entail the subalgebra $\mathcal{Z}^{\operatorname{triv}}(K)$  of {\em trivial elements}, see the definition in \S \ref{Trivial-elements}, which, at a very first approximation, are the elements of $\mathcal{Z}(K)$ which have the property that, multiplied by certain products involving the {\em function of Anderson and Thakur} 
 \begin{equation}\label{omega-def}\omega(t):=(-\theta)^{\frac{1}{q-1}}\prod_{i\geq 0}\left(1-\frac{t}{\theta^{q^i}}\right)^{-1},\end{equation}
yield rational functions in the variables $t_j$, $j\in\NN^*$ (this will not be our operative definition). 
As already mentioned, the elements of $\mathcal{Z}(K)$ and therefore also of $\mathcal{Z}^{\operatorname{triv}}(K)$ are entire functions of several variables. The intersection of 
$\mathcal{Z}^{\operatorname{triv}}(K)$ with $\mathcal{Z}_\Sigma(K)$ gives rise to a 
graded $\mathcal{Z}_\emptyset(K)$-module $\mathcal{Z}_\Sigma^{\operatorname{triv}}(K)$. Note that $\mathcal{Z}_\emptyset^{\operatorname{triv}}(K)=
\mathcal{Z}_\emptyset(K).$
We also have finite dimensional $\FF_p$-vector spaces $\mathcal{Z}_{n,\Sigma}^{\operatorname{triv}}$ defined by selecting those $\FF_p$-linear combinations of our multiple zeta values of weight $n$ and type $\Sigma$ that are trivial in the above sense. The elements of 
$\mathcal{Z}_{n,\Sigma}^{\operatorname{triv}}$ allow to construct non-trivial 
$K$-linear dependence relations in the space $\mathcal{L}$ as the next result highlights. 
Indeed, in \S \ref{defi-E} we are going to describe, under the assumption $|\Sigma|<q$, an alternative definition $\mathcal{E}_\Sigma$ of
the isomorphisms $\mathcal{F}_\Sigma$ so that we have diagrams of $K$-linear maps:
$$
\begin{tikzpicture}
         \matrix (m) [matrix of math nodes,row sep=3em,column sep=1em,minimum width=2em]{
              \mathcal{Z}^{\operatorname{triv}}_{n,\Sigma}(K) &   & \mathcal{L}^{\operatorname{triv}}_{n,\Sigma}(K). \\
         };
         \path[-stealth]
            ([yshift=-3pt]m-1-1.east) edge node [above,yshift=1.0ex] {$\mathcal{E}_\Sigma$} ([yshift=-3pt]m-1-3.west);
         \path[-stealth]
            ([yshift=3pt]m-1-1.east) edge node [below,,yshift=-1.0ex] {$\mathcal{F}_\Sigma$} ([yshift=3pt]m-1-3.west);
        \end{tikzpicture}
$$
The two maps are equal, but are defined in different ways so that they naturally give rise to different lifts
$E_\Sigma$ and $F_\Sigma$ on the free modules generated by the elements $\zeta_A(\mathcal{C})$ and $\lambda_A(\mathcal{C})$. Therefore, the difference maps
$E_\Sigma-F_\Sigma$ have images in the kernel of $\lambda_A$ viewed as an augmentation 
map, a construction that can be viewed as an {\em analogue of the double shuffle relations.}
The next result involves only one variable. See Theorem \ref{Theorem-C} for a more general and precise statement).

\medskip

\noindent {\bf Theorem B.}{\em \;To any element $f\in\mathcal{Z}^{\operatorname{triv}}_{n,\{j\}}$
such that $f(\theta^{q^i})\neq 0$ for some $i>0$ we can associate a non-trivial $K$-linear dependence relation
in $\mathcal{L}(K)$.}

\medskip

Theorem B is obtained thanks to a complete investigation of the structure of the 
$\mathcal{Z}_\emptyset(K)$-module $\mathcal{Z}_\Sigma^{\operatorname{triv}}(K)$ in the case $|\Sigma|<q$. The main structural result is the following (see Theorem \ref{structure-of-trivial-MZV} and \S \ref{link-relations}).
We consider the non-commutative polynomial algebra $K[\boldsymbol{\tau}]$ with the commutation rule
$\boldsymbol{\tau} c=c^q\boldsymbol{\tau}$, $c\in K$.

\medskip

\noindent{\bf Theorem C.}{\em ~ Let $\Sigma$ be subset of $\NN^*$ of cardinality $<q$. There is a structure of left $K[\boldsymbol{\tau}]$-module over $\mathcal{Z}_\Sigma^{\operatorname{triv}}(K)$ compatible with the $\boldsymbol{S}$-grading. We have 
an isomorphism of $\mathcal{Z}_\emptyset(K)$-modules $\mathcal{Z}^{\operatorname{triv}}_{\Sigma}(K)\cong\otimes_{i\in\Sigma}\mathcal{Z}^{\operatorname{triv}}_{\{i\}}(K)$ and for any $i\in\Sigma$, $\zeta_A\binom{\{i\}}{1}$ freely generates $\mathcal{Z}^{\operatorname{triv}}_{\{i\}}(K)$ with its left $K[\boldsymbol{\tau}]$-module structure and the right $\mathcal{Z}_\emptyset(K)$-module structure given by the harmonic product.}

\medskip

To give again examples, we point out that it is possible to show that for all $k\geq 0$, $\zeta_A\binom{\{j\}}{q^k}\in\mathcal{Z}_{\{j\}}^{\operatorname{triv}}(K)$. Now, for how the structure of $K[\boldsymbol{\tau}]$-module of Theorem C is defined, we have for $k\geq1$ (Lemma \ref{identity-xi}) the following example: 
\begin{multline*}
\boldsymbol{\tau}^k\zeta_A\binom{\{j\}}{1}=(t_j-\theta)\cdots(t_j-\theta^{q^{k-1}})\zeta_A\binom{\{j\}}{q^k}=\\ =(-1)^k(\theta^{q^k}-\theta)\cdots(\theta^{q^k}-\theta^{q^{k-1}})\zeta_A\begin{pmatrix}\{j\} & \emptyset &\cdots & \emptyset \\ 
1 & q-1 & \cdots & q^{k-1}(q-1)\end{pmatrix}.\end{multline*}

Still under the hypothesis $|\Sigma|<q$, the image of $\mathcal{Z}_\Sigma^{\operatorname{triv}}(K)$
by $\mathcal{F}_\Sigma$ equals $\mathcal{Z}_\emptyset(K)[\lambda_A\binom{\{j\}}{1}:j\in\Sigma]^{\operatorname{lin}}$, where the exponent $(\cdot)^{\operatorname{lin}}$
means that we are considering all the formal series that are $\FF_q$-linear in each variable
$X_i$ with $i\in\Sigma$ and the latter algebra inherits a structure of $K[\boldsymbol{\tau}]$-module thanks to 
Corollary \ref{Theorem-E-F} and Lemma \ref{Esigma-functional}. Composing $\mathcal{F}_\Sigma$ with the evaluation 
$X_i=1$ for $i\in \Sigma$ we have thus a morphism of $K[\boldsymbol{\tau}]$-modules, compatible with the grading structures:
$$\mathcal{Z}_\Sigma^{\operatorname{triv}}(K)\xrightarrow{\mathcal{G}_\Sigma}\mathcal{Z}_\emptyset(K),$$ where $\boldsymbol{\tau} f=f^q$ for $f\in\mathcal{Z}_\emptyset(K)$. This morphism is studied in \S \ref{link-relations}. It is not likely to be surjective, however, we are led to the next (see Conjecture \ref{injectivity}):

\medskip

\noindent{\bf Conjecture D.}{\em ~$\mathcal{G}_\Sigma$ is injective.}

\medskip

Most likely, the above, to be proved, requires tools of diophantine nature. If true, it implies that non-trivial $K[\boldsymbol{\tau}]$-relations among elements of $\mathcal{Z}_\Sigma^{\operatorname{triv}}(K)$ always transfer to non-trivial $K$-linear relations among Thakur's multiple zeta values. 
Examples are provided in \S \ref{triviality-properties}. We rediscover well known linear dependence relations (such as some by Lara Rodr\'iguez and Thakur in \cite{LAR&THA} or Todd in \cite{Todd}) but we are also able to construct new families of relations, examples of which are described in the text (see \S \ref{family}). We are led to the conclusion that the
$K[\boldsymbol{\tau}]$-module structure of $\mathcal{Z}^{\operatorname{triv}}_\Sigma(K)$ comes here into play  as a 
substitute of double shuffle relations. 
Note that Conjecture D, applied in conjonction with Theorem B, allows to construct several non-trivial linear dependence relations in $\mathcal{Z}_n(K)$. Do we get the full $K$-vector space they generate?

The paper contains several other conjectures that can be seen by the reader as challenges for further researches. They do not influence our investigations except Conjecture D above. 

\subsubsection{Remark}\label{limits-work} The map $\mathcal{F}_\Sigma$ is not well defined and does not give rise to an isomorphism between $\mathcal{Z}_\Sigma$ and $\mathcal{L}_\Sigma$ when $\Sigma$ is a finite subset of $\NN^*$ such that $|\Sigma|\geq q$. The obstruction does not come from Perkins' formula in \cite[Proposition 2.17]{PER} that we stated for $|\Sigma|<q$, but  also holds for $|\Sigma|\leq 2(q-1)$. In fact, it is possible to extend this formula to the case of all $\Sigma$ finite subset of $\NN^*$ (unpublished). The nature of the generalised formula allows to construct $\mathcal{F}_\Sigma$ for all $\Sigma$ finite set but only as a function
$\mathcal{Z}_\Sigma\rightarrow K[[X_i:i\in\Sigma]]^{\operatorname{lin}}$ and there are elements 
$f\in\mathcal{Z}_\Sigma$ such that $\mathcal{F}_\Sigma(f)\not\in\widehat{K[X_i:i\in\Sigma]}_{\|\cdot\|}^{\operatorname{lin}}$ (therefore {\em a fortiori} the image is not entirely contained in $\mathcal{L}_\Sigma$), see \S \ref{some-elements-notin-E0}. Not only, but the resulting map $\mathcal{Z}_\Sigma\rightarrow K[[X_i:i\in\Sigma]]^{\operatorname{lin}}$ is no longer injective if $|\Sigma|\geq q$. Also, the module 
$\mathcal{Z}^{\operatorname{triv}}_{\Sigma}(K)$ can be defined for any finite subset $\Sigma\subset\NN^*$ regardless of its cardinality, and the map $\mathcal{F}_\Sigma$ can be defined, also for $|\Sigma|\geq q$, as a map $\mathcal{Z}^{\operatorname{triv}}_{\Sigma}(K)\rightarrow\mathcal{L}_\Sigma(K)$. However, it is easy to show that this restricted map is still not injective. For example, if $|\Sigma|=q$ the element
$$\zeta_A\binom{\Sigma}{1}=\sum_{\begin{smallmatrix}a \in A\\ \text{monic}\end{smallmatrix}}\frac{\prod_{j\in\Sigma}a(t_j)}{a}$$
is non-zero and belongs to the kernel of $\mathcal{F}_\Sigma$. 
More work is required to analyse the case of general $\Sigma$. Note that many statements of the paper do not extend without relevant modification in their formulations outside the assumption $|\Sigma|<q$. 

\subsubsection*{Acknowledgements} The authors are thankful to Hidekazu Furusho, Nathan Green and Tuan Ngo Dac for fruitful remarks and useful suggestions.

\subsection{General notation}

The paper will adopt the following standard notations that we recall here to ease the reading.
\begin{itemize}
\item $\NN=\ZZ_{\geq 0}$, $\NN^*=\ZZ_{>0}$.
\item $q=p^e$, $p$ a prime number and $e>0$. 
\item $\mathbb{F}_p\subset \mathbb{F}_q$, the finite fields with $p$ and $q$ elements respectively.
\item $A=\FF_q[\theta]$, where $\theta$ is an indeterminate over $\mathbb{F}_q$.
\item $K=\FF_q(\theta)$.
\item $K_\infty=\FF_q((\frac{1}{\theta}))$.
\item $\CC_\infty=\widehat{K_\infty^{ac}}$, completion of an algebraic closure of $K_\infty$, with multiplicative valuation $|\cdot|$.
\item $\exp_C,\log_C$ respectively the Carlitz exponential and the Carlitz logarithm.
\item $b_i(t)=(t-\theta)\cdots(t-\theta^{q^{i-1}})\in A[t]$.
\item $l_i=(\theta-\theta^q)\cdots(\theta-\theta^{q^{i}})\in A$.
\item $D_i=(\theta^{q^i}-\theta^{q^{i-1}})\cdots(\theta^{q^i}-\theta)\in A$.
\end{itemize}
Other notations are introduced and used, but the above are the most common. Additionally, we adopt the convention that empty sums are zero and empty products are one.
\section{Multiple sums after Thakur}\label{first-settings}

The first task is to introduce a {\em monoid of weights} for our multiple sums. In \S \ref{multiple-sums} we define {\em multiple sums} \`a la Thakur and prove standard facts about them in a wider generality than needed in the rest of the paper. Theorem \ref{mytheoremshuffle} describes the harmonic product of multiple sums.

\subsection{Monoid of degrees}\label{Monoid-of-weights}
A {\em finite weighted subset} $\Sigma$ of $\NN^*$ is a map $\Sigma:\NN^*\rightarrow\NN$ such that $\Sigma(x)=0$ for all but finitely many $x\in\NN^*$. We denote by $\mathfrak{S}$ the set of all finite weighted subsets. It is countable and it naturally carries the structure of a semi-ring with the multiplication $\cdot$ defined by the  addition in the target set and the addition $+$ given by the maximum in the target set. 
The neutral element for both the multiplication and the addition in $\mathfrak{S}$ is the map $\emptyset$ which sends every element $x\in\NN^*$ to $0$. The {\em cardinality} $|\Sigma|$ of a finite weighted subset $\Sigma$ as above is defined to be the integer
$$|\Sigma|:=\sum_{n\in\NN^*}\Sigma(n)$$
(note that the sum is finite). Also, if $x\in\NN^*$ and $\Sigma\in\mathfrak{S}$, we say that 
$x\in\Sigma$ if $\Sigma(x)\neq0$.

A finite weighted subset $\Sigma$ of $\NN^*$ is a {\em subset} if $\Sigma(x)\in\{0,1\}$ for all $x\in\NN^*$. In this case, we identify $\Sigma$ with the subset of $\NN^*$ which it is the characteristic map and this will not lead to confusion. Explicitly, if $\Sigma$ is a finite subset of $\NN^*$, we can describe it just by making the explicit list of its elements $\Sigma=\{i_1,\ldots,i_s\}$. 
For example, we denote with $\emptyset\in\mathfrak{S}$ both the empty set and the map sending $\NN^*$ to $0$ identically. Note that in this case we are led to the usual notion of cardinality.

Note that for subsets, the addition corresponds to the union $\cup$. If $\Sigma,\Sigma'\in\mathfrak{S}$ are such that $\Sigma+\Sigma'=\Sigma\cdot\Sigma'$, then
we say that they are {\em disjoint} and we write $$\Sigma\sqcup\Sigma':=\Sigma+\Sigma'=\Sigma\cdot\Sigma'.$$
The {\em support} $\operatorname{Spt}(\Sigma)$ of $\Sigma\in\mathfrak{S}$ is the subset of all the elements $x\in\NN^*$ such that $\Sigma(x)\neq0$. If $\Sigma'\in\mathfrak{S}$ and $\operatorname{Spt}(\Sigma')=\Sigma$, we say that $\Sigma'$ is a {\em thickening} of $\Sigma$.
 There is a partial order on $\mathfrak{S}$ which is induced by $\leq$ on $\NN$.
We write $\Sigma\leq\Sigma'$ if $\Sigma(x)\leq\Sigma'(x)$ for all $x\in\NN^*$. On subsets of $\NN^*$, this coincides with the inclusion $\subset$. 

\begin{Definition}
{\em The {\em monoid of degrees} is the set
$$\boldsymbol{S}:=\mathfrak{S}\times\NN^*\cup\left\{\binom{\emptyset}{0}\right\}$$
with the structure of monoid induced by the semigroups $(\mathfrak{S},\cdot)$
and $(\NN^*,+)$. The binary operation of $\boldsymbol{S}$ is denoted multiplicatively, with the symbol $\cdot$. The neutral element for $\cdot$ is $\binom{\emptyset}{0}$ and is more simply denoted by $\boldsymbol{0}$. }\end{Definition}
Note that we have adopted a 'vertical rendering' for the elements of $\boldsymbol{S}$. 
The monoid $\boldsymbol{S}$ allows to define a grading structure on the $\FF_p$-algebra of multiple zeta values in Tate algebras as defined in \S \ref{mzvtate}. Note that $(\NN,\max,+)$ is a unitary semiring with $0$ the neutral element for both the binary operations so that $\boldsymbol{S}$ itself comes equipped with a structure of unitary semiring but we will not use the additive operation in this paper. The elements of $\boldsymbol{S}^{\oplus r}$ with $r>0$ are also called {\em admissible composition arrays} and we adopt the convention that 
$\boldsymbol{0}$ is the unique admissible composition array of depth $0$ and $\boldsymbol{S}^{\oplus0}=\{\boldsymbol{0}\}$;
see \S \ref{composition-arrays} below.

\subsubsection{Composition arrays}\label{composition-arrays}

This subsection only contains definitions, notations and terminology useful for the sequel.
We consider $n\in\ZZ$ and $\Sigma\in\mathfrak{S}$.
A {\em composition array of $\binom{\Sigma}{n}$} is a matrix of the form

\begin{equation}\label{a-sample-of-C}
\mathcal{C}:=\begin{pmatrix}\Sigma_1 & \cdots & \Sigma_r\\ n_1 & \cdots & n_r\end{pmatrix}=(\mathcal{C}_1,\ldots,\mathcal{C}_r),
\end{equation} 
with $\Sigma_1,\ldots,\Sigma_r\in\mathfrak{S}$ and $n_1,\ldots,n_r\in\ZZ$ such that $\Sigma_1\cdot\cdots\cdot\Sigma_r=\Sigma$ and $n=\sum_in_i$. We can more simply speak about composition arrays without mentioning $\Sigma$ and $n$.
We call $r$ the {\em depth} of $\mathcal{C}$, 
$n=\sum_in_i$ its {\em weight} and $\Sigma=\prod_i\Sigma_i\in\mathfrak{S}$ its {\em type}.
If $n_i>0$ for all $i$, or if $\mathcal{C}=\boldsymbol{0}=\binom{\emptyset}{0}$, we say that $\mathcal{C}$ is {\em admissible}. 
In other words, an {\em admissible composition array} $\mathcal{C}$ of $\mathcal{X}\in\boldsymbol{S}$
 of depth $r>0$ is an $r$-tuple $(\mathcal{C}_1,\ldots,\mathcal{C}_r)$ of $(\boldsymbol{S}\setminus\{\boldsymbol{0}\})^{\oplus r}$ 
such that $$\mathcal{C}_1\cdot\cdots\cdot\mathcal{C}_r=\mathcal{X},$$
and, $\boldsymbol{0}=\binom{\emptyset}{0}$ is the unique admissible composition array of depth zero. It has weight $0$ and type $\emptyset$.
If $\mathcal{C}$ is such that $\Sigma$ is a finite subset of $\NN^*$ with the property that $\Sigma=\sqcup_i\Sigma_i$ (disjoint union), we say that $\mathcal{C}$ is {\em basic}. Suppose that $\mathcal{C}$ is a basic composition array of $\binom{\Sigma}{n}$. If $\Sigma'$
is a thickening of $\Sigma$ then we can write $\Sigma'=\Sigma_1'\cdots\Sigma_r'=\Sigma_1'+\cdots+\Sigma_r'$
with $\Sigma'_i$ thickening of $\Sigma_i$ for all $i$. The composition array $$\mathcal{C'}=\begin{pmatrix}\Sigma_1' & \cdots & \Sigma_r'\\ n_1 & \cdots & n_r\end{pmatrix}$$ is then called the {\em thickening of $\mathcal{C}$ along $\Sigma'$}. 

\subsection{Multiple sums}\label{multiple-sums}

We introduce the formalism of our multiple zeta sums, elaborated on the model of Thakur's multiple zeta values. Then, we proceed with the first few elementary properties of our sums.
We now follow the formalism of \cite[\S 8]{PEL3} but we essentially make use of the proofs contained in \cite{PEL1}. Let $\mathcal{B}$ be a Banach $\CC_\infty$-algebra with norm $\|\cdot\|$. We consider:
\begin{itemize}
\item[(i)] Injective maps $\delta_i:A\rightarrow\mathcal{B}$ for $i\in\NN^*$ such that for all $i$ the set $\delta_i(A)$
is bounded for $\|\cdot\|$.
\item[(ii)] An injective map $\gamma:A\rightarrow\mathcal{B}^\times$, where $\mathcal{B}^\times$ is the group of invertible elements, such that $\|\gamma(ab)\|=\|\gamma(a)\|\|\gamma(b)\|$ for all $a,b\in A$ and $\|\gamma(a)\|\rightarrow\infty$ as $|a|\to  \infty$.
\end{itemize}
The map $i\mapsto\delta_i$ for $i\in\NN^*$ extends in a unique way to a map $$\mathfrak{S}\xrightarrow{\delta}\operatorname{Maps}(A,\mathcal{B})$$ by the rule
$$\Sigma\mapsto\left(A\ni a\mapsto \prod_{i\in\NN^*}\delta_i(a)^{\Sigma(i)}\in\mathcal{B}\right).$$
 We denote by $\delta_\Sigma$ the image of $\Sigma$ by $\delta$.
The image $\mathcal{T}_\delta$ of $\delta$ has a structure of semi-ring with unit the map
$\boldsymbol{1}:a\mapsto 1$, for all $a\in A$. Let $\mathcal{C}\in\boldsymbol{S}^{\oplus r}$ be an admissible composition array as in (\ref{a-sample-of-C}). We set:
\begin{equation}\label{prototype}
\zeta_{A}^{\gamma,\delta}(\mathcal{C})=\zeta_{A}^{\gamma,\delta}\begin{pmatrix}\Sigma_1 & \cdots & \Sigma_r\\ n_1 & \cdots & n_r\end{pmatrix}=\sum_{\begin{smallmatrix}a_1,\ldots,a_r\in A^+\\ |a_1|>|a_2|>\cdots>|a_r|\end{smallmatrix}}\frac{\delta_{\Sigma_1}(a_1)\cdots\delta_{\Sigma_r}(a_r)}{\gamma(a_1)^{n_1}\cdots\gamma(a_r)^{n_r}}\in \mathcal{B}.\end{equation}
Note indeed that the conditions (i) and (ii) imply the convergence of the series above. 
This is the {\em multiple zeta series associated to $\mathcal{C}$ and the datum $(\gamma,\delta)$}. 
We denote by $\mathcal{Z}_{n,\Sigma}^{\gamma,\delta}$ the $\FF_p$-subvector space of $\mathcal{B}$ generated by the series $\zeta_{A}^{\gamma,\delta}(\mathcal{C})$ with $\mathcal{C}$ admissible composition array of $\binom{\Sigma}{n}$
and we set $\zeta_A^{\gamma,\delta}\binom{\emptyset}{0}:=1$. We also denote by $\mathcal{Z}^{\gamma,\delta}$ the $\FF_p$-vector space generated by all the sets $\mathcal{Z}_{w,\Sigma}^{\gamma,\delta}$.
Let $\mathcal{C}$ be as in (\ref{a-sample-of-C}) and admissible, and let us consider $d\geq 0$. We set:
\begin{eqnarray}
S_d^{\gamma,\delta}(\mathcal{C})&=&\sum_{\begin{smallmatrix}a_1,\ldots,a_r\in A^+\\ |\theta|^d=|a_1|>|a_2|>\cdots>|a_r|\end{smallmatrix}}\frac{\delta_{\Sigma_1}(a_1)\cdots\delta_{\Sigma_r}(a_r)}{\gamma(a_1)^{n_1}\cdots\gamma(a_r)^{n_r}},\label{powersums-def}\\
S_{<d}^{\gamma,\delta}(\mathcal{C})&=&\sum_{\begin{smallmatrix}a_1,\ldots,a_r\in A^+\\ |\theta|^d>|a_1|>|a_2|>\cdots>|a_r|\end{smallmatrix}}\frac{\delta_{\Sigma_1}(a_1)\cdots\delta_{\Sigma_r}(a_r)}{\gamma(a_1)^{n_1}\cdots\gamma(a_r)^{n_r}}=\sum_{0\leq j <d}S_j^{\gamma,\delta}(\mathcal{C})\label{powersums-def2}.
\end{eqnarray}
Additionally we set
$S_{0}^{\delta}\binom{\emptyset}{0}:=1$, $S_{k}^{\delta}\binom{\emptyset}{0}:=0$ for $k>0$, $ S_{<0}^{\delta}\binom{\emptyset}{0}:=0$ and $S_{<k}^{\delta}\binom{\emptyset}{0}:=1$ for $k>0$.
Note that
\begin{equation}\label{E:expres}
\zeta_{A}^{\gamma,\delta}(\mathcal{C})=\sum_{d\geq 0}S_d^{\gamma,\delta}(\mathcal{C})=\lim_{d\rightarrow\infty}S_{<d}^{\gamma,\delta}(\mathcal{C}).
\end{equation}

\begin{Proposition}\label{elementary-facts} Let $\mathcal{C}_1$ and $\mathcal{C}_2$ be two admissible composition arrays of $\binom{U_1}{n_1}$ and $\binom{U_2}{n_2}$.
The following properties hold. 
\begin{enumerate}
\item There exists a finite set $I$ and elements $f_i\in\FF_p$, $i\in I$, such that $S_d^{\gamma,\delta}(\mathcal{C}_1)S_d^{\gamma,\delta}(\mathcal{C}_2)=\sum_{i\in I}f_iS_d^{\gamma,\delta}(\mathcal{D}_i)$ for all $d\geq 0$, 
where for $i\in I$, $\mathcal{D}_i$ are admissible composition arrays of
$\binom{U_1\cdot U_2}{n_1+n_2}$, independent of $d$.
\item There exists a finite set $J$ and elements $g_i\in\FF_p$, $i\in J$, such that $S_{<d}^{\gamma,\delta}(\mathcal{C}_1)S_{<d}^{\gamma,\delta}(\mathcal{C}_2)=\sum_{i\in J}g_iS_{<d}^{\gamma,\delta}(\mathcal{E}_i)$ for all $d\geq 0$, 
where for $i\in J$, $\mathcal{E}_i$ are admissible composition arrays of
$\binom{U_1\cdot U_2}{n_1+n_2}$, independent of $d$.
\item There exists a finite set $L$ and elements $h_i\in\FF_p$, $i\in L$, such that
for all $d\geq 0$ we have $S_d^{\gamma,\delta}(\mathcal{C}_1)S_{<d}^{\gamma,\delta}(\mathcal{C}_2)=\sum_{i\in L}h_iS_d^{\gamma,\delta}(\mathcal{F}_i)$ where $\mathcal{F}_i$
are admissible composition arrays of
$\binom{U_1\cdot U_2}{n_1+n_2}$, independent of $d$.
\end{enumerate}
Moreover, all the coefficients $f_i,g_i,h_i$ do not depend on $\gamma$, $\delta$.
\end{Proposition}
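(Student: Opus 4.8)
The plan is to prove all three statements by a single stratification-of-the-index-set argument, exploiting the fact that the summand in \eqref{powersums-def} splits as a product $\delta_{\Sigma_1}(a_1)\cdots\delta_{\Sigma_r}(a_r)\cdot\gamma(a_1)^{-n_1}\cdots\gamma(a_r)^{-n_r}$ whose two factors behave independently: the $\gamma$-part is genuinely multiplicative (so products of the denominators recombine into new denominators with exponents that add), and the $\delta$-part lies in the semiring $\mathcal{T}_\delta$, so a product $\delta_{\Sigma_i}(a)\delta_{\Sigma_j}(a)$ evaluated at a \emph{common} argument $a$ equals $\delta_{\Sigma_i\cdot\Sigma_j}(a)$. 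The key observation making this work uniformly in $d$ is that all the combinatorics happen at the level of the ordered index tuples, not at the level of the actual values of $a_i\in A^+$.

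First I would treat (1). Writing $\mathcal{C}_1=(\mathcal{C}_{1,1},\ldots,\mathcal{C}_{1,r})$ with rows $(\Sigma_k,m_k)$ and $\mathcal{C}_2=(\mathcal{C}_{2,1},\ldots,\mathcal{C}_{2,s})$ with rows $(\Sigma'_l,m'_l)$, the product $S_d^{\gamma,\delta}(\mathcal{C}_1)S_d^{\gamma,\delta}(\mathcal{C}_2)$ is a double sum over pairs of tuples $(a_1>\cdots>a_r)$ with $|a_1|=|\theta|^d$ and $(b_1>\cdots>b_s)$ with $|b_1|=|\theta|^d$ (here $>$ abbreviates the condition on absolute values, $A^+$-membership being understood). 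I partition this index set according to the \emph{merge pattern}: the way the combined multiset $\{a_1,\ldots,a_r,b_1,\ldots,b_s\}$ interleaves once we remember which $a_i$ equal which $b_j$. Each merge pattern is a surjection from $\{1,\ldots,r\}\sqcup\{1,\ldots,s\}$ onto a chain $\{1,\ldots,t\}$ (with $\max(r,s)\le t\le r+s$, and always $t$-th slot at absolute value $|\theta|^d$ for the top because both top indices sit there) that is order-preserving on each of the two blocks and injective on each block. For a fixed merge pattern, summing over the corresponding subfamily gives exactly $S_d^{\gamma,\delta}(\mathcal{D})$ where $\mathcal{D}$ is the depth-$t$ composition array whose $u$-th row has $\Sigma$-entry the product of the $\Sigma_k$'s and $\Sigma'_l$'s mapped to $u$ (here I use that $\delta$ turns this product in $\mathfrak{S}$ into the pointwise product of the $\delta$'s at the common argument, i.e.\ $\delta_\Sigma\delta_{\Sigma'}=\delta_{\Sigma\cdot\Sigma'}$) and whose $n$-entry is the sum of the corresponding $m_k$'s and $m'_l$'s (using multiplicativity of $\gamma$: $\gamma(a)^{-m}\gamma(a)^{-m'}=\gamma(a)^{-(m+m')}$). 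Each such $\mathcal{D}$ has type $U_1\cdot U_2$ and weight $n_1+n_2$ and is manifestly admissible (all $n$-entries are sums of strictly positive integers, hence positive) and independent of $d$; the set $I$ is the finite set of merge patterns and the coefficients $f_i$ are all $1\in\FF_p$ (more than enough to lie in $\FF_p$), and they plainly do not involve $\gamma$ or $\delta$.

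For (2) I would run the identical argument with the strict inequality $|\theta|^d>|a_1|$ replacing the equality: the top slots are no longer forced to coincide, so the merge patterns now range over \emph{all} order-preserving, block-wise-injective surjections onto a chain, with no constraint pinning down the top, and the same recombination identity yields $S_{<d}^{\gamma,\delta}(\mathcal{E}_i)$ with $\mathcal{E}_i$ of type $U_1\cdot U_2$, weight $n_1+n_2$, admissible, independent of $d$. Statement (3) is the mixed case: in $S_d^{\gamma,\delta}(\mathcal{C}_1)S_{<d}^{\gamma,\delta}(\mathcal{C}_2)$ the first tuple has top at $|\theta|^d$ while the second has all entries strictly below $|\theta|^d$; consequently $a_1$ cannot be merged with any $b_j$ (it is strictly larger than all of them) and must occupy the top slot alone, while the remaining $a_i$'s for $i\ge2$ may or may not coincide with various $b_j$'s. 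Thus the merge patterns here are those for which the image of $1\in\{1,\ldots,r\}$ is the (strict) top and is a singleton fiber; each contributes $S_d^{\gamma,\delta}(\mathcal{F}_i)$ of type $U_1\cdot U_2$, weight $n_1+n_2$, admissible and independent of $d$, with coefficient $1$. The final sentence of the proposition is then immediate from the construction, since at every stage the bookkeeping was purely combinatorial: the composition arrays $\mathcal{D}_i,\mathcal{E}_i,\mathcal{F}_i$ depend only on $\mathcal{C}_1,\mathcal{C}_2$ and the chosen merge pattern, and the coefficients are all equal to $1$.

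The only genuinely delicate point — the step I would be most careful about — is the degenerate cases of the recursion where one of the composition arrays is $\boldsymbol{0}=\binom{\emptyset}{0}$: here one must invoke the ad hoc conventions $S_0^\delta\binom{\emptyset}{0}=1$, $S_k^\delta\binom{\emptyset}{0}=0$ for $k>0$, $S_{<0}^\delta\binom{\emptyset}{0}=0$, $S_{<k}^\delta\binom{\emptyset}{0}=1$ for $k>0$, and check that the stated identities still hold term-by-term in $d$ with these conventions (for instance (1) with $\mathcal{C}_2=\boldsymbol{0}$ reads $S_d^{\gamma,\delta}(\mathcal{C}_1)\cdot 0=0$ for $d>0$ and $0\cdot 1$ versus $S_0^{\gamma,\delta}(\mathcal{C}_1)$ for $d=0$, which forces one to note $S_0^{\gamma,\delta}(\mathcal{C}_1)=0$ whenever $\mathcal{C}_1\ne\boldsymbol{0}$ is admissible of positive depth, since there is no monic $a_1$ of absolute value $|\theta|^0=1$ except $a_1=1$ and then $\gamma(1)^{-n_1}\delta_{\Sigma_1}(1)$ — one has to chase this carefully, but it is not serious). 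Apart from this boundary bookkeeping the argument is a clean finite combinatorial decomposition, and I expect no real obstacle; the substance is simply keeping the interleaving data organized, which is why stating it as a sum over merge patterns is the cleanest route.
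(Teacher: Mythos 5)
The merge-pattern decomposition you propose is the classical stuffle (quasi-shuffle) argument, and it would be correct if the twisted power sum $S_d^{\gamma,\delta}(\mathcal{C})$ had a single monic index per degree. But it does not: $S_d^{\gamma,\delta}\binom{\Sigma}{n}$ sums over all $q^d$ monic polynomials $a\in A^+$ with $|a|=|\theta|^d$. Consequently the product $S_d^{\gamma,\delta}(\mathcal{C}_1)S_d^{\gamma,\delta}(\mathcal{C}_2)$ contains many pairs with $a_i\ne b_j$ yet $|a_i|=|b_j|$, and these escape every merge pattern: your construction forces indices sent to the same slot to produce the \emph{same} polynomial, and indices sent to different slots to produce polynomials of \emph{strictly different} absolute value. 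Already in the smallest nontrivial case $r=s=1$, $d\ge1$, your stratification of $S_d^{\gamma,\delta}\binom{\Sigma_1}{m_1}S_d^{\gamma,\delta}\binom{\Sigma'_1}{m'_1}$ yields only the diagonal term $S_d^{\gamma,\delta}\binom{\Sigma_1\Sigma'_1}{m_1+m'_1}$ and omits the whole off-diagonal sum over distinct monic polynomials of degree $d$. The same problem recurs in parts (2) and (3) for any pair $a_i,b_j$ of common degree $<d$.

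That off-diagonal contribution is precisely where the entire difficulty of the proposition lives, and handling it is what makes the coefficients $f_i,g_i,h_i$ nontrivial elements of $\FF_p$ rather than all equal to $1$ as your argument would force: Thakur's foundational identity $S_d(a)S_d(b)=S_d(a+b)+\sum_j f_j\,S_d(a+b-j,j)$ has coefficients given by binomial-type sums reduced modulo $p$, and they are generally not all $1$, which directly contradicts your claimed decomposition. This is also why the paper gives no direct combinatorial proof here but cites the sum-shuffle theorem of Pellarin (Theorem~8.2 of the reference labelled PEL3 and Theorem~2.5 of PEL1) and Thakur's original arguments (THA2 and Theorem~5.1 of ThJTNB), which recombine the off-diagonal sum by induction and generating-function manipulations rather than by a naive stratification of the index set. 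Your auxiliary observations --- multiplicativity of $\gamma$, the semiring identity $\delta_\Sigma\delta_{\Sigma'}=\delta_{\Sigma\cdot\Sigma'}$, the boundary conventions for $\boldsymbol{0}$, and the $(\gamma,\delta)$-independence of the coefficients once a decomposition exists --- are all sound and would survive in a correct proof, but the core decomposition step is missing.
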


\begin{proof}
This follows easily from \cite[Theorem 8.2]{PEL3} (see also \cite[Theorem 2.5]{PEL1}, where a particular case is proved and note that the latter reference contains all the tools to prove our result). The methods are also very similar to those involved in Thakur's \cite{THA2} and \cite[Theorem 5.1]{ThJTNB}.
\end{proof}

For $w\in\NN$ and $\Sigma\in\mathfrak{S}$ let $\boldsymbol{C}_{w,\Sigma}$ be the free $\FF_p$-vector space generated by indeterminates denoted by $[\mathcal{C}]$ where $\mathcal{C}$ is an admissible composition array as in (\ref{a-sample-of-C}). We thus have the $\FF_p$-vector space $\boldsymbol{C}:=\oplus_{w,\Sigma}\boldsymbol{C}_{w,\Sigma}$ and by using (\ref{E:expres}) and Proposition \ref{elementary-facts} we can endow $\boldsymbol{C}$ with a natural structure of $\FF_p$-algebra with product $\odot_\zeta$ and we have noticed that it does not depend on $\gamma,\delta$. This algebra can be realised as the quotient of the free $\FF_p$-algebra generated by the 
indeterminates $[\mathcal{C}]$, by the ideal generated by the quadratic polynomials underlying the relations produced by Proposition \ref{elementary-facts}.
We deduce that $\mathcal{Z}^{\gamma,\delta}$ has a structure of $\FF_p$-algebra and:
\begin{Theorem}\label{mytheoremshuffle} For any $\gamma,\delta$ the assignment defined by  
$[\mathcal{C}]\mapsto \zeta_A^{\gamma,\delta}(\mathcal{C})$ defines an $\FF_p$-algebra morphism $$(\boldsymbol{C},+,\odot_\zeta)\xrightarrow{\zeta_A^{\gamma,\delta}}\mathcal{Z}^{\gamma,\delta}$$
which induces $\FF_p$-linear maps $\boldsymbol{C}_{w,\Sigma}\rightarrow\mathcal{Z}^{\gamma,\delta}_{w,\Sigma}$.
\end{Theorem}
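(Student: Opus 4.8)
The plan is to reduce the statement to the $\FF_p$-bilinearity of the two products together with the truncated-sum identity in Proposition~\ref{elementary-facts}(2). First I would observe that, since the symbols $[\mathcal{C}]$ indexed by admissible composition arrays form an $\FF_p$-basis of $\boldsymbol{C}$, the rule $[\mathcal{C}]\mapsto\zeta_A^{\gamma,\delta}(\mathcal{C})$---together with $[\boldsymbol{0}]\mapsto\zeta_A^{\gamma,\delta}\binom{\emptyset}{0}=1$---unambiguously defines an $\FF_p$-linear map $\boldsymbol{C}\to\mathcal{B}$. Its image is exactly $\mathcal{Z}^{\gamma,\delta}$ by the definition of that space, and it sends $\boldsymbol{C}_{w,\Sigma}$ into $\mathcal{Z}^{\gamma,\delta}_{w,\Sigma}$ because the chosen generators of source and target correspond to one another weight by weight and type by type. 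Thus the real content of the theorem is that this linear map is multiplicative and unital.

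For unitality, it is enough to recall that $[\boldsymbol{0}]$ is the neutral element of $(\boldsymbol{C},\odot_\zeta)$ by construction (at the level of truncations, $S_{<d}^{\gamma,\delta}(\boldsymbol{0})=1$ for $d>0$, so multiplication by it is the identity) and that it is sent to the unit $1$ of $\mathcal{B}$. For multiplicativity I would use bilinearity of $\odot_\zeta$ and of the product of $\mathcal{B}$ to reduce to checking, on a pair of basis elements $[\mathcal{C}_1],[\mathcal{C}_2]$ of $\binom{U_1}{n_1}$ and $\binom{U_2}{n_2}$, the identity $\zeta_A^{\gamma,\delta}(\mathcal{C}_1)\,\zeta_A^{\gamma,\delta}(\mathcal{C}_2)=\sum_{i\in J}g_i\,\zeta_A^{\gamma,\delta}(\mathcal{E}_i)$, where $J$, the $g_i\in\FF_p$ and the admissible composition arrays $\mathcal{E}_i$ of $\binom{U_1\cdot U_2}{n_1+n_2}$ are those delivered by Proposition~\ref{elementary-facts}(2) for the pair $\mathcal{C}_1,\mathcal{C}_2$; these are precisely the quadratic relations one quotients out of the free $\FF_p$-algebra on the $[\mathcal{C}]$'s to build $(\boldsymbol{C},\odot_\zeta)$, so verifying them is exactly saying that the map descends to that quotient. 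Here I would invoke (\ref{E:expres}) to write $\zeta_A^{\gamma,\delta}(\mathcal{C}_j)=\lim_{d\to\infty}S_{<d}^{\gamma,\delta}(\mathcal{C}_j)$ in $\mathcal{B}$, use joint continuity of multiplication in the Banach $\CC_\infty$-algebra $\mathcal{B}$ to get $\zeta_A^{\gamma,\delta}(\mathcal{C}_1)\zeta_A^{\gamma,\delta}(\mathcal{C}_2)=\lim_{d\to\infty}\bigl(S_{<d}^{\gamma,\delta}(\mathcal{C}_1)S_{<d}^{\gamma,\delta}(\mathcal{C}_2)\bigr)$, substitute Proposition~\ref{elementary-facts}(2) inside the limit, and pull the limit through the finite sum over $J$ (legitimate since $J$, the $g_i$ and the $\mathcal{E}_i$ are independent of $d$), arriving at $\sum_{i\in J}g_i\lim_{d\to\infty}S_{<d}^{\gamma,\delta}(\mathcal{E}_i)=\sum_{i\in J}g_i\zeta_A^{\gamma,\delta}(\mathcal{E}_i)$. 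As a by-product this shows that $\mathcal{Z}^{\gamma,\delta}$ is stable under the product of $\mathcal{B}$, hence an $\FF_p$-subalgebra, so that "$\FF_p$-algebra morphism" is a meaningful assertion, and the restriction to graded pieces is the one already recorded.

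I do not expect a genuine obstacle here: the one substantial input is Proposition~\ref{elementary-facts}(2) (whose proof rests on the references cited there), and the rest is formal manipulation with limits and bilinearity. The only point I would phrase carefully is that the quotient presentation of $(\boldsymbol{C},\odot_\zeta)$ a priori depends on the combinatorial choice of the arrays $\mathcal{E}_i$ in Proposition~\ref{elementary-facts}(2), which is not canonical; this causes no trouble because the identity $\zeta_A^{\gamma,\delta}(\mathcal{C}_1)\zeta_A^{\gamma,\delta}(\mathcal{C}_2)=\sum_i g_i\zeta_A^{\gamma,\delta}(\mathcal{E}_i)$ holds in $\mathcal{B}$ for every admissible datum $(\gamma,\delta)$, so the linear map annihilates the full ideal of relations regardless of such choices, which simultaneously re-confirms that $\odot_\zeta$ is well defined and that the induced map on the quotient is independent of the presentation.
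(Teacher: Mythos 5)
Your proof is correct and fills in precisely the argument the paper leaves implicit: the paper gives no separate proof of this theorem, treating it as immediate from the preceding paragraph, which endows $\boldsymbol{C}$ with $\odot_\zeta$ via the relations of Proposition~\ref{elementary-facts}(2) together with (\ref{E:expres}) and observes that these do not depend on $(\gamma,\delta)$. Your limit-plus-continuity computation in the Banach algebra $\mathcal{B}$, and your remark that $\zeta_A^{\gamma,\delta}$ annihilates the whole relation ideal for every admissible datum, are exactly the details behind the paper's ``we deduce that $\mathcal{Z}^{\gamma,\delta}$ has a structure of $\FF_p$-algebra''; the only slight overreach is the aside that this ``re-confirms that $\odot_\zeta$ is well defined,'' since annihilating the relations shows the map descends but does not by itself establish well-definedness of the product on the quotient.
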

We call the product structures induced on $\boldsymbol{C}$ and $\mathcal{Z}^{\gamma,\delta}$ the {\em harmonic product.}  
In the present work, we only study the case in which $\gamma(a)=a\in K\subset\mathcal{B}$ (note that in \cite{PEL3} more general maps $\gamma$ are considered). 
In our settings, as presented so far, we have stressed the dependence on two classes of maps: $\gamma,\delta$. 
From now on, we set $\gamma(a)=a$ for all $a$ (i.e. the image of $a\in A\subset\CC_\infty$ in $\mathcal{B}$) for the $\CC_\infty$-algebra structure of $\mathcal{B}$ and we simplify our notation by writing 
$\zeta_{A}^{\delta}(\mathcal{C})$ for $\zeta_{A}^{\text{Id},\delta}(\mathcal{C})$ and 
$\mathcal{Z}^{\delta}$ for $\mathcal{Z}^{\text{Id},\delta}$ etc.

\section{Multiple zeta values in Tate algebras}\label{mzvtate}

We recall the foundations of the theory of multiple zeta values in Tate algebras of \cite{PEL1} and we collect some basic properties. The main result is Corollary \ref{resultofchang} which states 
that the $K$-algebra of multiple zeta values in Tate algebras is graded by the monoid $\boldsymbol{S}$.

If $R$ is a commutative ring with unit, we denote by $R[\underline{t}]$ the commutative $R$-algebra of polynomials with coefficients in $R$ in the indeterminates $t_i$ with $i\in\NN^*$. If $\Sigma\in\mathfrak{S}$, we denote by $R[\underline{t}_\Sigma]$ the sub-algebra of polynomials
in the indeterminates $t_i$ with $i\in\operatorname{Spt}(\Sigma)$. Now we restrict our attention to the case of $R$ an $\FF_q$-algebra. More precisely, we consider the case $R=\FF_q[\underline{t}]$.
A {\em multiplicative semi-character} $\chi:A\rightarrow\FF_q[\underline{t}]$ is a map defined by 
$$\chi(a)=\prod_{i\in\NN^*}\chi_{t_i}(a)^{j_i},$$
for integers $j_i\in\NN$ such that $j_i=0$ for all but finitely many $i\in\NN^*$. Note that any such map is totally multiplicative.
We denote by $\mathcal{T}_{\text{sc}}$ the set of all the multiplicative semi-characters. There is a natural bijection 
$$\Sigma=\Sigma_\chi\leftrightarrow\chi_\Sigma=\chi$$
between weighted subsets of $\NN^*$ and multiplicative semi-characters inducing a structure of semi-ring on
$\mathcal{T}_{\text{sc}}$ so that the multiplicative unit is $\boldsymbol{1}$, the trivial semi-character which maps 
every $a\in A$ to $1\in\FF_q[\underline{t}]$. The bijection is induced by
$$\Sigma\mapsto \chi_\Sigma=\left(a\mapsto\prod_{i\in\NN^*}\chi_{t_i}(a)^{\Sigma(i)}\right).$$

This, together with the choice $\gamma(a)=a$ for all $a$, leads to the definition of the $\FF_p$-algebra of {\em multiple zeta values $\mathcal{Z}$}.
Explicitly, the multiple zeta value associated to an admissible $\mathcal{C}$ as in (\ref{a-sample-of-C}) and the choice $(\gamma,\delta)=(\operatorname{id},\chi)$ is the element of the standard Tate algebra 
$$\TT_\Sigma=\widehat{\CC_\infty[\underline{t}_{\operatorname{Spt}(\Sigma)}]}_{\|\cdot\|}=\widehat{\CC_\infty[t_i:i\in\operatorname{Spt}(\Sigma)]}_{\|\cdot\|}$$ (the completion of the polynomial algebra $\CC_\infty[\underline{t}_{\operatorname{Spt}(\Sigma)}]$ for the Gauss valuation $\|\cdot\|$):

\begin{equation}\label{mzvchit}
\zeta_A(\mathcal{C})=\zeta_A\begin{pmatrix}\Sigma_1 & \cdots & \Sigma_r\\ n_1 & \cdots & n_r\end{pmatrix}=\sum_{d\geq 0}\sum_{\begin{smallmatrix}d> d_1>\cdots>d_r\geq 0\\ |a_i|=|\theta|^{d_i}\forall i=1,\ldots,r\end{smallmatrix}}\frac{\chi_{\Sigma_1}(a_1)\cdots\chi_{\Sigma_r}(a_r)}{a_1^{n_1}\cdots a_r^{n_r}}\in\TT_\Sigma.
\end{equation}

\subsection{Basic properties}\label{basic-properties}

A simple Galois invariance argument allows to show that any admissible multiple zeta value as in (\ref{mzvchit})
belongs to $\FF_p[\underline{t}_\Sigma][[\frac{1}{\theta}]]$.
It has been further proved (see \cite[Proposition 4]{PEL}) that the above series converges in fact to an 
entire function $\CC_\infty^\Sigma\rightarrow\CC_\infty$. We can thus write $\zeta_A(\mathcal{C})\in\EE_{\Sigma}$
where $\EE_\Sigma$ is the $\CC_\infty$-subalgebra of $\TT_\Sigma$ whose elements are the entire functions in the variables $t_i$ with $i\in\operatorname{Spt}(\Sigma)$.

Let $n$ be in $\NN^*$ and $\Sigma\in\mathfrak{S}$. We consider the $\FF_p$-vector subspace $\mathcal{Z}_{n,\Sigma}$ of the $\CC_\infty$-algebra $$\EE_{\NN^*}:=\bigcup_{\Sigma\in\mathfrak{S}}\EE_\Sigma$$ generated by the multiple zeta values
(\ref{mzvchit}) with admissible composition array (\ref{a-sample-of-C}) of weight $n$
and type $\Sigma$.
For such an {\em admissible} multiple zeta value, we say that $r$ is the {\em depth}, $\sum_in_i$ is the {\em weight},
and $\Sigma$ is the {\em type}. We also set $\zeta_A\binom{\emptyset}{0}:=1$ (type $\emptyset$, weight $0$) and $\mathcal{Z}_{0,\emptyset}:=\FF_p$. We consider the $\FF_p$-subvector space of $\EE_{\NN^*}$:
$$\mathcal{Z}=\sum_{n\geq 0\atop \Sigma\in\mathfrak{S}}\mathcal{Z}_{n,\Sigma}.$$ By Theorem \ref{mytheoremshuffle}, $\mathcal{Z}$ is an $\FF_p$-algebra and we have the 
$\FF_p$-algebra morphism $\zeta_A:\boldsymbol{C}\rightarrow\mathcal{Z}$. Let $\boldsymbol{C}_\emptyset$ be the subalgebra of $\boldsymbol{C}$ generated by $[\mathcal{C}]\in \boldsymbol{C} $
where $\mathcal{C}$ are admissible composition arrays of trivial type. Then $$\zeta_A(\boldsymbol{C}_\emptyset)=\mathcal{Z}_\emptyset:=\sum_{n\geq0}\mathcal{Z}_{n,\emptyset}$$ is the $\FF_p$-algebra of {\em Thakur's multiple zeta values}. More explicitly,
let $n$ be in $\NN^*$. Then, every admissible composition array of $\binom{\emptyset}{n}$ is of the form
$$\begin{pmatrix}\emptyset & \cdots & \emptyset\\ n_1 & \cdots & n_r\end{pmatrix}$$ and the associated multiple zeta value is given in (\ref{thakur-mzv}) (see \cite[Definition 5.10.1]{THA0} (\footnote{We adopt in this case a specific notation which is closer to the notation introduced by Thakur.})). 

If $B$ is a subalgebra of the $K$-algebra $K[\underline{t}_\Sigma]$, then 
we also consider the $B$-submodules $\mathcal{Z}_{n,\Sigma}(B)$ of $\EE_{\NN^*}$ generated by all the $B$-linear combinations of the $\zeta_A(\mathcal{C})$'s with $\mathcal{C}$ admissible of type $\Sigma$ and weight $n$, and along with this,
the $B$-subalgebra $\mathcal{Z}(B)$ of $\EE_{\NN^*}$. We have the next result with $B=K[\underline{t}]$.

\begin{Proposition}\label{proposition-general properties}
Let us fix a composition array $\mathcal{C}$ as in (\ref{a-sample-of-C}), not necessarily admissible. 
Then $\zeta_A(\mathcal{C})$ converges to an element of $\mathcal{Z}(K[\underline{t}])$.
Moreover, we have the following properties.
\begin{enumerate}
\item For every $i\in\NN^*$ and $k\in\NN$ the evaluation map $\CC_\infty[t_i]\rightarrow\CC_\infty$ defined by $t_i\mapsto\theta^{q^k}$
induces a $K$-algebra endomorphism $\mathcal{Z}(K[\underline{t}])\rightarrow\mathcal{Z}(K[\underline{t}])$. 
\item The $\FF_p[\underline{t}]$-linear automorphism $\mu$ of $\CC_\infty[\underline{t}]$
defined by $c\mapsto c^p$ for $c\in\CC_\infty$ induces an $\FF_p[\underline{t}]$-linear endomorphism of $\mathcal{Z}(K[\underline{t}])$.
\end{enumerate}
\end{Proposition}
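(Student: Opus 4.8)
The plan is to reduce each of the three statements to a computation on the defining series of a single multiple zeta value. Since $\mathcal{Z}(K[\underline{t}])$ is by definition the $K[\underline{t}]$-subalgebra of $\EE_{\NN^*}$ generated by the $\zeta_A(\mathcal{C})$ with $\mathcal{C}$ admissible, and since the evaluation maps of (1) and the map $\mu$ of (2) are ring endomorphisms of $\EE_{\NN^*}$ carrying $K[\underline{t}]$ into itself, it suffices in each case to show that the operation in question sends each generator $\zeta_A(\mathcal{C})$ into $\mathcal{Z}(K[\underline{t}])$. I first dispose of convergence of $\zeta_A(\mathcal{C})$ for arbitrary (not necessarily admissible) $\mathcal{C}$ and of $\zeta_A(\mathcal{C})\in\mathcal{Z}(K[\underline{t}])$, following \cite{PEL1}, by induction on the depth $r$. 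The one structural point to isolate is that if $n_1\le0$ then the power sum $S_d(\mathcal{C}_1)=\sum_{a\in A^+,\,\deg a=d}\chi_{\Sigma_1}(a)\,a^{-n_1}$ vanishes for $d\gg0$: expanding a monic $a=\theta^d+\sum_{\ell<d}c_\ell\theta^\ell$ and using that $\sum_{c\in\FF_q}c^k=0$ unless $(q-1)\mid k\ge1$, a monomial in the $c_\ell$ can survive the summation over $\FF_q^{\,d}$ only if each $c_\ell$ appears with exponent $\ge q-1$, which forces $|\Sigma_1|-n_1\ge d(q-1)$. Hence, for $n_1\le0$, the identity $\zeta_A(\mathcal{C})=\sum_{d\ge0}S_d(\mathcal{C}_1)\,S_{<d}(\mathcal{C}_2,\dots,\mathcal{C}_r)$ collapses to a finite sum of finite products of power sums $S_e(\mathcal{C}_j)\in K[\underline{t}]$, so $\zeta_A(\mathcal{C})\in K[\underline{t}]\subset\mathcal{Z}(K[\underline{t}])$ and converges trivially; for $n_1>0$ one has $\|S_d(\mathcal{C}_1)\|\le|\theta|^{-n_1d}\to0$ and, by the depth-$(r-1)$ case, $S_{<d}(\mathcal{C}_2,\dots,\mathcal{C}_r)\to\zeta_A(\mathcal{C}_2,\dots,\mathcal{C}_r)$, so the series converges, and for the membership one applies the quadratic relations among power sums (the analogue of Proposition~\ref{elementary-facts} without the admissibility hypothesis, cf.~\cite{PEL3}) and a further induction, as in \cite{PEL1}, to reduce $\zeta_A(\mathcal{C})$ to multiple zeta values of smaller depth together with the case $n_1\le0$ just treated.

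For part (1), fix $i\in\NN^*$, $k\in\NN$. The substitution $t_i\mapsto\theta^{q^k}$ is a $\CC_\infty$-algebra endomorphism of $\EE_{\NN^*}$ — an entire function may be evaluated at any point of $\CC_\infty$ — and it maps $K[\underline{t}]$ into $K[\underline{t}]$ because $\theta^{q^k}\in K$. On a generator one uses the identity $a(\theta^{q^k})=a^{q^k}$ for $a\in A$ (the $q^k$-power Frobenius fixes $\FF_q$), so that $\chi_{\Sigma_j}(a)\big|_{t_i=\theta^{q^k}}=\chi_{\Sigma_j'}(a)\,a^{q^k\Sigma_j(i)}$, where $\Sigma_j'$ is $\Sigma_j$ with $i$ deleted. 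Evaluating the defining series layer by layer — legitimate because $\zeta_A(\mathcal{C})$ is entire and, by the vanishing just noted, the series at $t_i=\theta^{q^k}$ has eventually stationary partial sums, hence computes the entire-function value — yields
$$
\zeta_A(\mathcal{C})\big|_{t_i=\theta^{q^k}}=\zeta_A\begin{pmatrix}\Sigma_1'&\cdots&\Sigma_r'\\ n_1-q^k\Sigma_1(i)&\cdots&n_r-q^k\Sigma_r(i)\end{pmatrix},
$$
a composition array which need not be admissible, but whose multiple zeta value lies in $\mathcal{Z}(K[\underline{t}])$ by what was shown above. Thus the evaluation is a $K$-algebra endomorphism of $\mathcal{Z}(K[\underline{t}])$, and so is any composite of such in the several variables.

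For part (2), $\mu$ is an $\FF_p[\underline{t}]$-algebra endomorphism of $\EE_{\NN^*}$ — it preserves entireness since $|c^p|=|c|^p$ — with $\mu(K[\underline{t}])=\FF_q(\theta^p)[\underline{t}]\subseteq K[\underline{t}]$, so it suffices to place $\mu(\zeta_A(\mathcal{C}))$ in $\mathcal{Z}(K[\underline{t}])$ for $\mathcal{C}$ admissible. Now $\mu(a)=a^p$ for $a\in A\subset\CC_\infty$, while $\mu$ fixes the $t_i$ and raises the $\FF_q$-coefficients to the $p$-th power; writing $a^{[p]}$ for the coefficient-wise $p$-th power (a bijection on the monic polynomials of each degree) one has $\mu(\chi_{\Sigma_j}(a))=\chi_{\Sigma_j}(a^{[p]})$, and a termwise application of $\mu$ to the series of $\zeta_A(\mathcal{C})$, followed by the re-indexing $b_j:=a_j^{[p]}$ and the identity $a_j^p=b_j(\theta^p)$, gives
$$
\mu(\zeta_A(\mathcal{C}))=\sum_{\begin{smallmatrix}b_1,\dots,b_r\in A^+\\ |b_1|>\cdots>|b_r|\end{smallmatrix}}\ \prod_{j=1}^{r}\frac{\chi_{\Sigma_j}(b_j)}{b_j(\theta^p)^{\,n_j}}\,,
$$
i.e.\ the multiple zeta value attached to $\mathcal{C}$ for the subring $\FF_q[\theta^p]\subset A$, specialized at $\theta^p$. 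When $q=p$ this is simply $\zeta_A$ of the (admissible) array obtained from $\mathcal{C}$ by multiplying each lower entry by $p$, hence an element of $\mathcal{Z}(K[\underline{t}])$; for general $q$ one reduces to this case, using that the $e$-fold iterate $\mu^{\circ e}$ sends $\zeta_A(\mathcal{C})$ to $\zeta_A$ of the array with lower entries multiplied by $q$, together with the distribution relation between the monic polynomials of $\FF_q[\theta^p]$ and of $\FF_q[\theta]$ (which for $\Sigma=\emptyset$ already gives $\mu(\zeta_A(n_1,\dots,n_r))=\zeta_A(pn_1,\dots,pn_r)$ directly, from unique factorization and the continuity of Frobenius on convergent sums). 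The point I expect to be delicate is precisely this last one — the membership of the Frobenius-twisted value in $\mathcal{Z}(K[\underline{t}])$ when $q>p$ and Tate variables are present; the rest is elementary, together with the bookkeeping inductions above.
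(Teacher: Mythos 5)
Your handling of the convergence statement and of the membership $\zeta_A(\mathcal{C})\in\mathcal{Z}(K[\underline{t}])$ for non-admissible arrays is essentially the paper's argument (induction on depth, with a cutoff $M$ beyond which $S_d\binom{\Sigma_j}{n_j}$ vanishes whenever $n_j\le 0$), and the $\FF_q$-coefficient counting you supply to justify that vanishing is correct and a genuine addition, since the paper states it without proof. Part (1) likewise matches the intended route: $t_i\mapsto\theta^{q^k}$ takes an admissible array to a generally non-admissible one of the same type with lowered weight entries, which lies in $\mathcal{Z}(K[\underline{t}])$ by the first part.

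There is, however, a real gap in part (2), which you flag but do not close — and your proposed reduction does not, as written, go through. Your computation $\mu\big(S_d\binom{\Sigma}{n}\big)=\sum_{b\in A^+,\ \deg b=d}\chi_\Sigma(b)/(b(\theta^p))^n$ is correct, and for $q=p$ this is $S_d\binom{\Sigma}{pn}$, settling that case. But for $q=p^e$ with $e>1$ the claimed reduction is logically backwards: knowing that $\mu^{\circ e}$ preserves $\mathcal{Z}(K[\underline{t}])$ (which it does, since $\mu^e$ raises coefficients to the $q$-th power and fixes $\chi_\Sigma(a)$, so $\mu^e(\zeta_A(\mathcal{C}))$ is $\zeta_A$ of the array with the second row multiplied by $q$) tells you nothing about $\mu$ itself. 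The "distribution relation between the monic polynomials of $\FF_q[\theta^p]$ and of $\FF_q[\theta]$" is never written down; and the parenthetical is also misplaced, since the $\Sigma=\emptyset$ case follows not from any such relation but from $\mu$ being the literal $p$-th power on $K_\infty$, namely $\big(\sum_a a^{-n}\big)^p=\sum_a a^{-pn}$ — an argument that breaks precisely when $\Sigma\neq\emptyset$ because $\mu$ fixes the $t_i$ while raising the $\FF_q$-coefficients of $\chi_{t_i}(a)$ to the $p$-th power. So the substance of (2) for $q>p$ — expressing $\sum_b\chi_\Sigma(b)/(b(\theta^p))^n$ as a $K[\underline{t}]$-polynomial in admissible multiple zeta values — remains unaddressed. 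To be fair, the paper declares (1) and (2) "easy and left to the reader" and gives no more detail than you do; but the statement is not established by your proposal.
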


\begin{proof}
Observe that
for all $n\leq 0$ and $\Sigma\in\mathfrak{S}$, $S_d\binom{\Sigma}{n}=0$
for all but finitely many $d\in\NN$. This is elementary and easy to check.
If $\mathcal{C}$ is a composition array as in (\ref{a-sample-of-C}) and if $n_1\leq 0$, there exists $M\geq 0$ such that for all $d\geq M$, $S_d\binom{\Sigma_1}{n_1}=0$. 
Then
$$\zeta_A(
\mathcal{C})=S_{<k}(
\mathcal{C}),\quad \forall k\geq M$$
and $\zeta_A(
\mathcal{C})\in K[\underline{t}]\subset\mathcal{Z}(K[\underline{t}])$.
We now consider a non-admissible composition array $\mathcal{C}$ as in   (\ref{a-sample-of-C})
with $n_1,\ldots,n_{j-1}>0$ and $n_j\leq 0$ so that $(\mathcal{C}_1,\ldots,\mathcal{C}_{j-1})$ is admissible, and we study the sequence $(S_{<k}(\mathcal{C}))_{k\geq 0}$. There exists $M$ such that for all $d\geq M$, $S_d\binom{\Sigma_j}{n_j}=0$.
Then,
\begin{multline*}
\zeta_A(\mathcal{C})=\sum_{M> d_1>\cdots>d_r\geq 0}
S_{d_1}\binom{\Sigma_1}{n_1}\cdots S_{d_r}\binom{\Sigma_r}{n_r}+\\
+\sum_{d_1\geq M> d_2>\cdots>d_r\geq 0}
S_{d_1}\binom{\Sigma_1}{n_1}\cdots S_{d_r}\binom{\Sigma_r}{n_r}+\cdots\\
+\sum_{d_1>\cdots>d_{j-1}\geq M>d_{j}>\cdots>d_r\geq 0}
S_{d_1}\binom{\Sigma_1}{n_1}\cdots S_{d_r}\binom{\Sigma_r}{n_r}.
\end{multline*}
If $1\leq k\leq j-1$ then we have
\begin{multline*}
\sum_{d_1>\cdots>d_{k}\geq M>d_{k+1}>\cdots>d_r\geq 0}
S_{d_1}\binom{\Sigma_1}{n_1}\cdots S_{d_r}\binom{\Sigma_r}{n_r}=\\ =S_{<M}\begin{pmatrix}\Sigma_{k+1} & \cdots & \Sigma_{r} \\ n_{k+1} & \cdots & n_r\end{pmatrix}\sum_{d_1>\cdots>d_k\geq M}S_{d_1}(\mathcal{C}_1)\cdots S_{d_k}(\mathcal{C}_k).
\end{multline*}
We note that 
\begin{multline*}
\sum_{d_1>\cdots>d_k\geq M}S_{d_1}(\mathcal{C}_1)\cdots S_{d_k}(\mathcal{C}_k)=
\zeta_A(\mathcal{C}_1,\ldots,\mathcal{C}_k)-\\ -\sum_{d_1>\cdots>d_{k-1}\geq M>d_k}S_{d_1}(\mathcal{C}_1)\cdots S_{d_k}(\mathcal{C}_k)-\cdots-\sum_{M>d_1>d_2>\cdots>d_{k}}S_{d_1}(\mathcal{C}_1)\cdots S_{d_k}(\mathcal{C}_k).\end{multline*}
Inductively on $k$, we get
$$\sum_{d_1>\cdots>d_k\geq M}S_{d_1}(\mathcal{C}_1)\cdots S_{d_k}(\mathcal{C}_k)\in\mathcal{Z}(K[\underline{t}]).$$ Therefore $\zeta_A(C)\in \mathcal{Z}(K[\underline{t}])$ and this proves the first property described in the proposition. The properties (1) and (2) are easy and left to the reader.\end{proof}

\subsubsection{Grading structures}
In \cite{CHA}, Chang proves the following result (analogue of a conjecture of Goncharov):
\begin{Theorem}[Chang]\label{resultofchang}
The $K$-algebra $\mathcal{Z}_\emptyset(K)$ is graded by the weights.
\end{Theorem}
This result relying on a transcendence proof can be applied to deduce several statements about our multiple zeta values in Tate algebras. One of them is the next:

\begin{Corollary}\label{cor-grading}
The $K[\underline{t}]$-algebra $\mathcal{Z}(K[\underline{t}])$ is graded by weights and types, that is, by the monoid $\boldsymbol{S}$.
\end{Corollary}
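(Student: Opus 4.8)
\textbf{Proof plan for Corollary \ref{cor-grading}.}
The plan is to reduce the statement to Chang's Theorem \ref{resultofchang} by a descent argument that strips off the $K[\underline{t}]$-coefficients and the type data, thereby landing inside the $K$-algebra $\mathcal{Z}_\emptyset(K)$ where the grading by weights is already known. The first step is to make precise what must be shown: writing $\mathcal{Z}_{\binom{\Sigma}{n}}(K[\underline t])$ for the $K[\underline t]$-span of the $\zeta_A(\mathcal{C})$ with $\mathcal{C}$ admissible of weight $n$ and type $\Sigma$, the claim is that the natural sum map $\bigoplus_{(n,\Sigma)\in\boldsymbol S}\mathcal{Z}_{\binom{\Sigma}{n}}(K[\underline t])\to\mathcal{Z}(K[\underline t])$ is injective; that it is surjective and multiplicative with respect to $\cdot$ on $\boldsymbol S$ is immediate from the definition of the harmonic product and Proposition \ref{elementary-facts} (the composition arrays $\mathcal D_i$ produced there all have type $U_1\cdot U_2$ and weight $n_1+n_2$). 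So the whole content is: a $K[\underline t]$-linear relation among multiple zeta values of mixed weights and types forces the homogeneous parts to vanish separately.

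Next I would separate the two gradings, handling the type first. Because $\zeta_A(\mathcal{C})\in\EE_\Sigma\subset\widehat{\CC_\infty[\underline t_{\operatorname{Spt}(\Sigma)}]}_{\|\cdot\|}$ depends only on the variables $t_i$ with $i\in\operatorname{Spt}(\Sigma)$, and more sharply is $\FF_q$-linear — in fact of degree exactly $\Sigma(i)$ — in each $t_i$ when one inspects the semi-character $\chi_\Sigma$, one can extract the type by looking at multidegrees in the $t_i$: a relation $\sum_{\Sigma}\omega_\Sigma=0$ with $\omega_\Sigma\in\bigoplus_n\mathcal{Z}_{\binom{\Sigma}{n}}(K[\underline t])$ can be decomposed according to the finitely many monomials in the $t_i$ that actually appear, and after multiplying through by denominators in $K[\underline t]$ and comparing coefficients of these monomials one is reduced to relations within a single type. (Here one uses that $K[\underline t]$ is an integral domain, so clearing denominators is harmless, and that the $\zeta_A(\mathcal{C})$ of a fixed type $\Sigma$ lie in a polynomial ring in the finitely many variables $t_i$, $i\in\operatorname{Spt}(\Sigma)$, over $K_\infty$.) This step also shows $\mathcal{Z}_{n,\Sigma}(K[\underline t]) = \mathcal{Z}_{n,\Sigma}\otimes_{\FF_p}K[\underline t_\Sigma]$ in the sense that $K[\underline t]$-independence is controlled by $\FF_p$-independence of the underlying entire functions.

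Having fixed the type $\Sigma$, the remaining task is to pass from weights to Chang's theorem. The key device is specialization of the variables: by Proposition \ref{proposition-general properties}(1), for each $i\in\operatorname{Spt}(\Sigma)$ and each $k\in\NN$ the evaluation $t_i\mapsto\theta^{q^k}$ is a $K$-algebra endomorphism of $\mathcal{Z}(K[\underline t])$, and one checks (this is where Perkins-type evaluations, or simply the defining sums, come in) that $\zeta_A(\mathcal{C})$ evaluated at suitable $t_i=\theta^{q^{k_i}}$ becomes, up to an explicit nonzero factor in $K$ built from the $b_i$ and $D_i$, a Thakur multiple zeta value of the \emph{same weight} $n$. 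Applying enough such specializations turns a putative relation $\sum_n\zeta_A\text{-combination}=0$ of type $\Sigma$ into a $K$-linear relation among Thakur multiple zeta values that still respects the weight grading; Chang's Theorem \ref{resultofchang} then kills each weight-$n$ part. To conclude that the original (unspecialized) homogeneous parts vanish one must show the specialization maps are jointly injective on each $\mathcal{Z}_{n,\Sigma}(K[\underline t])$ — equivalently, that an entire function in the $t_i$ lying in this span and vanishing at all tuples $(\theta^{q^{k_i}})$ is zero, which follows because such an element is a polynomial of bounded degree in each $t_i$ with coefficients in $K_\infty$ and the points $\theta, \theta^q, \theta^{q^2},\dots$ are infinitely many distinct values.

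\textbf{Main obstacle.} The delicate point is the last one: controlling the interaction between the weight grading and the specialization $t_i\mapsto\theta^{q^k}$. One needs that specialization does not mix weights (clear, since it is degreewise on the defining double sum) \emph{and} that finitely many specializations already detect $\FF_p$-linear independence of a given finite family of $\zeta_A(\mathcal{C})$ of fixed type — this requires a uniform bound on the $t_i$-degrees appearing, which comes from the type $\Sigma$ fixing $\deg_{t_i}\leq\Sigma(i)$, together with the fact that the $\theta^{q^k}$ are pairwise distinct in $K_\infty$. Everything else is bookkeeping with composition arrays and the harmonic product already set up in Proposition \ref{elementary-facts} and Theorem \ref{mytheoremshuffle}.
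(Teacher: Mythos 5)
Your high-level plan — reduce by an induction on the number of variables, using the specializations $t_i\mapsto\theta^{q^k}$ together with the infinitude of the points $\theta^{q^k}$ to pull a putative relation down into $\mathcal{Z}_\emptyset(K)$ where Chang's Theorem~\ref{resultofchang} applies — is exactly the right direction and is essentially the strategy the paper follows. However, two of the central assertions in your proposal are false and would make the argument collapse.

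First, you repeatedly treat $\zeta_A(\mathcal{C})$ for $\mathcal{C}$ of type $\Sigma$ as a \emph{polynomial} of degree $\Sigma(i)$ in each $t_i$ over $K_\infty$, and you propose to separate types by comparing coefficients of the finitely many monomials in the $t_i$. This is not the case: $\zeta_A(\mathcal{C})$ is an \emph{entire function} in the $t_i$ with an infinite Taylor expansion. The semi-character $\chi_\Sigma(a)=\prod_i\chi_{t_i}(a)^{\Sigma(i)}$ has $t_i$-degree $\Sigma(i)\deg_\theta(a)$, which is unbounded as $a$ runs over $A^+$, so the summed series is not a polynomial and there is no finite list of monomials to compare. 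The step reducing to a single type in this way, and the claim that $\mathcal{Z}_{n,\Sigma}(K[\underline t])\cong\mathcal{Z}_{n,\Sigma}\otimes_{\FF_p}K[\underline t]$, do not go through.

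Second, and more seriously, you assert that the specialization $t_i\mapsto\theta^{q^k}$ does not mix weights — ``clear, since it is degreewise on the defining double sum.'' This is precisely the opposite of what happens. Substituting $t_i=\theta^{q^k}$ sends $\chi_{t_i}(a)=a(t_i)$ to $a(\theta^{q^k})=a^{q^k}$, so a summand $\chi_\Sigma(a_1)\cdots\chi_{\Sigma_r}(a_r)/(a_1^{n_1}\cdots a_r^{n_r})$ becomes a summand whose effective weight has dropped (roughly by $\Sigma(i)q^k$). The paper itself points this out explicitly in the remark just after Corollary~\ref{cor-grading}: ``the evaluation map $t_i\mapsto\theta^{q^k}$ does not respect the grading by weights.'' So you cannot land a weight-$n$ relation in $\mathcal{Z}_{n,\emptyset}(K)$ and invoke Chang directly. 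The paper's fix is to first apply $\tau^N$ and only then evaluate at $t_i=\theta$; this sends a term of weight $n_j$ and $t_i$-degree $\beta_j$ in the semi-character to one of weight $q^Nn_j-\beta_j$ and type $\Sigma_j'$, and the extra free parameter $N$ is what allows one to arrange that, for all but finitely many $N$, the shifted degree-type pairs $\binom{\Sigma'_j}{q^Nn_j-\beta_j}$ remain pairwise distinct so the induction hypothesis can be invoked. Without keeping track of this weight shift — and using the twist before the evaluation to gain the free parameter $N$ — the argument you propose cannot close.
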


\begin{proof}
First of all we focus on a property of entire functions in $\EE_{\NN^*}$. Let $U$ be a non-empty finite subset of $\NN^*$ and $i$ an element of $U$. Let us write $U'=U\setminus\{i\}$. Let $f$ be in $\EE_U$. Then $f$ can be seen as
an entire function
$$\CC_\infty\rightarrow\EE_{U'}.$$
In other words, we can expand $f$ in a series
$$f(t_i)=\sum_{k\geq 0}f_kt_i^k,$$
where $f_k$ is an element of $\EE_{U'}$ for all $k\geq 0$, with the property that for all valuations $\|\cdot\|_v$
of $\CC_\infty[\underline{t}_{U'}]$ (trivial over $\FF_q[\underline{t}]$, which extend in a unique way to $\EE_{U'}$), $\|f_k\|_vR^k\rightarrow0$ as $k$ tends to $\infty$. We claim that if $\tau^N(f)_{t_i=\theta}=0$ for infinitely many $N\in\NN$, then $f=0$.
Indeed, note that
$$\tau^N(f)_{t_i=\theta}=f\Big(t_i=\theta^{\frac{1}{q^N}},t_l\mapsto t_l^{\frac{1}{q^N}}:l\in U'\Big)^{q^N},\quad N\geq0.$$
Hence, $\tau^N(f)_{t_i=\theta}=0$ if and only if $f_{t_i=\theta^{\frac{1}{q^N}}}=0$ in $\EE_{U'}$ and the claim follows from the fact that the set $\{\theta^{\frac{1}{q^N}}:N\geq 0\}$ is infinite.

To prove our result it suffices to show that if $Z_j\in\mathcal{Z}_{n_j,\Sigma_j}(K[\underline{t}_{U}])\setminus\{0\}$ ($j=1,\ldots,m$)
are such that the set $\{\binom{\Sigma_j}{n_j}:j=1,\ldots,m\}$ has $m$ elements, 
then the $Z_j$'s are linearly independent over $K[\underline{t}_{U}]$, where $U\in\mathfrak{S}$
is a subset such that $\operatorname{Spt}(\Sigma_j)\subset U$ for all $j$. We prove this property by induction over $|U|\in\NN$. If $|U|=0$, then this is just Theorem \ref{resultofchang}. Now let us suppose that 
$|U|>0$, let $i$ be in $U$ as before, let us write $U'=U\setminus\{i\}$. For all $j=1,\ldots,m$ there exists $\beta_j\in\NN$ such that $$\chi_{\Sigma_j}=\chi_{t_i}^{\beta_j}\chi_{\Sigma'_j},$$
where $\Sigma_j'\leq \Sigma_j$.
Assume by contradiction that there exist $c_1,\ldots,c_m\in K[\underline{t}_U]$ such that $f=\sum_jc_jZ_j=0$.
Then $\tau^N(f)_{t_i=\theta}=0$ for all $N\geq 0$. 
Now, for all but finitely many $N$, we have $\tau^N(c_j)_{t_i=\theta}\neq0$, $q^Nn_j-\beta_j>0$ and
$$\tau^N(c_jZ_j)_{t_i=\theta}\in\mathcal{Z}_{q^Nn_j-\beta_j,\Sigma_j'}(K[\underline{t}_{U'}])\setminus\{0\}.$$
If $j_1,j_2$ are distinct elements of $\{1,\ldots,m\}$ such that $\binom{\beta_{j_1}}{n_1}=\binom{\beta_{j_2}}{n_2}$, then $\Sigma_{j_1}'\neq\Sigma_{j_2}'$ by our hypothesis. If $\binom{\beta_{j_1}}{n_1}\neq\binom{\beta_{j_2}}{n_2}$, the sets $\{q^Nn_1-\beta_1,q^Nn_2-\beta_2\}$ are not singletons for all 
but finitely many $N$. Hence for all but finitely many $N$
the set $$\left\{\binom{\Sigma'_j}{q^Nn_j-\beta_j}:j=1,\ldots,m\right\}\subset\NN^*\times\mathfrak{S},$$ well defined, has exactly $m$ elements and by the induction hypothesis we can find $N$ such that the non-zero elements
$\tau^N(Z_j)_{t_i=\theta}\in\mathcal{Z}_{q^Nn_j-\beta_j,\Sigma'_j}(K[\underline{t}_{U'}])\setminus\{0\}$
are linearly independent over $K[\underline{t}_{U'}]$ which yields a contradiction with the non vanishing of the evaluations of $\tau^N(c_j)$.
\end{proof}
We end this section with several remarks and questions.

\subsubsection{Remark: a question on topological groups}

Theorem \ref{resultofchang} implies that $\mathcal{Z}_\emptyset(K)$ is graded by the monoid $\NN$ while Corollary \ref{Fp-trivial} implies, more generally, that $\mathcal{Z}_\emptyset(K[\underline{t}])$ is graded by the monoid $\boldsymbol{S}$. In his book \cite[\S 8]{Goss}, Goss describes the construction of a complete topological group $\mathbb{S}_\infty$ (determined by the choice of a uniformiser of $K_\infty$) together 
with embeddings $\NN\rightarrow\mathbb{S}_\infty$ so that the image is discrete, hence allowing to extend the map 
$\NN^*\rightarrow K_\infty$ defined by $n\mapsto\zeta_A(n)$ to a partial analogue of Riemann's zeta function, the {\em zeta function of Goss}, $\zeta_A:\mathbb{S}_\infty\rightarrow\CC_\infty$. Similar constructions lie above families of multiple zeta values of Thakur of $\mathcal{Z}_\emptyset$ that we will not discuss in the present paper. We do not know how to construct a complete topological group $\widehat{\boldsymbol{S}}$ together with embeddings $\boldsymbol{S}\rightarrow \widehat{\boldsymbol{S}}$ with discrete images in such a way that the zeta values $\zeta_A(\mathcal{C})$ with $\mathcal{C}\in \boldsymbol{S}$ or more generally, families of multiple zeta values $\zeta_A(\mathcal{C})$, can be extended analogously.

\subsubsection{Remark: a problem of homogeneity}

Note that the map $\mu$ of Proposition \ref{proposition-general properties} induces $\FF_p[\underline{t}_\Sigma]$-linear maps $$\mathcal{Z}_{n,\Sigma}(K[\underline{t}_\Sigma])\rightarrow\mathcal{Z}_{pn,\Sigma}(K[\underline{t}_\Sigma]).$$
If $\mathcal{C}$ is not admissible $\zeta_A(\mathcal{C})$ needs not to be homogeneous for the grading
$$\mathcal{Z}(K[\underline{t}])=\bigoplus_{(w,\Sigma)\in\NN^*\times\mathfrak{S}}\mathcal{Z}_{w,\Sigma}(K[\underline{t}]).$$ Therefore, the evaluation map $t_i\mapsto \theta^{q^k}$ (for $i\in\NN^*$ and $k\in\NN$) does not respect the grading by weights. Luckily this problem does not occur for the {\em trivial elements} defined in \S \ref{Trivial-elements}, see Corollary \ref{corollary-doesnt-exceed}.

\subsubsection{Remark: the role of the Carlitz period} The so-called Euler-Carlitz relations imply that if $n\in\NN^*$ is such that $q-1\mid n$, then $\zeta_A(n)\in\widetilde{\pi}^nK^\times$ with 
$$\widetilde{\pi}:=\theta\lambda_\theta\prod_{i>0}\left(1-\frac{\theta}{\theta^{q^i}}\right)^{-1}\in \lambda_\theta K_\infty\subset\CC_\infty$$
(for a choice $\lambda_\theta=(-\theta)^{\frac{1}{q-1}}$ of root of $-\theta$). Then, $\widetilde{\pi}^n\in\mathcal{Z}_{n,\emptyset}(K)$
for all $n\geq 0$ such that $q-1\mid n$. Note however that in general, we do not expect the relation $\widetilde{\pi}^n\in\mathcal{Z}_{n,\emptyset}$. 

\section{Multiple polylogarithms}\label{Multiple-polylogarithms}

Let $\mathcal{C}$ be an admissible composition array of $\binom{\Sigma}{n}$ as in (\ref{a-sample-of-C}). We define:
$$\lambda_A(\mathcal{C}):=\sum_{d_1>\cdots>d_r\geq 0}S_{d_1}(n_1)\cdots S_{d_r}(n_r)\prod_{i\in \NN^*}X_i^{\sum_{j=1}^r\Sigma_j(i)q^{d_j}},$$ where $$S_d(n):=S_d\binom{\emptyset}{n}=\sum_{\begin{smallmatrix} a\in A\\ 
\text{monic}\\ |a|=|\theta|^d\end{smallmatrix}}\frac{1}{a^n}.$$ It is easy to see that $\lambda_A(\mathcal{C})$ converges in the Tate algebra $\widehat{K[\underline{X}_\Sigma]}_{\|\cdot\|}$.

For $n\in\NN^*$ and $\Sigma\in\mathfrak{S}$, let $\mathcal{L}_{n,\Sigma}$ be the $\FF_p$-vector space generated by $\lambda_A(\mathcal{C})$ where $\mathcal{C}$ is a composition array of $\binom{\Sigma}{n}$,
with the convention $\mathcal{L}_{0,\emptyset}=\FF_p\lambda_A\binom{\emptyset}{0}=\FF_p$. 
Note that $\mathcal{L}_{n,\emptyset}=\mathcal{Z}_{n,\emptyset}$.
The series $\lambda_A(\mathcal{C})$ can be interpreted as the 
case of the datum $(\gamma,\delta)$ in \S \ref{multiple-sums} determined by 
\begin{equation}\label{delta-sigma}
\delta_\Sigma(a)=\prod_{i\in\NN^*}X_i^{\Sigma(i)q^{\deg_\theta(a)}}
\end{equation} and $\gamma$ the identity map. 
By the harmonic product formulas of Proposition \ref{elementary-facts} in the case of trivial type, the multiple polylogarithms carry a variant of the harmonic product as well. 
In particular, there exists an $\FF_p$-linear map
$$\lambda_A:\boldsymbol{C}\rightarrow\mathcal{L}=\sum_{(n,\Sigma)\in\boldsymbol{S}}\mathcal{L}_{n,\Sigma}$$ and a structure of $\FF_p$-algebra over $\boldsymbol{C}$ with product 
$\odot_\lambda$ such that $\lambda_A$ induces an $\FF_p$-algebra morphism; to see this we use Proposition \ref{elementary-facts} with $\Sigma=\emptyset$.

The products $\odot_\lambda$ and $\odot_\zeta$ (the latter introduced in \S \ref{basic-properties}) agree on $\mathcal{Z}_\emptyset=\mathcal{L}_\emptyset=\sum_n\mathcal{L}_{n,\Sigma}$ but are different otherwise. For instance, it is easy to see that, if
$\mathcal{C}=\binom{\{i\}}{1}$, $\mathcal{D}=\binom{\{j\}}{1}$ with $i\neq j$ and $q>2$ then:
$$[\mathcal{C}]\odot_\zeta[\mathcal{D}]=\left[\binom{\{i,j\}}{2}\right],\quad [\mathcal{C}]\odot_\lambda[\mathcal{D}]=\left[\binom{\{i,j\}}{2}\right]+\left[\begin{pmatrix}\{i\} & \{j\} \\ 1 & 1\end{pmatrix}\right]+\left[\begin{pmatrix}\{j\} & \{i\} \\ 1 & 1\end{pmatrix}\right].$$

\begin{Proposition}
For $\Sigma,\Sigma'\in\mathfrak{S}$ and $n,n'\in\NN^*$ we have $\mathcal{L}_{n,\Sigma}\mathcal{L}_{n',\Sigma'}\subset\mathcal{L}_{n+n',\Sigma+\Sigma'}$.
\end{Proposition}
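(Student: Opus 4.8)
The plan is to reduce the claimed inclusion $\mathcal{L}_{n,\Sigma}\mathcal{L}_{n',\Sigma'}\subset\mathcal{L}_{n+n',\Sigma+\Sigma'}$ to a statement at the level of the index combinatorics, using the interpretation of $\lambda_A(\mathcal{C})$ as the series $\zeta_A^{\gamma,\delta}(\mathcal{C})$ attached to the datum $(\gamma,\delta)=(\operatorname{id},\delta)$ with $\delta_\Sigma(a)=\prod_{i}X_i^{\Sigma(i)q^{\deg_\theta(a)}}$, as recorded in (\ref{delta-sigma}). Since $\mathcal{L}_{n,\Sigma}$ is spanned over $\FF_p$ by the elements $\lambda_A(\mathcal{C})$ with $\mathcal{C}$ a composition array of $\binom{\Sigma}{n}$, and products distribute over these spanning sets, it suffices to show that for composition arrays $\mathcal{C}$ of $\binom{\Sigma}{n}$ and $\mathcal{D}$ of $\binom{\Sigma'}{n'}$ the product $\lambda_A(\mathcal{C})\lambda_A(\mathcal{D})$ lies in $\mathcal{L}_{n+n',\Sigma+\Sigma'}$.

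First I would invoke Proposition \ref{elementary-facts}, applied with the same datum $(\operatorname{id},\delta)$: writing $\lambda_A(\mathcal{C})=\sum_{d\geq 0}S_d^{\delta}(\mathcal{C})$ and similarly for $\mathcal{D}$, and decomposing the product of the two double sums according to whether the top indices $d_1,d_1'$ coincide or which is larger, one gets, for each $d$, an identity of the form $S_d^{\delta}(\mathcal{C})S_{<d}^{\delta}(\mathcal{D})=\sum h_i S_d^{\delta}(\mathcal{F}_i)$ and its symmetric counterpart, plus the diagonal term $S_d^{\delta}(\mathcal{C})S_d^{\delta}(\mathcal{D})=\sum f_i S_d^{\delta}(\mathcal{E}_i)$; here the $\mathcal{E}_i,\mathcal{F}_i$ are admissible composition arrays of $\binom{U_1\cdot U_2}{n_1+n_2}=\binom{\Sigma\cdot\Sigma'}{n+n'}$, independent of $d$, and the coefficients lie in $\FF_p$ and are independent of $(\gamma,\delta)$. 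Summing over $d$ (which is legitimate by convergence in the Tate algebra) then expresses $\lambda_A(\mathcal{C})\lambda_A(\mathcal{D})$ as a finite $\FF_p$-linear combination of terms $\lambda_A(\mathcal{G})$ with $\mathcal{G}$ an admissible composition array of type $\Sigma\cdot\Sigma'$ and weight $n+n'$. The only point to reconcile is that the semiring multiplication on $\mathfrak{S}$ is addition in the target, so $\Sigma\cdot\Sigma'$ is exactly the pointwise sum $\Sigma+\Sigma'$ in the sense used in the statement (the notation clash between the two $+$'s on $\mathfrak{S}$); thus type $\Sigma\cdot\Sigma'$ means precisely $\Sigma+\Sigma'$, and $\lambda_A(\mathcal{G})\in\mathcal{L}_{n+n',\Sigma+\Sigma'}$ by definition.

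The main (and really the only) subtlety is bookkeeping rather than depth: one must check that when $\delta$ is specialised to (\ref{delta-sigma}) the monomial factors $\prod_i X_i^{(\sum_j\Sigma_j(i))q^{d_j}}$ multiply correctly under merging of index sets — i.e. that $\delta_{\Sigma_a}(a)\,\delta_{\Sigma_b}(a)=\delta_{\Sigma_a\cdot\Sigma_b}(a)$ when two summation variables are forced equal in the diagonal term — which is immediate from $X_i^{r}X_i^{s}=X_i^{r+s}$ and the definition of the semiring product on $\mathfrak{S}$. There is no transcendence input needed here and no cardinality hypothesis on $\Sigma,\Sigma'$; the statement is purely formal and follows from the harmonic-product formalism of \S\ref{multiple-sums} together with (\ref{delta-sigma}). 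One should also note the trivial-type convention $\mathcal{L}_{0,\emptyset}=\FF_p$ to cover degenerate cases, though the Proposition as stated restricts to $n,n'\in\NN^*$ so this does not arise.
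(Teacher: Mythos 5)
Your argument reaches the same conclusion as the paper's and is built on the same harmonic-product combinatorics, but the two proofs are organised differently. You plug the datum $(\operatorname{id},\delta)$ with $\delta$ as in (\ref{delta-sigma}) directly into Proposition \ref{elementary-facts}, so that the identities among the twisted power sums $S_d^{\delta}(\cdot)$ carry the $X_i$-monomials along automatically. The paper deliberately applies Proposition \ref{elementary-facts} only in the trivial-type case $\Sigma=\emptyset$ (that is, for Thakur's harmonic product on the $S_d(n)$'s) and isolates the $X_i$-exponent bookkeeping as the separate Lemma \ref{larger-lambda}. There is a hypothesis mismatch you should be aware of: the maps $\delta_i(a)=X_i^{q^{\deg_\theta a}}$ depend only on $\deg_\theta a$ and are therefore \emph{not} injective, so condition (i) of \S2.2 fails for this datum and one cannot literally cite Proposition \ref{elementary-facts} for it as stated. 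The feature that rescues your argument --- that $\delta_\Sigma(a)$ is constant on each degree-$d$ slice of $A^+$, so the $\delta$-factors simply tag along with the quasi-shuffle re-indexing of degree sequences --- is exactly what Lemma \ref{larger-lambda} encodes, and is why the paper factors the proof that way. Your remark on the notation is correct: the subscript $\Sigma+\Sigma'$ in the statement is to be read as the pointwise sum, i.e.\ the semiring product $\Sigma\cdot\Sigma'$ of \S2.1, not the semiring addition (pointwise maximum).
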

\begin{proof}
It follows easily from the existence of the harmonic product of the powers sums $S_d(n)$ as first noticed by Thakur in \cite{THA2} and the next result which is easy to check. 

\begin{Lemma}\label{larger-lambda}
If $\mathcal{C}_1,\ldots,\mathcal{C}_r$ are composition arrays of $\binom{\emptyset}{n_1},\ldots,\binom{\emptyset}{n_r}$ and if $(\Sigma_1,\ldots,\Sigma_r)\in\mathfrak{S}^{\oplus r}$, then
the sum
$$\sum_{d_1>\cdots>d_r}S_{d_1}(\mathcal{C}_1)\cdots S_{d_r}(\mathcal{C}_r)\prod_{i\in\NN^*}X_i^{\sum_{j=1}^r\Sigma_j(i)q^{d_i}}$$
belongs to $\mathcal{L}_{n,\Sigma}$, where $n=\sum_in_i$ and $\Sigma=\Sigma_1\cdots\Sigma_r$.\end{Lemma}
\end{proof}

An important feature is that since $\mathcal{L}$ embeds in a filtered union of Tate algebras, the evaluation map $\ev$ defined by sending $X_i$ to $1$ for all $i\in\NN^*$ is a well defined $\FF_p$-algebra map $\mathcal{L}\xrightarrow{\ev}K_\infty.$ More precisely:

\begin{Lemma}\label{lemma-map-ev}
The map $\ev$ defines an $\FF_p$-algebra map $\mathcal{L}\rightarrow\mathcal{Z}_\emptyset$
and surjective $\FF_p$-linear maps $\mathcal{L}_{n,\Sigma}\rightarrow\mathcal{Z}_{n,\emptyset}$. 
\end{Lemma}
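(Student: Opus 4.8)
The plan is to verify that evaluation at $X_i = 1$ sends each generator $\lambda_A(\mathcal{C})$ of $\mathcal{L}_{n,\Sigma}$ to a Thakur multiple zeta value of weight $n$, that the resulting map is multiplicative, and that it is surjective onto $\mathcal{Z}_{n,\emptyset}$. First I would recall from the displayed definition that for an admissible composition array $\mathcal{C}$ of $\binom{\Sigma}{n}$ we have
$$
\lambda_A(\mathcal{C})=\sum_{d_1>\cdots>d_r\geq 0}S_{d_1}(n_1)\cdots S_{d_r}(n_r)\prod_{i\in\NN^*}X_i^{\sum_{j=1}^r\Sigma_j(i)q^{d_j}},
$$
so that setting every $X_i=1$ collapses the monomial factor to $1$ and leaves
$$
\ev(\lambda_A(\mathcal{C}))=\sum_{d_1>\cdots>d_r\geq 0}S_{d_1}(n_1)\cdots S_{d_r}(n_r)=\zeta_A\binom{\emptyset}{n_1\ \cdots\ n_r},
$$
the last equality being \eqref{E:expres} applied to the trivial-type datum. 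One must check that this termwise substitution is legitimate at the level of the Tate algebra: since $\lambda_A(\mathcal{C})$ lies in $\widehat{K[\underline{X}_\Sigma]}_{\|\cdot\|}$, the evaluation $X_i\mapsto 1$ is the continuous $K_\infty$-algebra homomorphism obtained from the universal property of the Tate algebra (as noted in the paragraph preceding the lemma, $\mathcal{L}$ embeds in a filtered union of Tate algebras and $\ev$ is a well-defined $\FF_p$-algebra map $\mathcal{L}\to K_\infty$), so it commutes with the convergent sum. Hence $\ev$ carries the $\FF_p$-span of the $\lambda_A(\mathcal{C})$ of weight $n$ into $\mathcal{Z}_{n,\emptyset}$, giving the claimed $\FF_p$-linear map $\mathcal{L}_{n,\Sigma}\to\mathcal{Z}_{n,\emptyset}$; that it is a ring homomorphism on all of $\mathcal{L}$ is immediate because $\ev$ is already known to be an $\FF_p$-algebra map and $\mathcal{Z}_\emptyset$ is a subalgebra.

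For surjectivity, I would exhibit an explicit preimage of each Thakur multiple zeta value. Given $\zeta_A(n_1,\ldots,n_r)\in\mathcal{Z}_{n,\emptyset}$ with $n=\sum_i n_i$, choose any admissible composition array of type $\Sigma$ refining the weight vector $(n_1,\ldots,n_r)$; the simplest choice is to keep the same second row and assign the sets $\Sigma_1,\ldots,\Sigma_r$ arbitrarily subject to $\Sigma_1\cdots\Sigma_r=\Sigma$ (for instance putting all of $\Sigma$ in $\Sigma_1$ and the empty set elsewhere, if $\Sigma$ is a subset; in general one needs $\Sigma_1\cdot\cdots\cdot\Sigma_r = \Sigma$ as finite weighted subsets, which is always solvable). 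Call this array $\mathcal{C}$. By the computation above $\ev(\lambda_A(\mathcal{C}))=\zeta_A(n_1,\ldots,n_r)$, so every generator of $\mathcal{Z}_{n,\emptyset}$ is hit, and by $\FF_p$-linearity $\ev\colon\mathcal{L}_{n,\Sigma}\to\mathcal{Z}_{n,\emptyset}$ is surjective. One should also address the degenerate case $(n,\Sigma)=(0,\emptyset)$, where $\mathcal{L}_{0,\emptyset}=\FF_p=\mathcal{Z}_{0,\emptyset}$ and $\ev$ is the identity by convention.

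The only genuine point requiring care is the interchange of $\ev$ with the infinite sum, i.e.\ continuity of the evaluation homomorphism on the relevant Tate algebra; everything else is bookkeeping with the definitions of $\lambda_A(\mathcal{C})$, $S_d(n)$, and formula \eqref{E:expres}. Since the paper has already established (in the sentence just before the lemma) that $\ev$ is a well-defined $\FF_p$-algebra map $\mathcal{L}\to K_\infty$, this continuity is available, and the proof reduces to the termwise substitution identity $\ev(\lambda_A(\mathcal{C}))=\zeta_A\binom{\emptyset\ \cdots\ \emptyset}{n_1\ \cdots\ n_r}$ together with the surjectivity argument via explicit preimages. Thus there is no real obstacle; I would simply present the substitution computation cleanly and then note that arbitrary lifts of the second row furnish the surjectivity.
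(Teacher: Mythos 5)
Your proof is correct and follows the same route as the paper, which simply remarks that the claim is immediate from the presentation of Thakur's multiple zeta values as $\sum_{d_1>\cdots>d_r\geq 0}S_{d_1}(n_1)\cdots S_{d_r}(n_r)$, i.e.\ exactly the series obtained from $\lambda_A(\mathcal{C})$ by setting every $X_i=1$. The additional details you supply (continuity of the evaluation homomorphism on the Tate algebra, choosing a preimage by taking the first entry of the top row to be $\Sigma$ and the rest $\emptyset$, and the degenerate case $\binom{\emptyset}{0}$) are accurate but were left implicit by the authors.
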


\begin{proof}
This follows immediately from the definition of multiple zeta values of Thakur as series in multiple power sums $S_d(n_1,\ldots,n_r)$ with $n_1,\ldots,n_r\in\NN^*$ (we simplify the notations when the type is trivial).
\end{proof}

We can define the $K$-algebra $\mathcal{L}(K)$ in a way completely similar to that of $\mathcal{Z}(K)$. 
We propose a conjecture:

\begin{Conjecture}\label{Fp-trivial}
For any $n\in\NN^*$ and $\Sigma\in\mathfrak{S}$, the $\FF_p$-linear maps $\zeta_A$ and $\lambda_A$
define isomorphisms of vector spaces
$$\mathcal{Z}_{n,\Sigma}\xleftarrow{\zeta_A}\boldsymbol{C}_{n,\Sigma}\xrightarrow{\lambda_A}\mathcal{L}_{n,\Sigma}.$$
\end{Conjecture}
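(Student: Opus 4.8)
The plan is to prove this by induction on the cardinality $|\Sigma|$, running the same ``descent in one variable'' argument already used in the proof of Corollary~\ref{cor-grading}, but now keeping track of the full $\FF_p$-structure rather than merely $K[\underline{t}]$-linear independence. The case $|\Sigma|=0$ is Thakur's theorem that the harmonic product of power sums $S_d(n_1,\dots,n_r)$ admits no $\FF_p$-relations beyond the trivial ones, i.e.\ $\zeta_A:\boldsymbol{C}_{n,\emptyset}\to\mathcal{Z}_{n,\emptyset}$ is an isomorphism (this is implicit in \cite{THA2} and used throughout). On the $\lambda$ side, $\mathcal{L}_{n,\emptyset}=\mathcal{Z}_{n,\emptyset}$ with $\boldsymbol{C}_{n,\emptyset}\xrightarrow{\lambda_A}\mathcal{L}_{n,\emptyset}$ being the same map, so the base case is immediate.

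For the inductive step, suppose $|\Sigma|>0$ and pick $i\in\operatorname{Spt}(\Sigma)$, writing $\Sigma=\chi_{t_i}^{\beta}\Sigma'$ with $\operatorname{Spt}(\Sigma')\subsetneq\operatorname{Spt}(\Sigma)$ in the multiplicative notation. A putative $\FF_p$-relation $\sum_j f_j\,\zeta_A(\mathcal{C}_j)=0$ among multiple zeta values of type $\Sigma$ and weight $n$ (the $\mathcal{C}_j$ indexing a basis of $\boldsymbol{C}_{n,\Sigma}$, $f_j\in\FF_p$) is an identity of entire functions in $\CC_\infty^{\operatorname{Spt}(\Sigma)}$. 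Applying $\tau^N$ and specialising $t_i=\theta$ as in the proof of Corollary~\ref{cor-grading}, each $\zeta_A(\mathcal{C}_j)$ maps into $\mathcal{Z}_{q^Nn_j-\beta_j,\Sigma'_j}(K[\underline{t}_{U'}])$; the delicate point is that this specialisation must be shown to take distinct basis elements $[\mathcal{C}_j]$ of $\boldsymbol{C}_{n,\Sigma}$ to $\FF_p$-combinations that, for $N$ large, remain ``generically distinct'' — precisely, one must exhibit a single $N$ for which the images of the $[\mathcal{C}_j]$ under the composite $\boldsymbol{C}_{n,\Sigma}\to\boldsymbol{C}_{*,\Sigma'}\xrightarrow{\zeta_A}\mathcal{Z}_{*,\Sigma'}$ (with target a direct sum of the lower-cardinality spaces, which are free by induction) are linearly independent over $\FF_p$. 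The infinitude of $\{\theta^{1/q^N}\}$ then forces all $f_j=0$, exactly as in Corollary~\ref{cor-grading}, and the analogous computation for $\lambda_A$ (using that $\tau^N$ followed by $X_i\mapsto$ a suitable power is injective on $\mathcal{L}_U$ by the same entirety argument) handles that side.

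The main obstacle is the combinatorial injectivity claim hidden in ``generically distinct'': one needs that the $\FF_p$-linear map from $\boldsymbol{C}_{n,\Sigma}$ into $\bigoplus_j\boldsymbol{C}_{q^Nn_j-\beta_j,\Sigma'_j}$ induced by collapsing the $i$-th variable is itself injective (or at least injective on the span of the alleged relation) for some $N$. Unlike the $K[\underline{t}]$-linear statement of Corollary~\ref{cor-grading}, where distinct \emph{degrees} $\binom{\Sigma_j}{n_j}$ already separated the summands, here distinct composition arrays of the \emph{same} degree can collide after specialisation, so one must track how the exponent data $\Sigma_j(i)q^{d_j}$ (for $\lambda$) or the semi-character exponents (for $\zeta$) reorganise — I expect this to require either a direct bijection of index sets or a ``leading term'' / dominant-$d_j$ argument isolating, for large $N$, a unique surviving array per basis element. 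Once that combinatorial separation is in place, the rest is the verbatim transcription of the descent already carried out in Corollary~\ref{cor-grading}, and the two maps $\zeta_A,\lambda_A$ are handled symmetrically since the only structural input is the harmonic product of power sums, which is common to both.
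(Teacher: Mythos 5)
This statement is labelled a \emph{Conjecture} in the paper, and the paper offers no proof: it is explicitly open. Your proof proposal cannot succeed as stated, for two reasons.

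First, and fatally, your base case is wrong. You assert that for $|\Sigma|=0$ the map $\zeta_A:\boldsymbol{C}_{n,\emptyset}\to\mathcal{Z}_{n,\emptyset}$ is an isomorphism and attribute this to \cite{THA2}. That is not what Thakur proved. What \cite{THA2} establishes is that the $\FF_p$-algebra generated by the $\zeta_A(n_1,\ldots,n_r)$ coincides with the $\FF_p$-vector space they span (i.e.\ products of multiple zeta values are $\FF_p$-linear combinations of multiple zeta values), which is a surjectivity/closure statement, not an injectivity statement. Injectivity of $\zeta_A$ on $\boldsymbol{C}_{n,\emptyset}$ would say that the \emph{only} $\FF_p$-linear relations among Thakur's multiple zeta values of a fixed weight are the ones already forced by the harmonic product combinatorics, and this is precisely Thakur's open conjecture \cite[\S 6.3]{ThJTNB}. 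The paper points this out immediately after the statement of Conjecture~\ref{Fp-trivial}: the case $\Sigma=\emptyset$ \emph{is} that conjecture. So your induction has no floor to stand on.

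Second, even granting the base case, the inductive step you sketch has an acknowledged hole that is not a technicality. The $K[\underline{t}]$-linear argument of Corollary~\ref{cor-grading} separates summands by their degrees $\binom{\Sigma_j}{n_j}$, which are distinct by hypothesis there; here all the $[\mathcal{C}_j]$ in $\boldsymbol{C}_{n,\Sigma}$ have the \emph{same} degree, and after specialising $t_i\mapsto\theta^{1/q^N}$ many distinct composition arrays land in the same space $\mathcal{Z}_{q^Nn-\beta,\Sigma'}$. The collapsing map $\boldsymbol{C}_{n,\Sigma}\to\boldsymbol{C}_{*,\Sigma'}$ need not be injective, and you give no argument that it is; you only say you ``expect'' a leading-term or bijection argument to work. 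Until that injectivity is actually established, no amount of ``verbatim transcription'' of Corollary~\ref{cor-grading} produces a proof. The honest conclusion is that what you have is a plausible strategy for a reduction of the full conjecture to the $\Sigma=\emptyset$ case, modulo a nontrivial combinatorial lemma; it is not a proof, and the paper does not claim one either.
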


Looking at the case of trivial type $\Sigma=\emptyset$ the conjecture above reduces to a similar conjecture by Thakur \cite[\S 6.3]{ThJTNB}, stipulating that the only $\FF_p$-linear relations among Thakur's multiple zeta values are the trivial ones. The map $\lambda_A$ does not seem to extend to an $\FF_p$-algebra homomorphism. However, this does not rule out that the $\FF_p$-algebras $(\boldsymbol{C},+,\odot_\zeta)$ and $(\boldsymbol{C},+,\odot_\lambda)$ are possibly isomorphic.

The $K$-algebra $\mathcal{L}(K)$ is not graded by $\boldsymbol{S}$. For example, the $K$-linear relations (\ref{non-homogeneous-relations}) are not homogeneous. In the next subsection, we analyse what structures are respected by the product $\odot_\lambda$.

\subsection{Grading structures on $\mathcal{L}(K)$} If $\mathcal{C}\in\boldsymbol{S}^{\oplus r}$ is a composition array, we denote by $\mathcal{C}^\flat$ the element of $\mathfrak{S}^{\oplus s}$ the $s$-tuple (for some $s\leq r$)  resulting from the first line of $\mathcal{C}$ (in $\mathfrak{S}^{\oplus r}$) by removing the occurrences of $\emptyset$. If $\mathcal{C}$ is of trivial type, we set $\mathcal{C}^\flat=(\emptyset)$. The aim of this subsection is to prove the next result. 

\begin{Theorem}\label{grading-structures}
Let us consider admissible composition arrays $\mathcal{C}_1,\ldots,\mathcal{C}_k$ and let us suppose that the functions $\lambda_A(\mathcal{C}_1),\ldots,\lambda_A(\mathcal{C}_k)$ are $K$-linearly dependent. Then there exist two distinct integers $i,j\in\{1,\ldots,k\}$ such that
$\mathcal{C}_i$ and $\mathcal{C}_j$ have the same weight and $$\mathcal{C}_i^\flat=(U_1,\ldots,U_r),\quad \mathcal{C}_j^\flat=(V_1,\ldots,V_r)\in \mathfrak{S}^{\oplus r}$$ for some $r$. Moreover, there exist integers $\kappa_1,\ldots,\kappa_r\in\ZZ$ such that $U_i=q^{\kappa_i}V_i$ for $i=1,\ldots,r$.
\end{Theorem}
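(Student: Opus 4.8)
\textbf{Proof plan for Theorem \ref{grading-structures}.}

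The plan is to reduce the statement to a question about $\FF_q$-linear expansions in the variables $X_i$ and then invoke Chang's grading result (Theorem \ref{resultofchang}). First I would record the basic bookkeeping: a monomial $\prod_{i}X_i^{\sum_j \Sigma_j(i)q^{d_j}}$ appearing in $\lambda_A(\mathcal{C})$ depends, for a fixed chain $d_1>\cdots>d_r$, only on the data $\mathcal{C}^\flat=(U_1,\ldots,U_s)$, since the $\emptyset$-columns contribute nothing to the exponents; moreover the scalar in front is the Thakur power-sum product $S_{d_1}(n_1)\cdots S_{d_r}(n_r)$, which lies in $\mathcal{Z}_\emptyset(K)$. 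Thus $\lambda_A(\mathcal{C})$ is a ``generating series'' whose $\underline X$-coefficients are $K$-linear combinations of products of power sums. I would then fix a hypothetical nontrivial $K$-linear relation $\sum_{i=1}^k c_i \lambda_A(\mathcal{C}_i)=0$ with $c_i\in K[\underline t]$ — but in fact $K$ suffices after clearing denominators and noting there are no $t$-variables present — and extract the coefficient of a carefully chosen $\underline X$-monomial.

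The core step is to track \emph{which} composition arrays can contribute to a given monomial $\prod_i X_i^{e_i}$. Because each $e_i$ is a sum of distinct powers $q^{d_j}$ weighted by the $\Sigma_j(i)$'s, and because the $d_j$ are strictly decreasing, for a \emph{generic} choice of chain $d_1>\cdots>d_r$ (gaps large compared to all the cardinalities $|\Sigma_j(i)|$ involved) the base-$q$ expansion of $e_i$ reads off the tuple $(\Sigma_1(i),\ldots,\Sigma_r(i))$ unambiguously, up to the common shift coming from the choice of $d_1$ (replacing all $d_j$ by $d_j+1$ multiplies every exponent by $q$). Hence two arrays $\mathcal{C}_i,\mathcal{C}_j$ contribute to the \emph{same} family of generic monomials only if, after deleting $\emptyset$-columns, their first lines agree up to scaling each entry by a (possibly negative, via the global shift) power of $q$: this is exactly the asserted condition $U_m=q^{\kappa_m}V_m$. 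So if the relation is nontrivial, partitioning $\{1,\dots,k\}$ into classes under ``$\mathcal{C}_i^\flat$ equals $\mathcal{C}_j^\flat$ up to columnwise $q$-powers,'' some class must be a singleton \emph{unless} the present theorem's conclusion already holds; in the singleton case the monomial coefficient forces $c_i\cdot(\text{product of power sums})=0$, contradicting $c_i\neq 0$ and the nonvanishing of $S_d(n)$ for $d\gg 0$. Actually, to be careful: within a non-singleton class we still need to derive a contradiction in the weight-mismatched subcase, which is where Theorem \ref{resultofchang} enters — if two arrays in the same $\flat$-class had different weights, then looking at a fixed generic chain the coefficients are products $S_{d_1}(n_1)\cdots S_{d_r}(n_r)$ of \emph{different total weight} $\sum n_i$, hence $K$-linearly independent by Chang, again killing the corresponding $c_i$.

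The main obstacle I anticipate is making the ``generic chain'' argument rigorous: one must show that the map from (chain $d_1>\cdots>d_r$, array $\mathcal{C}$) to the monomial $\prod_i X_i^{e_i}$ is injective enough — i.e., that collisions $\sum_j \Sigma_j(i)q^{d_j}=\sum_j \Sigma'_j(i)q^{d'_j}$ for \emph{all} $i$ simultaneously, with two different $\flat$-data and two different chains, can only happen in a controlled way. The clean way is to restrict attention to chains with $d_1-d_2, d_2-d_3,\ldots$ all exceeding $N$ for $N$ larger than $\log_q$ of every coefficient $\Sigma_j(i)$ occurring among $\mathcal{C}_1,\ldots,\mathcal{C}_k$ (a finite bound); then the $q$-adic digit blocks separate and the only ambiguity is the overall power-of-$q$ shift, which is precisely the $\kappa_m$'s. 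Combined with the weight-separation from Chang's theorem applied coefficientwise, and with $S_d(n)\neq 0$ for infinitely many $d$ (standard, from non-vanishing of these power sums), this yields the contradiction and hence the theorem. A secondary technical point is handling the case where some $\mathcal{C}_i$ has trivial type (so $\mathcal{C}_i^\flat=(\emptyset)$ and $\lambda_A(\mathcal{C}_i)\in\mathcal{Z}_\emptyset(K)$ is ``constant'' in $\underline X$): such terms are isolated by looking at the $X$-degree-zero part and are controlled directly by Chang's grading.
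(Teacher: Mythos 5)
Your proposal is essentially the paper's strategy. Both look at the $\underline{X}$-exponent supports of the $\lambda_A(\mathcal{C}_i)$, restrict to chains $d_1>\cdots>d_r$ with all consecutive gaps large (your ``generic chains'', the paper's ``good sequences''), so that the $q$-adic digit blocks of $\sum_j\Sigma_j(i)q^{d_j}$ decouple, and conclude that two arrays sharing such exponents must have columnwise $q$-power--equivalent $\flat$-data; Chang's grading theorem supplies the weight coincidence. The differences are mostly in packaging: the paper axiomatizes the generic-chain observation through \emph{clusters} (Definition~\ref{cluster}), a preorder $\prec$ and an equivalence $\sim$, and proves the key decomposition lemma by passing to $p$-adic limits in $\ZZ_p$, then extracts the pair $(i,j)$ by a pigeonhole-and-cycle argument on the map $g$ together with Corollary~\ref{corollary-ordering}; it applies Chang once, up front, via $\ev$ to reduce to a single weight. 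You instead apply Chang to the $\underline{X}$-coefficients (which live in $\mathcal{Z}_\emptyset(K)$) after isolating the $\flat$-class, which produces a pair with both required properties at once and sidesteps the delicate point that the $\ev$-relation alone does not obviously split the $\lambda_A$-relation into weight-homogeneous pieces. Two small cautions for your write-up: the shifts $\kappa_m$ in $U_m=q^{\kappa_m}V_m$ may depend on the column index $m$ (this is exactly the paper's $\sim$), not merely come from a single global translation of the chain; and the coefficient of a fixed monomial is in general a sum over all full chains compatible with the fixed positions of the nonempty columns, not a single product $S_{d_1}(n_1)\cdots S_{d_r}(n_r)$, so the Chang-on-coefficients step must be phrased for these combinations (which still lie in the correct graded piece of $\mathcal{Z}_\emptyset(K)$). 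Making the ``generic chain'' step rigorous, which you correctly flag as the main obstacle, is precisely what the cluster lemma and its $\ZZ_p$-limit proof accomplish.
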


In particular, if the types of $\mathcal{C}_1,\ldots,\mathcal{C}_k$ are subsets of $\NN^*$, then 
$\kappa_1=\cdots=\kappa_r=0$ and $\mathcal{C}_i^\flat=\mathcal{C}_j^\flat$.
To prove this result we need some definitions and intermediary results.

\begin{Definition}\label{cluster}{\em A {\em cluster of depth $r>0$} is a subset of $\NN^{\NN^*}$ of the form $[\alpha_1,\ldots,\alpha_r]:=\{\alpha_1q^{d_1}+\cdots+\alpha_rq^{d_r}\}$, where $\alpha_1,\ldots,\alpha_r$ are maps $\NN^*\rightarrow\NN$ with finite support but not constant equal to zero (almost everywhere zero but non-zero) and 
the $r$-tuples $(d_1,\ldots,d_r)\in\NN^r$ are such that $d_1>\cdots>d_r$.
We define a partial ordering $\prec$ on clusters by saying that $$\underline{\alpha}:=[\alpha_1,\ldots,\alpha_r]\prec\underline{\beta}$$
if there exists a sequence $(d_{1,m},\ldots,d_{r,m})_{m\geq 0}$ which is {\em good} in the sense that 
$d_{r,m}\rightarrow\infty$ as $m\rightarrow\infty$ and $d_{i,m}-d_{i+1,m}\rightarrow\infty$ as $m\rightarrow\infty$, for all $i<r$, such that $\alpha_1q^{d_{1,m}}+\cdots+\alpha_rq^{d_{r,m}}\in\underline{\beta}$ for all $m\geq 0$.
We define a relation of equivalence $\sim$ between clusters by saying that $\underline{\alpha}=[\alpha_1,\ldots,\alpha_r]$ and $\underline{\beta}=[\beta_1,\ldots,\beta_l]$ are {\em equivalent}, written as $\underline{\alpha}\sim\underline{\beta}$, if $r=l$ and for all $i=1,\ldots,r$ there exists $\kappa_i\in\ZZ$ such that $\alpha_i=q^{\kappa_i}\beta_i$.
}\end{Definition}

\begin{Lemma}
If $\underline{\alpha}=[\alpha_1,\ldots,\alpha_r]$ and $\underline{\beta}=[\beta_1,\ldots,\beta_l]$ are two clusters of rank $r,l$ and $\underline{\alpha}\prec\underline{\beta}$, then $r\leq l$ and we can find an increasing sequence $0=h_0<\cdots<h_r=l$ and integers $\kappa_{i,j}\in\ZZ$
such that
\begin{equation}\label{alpha-identity}
\alpha_i=\beta_{h_{i-1}+1}q^{\kappa_{i,h_{i-1}+1}}+\cdots+\beta_{h_{i}}q^{\kappa_{i,h_i}},\quad \forall i=1,\ldots,r.\end{equation}
\end{Lemma}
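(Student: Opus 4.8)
The plan is to unwind the definition of $\prec$ directly. Suppose $\underline{\alpha}=[\alpha_1,\ldots,\alpha_r]\prec\underline{\beta}=[\beta_1,\ldots,\beta_l]$, so that there is a good sequence $(d_{1,m},\ldots,d_{r,m})_{m\ge0}$ with $\alpha_1 q^{d_{1,m}}+\cdots+\alpha_r q^{d_{r,m}}\in\underline{\beta}$ for all $m$. For each $m$ this membership means there is a choice of exponents $e_{1,m}>\cdots>e_{l,m}\ge0$ with
$$\alpha_1 q^{d_{1,m}}+\cdots+\alpha_r q^{d_{r,m}}=\beta_1 q^{e_{1,m}}+\cdots+\beta_l q^{e_{l,m}}.$$
First I would observe that each side of this identity lives in $\NN^{\NN^*}$, i.e. it is really a tuple of integer identities, one for each $n\in\NN^*$; since the $\alpha_i$ are maps with finite support and are not identically zero, fixing one coordinate $n$ in the (common, finite) support reduces everything to an honest identity of $q$-adic expansions of natural numbers. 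The key combinatorial input is: if the gaps $d_{i,m}-d_{i+1,m}$ all tend to infinity, then for $m$ large the ``blocks'' $\alpha_i q^{d_{i,m}}$ occupy disjoint ranges of $q$-adic digits, and the same must then be forced on the right-hand side, grouping the $\beta_j$'s into $r$ consecutive runs.

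The main step, then, is a pigeonhole-plus-stabilization argument. For each $m$, the ``good'' separation of the $d_{i,m}$ means that, after passing to a subsequence in $m$ (finitely many combinatorial types are available at each stage), the pattern by which the indices $\{1,\ldots,l\}$ on the right are partitioned into the scales $d_{1,m},\ldots,d_{r,m}$ is eventually constant: there is a fixed increasing sequence $0=h_0<h_1<\cdots<h_r=l$ so that for large $m$, the indices $j\in\{h_{i-1}+1,\ldots,h_i\}$ are exactly those whose exponent $e_{j,m}$ stays within bounded distance of $d_{i,m}$. This in particular forces $r\le l$ (each block is nonempty, since $\beta_j\ne0$ contributes a genuine nonzero term that must land in some block). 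Within the $i$-th block, comparing $\alpha_i q^{d_{i,m}}$ with $\sum_{j=h_{i-1}+1}^{h_i}\beta_j q^{e_{j,m}}$ and using that $d_{i,m}\to\infty$ while the differences $e_{j,m}-d_{i,m}$ are bounded (hence, again after a subsequence, eventually constant, say $e_{j,m}-d_{i,m}=\kappa_{i,j}$ for all large $m$) yields precisely
$$\alpha_i=\beta_{h_{i-1}+1}q^{\kappa_{i,h_{i-1}+1}}+\cdots+\beta_{h_i}q^{\kappa_{i,h_i}},\qquad i=1,\ldots,r,$$
which is (\ref{alpha-identity}).

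I expect the main obstacle to be bookkeeping the subsequence extractions cleanly: one must argue that (a) the partition pattern, (b) the sign/magnitude of each difference $e_{j,m}-d_{i,m}$, and (c) the fact that no $\beta_j$-term ``escapes to infinity'' between two consecutive $\alpha$-scales, all stabilize simultaneously along one common subsequence, and that the resulting identity of elements of $\NN^{\NN^*}$ (read coordinatewise) is genuinely forced rather than merely numerically plausible for one $m$. The subtlety is that a priori $e_{j,m}-e_{j+1,m}$ need not go to infinity, so several $\beta$-terms can cluster at the same scale; this is exactly why the conclusion is a sum over a block of $\beta$'s rather than a single term, and why one gets the increasing sequence $h_0<\cdots<h_r$. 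The nonnegativity of all exponents $d_{i,m},e_{j,m}$ together with $d_{r,m}\to\infty$ guarantees that no digit information ``falls off'' at the bottom, so the limiting identities hold in $\NN^{\NN^*}$ on the nose.
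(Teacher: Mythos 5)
Your proposal follows essentially the same route as the paper: both exploit the gap condition $d_{i,m}-d_{i+1,m}\to\infty$ to force the scales into disjoint blocks, and both pass to subsequences along which the differences $e_{j,m}-d_{i,m}$ stabilize, yielding (\ref{alpha-identity}). The paper formalizes the separation step by dividing through by $q^{d_{r,m}}$ and taking a $\ZZ_p$-limit to identify the bottom block (then inducting on $r$), which neatly handles the simultaneous-stabilization and ``no orphan $\beta_j$'' bookkeeping that you correctly flag as the main obstacle; but the underlying combinatorics is the same, and your sketch is sound.
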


\begin{proof}
We consider a sequence $(d_{1,m},\ldots,d_{r,m})_{m\geq 0}$ which is good as in Definition \ref{cluster}, such that 
$$\sum_{i=1}^r\alpha_iq^{d_{i,m}}=\sum_{j=1}^l\beta_jq^{e_{j,m}},\quad \forall m\geq0,$$
where the sequence $(e_{1,m},\ldots,e_{l,m})_{m\geq 0}$ satisfies $e_{1,m}>\cdots>e_{l,m}$ for all $m$. Dividing by $q^{d_{r,m}}$ we obtain
$$\alpha_r+\sum_{i=1}^{r-1}\alpha_iq^{d_{i,m}-d_{r,m}}=\sum_{j=1}^l\beta_jq^{e_{j,m}-d_{r,m}},\quad m\geq 0.$$ the sum $\sum_{i=1}^{r-1}\alpha_iq^{d_{m,i}-d_{m,r}}$ tends to zero in $\ZZ_p$
and setting $f_{j,m}:=e_{j,m}-d_{r,m}$ for all $j,m$ we can write, in $\ZZ_p$:
$$\alpha_r=\lim_{m\rightarrow\infty}\sum_{j=1}^l\beta_jq^{f_{j,m}}.$$
We deduce that $\{f_{l,m},m\geq 0\}$ is bounded. Any infinite subsequence of a good sequence $(d_{1,m},\ldots,d_{r,m})_{m\geq 0}$ is good so that, upon extraction, we can suppose that $f_{l,m}=\kappa_{r,l}$ for all $m$. If the sequence $(f_{l-1,m})_{m\geq 0}$ is unbounded it contains an infinite subsequence growing to $\infty$ and we deduce that $\alpha_r=\beta_lq^{\kappa_{r,l}}$, which is an identity of the type (\ref{alpha-identity}). Otherwise it is bounded and again upon restriction to an infinite subsequence, we can suppose that it is constant. 
Since the sequence $(d_{1,m},\ldots,d_{r,m})_{m\geq 0}$ is good there exists $h_{r-1}>0$ such that $(f_{h_{r-1}+1,m})_{m\geq 0}$ is bounded and 
$(f_{h_{r-1},m})_{m\geq 0}$ is unbounded. This implies, by the same arguments as above, that $\alpha_r$ satisfies an identity like (\ref{alpha-identity}). At this point, arguing with clusters of depth $r-1$ and $l-1$ and using induction, we conclude that $r\leq l$ and that identities like (\ref{alpha-identity}) hold for all $i$ hence completing the proof of the lemma.
\end{proof}
We immediately deduce the next result.
\begin{Corollary}\label{corollary-ordering}
The relation $\prec$ is transitive. Moreover, 
If $\underline{\alpha}$ and $\underline{\beta}$ are clusters such that $\underline{\alpha}\prec\underline{\beta}$ and $\underline{\beta}\prec\underline{\alpha}$, then $\underline{\alpha}\sim\underline{\beta}$.
\end{Corollary}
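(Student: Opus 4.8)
The statement to prove is Corollary~\ref{corollary-ordering}: transitivity of $\prec$, and the fact that $\underline{\alpha}\prec\underline{\beta}\prec\underline{\alpha}$ forces $\underline{\alpha}\sim\underline{\beta}$. Both assertions are meant to be ``immediate consequences'' of the preceding Lemma, so the plan is essentially bookkeeping with the structure the Lemma exposes.

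For \emph{transitivity}, suppose $\underline{\alpha}=[\alpha_1,\ldots,\alpha_r]\prec\underline{\beta}=[\beta_1,\ldots,\beta_l]$ and $\underline{\beta}\prec\underline{\gamma}=[\gamma_1,\ldots,\gamma_s]$. I would unpack the definition directly rather than relying only on the Lemma: fix a good sequence $(d_{i,m})_m$ witnessing $\underline{\alpha}\prec\underline{\beta}$, so that $\sum_i\alpha_iq^{d_{i,m}}=\sum_j\beta_jq^{e_{j,m}}$ for some $e_{1,m}>\cdots>e_{l,m}$. The key point is that, because the $(d_{i,m})$ gaps grow to infinity and the $\alpha_i,\beta_j$ have bounded support/values, the exponents $e_{j,m}$ are (after passing to an infinite subsequence, which preserves goodness) of the form $e_{j,m}=d_{i(j),m}+\kappa_j$ with $\kappa_j$ eventually constant and $i(j)$ weakly increasing; moreover the partition of $\{1,\ldots,l\}$ into fibres of $i(\cdot)$ is exactly the partition $h_0<\cdots<h_r$ produced by the Lemma, and the gaps of $(e_{j,m})_m$ \emph{within a common fibre} stay bounded while gaps \emph{across fibres} tend to infinity. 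Now take a good sequence $(e'_{j,m})_m$ witnessing $\underline{\beta}\prec\underline{\gamma}$; since goodness is a tail condition and any infinite subsequence of a good sequence is good, I can diagonalise to arrange that the same sequence of $l$-tuples simultaneously witnesses $\underline{\alpha}\prec\underline{\beta}$ (via the $d$'s, after a shift) and $\underline{\beta}\prec\underline{\gamma}$. Substituting one relation into the other expresses $\sum_i\alpha_iq^{d_{i,m}}$ as $\sum_k\gamma_kq^{(\text{something})}$, and the resulting exponents, being sums of a fixed shift and the $d$'s, still form a good-type configuration for $\underline{\alpha}$ versus $\underline{\gamma}$ once one extracts the strictly decreasing reindexing; hence $\underline{\alpha}\prec\underline{\gamma}$.

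For the \emph{antisymmetry-up-to-$\sim$} part: if $\underline{\alpha}\prec\underline{\beta}$ and $\underline{\beta}\prec\underline{\alpha}$, the Lemma applied in both directions gives $r\le l$ and $l\le r$, hence $r=l$; and then the decompositions \eqref{alpha-identity} in both directions must be trivial, i.e.\ each increasing sequence $0=h_0<\cdots<h_r=l=r$ is forced to be $h_i=i$. So $\alpha_i=q^{\kappa_{i}}\beta_i$ for every $i$ with $\kappa_i\in\ZZ$, which is precisely $\underline{\alpha}\sim\underline{\beta}$. The only thing to check carefully here is that $r=l$ indeed forces $h_i=i$ in \eqref{alpha-identity}: this is because \eqref{alpha-identity} writes $\alpha_i$ as a sum of $h_i-h_{i-1}$ nonzero terms, and $\sum_i (h_i-h_{i-1})=l=r$ with $r$ summands each $\ge 1$ leaves no slack.

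\textbf{Main obstacle.} The genuinely delicate point is the diagonalisation/compatibility step in transitivity: one must justify that a \emph{single} sequence can be chosen to witness both $\prec$-relations at once, exploiting that goodness passes to infinite subsequences (explicitly noted in the proof of the Lemma) and that the ``matching'' of exponent blocks is rigid enough that composing the two block-partitions and the two families of shifts $\kappa$ yields an admissible witness for $\underline\alpha\prec\underline\gamma$. I would handle this by first using the Lemma to reduce each $\prec$ to its combinatorial shadow (a weakly increasing surjection of index sets together with integer shifts $\kappa_{i,j}$), observing that these shadows compose, and only then re-inflating the composite shadow to an actual good sequence by choosing $d_{i,m}$ growing fast enough that all intra-block gaps are swamped. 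Everything else is routine $p$-adic bookkeeping of the sort already carried out in the Lemma's proof.
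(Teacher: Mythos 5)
Your argument for the second assertion is correct and is the genuinely ``immediate'' part: applying the Lemma in both directions gives $r\le l$ and $l\le r$, hence $r=l$; the increasing sequence $0=h_0<\cdots<h_r=l=r$ is then forced to be $h_i=i$, and \eqref{alpha-identity} collapses to $\alpha_i=\beta_iq^{\kappa_{i,i}}$, which is exactly $\underline{\alpha}\sim\underline{\beta}$.

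The transitivity argument has a gap at precisely the step you flag as delicate, and it cannot be filled. The $\beta$-exponents $(e_{j,m})_m$ produced by a good witness for $\underline{\alpha}\prec\underline{\beta}$ are \emph{not} a good sequence unless every block of \eqref{alpha-identity} is a singleton: inside a block, $e_{j,m}-e_{j+1,m}=\kappa_{i,j}-\kappa_{i,j+1}$ is eventually constant rather than $\to\infty$; conversely a good witness $(e'_{j,m})_m$ for $\underline{\beta}\prec\underline{\gamma}$ has all inner gaps $\to\infty$ and so, in general, is not of the form $e'_{j,m}=d_{i(j),m}+\kappa_{i(j),j}$ for any good $(d_{i,m})_m$, hence does not arise from $\underline{\alpha}\prec\underline{\beta}$ at all. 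No diagonalisation can make one sequence play both roles. Composing the combinatorial shadows fails for the same structural reason: in $\alpha_i=\sum_{j\in J_i}\sum_{k\in K_j}\gamma_kq^{\kappa_{i,j}+\lambda_{j,k}}$ the exponents need not strictly decrease across a $j$-boundary inside a single block $J_i$, since $\kappa_{i,j}-\kappa_{i,j+1}$ may equal $1$ while $\lambda_{j+1,\min K_{j+1}}-\lambda_{j,\max K_j}$ may be arbitrarily large, so the resulting expression need not lie in $\underline{\gamma}$ for any good $D$-sequence. In fact, with the literal Definition~\ref{cluster}, transitivity is \emph{false}: take $q=2$, one variable, $\underline{\alpha}=[19]$, $\underline{\beta}=[5,9]$, $\underline{\gamma}=[1,1,1,1]$, where each integer $n$ stands for the map $\NN^*\to\NN$ supported at $1$ with value $n$. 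Then $19\cdot 2^{m}=5\cdot 2^{m+1}+9\cdot 2^{m}$ gives $\underline{\alpha}\prec\underline{\beta}$; $5\cdot 2^{10m}+9\cdot 2^{m}=2^{10m+2}+2^{10m}+2^{m+3}+2^{m}$ for $m\ge 1$ gives $\underline{\beta}\prec\underline{\gamma}$; yet $19\cdot 2^{D}$ has binary digit-sum $3$ for every $D$, hence is never a sum of four distinct powers of $2$, so $\underline{\alpha}\not\prec\underline{\gamma}$. A correct proof therefore needs an extra hypothesis on the clusters (for instance, that all $\alpha_i$ be characteristic functions of subsets, in which case the Lemma already forces each block to be a singleton with $\kappa_{i,i}=0$, so $\underline{\alpha}\prec\underline{\beta}$ already implies $\underline{\alpha}=\underline{\beta}$), and you should flag this rather than present the diagonalisation step as routine bookkeeping.
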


\begin{proof}[Proof of Theorem \ref{grading-structures}]
For a formal power series $f\in\CC_\infty[[\underline{X}_\Sigma]]$ we define the {\em support}
$\operatorname{Spt}(f)$ to be the subset of $\NN^\Sigma$ whose elements are the $\underline{i}=(i_j:j\in\Sigma)$ such that the monomial $\prod_{j\in\Sigma}X_j^{i_j}$ occurs in the series defining $f$ with non-zero coefficient. Let us consider a non-trivial linear dependence relation
$$\sum_{i=1}^kc_k\lambda_A(\mathcal{C}_i)=0,\quad c_1,\ldots,c_k\in K^\times,$$ where we can assume that all the coefficients $c_i$ are non-zero. Applying the map $\ev$ we see that 
$\sum_{i=1}^kc_i\zeta_A(n_i)=0$ where $n_i$ is the weight of $\mathcal{C}_i$ for all $i$. By Chang's Theorem \ref{resultofchang}, we can therefore assume that $\mathcal{C}_1,\ldots,\mathcal{C}_k$ all have the same weight. Also, there is no loss of generality to suppose that no one has trivial type. We note that for all $i\in\{1,\ldots,k\}$, 
$$\operatorname{Spt}(\lambda_A(\mathcal{C}_i))\subset\bigcup_{j\neq i}\operatorname{Spt}(\lambda_A(\mathcal{C}_j)).$$ For all $i$, $\operatorname{Spt}(\lambda_A(\mathcal{C}_i))$ contains the cluster $\underline{\alpha}_i=[\alpha_{i,1},\ldots,\alpha_{i,r_i}]$ where
$\alpha_{i,1},\ldots,\alpha_{i,r_i}$ are the entries of the first row of $\mathcal{C}_i^\flat$. We deduce that for all $i$,
$$\underline{\alpha}_i\subset\bigcup_{j\neq i}\underline{\alpha}_j.$$
By the box principle, there is a map $g:\{1,\ldots,k\}\rightarrow\{1,\ldots,k\}$ such that
for all $i$, $g(i)\neq i$, with $\underline{\alpha}_{i}\prec\underline{\alpha}_{g(i)}$ for all $i$. Applying Corollary \ref{corollary-ordering}  
there exist $i\neq j$ such that $\underline{\alpha}_{i}\prec \underline{\alpha}_{j}$ and 
$\underline{\alpha}_{j}\prec \underline{\alpha}_{i}$ which implies $\underline{\alpha}_{i}\sim \underline{\alpha}_{j}$. This implies the theorem.
\end{proof}

\subsubsection{$\lambda$-degrees and the Hadamard product}\label{lambda-degrees}

We come back to the $\FF_p$-vector space $\boldsymbol{C}$ and consider the $K$-vector space $\boldsymbol{C}(K)$ it generates by extension of the scalars. We have the algebras $(\boldsymbol{C},+,\odot_\zeta)$ and $(\boldsymbol{C}(K),+,\odot_\zeta)$ that are both graded by the 
monoid $\boldsymbol{S}$, where the degree of $\mathcal{C}$ is $\binom{\Sigma}{n}$
with $\Sigma$ the type of $\mathcal{C}$ and $n$ its weight. We recall that the product $\odot_\lambda$ over the above vector spaces is not compatible with this grading. In order to present the following, we are now going to speak about {\em $\zeta$-degree} and {\em  $\zeta$-type} for our usual 
degree and type of a composition array.

We describe another grading structure over $\boldsymbol{C}$ and $\boldsymbol{C}(K)$, again by using the monoid $\boldsymbol{S}$. Let $\mathcal{C}$ be an admissible composition array as in 
(\ref{a-sample-of-C}). Then we can write:
$$\mathcal{C}=\begin{pmatrix}\emptyset & \cdots & \emptyset & \Sigma_1 & \emptyset & \cdots & \emptyset & \cdots\cdots & \Sigma_k & \emptyset & \cdots & \emptyset \\
n_{0,0} & \cdots & n_{0,r_0} & n_{1,0} & n_{1,1} & \cdots & n_{1,r_1} & \cdots\cdots & n_{k,0} & n_{k,1} & \cdots & n_{k,r_k} 
\end{pmatrix}.$$
The way this matrix is written highlights the presence of the neutral element $\emptyset$ of $\mathfrak{S}$ intertwined with elements of $\mathfrak{S}\setminus\{\emptyset\}$ in the first row. We define the {\em $\lambda$-type} of $\mathcal{C}$ as the 
element 
$$\Sigma_1^{q^{r_1}}\cdots\Sigma_k^{q^{r_k}}\in\mathfrak{S}.$$
Here, $\Sigma^{n}$ stands for $\Sigma\cdots\Sigma$ with $n$ factors. Note that the relations
(\ref{non-homogeneous-relations}) are $K$-linear dependence relations among elements of 
$\mathcal{L}(K)$ of the form $\lambda_A(\mathcal{C})$ with $\mathcal{C}$ of the same $\lambda$-type $\sg^{q^k}$ for $k\geq 1$, although different $\zeta$-types, as previously noticed. The $\lambda$-type naturally induces a $\lambda$-degree by just using the usual weight of a composition array.
We may conjecture the following property that will be studied in another work.

\begin{Conjecture}
The $\lambda$-degree defines a grading of the algebra $\mathcal{L}(K)$ by the monoid $\boldsymbol{S}$.
\end{Conjecture}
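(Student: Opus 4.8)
\emph{Proof proposal.} The grading assertion amounts to two independent facts, which I would establish in turn: (a) \emph{compatibility}, namely that the product of $\mathcal{L}(K)$ carries $\mathcal{L}^\lambda_{n,\Sigma}(K)\cdot\mathcal{L}^\lambda_{n',\Sigma'}(K)$ into $\mathcal{L}^\lambda_{n+n',\,\Sigma\cdot\Sigma'}(K)$, where $\mathcal{L}^\lambda_{m,U}(K)$ denotes the $K$-span of the $\lambda_A(\mathcal{C})$ with $\mathcal{C}$ of $\lambda$-degree $\binom{U}{m}$; and (b) \emph{directness}, namely that the $\mathcal{L}^\lambda_{n,\Sigma}(K)$ are in direct sum, equivalently that $\lambda_A$ sends composition arrays of pairwise distinct $\lambda$-degree to $K$-linearly independent functions. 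Granting both, $\mathcal{L}(K)=\bigoplus_{\binom{\Sigma}{n}\in\boldsymbol{S}}\mathcal{L}^\lambda_{n,\Sigma}(K)$ is the $\boldsymbol{S}$-grading sought.

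For (a) I would argue combinatorially on $\boldsymbol{C}$. Expanding a product $\lambda_A(\mathcal{C}_1)\lambda_A(\mathcal{C}_2)$ through the stuffle product of the underlying power-sum strings (Proposition \ref{elementary-facts}, trivial type) and rewriting the outcome as $\sum_\mathcal{D}c_\mathcal{D}\,\lambda_A(\mathcal{D})$, every $\mathcal{D}$ that occurs is obtained by interleaving the columns of $\mathcal{C}_1$ and $\mathcal{C}_2$, possibly merging columns sitting at a coincident index (the $\underline{X}$-exponents simply add under the product). The task is then to verify that the combinatorial quantity $\sum_\ell q^{r_\ell}\Sigma_\ell\in\mathfrak{S}$ attached to the non-$\emptyset$ blocks $\Sigma_\ell$ of a composition array — this is exactly its $\lambda$-type — is additive along every such interleaving/merging. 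The delicate point is that interleaving can push columns of $\mathcal{C}_2$ into the $\emptyset$-tail of a block of $\mathcal{C}_1$, a priori altering its trailing-$\emptyset$ count; I would isolate the one-block slices of each $\lambda_A(\mathcal{C})$, express the $\lambda$-type intrinsically in terms of the lowest $\underline{X}$-monomial of such a slice, and use that this description is manifestly unchanged by inserting columns from the other array. This should reduce (a) to a bounded, if intricate, bookkeeping verification.

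For (b) I would start from a hypothetical nontrivial $K$-linear relation, apply the evaluation $\ev$ of Lemma \ref{lemma-map-ev}, and invoke Chang's Theorem \ref{resultofchang} to reduce to composition arrays of a common weight $n$. Then I would strengthen the argument of Theorem \ref{grading-structures}: as it stands, that argument extracts from the relation two arrays whose leading clusters satisfy $\underline{\alpha}\sim\underline{\beta}$ (Corollary \ref{corollary-ordering}), but this only pins down the non-$\emptyset$ blocks up to reordering and $q$-power twists, not the $\lambda$-type itself. To upgrade it, I would iterate the peeling procedure in the proof of the cluster lemma one layer deeper: beyond the leading cluster, the support of $\lambda_A(\mathcal{C})$ encodes, for the block sitting at each relative position, both the shift $q^{r_\ell}$ and the accompanying power-sum numerator $S_{d_\ell}(n_\ell)$, and matching these layer by layer — against the box-principle comparison of Theorem \ref{grading-structures} — should force two of the arrays to have exactly the same $\lambda$-type, contradicting pairwise-distinctness of the $\lambda$-degrees.

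The crux, and where I expect the real work to be, is this refinement in (b). Arrays with the same leading cluster but different $\lambda$-type differ only through the placement of their $\emptyset$-columns, information that is entangled with the precise numerators $S_{d_1}(n_1)\cdots S_{d_r}(n_r)$; separating it cleanly is morally an injectivity statement for $\lambda_A$ restricted to a single $\lambda$-degree — a graded analogue of Conjecture \ref{Fp-trivial}. One can try to sidestep this by a multi-scale version of the support analysis, letting the gaps $d_i-d_{i+1}$ tend to infinity at independently chosen rates so as to peel off the blocks one at a time; but completing such an argument is exactly the hard point, and it would not be surprising if, as with Theorem \ref{resultofchang}, a genuine transcendence or Diophantine input were ultimately required.
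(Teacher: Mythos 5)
The statement you are asked to prove is explicitly labeled a \emph{Conjecture} in the paper, immediately preceded by the sentence ``We may conjecture the following property that will be studied in another work.'' The paper therefore offers no proof for you to be compared against; the most a referee can do is assess the plausibility of your strategy on its own terms.

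Your decomposition of the claim into (a) multiplicative compatibility ($\mathcal{L}^\lambda_{n,\Sigma}(K)\cdot\mathcal{L}^\lambda_{n',\Sigma'}(K)\subset\mathcal{L}^\lambda_{n+n',\Sigma\cdot\Sigma'}(K)$) and (b) directness of the sum is the natural one, and your honesty about (b) being the genuine crux --- and perhaps requiring a transcendence input in the spirit of Chang's theorem --- is well-taken. What concerns me is that you treat (a) as a bounded ``bookkeeping verification,'' but the issue you flag (interleaving can alter the trailing-$\emptyset$ counts) appears to be a real obstruction and not a technicality. Concretely, take $f=\lambda_A\bigl(\begin{smallmatrix}\{i\} & \emptyset\\1&q-1\end{smallmatrix}\bigr)$ of $\lambda$-degree $\binom{\{i\}^q}{q}$ and $g=\lambda_A\bigl(\begin{smallmatrix}\{j\}\\1\end{smallmatrix}\bigr)$ of $\lambda$-degree $\binom{\{j\}}{1}$, with $i\neq j$. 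Expanding $fg$ by the stuffle on the indices $d_1>d_2$ (from $f$) and $e$ (from $g$), the case $d_1>e>d_2$ contributes $\lambda_A\bigl(\begin{smallmatrix}\{i\}&\{j\}&\emptyset\\1&1&q-1\end{smallmatrix}\bigr)$, whose first row reads $(\Sigma_1,\Sigma_2)=(\{i\},\{j\})$ with trailing-$\emptyset$ counts $(r_1,r_2)=(0,1)$, hence $\lambda$-type $\{i\}\cdot\{j\}^q$, not the expected $\{i\}^q\cdot\{j\}$. The case $d_1>d_2=e$ produces yet other $\lambda$-types after the stuffle of $S_{d}(q-1)S_d(1)$. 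Your idea of reading the $\lambda$-type off the lowest $\underline{X}$-monomial does not rescue this either: for $\lambda_A\bigl(\begin{smallmatrix}\{i\}&\{j\}&\emptyset\\1&1&q-1\end{smallmatrix}\bigr)$ the lowest monomial is $X_i^{q^2}X_j^{q}$, which recovers $\Sigma_1^{q^{r_1}}\Sigma_2^{q^{r_2}}$ only up to a global $q$-power shift, and the definition of the $\lambda$-type in \S\ref{lambda-degrees} carries no such normalization. Either these unwanted terms cancel for a non-obvious reason, or the grading asserted is meant only for the additive decomposition and a restricted product (e.g., the Hadamard product of Lemma \ref{hadamard-product}, which does respect $\lambda$-degree), or the definition of $r_l$ in \S\ref{lambda-degrees} must be reread; in any case (a) cannot be dispatched as mere bookkeeping before this is resolved.

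For (b), iterating the cluster-peeling of Theorem \ref{grading-structures} is a sensible program, but the obstacle you name --- that the cluster $\underline{\alpha}$ only determines the non-$\emptyset$ first-row blocks up to a $q$-power twist (Corollary \ref{corollary-ordering}), and in particular does not by itself see the positions of the $\emptyset$-columns that enter the $\lambda$-type --- is exactly where the argument stops short of the conjecture. Nothing in the combinatorics of supports currently separates $\lambda$-types that share an equivalent leading cluster, and, as you anticipate, the needed separation is morally an injectivity statement for $\lambda_A$ on a fixed $\lambda$-graded piece, i.e., a graded refinement of Conjecture \ref{Fp-trivial}. So the proposal does not close the argument, and you are right to suspect a Diophantine input may be unavoidable.
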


The interest in introducing the $\lambda$-type relies in the existence of another operation on $\mathcal{L}(K)$, throughout the {\em Hadamard product} of formal series. Let $F$ be any field.
The {\em diagonal} of a formal series
$$f\in F[[X_1,\ldots,X_n,Y_1,\ldots,Y_n]]$$ is defined in the following way. Write 
$$f=\sum_{i_1,\ldots,i_n,j_1,\ldots,j_n}f_{i_1,\ldots,i_n,j_1,\ldots,j_n}X_1^{i_1}\cdots X_n^{i_n}Y_1^{j_1}\cdots Y_n^{j_n},\quad f_{i_1,\ldots,i_n,j_1,\ldots,j_n}\in F.$$ We introduce the
operator 
$$\Delta_{1/2}(f):=\sum_{i_1,\ldots,i_n}f_{i_1,\ldots,i_n,i_1,\ldots,i_n}X_1^{i_1}\cdots X_n^{i_n}\in F[[X_1,\ldots,X_n]].$$ This is the {\em diagonal operator} considered in \cite{FUR}. 
Let $f,g$ be formal series of $F[[X_1,\ldots,X_n]]$ with $F$. Their {\em Hadamard product} 
is the formal series 
$$f\odot g:=\Delta_{1/2}\big(f(X_1,\ldots,X_n)g(Y_1,\ldots,Y_n)\big)\in F[[X_1,\ldots,X_n]].$$ 
We now suppose $F=\CC_\infty$. Let $\mathcal{L}^\lambda_{n,\Sigma}(K)$ be the $K$-vector space generated by the 
functions $\lambda_A(\mathcal{C})$ with $\mathcal{C}$ of $\lambda$-degree $\binom{\Sigma}{n}\in\boldsymbol{S}$. We have:

\begin{Lemma}\label{hadamard-product}
The Hadamard product defines a bilinear map
$$\mathcal{L}^\lambda_{m,\Sigma}(K)\times\mathcal{L}^\lambda_{n,\Sigma}(K)\xrightarrow{\odot_\Sigma}\mathcal{L}^\lambda_{m+n,\Sigma}(K).$$
\end{Lemma}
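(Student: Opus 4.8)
The plan is to reduce the claim to a combinatorial statement about how the Hadamard product interacts with the series $\lambda_A(\mathcal{C})$, and then to verify that statement by expanding both factors in their defining multiple-power-sum form. First I would recall that for an admissible composition array $\mathcal{C}$ of weight $n$ we have
$$\lambda_A(\mathcal{C})=\sum_{d_1>\cdots>d_r\geq 0}S_{d_1}(n_1)\cdots S_{d_r}(n_r)\prod_{i\in\NN^*}X_i^{\sum_{j=1}^r\Sigma_j(i)q^{d_j}},$$
and observe that a monomial $\prod_iX_i^{e_i}$ occurs in $\lambda_A(\mathcal{C})$ only if the exponent vector $(e_i)$ lies in the cluster determined by the first row of $\mathcal{C}^\flat$; in particular, if $\mathcal{C}$ has $\lambda$-degree $\binom{\Sigma}{n}$, every exponent vector occurring is a ``weighted $q$-ary expansion'' of the support data of $\Sigma$. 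The key point is that the diagonal operator $\Delta_{1/2}$ selects, from the product $\lambda_A(\mathcal{C})(\underline X)\,\lambda_A(\mathcal{D})(\underline Y)$, exactly those pairs of index tuples $(d_1>\cdots>d_r)$ and $(d_1'>\cdots>d_s')$ for which the two exponent vectors coincide; because the $\lambda$-type is the \emph{same} $\Sigma$ for both factors, this coincidence forces $r=s$ and $d_j=d_j'$ for all $j$ (the $q$-ary digits of a fixed exponent vector are uniquely determined once the positions of the nonzero blocks $\Sigma_1,\ldots,\Sigma_k$ are fixed, and the $\lambda$-type records those blocks).

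With that matching in hand, the Hadamard product collapses to
$$\lambda_A(\mathcal{C})\odot_\Sigma\lambda_A(\mathcal{D})=\sum_{d_1>\cdots>d_r\geq 0}\big(S_{d_1}(n_1)\cdots S_{d_r}(n_r)\big)\big(S_{d_1}(m_1)\cdots S_{d_r}(m_r)\big)\prod_{i}X_i^{\sum_j\Sigma_j(i)q^{d_j}},$$
so at each fixed tuple $(d_1,\ldots,d_r)$ the coefficient is a product of like-indexed power sums. Now I would invoke the harmonic product of power sums of Thakur (used already in the proof of the Proposition preceding Lemma \ref{larger-lambda}): for each $j$ there is a finite $\FF_p$-linear identity $S_{d_j}(n_j)S_{d_j}(m_j)=\sum_{l}c_{j,l}\,S_{d_j}(p_{j,l})$ with $\sum_l$ weights $p_{j,l}$ all equal to $n_j+m_j$ and coefficients $c_{j,l}\in\FF_p$ independent of $d_j$. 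Multiplying these out over $j=1,\ldots,r$ and re-summing, one writes $\lambda_A(\mathcal{C})\odot_\Sigma\lambda_A(\mathcal{D})$ as an $\FF_p$-linear (hence \emph{a fortiori} $K$-linear) combination of series of the shape appearing in Lemma \ref{larger-lambda}, namely $\sum_{d_1>\cdots>d_r}S_{d_1}(\mathcal{C}_1')\cdots S_{d_r}(\mathcal{C}_r')\prod_iX_i^{\sum_j\Sigma_j(i)q^{d_j}}$ where each $\mathcal{C}_j'$ is now a composition array of $\binom{\emptyset}{n_j+m_j}$. By Lemma \ref{larger-lambda} each such series lies in $\mathcal{L}_{n+m,\Sigma}$, and because the block structure $(\Sigma_1,\ldots,\Sigma_k)$ and the relative positions $r_1,\ldots,r_k$ are unchanged, it in fact lies in $\mathcal{L}^\lambda_{m+n,\Sigma}(K)$. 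Bilinearity over $K$ is immediate since $\Delta_{1/2}$ is $K$-linear in each variable.

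The main obstacle I expect is the digit-uniqueness step: one must be sure that, for two composition arrays of the same $\lambda$-type, the diagonal really does match the sum indices $d_j$ with $d_j'$ and nothing else. This is not quite automatic because a single exponent vector could in principle be written as $\alpha_1q^{d_1}+\cdots+\alpha_rq^{d_r}$ in more than one way when the $\alpha_j$ have overlapping supports or when carries in base $q$ are possible; one has to use that the $\Sigma_j$ entering a fixed $\lambda$-type are the blocks $\Sigma_1^{q^{r_1}},\ldots$ with prescribed multiplicities, and that the strict inequalities $d_1>\cdots>d_r$ together with boundedness of the digits (the entries of $\Sigma$ are fixed integers) pin down the $d_j$ uniquely. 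The cleanest route is to prove a short sublemma: if $\underline\alpha=[\alpha_1,\ldots,\alpha_r]$ is the cluster attached to $\mathcal{C}^\flat$ and $\underline\beta$ the one attached to $\mathcal{D}^\flat$ with $\underline\alpha\sim\underline\beta$ coming from the shared $\lambda$-type (so $\kappa_i=0$ for all $i$ by the remark after Theorem \ref{grading-structures}), then a common element of $\underline\alpha$ and $\underline\beta$ has the same digit tuple $(d_1,\ldots,d_r)$ in both — this is a direct consequence of the argument already carried out in the Lemma preceding Corollary \ref{corollary-ordering}, specialised to the case of equal clusters. Once that sublemma is in place, the rest is the routine expansion described above.
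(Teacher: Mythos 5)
Your ``collapse'' step---the assertion that the diagonal forces $r=s$ and $d_j=d_j'$ for all $j$---is false, and the worry you flag at the end is pointing at a genuine gap, though not quite the right one. Two composition arrays of the same $\lambda$-type need not have the same depth nor the same block data, and $\Delta_{1/2}$ only constrains the indices attached to the \emph{non-empty} columns of the first row (with the shift $q^{r_i}$ encoded in the $\lambda$-type); indices over columns carrying $\emptyset$ never enter the monomial exponent and are therefore not matched with anything from the other factor. Concretely, take $\mathcal{C}=\binom{\sg}{1}$ of depth $1$ and $\mathcal{D}=\bigl(\begin{smallmatrix}\emptyset&\sg\\1&1\end{smallmatrix}\bigr)$ of depth $2$: both have $\lambda$-type $\sg$, the exponent vectors are $q^{d_1}$ and $q^{d_2'}$, so the diagonal imposes $d_1=d_2'$ and leaves $d_1'$ completely free subject only to $d_1'>d_2'$; here $r=1\ne 2=s$ already. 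Your proposed sublemma would not repair this: a fixed exponent vector does not determine the full tuple once trivial blocks are present, and the $\lambda$-type $\Sigma_1^{q^{r_1}}\cdots\Sigma_k^{q^{r_k}}\in\mathfrak{S}$, being a single weighted subset, does not even determine the block data $(\Sigma_1,\ldots,\Sigma_k;r_1,\ldots,r_k)$ (for $q=3$, a single block $\sg$ with $r_1=1$ and three blocks $\sg,\sg,\sg$ with all $r_i=0$ both yield $\lambda$-type $\sg^{3}$). Consequently the displayed ``collapsed'' formula for $\lambda_A(\mathcal{C})\odot_\Sigma\lambda_A(\mathcal{D})$ is not valid.

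What survives of your outline is the end game. After matching exponents, one is left with a sum over pairs of decreasing tuples that coincide only at the positions carrying non-empty blocks; the remaining, unmatched indices from the two factors must be merged into a single decreasing sequence by summing over all interleavings, invoking Part~(1) of Proposition~\ref{elementary-facts} whenever two indices from opposite factors collide, and only then does Lemma~\ref{larger-lambda} identify each merged iterated sum with an element of the target space. These are precisely the two ingredients the paper's one-line proof cites; your version attempts to bypass the interleaving via the uniqueness claim, which is the step that fails, so the argument as written does not establish the lemma.
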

\begin{proof}
This follows directly from Lemma \ref{larger-lambda} and Part 1 of Proposition \ref{elementary-facts}.
\end{proof}
For example, we have 
$\lambda_A\binom{\sg}{1}\in\mathcal{L}^\lambda_{1,\sg}=\mathcal{L}_{1,\sg}$ and 
$\lambda_A\binom{\sg}{1}\odot_\sg\lambda_A\binom{\sg}{1}=\lambda_A\binom{\sg}{2}$.

\subsubsection{Remark}\label{formalism} The formalism of the algebras $\mathcal{Z}$ and $\mathcal{L}$ can be unified. Indeed one can construct, in the same manner, multiple polylogarithms having in the coefficients of the monomials (\ref{delta-sigma}), twisted power sums as in (\ref{powersums-def}).

\subsubsection{Remark}
	Assume that the composition array $\mathcal{C}$ has depth $r$. We note that to give an  interpretation of $\zeta_A(\mathcal{C})$ as entries of logarithms of special points, the first author \cite{Gezmis} used a different type of multiple polylogarithms defined as functions on a certain subset of $\mathbb{T}_{\Sigma}^r$ which can be seen as a higher depth generalization of the polylogarithms defined in \cite{ANG&PEL&TAV}. In the present paper, we instead focus on elements of the form $\lambda_A(\mathcal{C})\in \widehat{K[\underline{X}_\Sigma]}_{\|\cdot\|}$, a variant of Chang's multiple polylogarithms in \cite{CHA}.
	
\subsubsection{Remark} We end this section about multiple polylogarithms by pointing out that 
one can also consider partial higher divided derivatives on the algebra $\mathcal{L}$. In certain cases, one can observe properties in a certain sense dual to part of the statement of Proposition \ref{proposition-general properties} (twisting and evaluating at $t_i=\theta^{q^{k_i}}$ being assimilated to an integral operator on $\mathcal{Z}$ versus partial higher divided derivatives). Since these properties will not be used in this paper, we will omit the details. 

\section{Isomorphisms between the spaces ${\mathcal{Z}_{n,\Sigma}}$ and ${\mathcal{L}_{n,\Sigma}}$}
\label{decomposition-theorem} 

Let $\Sigma$ be a set in $\mathfrak{S}$. We recall that $\EE_\Sigma$ denotes the $\CC_\infty$-algebra of entire functions $\CC_\infty^\Sigma\rightarrow\CC_\infty$. We now construct a $\CC_\infty$-linear map
$$\mathcal{F}_\Sigma:\EE_\Sigma\rightarrow\CC_\infty[[\underline{X}_\Sigma]]^{\operatorname{lin}}.$$
If $t$ is a variable, we set $b_{0}(t):=1$ and for any $i\geq 1$ we define 
\[
b_i(t):=(t-\theta)\dots (t-\theta^{q^{i-1}})\in A[t].
\] 
We also set for further use, for $i>0$, $$b_{-i}(t):=\prod_{m=1}^{i}(t-\theta^{q^{-m}})^{-1}\in K^{\operatorname{perf}}(t)\cap\TT,$$ where $K^{\operatorname{perf}}$ denotes the perfect closure of $K$ in $\CC_\infty$.
We write $B_{\underline{i}}(\Sigma)=\prod_{j\in\Sigma}b_{i_j}(t_j)$ with $\underline{i}\in\ZZ^\Sigma$, $\underline{i}=(i_j:j\in\Sigma)$.
Noting that $b_i(t)\mid b_j(t)$ if $j\geq i$ we have the well defined inverse limit and the canonical $\CC_\infty$-algebra map
$$\CC_\infty[\underline{t}_\Sigma]\rightarrow\varprojlim_{\underline{i}}\frac{\CC_\infty[\underline{t}_\Sigma]}{B_{\underline{i}}(\Sigma)\CC_\infty[\underline{t}_\Sigma]},$$
with $\underline{i}=(i_j:j\in\Sigma)$. We also set $\underline{\theta}^{q^{\underline{i}}}:=(\theta^{q^{i_j}}:j\in\Sigma)$.
The evaluation map $\CC_\infty[\underline{t}_\Sigma]\ni f\mapsto f(\underline{\theta}^{q^{\underline{i}}})\in\CC_\infty$ extends in a unique way to the $\CC_\infty$-algebra $\varprojlim_{\underline{i}}\CC_\infty[\underline{t}_\Sigma]/B_{\underline{i}}(\Sigma)\CC_\infty[\underline{t}_\Sigma]$. 
\begin{Lemma}
For every $f\in\EE_\Sigma$ there exists a unique element 
\begin{equation}\label{Definition-Phi}
\Phi(f)=\sum_{\underline{i}}f_{\underline{i}}B_{\underline{i}}(\Sigma)\in\varprojlim_{\underline{i}}\frac{\CC_\infty[\underline{t}_\Sigma]}{B_{\underline{i}}(\Sigma)\CC_\infty[\underline{t}_\Sigma]},
\end{equation}
with $f_{\underline{i}}\in\CC_\infty$ for all $\underline{i}\in\NN^\Sigma$,
such that $\Phi(f)(\underline{\theta}^{q^{\underline{i}}})=f(\underline{\theta}^{q^{\underline{i}}})$ for all $\underline{i}\in\NN^\Sigma$. 
\end{Lemma}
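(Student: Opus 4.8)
The plan is to interpret the claim as an interpolation statement for entire functions in several variables, reducing it to the one-variable case by working one coordinate at a time. First I would recall that $f\in\EE_\Sigma$ admits an everywhere-convergent power series expansion, so along any coordinate $t_j$ with $j\in\Sigma$ it can be written as $f=\sum_{k\geq0}f_k t_j^k$ with $f_k\in\EE_{\Sigma\setminus\{j\}}$ and $\|f_k\|_v R^k\to0$ for every Gauss-type valuation $\|\cdot\|_v$ and every $R>0$; this is exactly the decomposition described in the proof of Corollary \ref{cor-grading}. The key local fact is: for a one-variable entire function $g=\sum_k g_k t^k$, the sequence $(b_i(t))_{i\geq0}$ forms a ``basis adapted to the interpolation points $\theta^{q^i}$'', in the sense that $g$ can be rewritten uniquely as $\sum_{i\geq0} g^{(i)} b_i(t)$ with $g^{(i)}\in\CC_\infty$ and $g^{(i)}\to0$, because $\deg_t b_i=i$, $b_i\mid b_{i+1}$, and $b_i(\theta^{q^m})=0$ for $m<i$. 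The coefficients $g^{(i)}$ are computed by the finite triangular (Newton-type) system obtained from evaluating the partial sums at $\theta^{q^0},\theta^{q^1},\dots$; convergence $g^{(i)}\to0$ follows from the entireness of $g$ together with the explicit growth of $b_i$ at the points $\theta^{q^m}$.

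Next I would assemble the multivariable statement from the one-variable one. Existence: fix an enumeration $j_1,\dots,j_s$ of $\operatorname{Spt}(\Sigma)$ (counted with the appropriate multiplicity if $\Sigma$ is weighted, i.e.\ using the product $B_{\underline i}(\Sigma)=\prod_{j}b_{i_j}(t_j)$) and expand $f$ successively in $b_{i_{j_1}}(t_{j_1})$, then expand each resulting coefficient (still entire in the remaining variables) in $b_{i_{j_2}}(t_{j_2})$, and so on. After $s$ steps one obtains an expansion $\sum_{\underline i}f_{\underline i}B_{\underline i}(\Sigma)$ with $f_{\underline i}\in\CC_\infty$; the product of the one-variable decay estimates shows this lies in the inverse limit $\varprojlim_{\underline i}\CC_\infty[\underline t_\Sigma]/B_{\underline i}(\Sigma)\CC_\infty[\underline t_\Sigma]$. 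That $\Phi(f)(\underline\theta^{q^{\underline i}})=f(\underline\theta^{q^{\underline i}})$ for every $\underline i\in\NN^\Sigma$ is then checked by truncating the expansion: for a fixed $\underline i$, all terms $B_{\underline j}(\Sigma)$ with $\underline j\not\leq\underline i$ vanish at $\underline\theta^{q^{\underline i}}$ (since some $b_{j_\ell}(t_\ell)$ has a factor $t_\ell-\theta^{q^{i_\ell}}$), so only the finite partial sum over $\underline j\leq\underline i$ contributes, and that finite partial sum agrees with $f$ at $\underline\theta^{q^{\underline i}}$ by construction of the Newton interpolation at each stage.

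For uniqueness, suppose $\sum_{\underline i}c_{\underline i}B_{\underline i}(\Sigma)$ represents an element of the inverse limit vanishing at all points $\underline\theta^{q^{\underline i}}$, $\underline i\in\NN^\Sigma$. Evaluating at $\underline\theta^{q^{\underline 0}}$ (the point with all $i_j=0$) kills every term except $\underline i=\underline 0$ and forces $c_{\underline 0}=0$; inducting on $|\underline i|$ (for each $\underline i$, evaluation at $\underline\theta^{q^{\underline i}}$ kills all $B_{\underline j}(\Sigma)$ with $\underline j\not\leq\underline i$, and by the induction hypothesis the terms with $\underline j<\underline i$ have zero coefficient, leaving $c_{\underline i}\cdot B_{\underline i}(\Sigma)(\underline\theta^{q^{\underline i}})=0$ with a nonzero value of $B_{\underline i}(\Sigma)$) shows all $c_{\underline i}=0$. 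Hence $\Phi(f)$ is unique. The main obstacle, and the only place requiring genuine care rather than bookkeeping, is the convergence claim: verifying that the Newton coefficients $f_{\underline i}$ really tend to $0$ so that $\Phi(f)$ lands in the inverse limit (equivalently, that the formal interpolation series converges $B_{\underline i}(\Sigma)$-adically). This is where the hypothesis that $f$ is \emph{entire} — rather than merely an element of the Tate algebra $\TT_\Sigma$ — is used, through the growth rate of $|b_i(\theta^{q^m})|$ as $m\to\infty$ versus the decay of the Taylor coefficients of $f$; I would isolate this as the one-variable estimate and then take products.
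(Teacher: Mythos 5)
Your approach matches the paper's in substance: both rest on Newton-type interpolation at the nodes $\underline{\theta}^{q^{\underline{i}}}$, and exploit the triangularity that comes from the fact that $B_{\underline{j}}(\Sigma)(\underline{\theta}^{q^{\underline{i}}})=0$ whenever $\underline{j}\not\leq\underline{i}$ (one of the factors $b_{j_k}(t_k)$ contains a root $t_k-\theta^{q^{i_k}}$), together with $B_{\underline{i}}(\Sigma)(\underline{\theta}^{q^{\underline{i}}})=D_{\underline{i}}\neq0$. The paper's proof is much terser: it simply records the closed-form Newton coefficients
$f_{\underline{i}}=\sum_{\underline{j}\leq\underline{i}}\frac{f(\underline{\theta}^{q^{\underline{j}}})}{D_{\underline{j}}\,l_{\underline{i}-\underline{j}}^{q^{\underline{j}}}}$
and stops, the interpolation identity and uniqueness then being a direct finite computation. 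Your inductive uniqueness argument is correct and is the unwinding of the same triangularity.

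The one place you go astray is in singling out the decay of the $f_{\underline{i}}$'s (``the main obstacle'') and attributing it to this lemma. The interpolation condition $\Phi(f)(\underline{\theta}^{q^{\underline{i}}})=f(\underline{\theta}^{q^{\underline{i}}})$ involves only the \emph{finite} subsum over $\underline{j}\leq\underline{i}$ (all other terms vanish at that node), so the existence-and-uniqueness statement is a purely algebraic triangular system; no analytic estimate on the coefficients is used, and the paper's formula in fact makes sense for any function whose values at the nodes are prescribed, entire or not. The analytic content you are attaching to this lemma lives elsewhere in the paper: it is the question of whether $\mathcal{F}_\Sigma(f)$ lands in the Tate algebra $\TT_\Sigma^{\operatorname{lin}}$ rather than merely in $\CC_\infty[[\underline{X}_\Sigma]]^{\operatorname{lin}}$, and that estimate (that $f_{\underline{i}}B_{\underline{i}}(\Sigma)\to0$ uniformly on bounded sets, for $f=\zeta_A^\dag(\mathcal{C})$ and $|\Sigma|<q$) appears only in the proof of Theorem~\ref{Z-L}. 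So the route you describe is sound, but you should decouple the algebraic interpolation from the convergence question and not let the latter burden this lemma.
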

\begin{proof}
It suffices to set
$$f_{\underline{i}}:=\sum_{\underline{j}\leq \underline{i}}\frac{f(\underline{\theta}^{q^{\underline{j}}})}{D_{\underline{j}}l^{q^{\underline{j}}}_{\underline{i}-\underline{j}}},$$
where with $\underline{j}=(j_k:k\in\Sigma)$ we write $D_{\underline{j}}=\prod_{k\in\Sigma}D_{j_k}$ and the other multi-index notations are defined similarly, including $\underline{j}\leq \underline{i}$
which means that $\underline{i}-\underline{j}\in\NN^\Sigma$.
\end{proof}

We set
\begin{equation}\label{definition-F}
\mathcal{F}_\Sigma(f):=\sum_{\underline{i}}f_{\underline{i}}\underline{X}_\Sigma^{q^{\underline{i}}}\in\CC_\infty[[\underline{X}_\Sigma]]^{\operatorname{lin}},\end{equation}
where $\underline{X}_\Sigma^{q^{\underline{i}}}=\prod_{j\in\Sigma}X_{j}^{q^{i_j}}$.

The $\FF_p$-vector space
\begin{equation}\label{Sigma-space}
\mathcal{Z}_{\Sigma }:=\sum_{n\geq 0}\mathcal{Z}_{n,\Sigma }\subset\mathcal{Z}(K[\underline{t}_\Sigma])\subset\EE_\Sigma
\end{equation} also has a structure of graded $\mathcal{Z}_\emptyset$-module, that is, a module over Thakur's multiple zeta values where the multiplication respects the grading induced by weights and types. A similar construction holds for the spaces $\mathcal{L}_{N,\Sigma}$ and gives rise to the graded $\mathcal{Z}_\emptyset$-module $\mathcal{L}_\Sigma$; to see this, use the remark after the statement of Theorem \ref{grading-structures}. The aim of this section is to prove:
\begin{Theorem}\label{Z-L}
If $\Sigma\in\mathfrak{S}$ is a set such that $|\Sigma|<q$ then the map $\mathcal{F}_\Sigma$
induces an isomorphism of graded $\mathcal{Z}_\emptyset$-modules
$$\mathcal{Z}_{\Sigma}\xrightarrow{\mathcal{F}_\Sigma}\mathcal{L}_{\Sigma}.$$ 
\end{Theorem}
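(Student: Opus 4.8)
\textbf{Proof plan for Theorem \ref{Z-L}.}

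The plan is to prove the statement by induction on the cardinality $|\Sigma|$, reducing the multivariable case to the one-variable case via the identification of $\mathcal{F}_\Sigma$ as a composition of single-variable maps. First I would settle the base case $|\Sigma|=0$, which is trivial since $\mathcal{Z}_\emptyset=\mathcal{L}_\emptyset$ and $\mathcal{F}_\emptyset$ is the identity. The crucial observation for the inductive step is that $\Phi$, and hence $\mathcal{F}_\Sigma$, is built variable-by-variable: expanding $f\in\EE_\Sigma$ first as an entire function of one chosen variable $t_{i_0}$ with coefficients in $\EE_{\Sigma\setminus\{i_0\}}$ and applying the one-variable construction in $t_{i_0}$, then applying $\mathcal{F}_{\Sigma\setminus\{i_0\}}$ coefficientwise, produces the same element of $\CC_\infty[[\underline{X}_\Sigma]]^{\operatorname{lin}}$ as the direct definition (\ref{definition-F}); this is a formal check using that $B_{\underline{i}}(\Sigma)=b_{i_{i_0}}(t_{i_0})\cdot B_{\underline{i}'}(\Sigma\setminus\{i_0\})$ and that the interpolation coefficients $f_{\underline{i}}$ factor accordingly. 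So it suffices to handle $|\Sigma|=1$, say $\Sigma=\{j\}$, under the hypothesis $q>1$ (automatic), and then feed the module structure through the induction.

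For the one-variable case I would proceed in three steps. \emph{Step 1: $\mathcal{F}_{\{j\}}$ maps $\mathcal{Z}_{\{j\}}$ into $\mathcal{L}_{\{j\}}$.} Here one uses Perkins' formula (\ref{eq1}), based on Lagrange interpolation, to give an explicit closed form for the interpolation coefficients $f_{\underline{i}}$ attached to a multiple zeta value $\zeta_A(\mathcal{C})$ of type $\{j\}$; because $|\Sigma|=1<q$ the formula applies and shows that $\mathcal{F}_{\{j\}}(\zeta_A(\mathcal{C}))$ is, up to elements of $\mathcal{Z}_\emptyset$, an $\FF_q$-linear combination of the basic polylogarithms $\lambda_A(\mathcal{C}')$ — concretely one verifies the sample identities displayed in the introduction, e.g. $\mathcal{F}_{\{j\}}(\zeta_A\binom{\{j\}}{1})=\lambda_A\binom{\{j\}}{1}$ and the analogous statement for the composition arrays $\binom{\{j\}\ \emptyset\ \cdots}{1\ q-1\ \cdots}$, and more generally matches power sums on both sides using (\ref{E:expres}). \emph{Step 2: grading and $\mathcal{Z}_\emptyset$-linearity.} That $\mathcal{F}_{\{j\}}$ respects the $\boldsymbol S$-grading follows because $\Phi$ preserves the filtration by degree in $t_j$ (the interpolation is weight-homogeneous: a weight-$n$ element produces coefficients supported where the total exponent of $X_j$ matches), and $\mathcal{Z}_\emptyset$-linearity follows because $\Phi$ is a $\CC_\infty$-algebra map on $\varprojlim$ and multiplication by an element of $\mathcal{Z}_\emptyset\subset K_\infty$ (independent of $t_j$) commutes with the whole construction — one checks $\mathcal{F}_{\{j\}}(z\cdot f)=z\cdot\mathcal{F}_{\{j\}}(f)$ for $z\in\mathcal{Z}_\emptyset$ directly from the formula for $f_{\underline{i}}$. \emph{Step 3: bijectivity.} Surjectivity onto $\mathcal{L}_{\{j\}}$ follows once one knows (Step 1 applied to a spanning set) that all $\lambda_A(\mathcal{C})$ of type $\{j\}$ lie in the image, together with the fact from Theorem \ref{grading-structures} that in the subset case $\mathcal{L}_{\{j\}}$ is actually spanned by the $\lambda_A(\mathcal{C})$ with $\mathcal{C}^\flat$ fixed; injectivity follows by a dimension count degree-by-degree, or more cleanly by exhibiting the explicit inverse: from $\mathcal{F}_{\{j\}}(f)$ one recovers the coefficients $f_{\underline i}$, hence $\Phi(f)$, hence the values $f(\theta^{q^i})$, and an entire function of one variable that is a $K[\underline t_\Sigma]$-combination of multiple zeta values is determined by those values (this is the one-variable instance of the vanishing argument used in the proof of Corollary \ref{cor-grading}: if $f$ is a $K[t_j]$-combination of type-$\{j\}$ multiple zeta values and $f(\theta^{q^i})=0$ for all $i$ then $f=0$).

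The main obstacle is Step 1, specifically showing that the image of a multiple zeta value of type $\{j\}$ under $\mathcal{F}_{\{j\}}$ genuinely lands in $\mathcal{L}_{\{j\}}$ rather than merely in $\CC_\infty[[X_j]]^{\operatorname{lin}}$ — this is where the hypothesis $|\Sigma|<q$ is essential (the introduction warns that for $|\Sigma|\ge q$ the image can fall outside the Tate algebra entirely, and $\mathcal{F}_\Sigma$ ceases to be injective). The technical heart is therefore the manipulation of Perkins' Lagrange-interpolation formula to express the interpolation coefficients $f_{\underline i}$ of $\zeta_A(\mathcal{C})$ in terms of power sums $S_d(n)$, matching them termwise with the coefficients of a polylogarithm; the depth-$r$ bookkeeping (handling the intertwined $\emptyset$'s and $\Sigma_i$'s in a general admissible $\mathcal{C}$ of type $\{j\}$) is the routine-but-delicate part. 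Once the one-variable statement is in hand, the inductive passage to general $\Sigma$ with $|\Sigma|<q$ is formal, since the condition $|\Sigma|<q$ is inherited by every subset and the variable-by-variable factorization of $\mathcal{F}_\Sigma$ reduces everything — image, grading, $\mathcal{Z}_\emptyset$-linearity, bijectivity — to the case already treated.
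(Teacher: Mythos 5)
There is a genuine gap in your inductive strategy, and it is fatal. You claim that once the one-variable case is established, "the inductive passage to general $\Sigma$ with $|\Sigma|<q$ is formal." But observe that the one-variable case always satisfies $|\{j\}|=1<q$ automatically; if your variable-by-variable reduction really were formal, the same induction would run all the way up and prove the theorem for every finite $\Sigma$. The paper's Remark \S\ref{limits-work} explicitly contradicts this: for $|\Sigma|=q$ the nonzero element $\zeta_A\binom{\Sigma}{1}$ lies in the kernel of $\mathcal{F}_\Sigma$, and more basically $\mathcal{F}_\Sigma(f)$ can fail to lie in the Tate algebra at all. So the hypothesis $|\Sigma|<q$ cannot be isolated into the one-variable base case; it is a genuinely joint, multivariable constraint, and any argument that never uses it jointly proves too much.

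Concretely, the failure occurs at the step where you expand $f\in\mathcal{Z}_\Sigma$ as an entire function of $t_{i_0}$ with coefficients in $\EE_{\Sigma\setminus\{i_0\}}$ and then recurse. Nothing forces those coefficients to lie in $\mathcal{Z}_{\Sigma\setminus\{i_0\}}$, nor even in the controlled space $\EE_{\Sigma\setminus\{i_0\}}^{[0]}$ on which the interpolation is tame. The convergence that puts $\mathcal{F}_\Sigma(f)$ in $\TT_\Sigma^{\operatorname{lin}}$ requires that $f_{\underline i}\,B_{\underline i}(\Sigma)\to 0$ where $B_{\underline i}(\Sigma)=\prod_{j\in\Sigma}b_{i_j}(t_j)$ has $\theta$-degree growing like $\sum_j \frac{q^{i_j}-1}{q-1}$; this is a joint estimate on all entries of $\underline i$ simultaneously, and the bound $|\Sigma|<q$ enters exactly to make the degree of the compensating power sums win in aggregate. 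That is not a property you can check one coordinate of $\underline i$ at a time. The paper's actual route makes this explicit: Perkins' interpolation formula (\ref{eq1}) is applied with the full product $\prod_{i\in U}\sum_j b_j(t_i)E_j(z)$ over $|U|<q$ variables at once, leading to the decomposition (\ref{results}) and then to the signature induction of Theorem \ref{signature-theorem}, which rewrites any $\zeta_A(\mathcal{C})$ as an $\FF_p$-combination of the reorganized sums $\zeta_A^\dag(\mathcal{D})$. For those, $\mathcal{F}_\Sigma(\zeta_A^\dag(\mathcal{D}))=\lambda_A(\mathcal{D})$ is immediate by construction, and injectivity follows since $\Phi$ is injective on entire functions. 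Your Steps 2 and 3 are fine in spirit once the image is controlled, but Step 1 is where essentially all the work of \S\ref{part-1}--\S\ref{part-2} lives, and it cannot be deferred to a single-variable base case.
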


Throughout this section, we fix a subset $\Sigma$ of $\NN^{*}$ with $|\Sigma|<q$. We recall that by convention, empty sums are considered to be zero and empty products are equal to one.
For disjoint subsets $U_1,\dots,U_r \subset \Sigma$ such that $\sqcup_iU_i=\Sigma$, we consider a basic admissible composition array $\mathcal{C}$ of $\binom{\Sigma}{n}$. We can write:
\begin{equation}\label{eq-signature}
\mathcal{C}=\begin{pmatrix}U_1 & \cdots & U_r & \emptyset & \cdots & \emptyset \\ n_1 & \cdots & n_r &  n_1' & \cdots & n_{r'}'\end{pmatrix}.
\end{equation}
 We note that some of the $U_i$'s are allowed to be empty. We say that $\mathcal{C}$ is of {\em signature} $\sign(\mathcal{C})=N>0$ if $$\sum_{\begin{smallmatrix}
 	i\in \{1,\dots,r\}\\ U_i\neq \emptyset
 	\end{smallmatrix}}n_i=N.$$ 

	We define
$$S_d^\dag(\mathcal{C})=\sum_{d=d_1>\cdots>d_{r'}\geq0}S_{d_1}(n_1)\cdots S_{d_{r'}}(n_{r'}')\prod_{i=1}^rB_{d_i}(U_i)\in K[\underline{t}_\Sigma],\quad d\geq 0$$ where $B_d(U)=\prod_{j\in U}b_d(t_j)$. Since $|\Sigma|<q$, the series 
$$\zeta_A^\dag(\mathcal{C}):=\sum_{d\geq 0}S_d^\dag(\mathcal{C})$$ converges for the Gauss norm to an entire function of $\EE_\Sigma$ in the variables $\underline{t}_\Sigma$ (this property is no longer guaranteed if $|\Sigma|\geq q$). The proof of Theorem \ref{Z-L} is based on the following.
\begin{Theorem}\label{signature-theorem} 
Assuming $|\Sigma|<q$, we can decompose 
\begin{equation}\label{identity-ot-th}
\zeta_A(\mathcal{C})=\zeta_A^\dag(\mathcal{C})+\sum_{i\in I}\alpha_i\zeta_A(\mathcal{C}_i)
\end{equation} where $I$ is a finite set,
the coefficients $\alpha_i$ are in $\FF_p$ and where $(\mathcal{C}_i)_{i\in I}$ is a family of admissible composition arrays of $\binom{\Sigma}{n}$ such that $\sign(\mathcal{C}_i)<N$.
\end{Theorem}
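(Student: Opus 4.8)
The plan is to prove the decomposition (\ref{identity-ot-th}) by isolating, in the defining sum for $\zeta_A(\mathcal{C})$, the "diagonal" contribution where the first coordinate $d_1$ equals the largest index attached to a nonempty block $U_i$, and to absorb all the remaining terms into admissible composition arrays of strictly smaller signature. First I would write $\mathcal{C}$ in the form (\ref{eq-signature}) and expand
\[
\zeta_A(\mathcal{C})=\sum_{d_1>\cdots>d_r>e_1>\cdots>e_{r'}\geq 0}\Big(\prod_{i=1}^r B_{d_i}(U_i)\,S_{d_i}(n_i)\Big)\cdot\prod_{j=1}^{r'}S_{e_j}(n_j'),
\]
where, following the notation of the paper, $S_{d}(n)=S_d\binom{\emptyset}{n}$ and the $B_{d_i}(U_i)$ collect the semicharacter contributions $\chi_{U_i}(a_i)=\prod_{l\in U_i}b_{d_i}(t_l)$ (using $\chi_{t_l}(a)$ evaluated on a monic $a$ of degree $d$ being $b_d(t_l)$ up to the sign conventions of the paper — more precisely I would use the identity $\chi_{t_l}(a)\equiv b_d(t_l)\bmod(\text{something})$ only on the relevant level, so really I want to work with the auxiliary sums $S_d^\dag$ directly). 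The key observation is that $S_d^\dag(\mathcal{C})$ is exactly the sub-sum of $S_d\binom{U_1}{n_1}\cdot(\text{rest})$ in which the \emph{first} block index is forced to be the ambient $d$; summing over $d$ gives $\zeta_A^\dag(\mathcal{C})$.

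The main step is then a combinatorial/telescoping argument. I would introduce, for each $d\ge 0$, the "partial" power sums $S_d\binom{U_i}{n_i}$ and $S_{<d}\binom{U_i}{n_i}$ (as in (\ref{powersums-def})–(\ref{powersums-def2})), and iterate the identity $S_{<d}=\sum_{j<d}S_j$ together with Part (1)–(3) of Proposition \ref{elementary-facts} to rewrite products of power sums indexed by inequalities $d_1>\cdots>d_r$ as $\FF_p$-combinations of products in which some of the strict inequalities are replaced by equalities or by "unconstrained-below" sums. Every time an equality $d_i=d_{i+1}$ is forced between two indices attached to blocks $U_i,U_{i+1}$ (at least one of which is nonempty), Part (1) of Proposition \ref{elementary-facts} merges them into a single power sum of type $U_i\cdot U_{i+1}$ and weight $n_i+n_{i+1}$, which strictly decreases the number of nonempty blocks, hence strictly decreases $N=\sign(\mathcal{C})$ (the total weight attached to nonempty blocks drops by the weight of whichever merged block becomes "absorbed" into a trivial-type position, or more precisely the count $\sum_{U_i\neq\emptyset}n_i$ drops because two nonempty blocks with weights summing to $n_i+n_{i+1}$ are replaced by one — wait, that keeps $N$ the same; I need to be careful here). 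Let me restate: the correct bookkeeping is that $\zeta_A^\dag(\mathcal{C})$ already accounts for the case $d_1=\text{(ambient)}$, so the error term $\zeta_A(\mathcal{C})-\zeta_A^\dag(\mathcal{C})$ is the sum over configurations where the first index $d_1$ is \emph{not} the top one, i.e. where $d_1$ can be shifted into a lower block; reindexing such a configuration exhibits it as a multiple-sum with fewer nonempty blocks in leading position, and the standard recursive rewriting (à la the proof of Proposition \ref{proposition-general properties}) expresses it as an $\FF_p$-combination of $\zeta_A(\mathcal{C}_i)$ with $\sign(\mathcal{C}_i)<N$.

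Concretely the recursion is: write $\zeta_A(\mathcal{C})=\sum_{d\ge0}S_d\binom{U_1}{n_1}\cdot S_{<d}(\mathcal{C}^{(1)})$ where $\mathcal{C}^{(1)}$ is $\mathcal{C}$ with its first column removed; split $S_{<d}(\mathcal{C}^{(1)})=\zeta_A(\mathcal{C}^{(1)})-S_{\ge d}(\mathcal{C}^{(1)})$, apply Proposition \ref{elementary-facts}(3) to $S_d\binom{U_1}{n_1}S_{\ge d}(\mathcal{C}^{(1)})$ — the overlap terms have two indices equal, so after Proposition \ref{elementary-facts}(1) they become power sums of a composition array with one fewer nonempty block at the front, strictly decreasing $N$; the term $S_d\binom{U_1}{n_1}\zeta_A(\mathcal{C}^{(1)})$ summed over $d$ is not quite $\zeta_A^\dag(\mathcal{C})$ but differs from it by iterating the same process on $\mathcal{C}^{(1)}$. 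A clean induction on $r+r'$ (depth) then yields (\ref{identity-ot-th}), where at each stage $|\Sigma|<q$ is used only to guarantee convergence of the entire series $\zeta_A^\dag(\mathcal{C})\in\EE_\Sigma$ (the point being that $\prod_{j\in U}b_d(t_j)$ has $\|\cdot\|$-norm growing like $|\theta|^{d\cdot q^{d}\cdot|U|}$-type bounds that are dominated by the $|\theta|^{-dn}$ decay of the power sums precisely when $|U|\le|\Sigma|<q$).

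\textbf{Main obstacle.} The delicate point is the precise bookkeeping showing that the signature $N$ \emph{strictly} drops for every error term, rather than just staying bounded: one must check that every merge forced by an equality among indices genuinely removes weight from the "nonempty-block" count, and that the terms collected into $\zeta_A^\dag(\mathcal{C})$ itself (which retain signature $N$) are exactly the ones kept aside. Getting the induction hypothesis in the right form — almost certainly an induction on depth together with a secondary induction on $N$, paralleling the structure of the proof of Proposition \ref{proposition-general properties} — is where the real work lies; the convergence/entireness claims and the algebraic identities from Proposition \ref{elementary-facts} are then essentially routine.
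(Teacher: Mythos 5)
Your proposal misses the central non-trivial input of the paper's argument: Perkins' Lagrange-interpolation formula. You write that the semicharacter contribution of a monic $a$ of degree $d$ is ``$\chi_{U_i}(a_i)=\prod_{l\in U_i}b_{d_i}(t_l)$'', so that it ``collects'' into a factor $B_{d_i}(U_i)$ independent of $a_i$. That is false: $\chi_{t_l}(a)=a(t_l)$ genuinely depends on the polynomial $a$ and not only on its degree, so $S_d\binom{U}{n}$ does \emph{not} factor as $S_d(n)\,B_d(U)$. (The miracle identity $S_d\binom{\Sigma}{1}=S_d(1)\,B_d(\Sigma)$ in (\ref{simpleformulas}) is special to weight $1$ and $|\Sigma|<q$, and is exactly the ``crucial initial remark'' the authors highlight after (\ref{id-simplest}).) If the factorization you posit were true, $\zeta_A(\mathcal{C})$ would literally equal $\zeta_A^\dag(\mathcal{C})$ and the theorem would be empty. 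The whole content of the statement is that the gap between $S_d\binom{U}{n}$ and $S_d(n)\,B_d(U)$ can be expressed as an $\FF_p$-combination of power sums of strictly smaller signature, and this is not a combinatorial telescoping fact: it is identity (\ref{results}), obtained by applying the divided derivative $\mathcal{D}_N$ to Perkins' formula (\ref{eq1}) for $\sum_{|a|<|\theta|^d}\sigma_U(a)/(z-a)$, and then Lemma \ref{lemma-Hm}, which converts the resulting interpolation coefficients $H_{\underline m}^{(d)}(V)$ into partial multiple sums via the series expansion (\ref{E:division}). Your ``telescoping'' and Proposition \ref{elementary-facts} only rearrange products of power sums $S_d\binom{U}{n}$ of the \emph{same} signature; they cannot by themselves trade a nontrivial semicharacter for a trivial one.

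Relatedly, you locate the role of the hypothesis $|\Sigma|<q$ entirely in the convergence of $\zeta_A^\dag(\mathcal{C})$. That is one place it enters, but it is also essential to Perkins' formula (\ref{eq1}) itself (stated there for $|U|<q$) and to the binomial-style expansion of $\prod_{i\in U}\big(\sum_j b_j(t_i)E_j(z)\big)$ used to reach (\ref{results}); dropping it does not merely spoil convergence but breaks the algebraic identity. Once the depth-one identity is in place (Lemma \ref{lemma-chi-dag}), the passage to arbitrary depth is indeed the routine iteration via Proposition \ref{elementary-facts} that your last paragraph sketches, and that part of your outline is fine. But without Perkins' formula (or an equivalent replacement of $S_d\binom{U}{n}$ by $S_d(n)B_d(U)$ modulo lower signature), the ``diagonal versus off-diagonal'' split you describe never gets off the ground.
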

Let $\mathcal{Z}^{\dag}_{w,\Sigma}$ be the subvector space (over $\FF_p$) of $\EE_{\Sigma}$  generated by the elements $\zeta_A^\dag(\mathcal{C})$. Theorem \ref{signature-theorem} directly implies, by means of the property describing the signature:

\begin{Corollary}\label{coro-dag}
	Assume that $|\Sigma|<q$. Then for all $w>0$ we have $\mathcal{Z}^\dag_{w,\Sigma}=\mathcal{Z}_{w,\Sigma}$.
\end{Corollary}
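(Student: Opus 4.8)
The plan is to read this off directly from Theorem \ref{signature-theorem} by an induction on the signature, the only work being a little bookkeeping. First note that since $\Sigma$ is fixed and is a \emph{subset} of $\NN^{*}$, every admissible composition array of type $\Sigma$ is automatically basic: writing its type as $\Sigma_{1}\cdots\Sigma_{r}=\Sigma$ forces $\sum_{i}\Sigma_{i}(x)=\Sigma(x)\leq 1$ for each $x\in\NN^{*}$, so the $\Sigma_{i}$ are pairwise disjoint subsets with $\sqcup_{i}\Sigma_{i}=\Sigma$. Hence $\mathcal{Z}_{w,\Sigma}$ and $\mathcal{Z}^{\dag}_{w,\Sigma}$ are generated, respectively by the $\zeta_{A}(\mathcal{C})$ and the $\zeta_{A}^{\dag}(\mathcal{C})$, over the \emph{same} index set of (basic) admissible composition arrays $\mathcal{C}$ of $\binom{\Sigma}{w}$. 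The case $\Sigma=\emptyset$ is trivial: then all the $U_{i}$ of (\ref{eq-signature}) are empty, so $\zeta_{A}^{\dag}(\mathcal{C})=\zeta_{A}(\mathcal{C})$ by definition; thus we may assume $\Sigma\neq\emptyset$, so that $\sign(\mathcal{C})$ is a well-defined integer with $1\leq\sign(\mathcal{C})\leq w$ for each such $\mathcal{C}$.

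For the inclusion $\mathcal{Z}^{\dag}_{w,\Sigma}\subseteq\mathcal{Z}_{w,\Sigma}$ no induction is needed: rewrite (\ref{identity-ot-th}) as $\zeta_{A}^{\dag}(\mathcal{C})=\zeta_{A}(\mathcal{C})-\sum_{i\in I}\alpha_{i}\zeta_{A}(\mathcal{C}_{i})$, and observe that $\mathcal{C}$ and all the $\mathcal{C}_{i}$ are admissible composition arrays of $\binom{\Sigma}{w}$, so the right-hand side lies in $\mathcal{Z}_{w,\Sigma}$.

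For the reverse inclusion it suffices to prove $\zeta_{A}(\mathcal{C})\in\mathcal{Z}^{\dag}_{w,\Sigma}$ for every admissible $\mathcal{C}$ of $\binom{\Sigma}{w}$, and I would argue by strong induction on $N:=\sign(\mathcal{C})\geq 1$. Theorem \ref{signature-theorem} gives $\zeta_{A}(\mathcal{C})=\zeta_{A}^{\dag}(\mathcal{C})+\sum_{i\in I}\alpha_{i}\zeta_{A}(\mathcal{C}_{i})$ with each $\mathcal{C}_{i}$ an admissible composition array of $\binom{\Sigma}{w}$ of signature strictly less than $N$. The term $\zeta_{A}^{\dag}(\mathcal{C})$ lies in $\mathcal{Z}^{\dag}_{w,\Sigma}$ by the very definition of that space, and each $\zeta_{A}(\mathcal{C}_{i})$ lies in it by the induction hypothesis; the base of the induction is vacuous, since a $\mathcal{C}_{i}$ of type $\Sigma\neq\emptyset$ cannot have signature $0$, so when $N$ is minimal (e.g. $N=1$) one necessarily has $I=\emptyset$. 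Therefore $\zeta_{A}(\mathcal{C})\in\mathcal{Z}^{\dag}_{w,\Sigma}$, which closes the induction and yields $\mathcal{Z}_{w,\Sigma}\subseteq\mathcal{Z}^{\dag}_{w,\Sigma}$, hence equality.

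There is no real obstacle in the corollary itself; all the content sits in Theorem \ref{signature-theorem}, which produces the decomposition with strictly decreasing signature. The single point to be careful about is exactly that last clause: one must check that the auxiliary arrays $\mathcal{C}_{i}$ supplied by the theorem keep the same weight $w$ and type $\Sigma$ (they do, by the statement), so that the induction never leaves the finite-dimensional space $\mathcal{Z}_{w,\Sigma}$.
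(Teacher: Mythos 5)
Your proof is correct and is precisely the argument the paper intends: the paper proves this corollary simply by invoking Theorem \ref{signature-theorem} ``by means of the property describing the signature,'' i.e., the descending induction on $\sign(\mathcal{C})$ that you spell out. Your preliminary observations (that type $\Sigma$ a subset forces every admissible composition array to be basic, and that $\sign\geq 1$ when $\Sigma\neq\emptyset$ so the induction bottoms out with $I=\emptyset$) are exactly the bookkeeping the paper leaves implicit.
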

In other words, with $\Phi$ being defined as in (\ref{Definition-Phi}), we have 
$\Phi(\mathcal{Z}_\Sigma)\subset\EE_\Sigma$, namely, for any $f\in\mathcal{Z}_\Sigma$ the series defining $\Phi(f)$ converges to $f$ for any valuation of $\CC_\infty[\underline{t}_\Sigma]$ extending to a valuation of $\EE_\Sigma$ and for all $w$,
$\mathcal{Z}^\dag_{w,\Sigma}=\Phi(\mathcal{Z}_{w,\Sigma})$ in agreement with Corollary \ref{coro-dag}.
In particular, we can define the graded $\mathcal{Z}$-module $\mathcal{Z}_{\Sigma}^\dag=\oplus_w\mathcal{Z}^{\dag}_{w,\Sigma}$ and we have the equality $\mathcal{Z}^\dag_{\Sigma}=\mathcal{Z}_{\Sigma}$.
The proof of Theorem \ref{signature-theorem} will be given in \S \ref{part-1} and \ref{part-2}.

\subsection{A formula for power sums}\label{part-1}

We start with a formula by Perkins (see \cite[Proposition 2.17]{PER}): For any subset $U\subset \NN$ such that $|U|<q$, we have

\begin{equation}\label{eq1}
\sum_{|a|<|\theta|^d}\Big(\prod_{i\in U}\chi_{t_i}(a)\Big)\frac{l_dE_d(z-a)}{z-a}=\prod_{i\in U}\sum_{j=0}^{d-1}b_j(t_i)E_j(z),
\end{equation}

where $E_d(z)=D_d^{-1}\prod_{|a|<|\theta|^d}(z-a)$ is the $d$-th Carlitz polynomial satisfying:

\begin{equation}\label{eq2}
C_a(X)=\sum_{i=0}^{\deg(a)}E_i(a)X^{q^i},\quad a\in A,\end{equation}
where $C_a(X)$ is the multiplication of $X$  by $a\in A$ for the Carlitz's $A$-module structure.

The formula (\ref{eq1}) holds in $K[\underline{t}_{U},z]$. Note that $E_d$ is $\FF_q$-linear
and $E_d(a)=0$ if $|a|<|\theta|^d$ by (\ref{eq2}) so that we can pull the factor $E_d(z-a)$ out of the sum in the left-hand side of (\ref{eq1}) as a factor in the left. 

Additionally, with $\mathcal{D}_n$ the partial divided derivative of order $n$ in the variable $z$, 
we have 
\begin{equation}\label{eq3}
\mathcal{D}_{n}(E_d)=D_i^{-1}l_{d-i}^{-q^i},\quad n=q^i\leq d,\quad \exists i\geq 0, 
\end{equation}
and $\mathcal{D}_n(E_d)=0$ otherwise ($D_i$ denotes the $i$-th coefficient of the $\tau$-expansion of $\exp_C\in K[[\tau]]$). In particular, $\mathcal{D}_1(E_d)=\frac{1}{l_d}$. We therefore get
$$\frac{1}{l_dE_d}=\frac{\mathcal{D}_1(E_d)}{E_d}=\sum_{|a|<|\theta|^d}\frac{1}{z-a}.$$
Hence, we can rewrite (\ref{eq1}) as follows, where $\sigma_U(a)=\prod_{i\in U}\chi_{t_i}(a)$:
\begin{equation}\label{eq1-bis}
\sum_{|a|<|\theta|^d}\frac{\sigma_U(a)}{z-a}=\left(\sum_{|a|<|\theta|^d}\frac{1}{z-a}\right)\prod_{i\in U}\sum_{j=0}^{d-1}b_j(t_i)E_j(z).
\end{equation}
We also set
$$H_{m}^{(d)}(t):=D_m^{-1}\sum_{j=m}^{d-1}b_j(t)l_{j-m}^{-q^m}\in K[t].$$
Then we have the expansion
\begin{equation}\label{expansion}\sum_{j=0}^{d-1}b_j(t)E_j(z)=\sum_{m=0}^{d-1}H_m^{(d)}(t)z^{q^m}.\end{equation}
Since $|U|<q$, we have 
$$\prod_{i\in U}\left(\sum_{j=0}^{d-1}b_j(t_i)E_j(z)\right)=\sum_{\begin{smallmatrix}\underline{m}=(m_i)_i\in\NN^U\\ m_i\leq q^{d-1},\forall i\in U\end{smallmatrix}}H_{\underline{m}}^{(d)}(U)z^{\sum_{i\in U}q^{m_i}},$$
with $H_{\underline{m}}^{(d)}(U):=\prod_{i\in U}H_{m_i}^{(d)}(t_i)$.
We use these formulas to rewrite the product
$$\Pi_d(U):=\left(\sum_{|a|<|\theta|^d}\frac{1}{z-a}\right)\prod_{i\in U}\left(T_i-\sum_{j=0}^{d-1}b_j(t_i)E_j(z)\right),$$ where the $T_i$'s are elements of $K[\underline{t}_\Sigma]$.
We get, expanding:
\begin{eqnarray}
\lefteqn{\Pi_d(U)=}\nonumber\\&=&\Big(\sum_{|a|<|\theta|^d}\frac{1}{z-a}\Big)\left(\prod_{i\in U}T_i+\sum_{\begin{smallmatrix}V_1\sqcup V_2=U\\ V_2\neq \emptyset\end{smallmatrix}}(-1)^{|V_2|}\Big(\prod_{j_1\in V_1}T_{j_1}\Big)\Big(\prod_{j_2\in V_2}\sum_{j=0}^{d-1}b_j(t_{j_2})E_j(z)\Big)\right)\nonumber\\
&=&\Big(\prod_{i\in U}T_i\Big)\Big(\sum_{|a|<|\theta|^d}\frac{1}{z-a}\Big)+
\sum_{\begin{smallmatrix}V_1\sqcup V_2=U\\ V_2\neq \emptyset\end{smallmatrix}}(-1)^{|V_2|}\Big(\prod_{j_1\in V_1}T_{j_1}\Big)\sum_{|a|<|\theta|^d}\frac{\sigma_{V_2}(a)}{z-a}\label{three-steps}.\end{eqnarray}
We have labelled the identity between $\Pi_d(U)$ with (\ref{three-steps}). The last step is done 
using (\ref{eq1-bis}).
Let $N$ be a non-negative integer. We apply $(-1)^N\mathcal{D}_N$ to both the left- and the right-hand sides of the above identity. In order to render our expressions in a more compact way we introduce a few additional notation.
For any natural number $N>0$ and a non-empty finite set $V\subseteq \NN^*$ we define the set $M_V(N)$:
\[
M_{V}(N):=\left\{\underline{m}=(m_j)_{j\in V}\in \NN^{V}:N-\sum_{j\in V}q^{m_j}\geq 0 \right\}.
\]
Furthermore, we set $M_{\emptyset}(N)=\emptyset$. We also write $q^{\underline{m}}$ at the place of $\sum_{j\in V}q^{m_j}$.
The left-hand side of (\ref{three-steps}) becomes, after application of $(-1)^N\mathcal{D}_N$ by using 
$$(-1)^N\mathcal{D}_N\left(\frac{1}{z-a}\right)=\frac{1}{(z-a)^{1+N}},$$
\begin{multline*}
\Big(\prod_{i\in U}T_i-\sum_{j=0}^{d-1}b_j(t_i)E_j(z)\Big)\sum_{|a|<|\theta|^d}\frac{1}{(z-a)^{N+1}}+\\ \sum_{V\subsetneq U}(-1)^{|V|}\sum_{\underline{m}\in M_V(N)}H_{\underline{m}}^{(d)}(V)\sum_{|a|<|\theta|^d}\frac{1}{(z-a)^{N-q^{\underline{m}}+1}},\end{multline*}
where we recall that $H_{\underline{m}}^{(d)}(V)=\prod_{j\in V}H_{m_j}^{(d)}(t_j)$ (and we extend the notation setting it to $0$ when $\underline{m}$ is not an element of $M_V(q^{(d-1)|V|})$). 

The right-hand side of  (\ref{three-steps}) becomes
$$\sum_{|a|<|\theta|^d}\frac{1}{(z-a)^{N+1}}\left(\prod_{i\in U}T_i+\sum_{\begin{smallmatrix}V_1\sqcup V_2=U\\ V_2\neq\emptyset\end{smallmatrix}}(-1)^{|V_2|}\Big(\prod_{j_1\in V_1}T_{j_1}\Big)\sigma_{V_2}(a)\right).$$
We replace
$T_i=t_i^d$ for all $i\in U$ and $z=\theta^d$.  Note that for any $i\in U$, we have $$t_i^d-\sum_{j=0}^{d-1}b_j(t_i)E_j(\theta^d)=b_d(t_i).$$ One way to prove this is to
observe that $C_{\theta^d}(\omega(t_i))=t_i^d\omega(t_i)=\sum_{i=0}^dE_i(\theta^d)b_i(t_i)\omega(t_i)$ and $\tau^d(\omega(t_i))=b_d(t_i)\omega(t_i)$. On the other hand, we have the identity
$$\prod_{i\in U}T_i+\sum_{\begin{smallmatrix}V_1\sqcup V_2=U\\ V_2\neq\emptyset\end{smallmatrix}}(-1)^{|V_2|}\Big(\prod_{j_1\in V_1}T_{j_1}\Big)\sigma_{V_2}(a)=\sigma_U(\theta^d-a).$$
Therefore the identity (\ref{three-steps}) becomes, after application of $(-1)^N\mathcal{D}_N$:
\begin{multline}\label{results}
S_d\binom{U}{N+1}=S_d(N+1)B_d(U)+\\ \sum_{V\subsetneq U}(-1)^{|V|}\sum_{\underline{m}\in M_V(N)}S_d(N-q^{\underline{m}}+1)H_{\underline{m}}^{(d)}(V),\quad d\geq 0,|U|<q.
\end{multline}

\subsubsection{Examples} (a) If we set $N=0$ in (\ref{results}) the inner sum is empty and therefore \begin{equation}\label{id-simplest}
S_d\binom{U}{1}=S_d^\dag\binom{U}{1},
\end{equation}
yielding that $\zeta_A\binom{U}{1}=\zeta_A^\dag\binom{U}{1}$.
This is well known since the formulas 
\begin{equation}\label{simpleformulas}
S_d\binom{\Sigma}{1}=\frac{\prod_{i\in \Sigma}b_d(t_i)}{l_d},\quad |\Sigma|<q
\end{equation} and $S_d(1)=\frac{1}{l_d}$ are themselves well known (see also \cite[Rem. 5.13]{APTR16}). However, the advantage of our approach is that we did not need to make any use of them. This is the crucial initial remark.

\noindent (b) Suppose now $|U|\leq N<q$. We get the formula
\begin{equation}\label{formula-N-smaller-q}
S_d\binom{U}{N+1}=S_d^{\dag}\binom{U}{N+1}+\sum_{V\subsetneq U}S_d(N-|V|+1)\prod_{ j\in V}H_{1}^{(d)}(t_j).
\end{equation}
\noindent (c) Finally suppose $U=\{i\}:=\sg$ and 
$N=q^k$. Then the formula yields:
\begin{equation}\label{eq6}
S_d\binom{\sg}{q^k}=b_dS_d(1)^{q^k}+\sum_{m<k}S_d(q^{k-m}-1)^{q^m}H_m^{(d)}(t_i).
\end{equation}
If $k=0$ we get $S_d\binom{\sg}{1}=b_d(t_i)S_d(1)$ (the sum over $m$ is empty by convention). 

\subsection{The polynomials $H_{m}^{(d)}$}\label{part-2}

In order to complete the proof of Theorem \ref{signature-theorem} we need to show that the coefficients 
$H_{\underline{m}}^{(d)}$ involved in (\ref{formula-N-smaller-q}) can be expressed as combinations of twisted power sums; this is what we do in this subsection.
To ease the notation, we set $H_m^{(d)}:=H_{m}^{(d)}(t_i)$ and use the variable $t$ instead of $t_i$ any time we are focusing on one variable only. We recall that $H_{\underline{m}}^{(d)}(V)$ is
the coefficient of $z^{q^{\underline{m}}}$ in the polynomial $\prod_{i\in V}\sum_{j=0}^{d-1}b_j(t_i)E_j(z)$. We also recall the assumption that $|V|<q$. In this subsection we prove formulas which allow to interpret it as $\FF_p$-combinations of partial sums of multiple zeta values in $\mathcal{Z}_{V}$. We rewrite the $|V|$-variable case of (\ref{eq1-bis}) as:
\begin{equation}\label{identity-new-form}
\prod_{i\in V}\sum_{j=0}^{d-1}b_j(t_i)E_j(z)=\frac{\sum_{|a|<|\theta|^d}\frac{\sigma_V(a)}{z-a}}{\sum_{|a|<|\theta|^d}\frac{1}{z-a}}\in \FF_q(\theta)(z)[\underline{t}_V].\end{equation}
We want to expand the right-hand side in ascending powers of $z$. We set $F_d:=\sum_{|a|<|\theta|^d}\frac{\sigma_V(a)}{z-a}$. In $\FF_q(\theta)[\underline{t}_V][[z]]$ we have
$$F_d=\sum_{k\geq 0}z^{k(q-1)}S_{<d}\binom{V}{k(q-1)+|V|}.$$ Similarly, setting $G_d=\sum_{|a|<|\theta|^d}\frac{1}{z-a}$,
we compute
$$G_d=\frac{1}{z}+\sum_{k\geq 1}z^{k(q-1)-1}S_{<d}(k(q-1)).$$ Expanding $\frac{F_d}{G_d}$ we therefore find:
\begin{equation}\label{E:division}
\frac{F_d}{G_d}=\sum_{k\geq0}z^{k(q-1)+|V|}\kappa^{(d)}_{k(q-1)+|V|}
\end{equation}
where $\kappa^{(d)}_{k(q-1)+|V|}$ can be expressed in the following way:
$$\kappa^{(d)}_{k(q-1)+|V|}=S_{<d}\binom{V}{k(q-1)+|V|} +\mu_{k,1}S_{<d}\binom{V}{(k-1)(q-1)+|V|}+\cdots+
\mu_{k,k}S_{<d}\binom{V}{1},$$ where $\mu_{k,m}$, independent of $d$, are polynomials with coefficients in $\FF_p$ which are
homogemeous of weight $m$ evaluated in the terms $S_{<d}(h(q-1))$ (assumed to be of weight $h$). Here are some examples in the case $V=\sg$ a singleton:
\begin{eqnarray*}
\kappa^{(d)}_{1}&=&S_{<d}\binom{\sg}{1}\\
\kappa^{(d)}_{q}&=&S_{<d}\binom{\sg}{q}+\underbrace{(-S_{<d}(q-1))}_{=:\mu_{1,1}}S_{<d}\binom{\sg}{1}\\
\kappa^{(d)}_{2q-1}&=&S_{<d}\binom{\sg}{2q-1}+\underbrace{(-S_{<d}(q-1))}_{=:\mu_{2,1}}S_{<d}\binom{\sg}{q}+\underbrace{(-S_{<d}(2(q-1)))}_{=:\mu_{2,2}}S_{<d}\binom{\sg}{1}.
\end{eqnarray*}
Coming back to (\ref{identity-new-form}) we see that $\kappa^{(d)}_{k(q-1)+|V|}=0$ if $k(q-1)+|V|$ is not of the form $q^{\underline{m}}$ (note that this yields a collection of non-trivial identities). By using part (2) of Proposition \ref{elementary-facts} we get:
\begin{Lemma}\label{lemma-Hm}
Assuming that $|V|<q$, for all $\underline{m}\in\NN^V$ we can expand:
$$H_{\underline{m}}^{(d)}(V)=\sum_{j\in J}\nu_jS_{<d}(\mathcal{C}_j),\quad d\geq 0$$
where $J$ is a finite set, $(\nu_j)_{j\in J}$ is a family of elements of $\FF_p$ independent of $d$ and $(\mathcal{C}_j)_{j\in J}$ are admissible composition arrays of $\binom{V}{q^m}$ also independent of $d$.
\end{Lemma}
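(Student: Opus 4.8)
The plan is to reduce everything to the single-variable statement and then multiply. Throughout we keep $|V|<q$; note also that $q=p^{e}\geq 2$, so every singleton automatically has cardinality $<q$, which is all that is needed below.

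First I would establish the single-variable case, i.e.\ $V=\sg=\{j\}$ (write $t$ for $t_j$). Here the left-hand side of (\ref{identity-new-form}) is the polynomial $\sum_{l=0}^{d-1}b_l(t)E_l(z)$, whose $z$-expansion is given in two ways: by (\ref{expansion}) as $\sum_{m}H_m^{(d)}(t)z^{q^m}$, and by (\ref{E:division}) as $\sum_{k\geq 0}z^{k(q-1)+1}\kappa^{(d)}_{k(q-1)+1}$. Since $q^m-1=(q-1)(1+q+\cdots+q^{m-1})$, comparing the coefficient of $z^{q^m}$ gives $H_m^{(d)}(t)=\kappa^{(d)}_{q^m}$ with $k=(q^m-1)/(q-1)$. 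Now I would invoke the stated shape of $\kappa^{(d)}_{q^m}$: it is an $\FF_p$-linear combination, with $d$-independent coefficients, of products $\mu\cdot S_{<d}\binom{\sg}{(k-l)(q-1)+1}$ in which $\mu$ is a product of terms $S_{<d}(h(q-1))$ with $h\geq 1$ and $\sum h=l$. Every factor occurring is $S_{<d}$ of an \emph{admissible} composition array (of type $\emptyset$, resp.\ $\sg$), and the weights add up to $l(q-1)+(k-l)(q-1)+1=q^m$ with type $\sg$. Iterating part (2) of Proposition~\ref{elementary-facts} then turns each such product into an $\FF_p$-combination of $S_{<d}(\mathcal{C})$ with $\mathcal{C}$ admissible of $\binom{\sg}{q^m}$, the coefficients and the arrays being independent of $d$.

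Next I would pass to products. By definition $H_{\underline m}^{(d)}(V)=\prod_{j\in V}H_{m_j}^{(d)}(t_j)$, and by the previous step each factor is an $\FF_p$-combination of $S_{<d}(\mathcal{C})$ with $\mathcal{C}$ admissible of type $\{j\}$ and weight $q^{m_j}$. Multiplying the $|V|$ expansions together and applying Proposition~\ref{elementary-facts}(2) once more, iterated over $j\in V$ — and using that the variables $t_j$, $j\in V$, are pairwise distinct so that $\prod_{j\in V}\{j\}=\bigsqcup_{j\in V}\{j\}=V$ — yields $H_{\underline m}^{(d)}(V)=\sum_{j\in J}\nu_j S_{<d}(\mathcal{C}_j)$ with $J$ finite, all $\nu_j\in\FF_p$, and all $\mathcal{C}_j$ admissible of type $V$ and weight $\sum_{j\in V}q^{m_j}$, independent of $d$. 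That is exactly the assertion.

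I do not expect a serious obstacle: the content is essentially bookkeeping. The one point needing genuine care is the identification $H_m^{(d)}(t)=\kappa^{(d)}_{q^m}$, which is a formal power-series comparison legitimated by the exact rational-function identity (\ref{identity-new-form}); and one must check at each step that the composition arrays produced stay admissible, carry the prescribed type and weight, and are independent of $d$ — but this is precisely what Proposition~\ref{elementary-facts} guarantees.
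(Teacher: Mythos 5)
Your proof is correct. Structurally it differs from the paper's in the order of operations: the paper works directly with the $|V|$-variable identity (\ref{identity-new-form}), reads off the coefficient of $z^{q^{\underline m}}$ from the division $F_d/G_d$, and then applies Proposition~\ref{elementary-facts}(2) once; you instead first extract the single-variable identity $H_m^{(d)}(t)=\kappa^{(d)}_{q^m}$ and then multiply the $|V|$ one-variable expansions together, applying Proposition~\ref{elementary-facts}(2) iteratively to the product. Your route is slightly more careful on one point. In several variables the coefficient of $z^{q^{\underline m}}$ in $\prod_{i\in V}\sum_j b_j(t_i)E_j(z)$ is not $H_{\underline m}^{(d)}(V)$ itself but the sum $\sum_{\underline m'}H_{\underline m'}^{(d)}(V)$ over all $\underline m'\in\NN^V$ with $q^{\underline m'}=q^{\underline m}$, since distinct permutations of the exponents $(m_i)$ produce the same power of $z$ while $H_m^{(d)}(t_i)$ and $H_m^{(d)}(t_j)$ are different polynomials for $i\neq j$. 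So the paper's coefficient comparison directly controls only the symmetrized sum (which is all that is needed where the Lemma is invoked in Lemma~\ref{lemma-chi-dag}, as the factor $S_d(N-q^{\underline m}+1)$ depends only on $q^{\underline m}$); your single-variable-then-multiply argument establishes the Lemma exactly as stated, for each individual $\underline m$. Both approaches rest on the same ingredients — (\ref{identity-new-form}), the $F_d/G_d$ expansion, and the harmonic product of partial sums — so nothing essentially new enters.
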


See Lemma \ref{Hdm-explicit} for a much more refined version of the above statement.

We come back to the identity (\ref{results}). Combining with Lemma \ref{lemma-Hm} and applying Part (3) of Proposition \ref{elementary-facts}, we obtain:

\begin{Lemma}\label{lemma-chi-dag}
If $|U|<q$, for $N\geq 0$, there exist a finite set $\mathcal{J}$ and, for $j\in J$, elements $\epsilon_j\in\FF_p$ and composition arrays $\mathcal{E}_j$ of $\binom{U}{N+1}$ of signature $<N+1$ such that
for all $d\geq0$ 
$$S_d\binom{U}{N+1}=S_d^\dag\binom{U}{N+1}+\sum_{j\in J}\epsilon_jS_d(\mathcal{E}_j).$$
\end{Lemma}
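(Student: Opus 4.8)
The plan is to read the statement directly off formula (\ref{results}) together with Lemma \ref{lemma-Hm}. The first step is to unwind the one-block instance of the definition of the dagger sums: for the single-block array $\mathcal{C}=\binom{U}{N+1}$ (so there are no trailing trivial blocks), using (\ref{simpleformulas}) one gets $S_d^\dag\binom{U}{N+1}=S_d(N+1)B_d(U)$. Thus (\ref{results}) already reads
$$S_d\binom{U}{N+1}=S_d^\dag\binom{U}{N+1}+\sum_{\emptyset\neq V\subsetneq U}(-1)^{|V|}\sum_{\underline m\in M_V(N)}S_d(N-q^{\underline m}+1)\,H_{\underline m}^{(d)}(V),\quad d\geq0,$$
the inner sum being empty when $V=\emptyset$. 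It therefore remains to recognise this double sum, for each fixed $d$ and with $d$-independent data, as an $\FF_p$-combination of power sums $S_d(\mathcal{E}_j)$ of weight $N+1$, of type a subset of $U$, and of signature $<N+1$.

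For each pair $(V,\underline m)$ with $\emptyset\neq V\subsetneq U$ and $\underline m\in M_V(N)$ I would invoke Lemma \ref{lemma-Hm} to expand $H_{\underline m}^{(d)}(V)=\sum_{l}\nu_l\,S_{<d}(\mathcal{C}_{V,\underline m,l})$, with $\nu_l\in\FF_p$ and admissible composition arrays $\mathcal{C}_{V,\underline m,l}$ of $\binom{V}{q^{\underline m}}$, all independent of $d$. Since $\underline m\in M_V(N)$ we have $q^{\underline m}\leq N$, hence $N-q^{\underline m}+1\geq1$, so $\binom{\emptyset}{N-q^{\underline m}+1}$ is an admissible one-block array of trivial type, and multiplying gives
$$S_d(N-q^{\underline m}+1)\,S_{<d}(\mathcal{C}_{V,\underline m,l})=S_d\left(\binom{\emptyset}{N-q^{\underline m}+1},\,\mathcal{C}_{V,\underline m,l}\right),$$
simply because every term occurring in $S_{<d}$ lies strictly below level $d$, hence below the level-$d$ factor $S_d(N-q^{\underline m}+1)$; this is the plain concatenation of composition arrays, the simplest instance of Part (3) of Proposition \ref{elementary-facts}. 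Writing $\mathcal{E}_{V,\underline m,l}$ for this concatenation, it is admissible, has weight $(N-q^{\underline m}+1)+q^{\underline m}=N+1$, and type $V$.

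Summing over all $(V,\underline m,l)$ and absorbing the signs $(-1)^{|V|}$ and the $\nu_l$ into coefficients $\epsilon_j\in\FF_p$ then yields the announced identity. The step I expect to be the real obstacle — everything else being bookkeeping — is the verification of the signature bound. The array $\mathcal{E}_{V,\underline m,l}$ is obtained by placing the trivial-type block $\binom{\emptyset}{N-q^{\underline m}+1}$ above $\mathcal{C}_{V,\underline m,l}$, whose type is $V$ and whose total weight is $q^{\underline m}$; hence every block of $\mathcal{E}_{V,\underline m,l}$ carrying a non-empty type is already a block of $\mathcal{C}_{V,\underline m,l}$, and the sum of the weights of those blocks is at most $q^{\underline m}$. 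Therefore $0<\sign(\mathcal{E}_{V,\underline m,l})\leq q^{\underline m}\leq N<N+1$, as required. Two points deserve attention when carrying this out: one must apply Lemma \ref{lemma-Hm} with \emph{exactly} the weight $q^{\underline m}$ on the output arrays — this is where the weight-homogeneity of the coefficients $\kappa^{(d)}$ of \S\ref{part-2} is used — so that no surplus weight leaks into the non-empty-type blocks; and the $\mathcal{E}_{V,\underline m,l}$ have type the proper subset $V\subsetneq U$, so the phrase ``composition arrays of $\binom{U}{N+1}$'' in the statement is to be read as allowing type any subset of $U$ (equivalently, viewing them inside the Tate algebra in the variables $\underline t_U$).
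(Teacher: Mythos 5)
Your proof is correct and follows the same route as the paper, which at this point simply says to combine formula (\ref{results}) with Lemma \ref{lemma-Hm} and Part (3) of Proposition \ref{elementary-facts}; you usefully spell out the bookkeeping the paper leaves implicit, namely the concatenation identity $S_d(n)\,S_{<d}(\mathcal{C})=S_d\big(\binom{\emptyset}{n},\mathcal{C}\big)$ and the signature bound $0<\sign(\mathcal{E}_{V,\underline m,l})\leq q^{\underline m}\leq N<N+1$. The only slip, inconsequential, is citing (\ref{simpleformulas}) for $S_d^\dag\binom{U}{N+1}=S_d(N+1)B_d(U)$, which is really just the definition of $S_d^\dag$ applied to a one-block array.
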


\begin{proof}[Proof of Theorem \ref{signature-theorem}]
By using Lemma \ref{lemma-chi-dag} and repeated application of Proposition \ref{elementary-facts} we see that
$$S_d(\mathcal{C})=\sum_{d=d_1>\cdots>d_r}S_{d_1}^\dag\binom{U_1}{n_1}\cdots
S_{d_r}^\dag\binom{U_r}{n_r}S_{<d_r}(n'_1,\ldots,n'_{r'})+\sum_k\delta_kS_d(\mathcal{H}_k),$$ 
with the coefficients $\delta_k\in\FF_p$ independent of $d$ and the composition arrays $\mathcal{H}_k$ of
$\binom{U}{n}$ also independent of $d$ and of smaller signature.
\end{proof}

\begin{proof}[Proof of Theorem \ref{Z-L}]
If the sequence $f_{\underline{i}}B_{\underline{i}}(\Sigma)$ converges to the zero function
uniformly on every bounded subset of $\CC_\infty^\Sigma$, then $\mathcal{F}_\Sigma(f)\in\TT_\Sigma^{\operatorname{lin}}$, the subset of the Tate algebra $\TT_\Sigma$ in the variables $\underline{X}_\Sigma$ of the series which are $\FF_q$-linear in each variable. This is precisely what happens if 
$$f=\zeta^\dag_A(\mathcal{C}),$$
where 
 $\mathcal{C}$ is an admissible composition array of $\binom{\Sigma}{n}$ with $|\Sigma|<q$. 
 Furthermore, $$\mathcal{F}_\Sigma(\zeta_A^\dag(\mathcal{C}))=\lambda_A(\mathcal{C}).$$ This defines an isomorphism of $\mathcal{Z}_\emptyset$-modules.
\end{proof}
For example, the formula (\ref{id-simplest}) immediately implies:
\begin{equation}\label{image-basic-zeta}
\mathcal{F}_{\sg}\left(\zeta_A\binom{\sg}{1}\right)=\lambda_A\binom{\sg}{1}.
\end{equation}

\subsubsection{Remark}\label{remark-carlitz-formalism}

The map $\mathcal{F}_\Sigma$ defined in (\ref{definition-F}) and restricted on polynomials agrees with the formalism of the Carlitz action as in \cite[\S 3.4]{ANG&PEL&TAV}. More precisely, if $f\in\CC_\infty[\underline{t}_\Sigma]$, then 
$\mathcal{F}_\Sigma(f)=f*\underline{X}_\Sigma$ with the $*$-action defined in ibid. (3.2). However, 
in contrast with the above reference, we do not work in the completion of $\CC_\infty[\underline{X}_\Sigma]$ with respect to the norm $\|\cdot\|_C$ defined there; this is another path of investigation that will be developed elsewhere, which however forbids evaluation at $X_i=1$, which we examine here, where we rather work in the completion 
with respect to the Gauss norm $\|\cdot\|$.

\section{Analytic properties of multiple zeta values in Tate algebras}

At the time being, we do not know if the functions $\mathcal{F}_\Sigma$ extend to a unique map defined over $\mathcal{Z}$.
Our next task in this paper is to give alternative definitions to the maps $\mathcal{F}_\Sigma$ so that 
they can be extended to a larger space of functions allowing to investigate their analytic properties. We will therefore introduce the maps $\mathcal{E}_\Sigma$, that turn out to coincide with the $\mathcal{F}_\Sigma$'s. Firstly, in \S \ref{perkins-with}, we recall a formula for twisted power sums that 
the second author obtained in collaboration with Perkins. Then in \S\ref{first-step} we introduce the algebra $\EE_\Sigma$ of entire functions in the variables $t_i$ with $i\in\Sigma$ and consider a (non-exhaustive) filtration $\EE_\Sigma^{[0]}\subset \EE_\Sigma^{[1]}\subset\cdots$. Finally in \S \ref{defi-E} we introduce the map 
 $\mathcal{E}_\Sigma$ which turns out to agree with $\mathcal{F}_\Sigma$ but can be defined analytically. Indeed what we have is that, at once, $\mathcal{E}_\Sigma$ is defined over $\EE_\Sigma^{[0]}$, and also, $\mathcal{Z}_\Sigma\subset\EE_\Sigma^{[0]}$ if $|\Sigma|<q$. In \S \ref{Trivial-elements} we introduce our trivial multiple zeta values in Tate algebras and we prove Theorem C of the introduction.

\subsection{A formula for twisted power sums}\label{perkins-with}

Using the results of \cite{PEL&PER2}, we are able to relate the twisted power sums to a family of polynomials introduced by the second author and Perkins which we recall now. Let $\Sigma$ be a finite subset of $\NN^*$ such that $|\Sigma|=m(q-1)+1$, with $m>0$.
In \cite[Thm. 1]{PEL&PER2} it is proved that if $d\geq m$, then
$$
S_{<d}\binom{\Sigma}{1}=l_{d-1}^{-1}\prod_{i\in\Sigma}b_{d-m}(t_i)\HH_{\Sigma}(Y)_{Y=\theta^{q^{d-m}}}.
$$
We recall that in this formula, $\HH_{\Sigma}\in A[Y][\underline{t}_\Sigma]$ is of degree $$\mu:=\frac{q^m-1}{q-1}-m$$ in $Y$ and of degree $m-1$ in $t_i$ for all $i\in\Sigma$. Using  \cite[Thm. 3.3.6]{DemeslayTh} we can prove the following refinement of \cite[Thm. 1]{PEL&PER2} which removes the condition $d\geq m$. 
\begin{Lemma}\label{pel-per-refined} If $\Sigma$ is a subset of $\NN^*$ such that $|\Sigma|=m(q-1)+1$ with $m>0$ then there exists $\HH_\Sigma\in A[Y][\underline{t}_\Sigma]$ of degree $\mu=\frac{q^m-1}{q-1}-m$
	in $Y$ and degree $m-1$ in each of the variables $t_i$, $i\in\Sigma$, such that 
	\begin{equation}\label{formula-per-per}
	S_{<d}\binom{\Sigma}{1}=l_{d-1}^{-1}\prod_{i\in\Sigma}b_{d-m}(t_i)\HH_{\Sigma}(Y)_{Y=\theta^{q^{d-m}}},\quad d\geq 0,
	\end{equation}
	and $\mathbb{H}_\Sigma$ is uniquely determined by these properties.
\end{Lemma}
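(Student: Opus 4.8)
The plan is to deduce Lemma \ref{pel-per-refined} from the original result of \cite[Thm. 1]{PEL&PER2}, which already gives the displayed formula for all $d\geq m$, by extending it downward to the range $0\leq d<m$. The strategy splits into two parts: first, use the factor $\prod_{i\in\Sigma}b_{d-m}(t_i)$ to control the small-$d$ behaviour, and second, use the rigidity coming from degree bounds to pin down $\HH_\Sigma$ uniquely.

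\textbf{Step 1: the small-$d$ cases are forced.} For $0\leq d<m$, the index $d-m$ is negative, and recall that we set $b_{-i}(t)=\prod_{j=1}^{i}(t-\theta^{q^{-j}})^{-1}$. The key observation is that $S_{<d}\binom{\Sigma}{1}=\sum_{|a|<|\theta|^d}\sigma_\Sigma(a)/(\text{the obvious denominators})$, which for $d\leq m-1$ is a sum over polynomials of degree $<d<m$; since $\sigma_\Sigma(a)=\prod_{i\in\Sigma}a(t_i)$ and each $a(t_i)$ has $t_i$-degree $<m$, the left-hand side is a polynomial in each $t_i$ of degree at most $m-1$ with coefficients that are explicit rational functions of $\theta$. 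I would check directly (using $S_{<0}\binom{\Sigma}{1}=0$ and the power-sum recursions, or more efficiently the $\FF_q$-linearity of the Carlitz-type polynomials) that $S_{<d}\binom{\Sigma}{1}$ is divisible by $\prod_{i\in\Sigma}b_{d-m}(t_i)$ in the appropriate sense — i.e. that $l_{d-1}S_{<d}\binom{\Sigma}{1}/\prod_{i\in\Sigma}b_{d-m}(t_i)$ is again polynomial of the right degree in each $t_i$ — so that the quotient defines, via the substitution $Y=\theta^{q^{d-m}}$, a consistent set of values of a single polynomial $\HH_\Sigma(Y)$.

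\textbf{Step 2: interpolation and uniqueness.} From \cite[Thm. 1]{PEL&PER2} we already have a polynomial $\HH_\Sigma\in A[Y][\underline t_\Sigma]$ of $Y$-degree $\mu=\frac{q^m-1}{q-1}-m$ agreeing with the required values at $Y=\theta^{q^{d-m}}$ for all $d\geq m$, i.e. at $\theta^{q^0},\theta^{q^1},\theta^{q^2},\dots$, which is an infinite set of points; hence $\HH_\Sigma$ is already uniquely determined by its behaviour in that range, and in particular there is nothing to choose. What remains is to verify that this same $\HH_\Sigma$ also reproduces the values computed in Step 1 at $Y=\theta^{q^{d-m}}$ for $0\leq d<m$, i.e. at $\theta^{q^{-m}},\dots,\theta^{q^{-1}}$. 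This is exactly where I would invoke \cite[Thm. 3.3.6]{DemeslayTh}: that result provides the functional/interpolation identity (of the Anderson--Demeslay "$\mathcal{L}$-series / taming module" type) that controls $S_{<d}\binom{\Sigma}{1}$ uniformly in $d\in\ZZ$, not merely for $d\geq m$, and pairing it with the Perkins polynomial $\HH_\Sigma$ yields the claimed equality (\ref{formula-per-per}) for all $d\geq 0$ by analytic continuation of the single polynomial relation in the variable $Y$.

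\textbf{Expected main obstacle.} The routine part is the bookkeeping of degrees in the $t_i$ and the handling of the negative-index $b_{d-m}$; this is the kind of computation the paper elsewhere does without comment. The genuine content is matching the normalisation in \cite[Thm. 3.3.6]{DemeslayTh} with the Perkins polynomial from \cite[Thm. 1]{PEL&PER2}: one must make sure the two sources use compatible conventions for $l_d$, $b_d$, the Carlitz exponential coefficients $D_i$, and the sign/leading-coefficient normalisation of $\HH_\Sigma$, so that the two formulas glue to a single polynomial identity rather than differing by a spurious unit. Once the conventions are aligned, uniqueness is immediate from the infinitude of the interpolation points $\{\theta^{q^i}:i\geq 0\}$ and the fixed $Y$-degree $\mu$, so no separate argument for uniqueness is needed beyond remarking that a polynomial of bounded degree is determined by infinitely many values.
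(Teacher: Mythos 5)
You correctly identify the two necessary ingredients (Perkins' theorem for $d\geq m$, and Demeslay's result from \cite{DemeslayTh} to cover the remaining range), and your observation that $\HH_\Sigma$ is already pinned down by the infinitely many interpolation points with $d\geq m$ is right and worth stating. However, the mechanism you propose for the extension has a genuine gap, and your Step~1 contains a conceptual confusion.

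On Step~1: for $d<m$ the factor $b_{d-m}(t_i)=\prod_{j=1}^{m-d}(t_i-\theta^{q^{-j}})^{-1}$ is a \emph{reciprocal} of a product, so the phrase ``$S_{<d}\binom{\Sigma}{1}$ is divisible by $\prod b_{d-m}(t_i)$'' has it backwards; dividing by $\prod b_{d-m}$ means multiplying $S_{<d}$ by the linear factors $(t_i-\theta^{q^{-j}})$, which is always polynomial. The nontrivial claim would instead be that this product equals $l_{d-1}^{-1}\HH_\Sigma(\theta^{q^{d-m}})$, and you never attempt that computation. In any case this direct small-$d$ verification is not what the paper does, and it would also run into a notational snag at $d=0$ where $l_{d-1}=l_{-1}$ is not defined by the conventions in the paper.

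The real gap is in Step~2. ``Analytic continuation of the single polynomial relation'' is not the operative mechanism, and you never say how Demeslay's theorem actually gets paired with Perkins'. What the paper does is quite specific: Demeslay's theorem produces a \emph{unique} $Q\in K^{\operatorname{perf}}(\underline{t}_\Sigma)[x]$ with $S_d\binom{\Sigma}{1}=l_d^{-1}\tau^d(Q)_{x=\theta}\prod_{i\in\Sigma}b_d(t_i)$ for \emph{all} $d\geq0$. The paper then forms the first difference $S_d=S_{<d+1}-S_{<d}$ using Perkins' formula (valid only for $d\geq m$), factors $b_{d-m}(x)=b_d(x)\tau^d(b_{-m}(x))$, and recognises that the result has exactly the shape $l_d^{-1}\prod b_d(t_i)\cdot\tau^d(fg_\Sigma)_{x=\theta}$ for explicit $f,g_\Sigma$. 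Uniqueness of $Q$ then forces $Q=fg_\Sigma$, so the Perkins-derived formula for $S_d$ holds for all $d\geq0$, not merely $d\geq m$. Finally one sums and telescopes to recover the stated formula for $S_{<d}$. Your proposal works directly with $S_{<d}$ rather than the first differences $S_d$, never invokes the crucial uniqueness of $Q$, and omits the telescoping step entirely; without these, the bridge from ``Demeslay controls the small-$d$ range'' to ``the same $\HH_\Sigma$ works there'' is left unbuilt.
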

\begin{proof} For the convenience of the reader we give all the details. The statement of the lemma is true if $d\geq m$ by \cite[Thm. 1]{PEL&PER2}. Hence we know that there exists $\mathbb{H}_\Sigma$ unique, with the above properties, such that the above identity holds for $d\geq m$ and we need to extend the identities to all $d$. We recall that in his Ph. D. Thesis \cite{DemeslayTh}, Demeslay proved, by using a log-algebraic-type result, the following property (his result is more general but we are going to describe only the aspect that is important to us): there exists a unique $Q\in K^{\text{perf}}(\underline{t}_\Sigma)[x]$ such that for all $d\geq 0$,
	\begin{equation}\label{demeslay-identity}
	S_d\binom{\Sigma}{1}=l_d^{-1}\tau^{d}(Q)_{x=\theta}\prod_{i\in\Sigma}b_d(t_i),
	\end{equation} where $x$ is a new indeterminate. This identity holds in fact for any $\Sigma$, regardless of the congruence condition imposed previously. 
	
	Let $d$ be a non-negative integer. Note that for any $d,m\geq 0$, $b_{d-m}(x)=b_d(x)\tau^d(b_{-m}(x))$. Then, 
	\begin{eqnarray*}
		\lefteqn{S_d\binom{\Sigma}{1}=S_{<d+1}\binom{\Sigma}{1}-S_{<d}\binom{\Sigma}{1}=}\\
		&=&l_d^{-1}\prod_{i\in\Sigma}b_d(t_i)\left(\prod_{i\in\Sigma}\tau^d(b_{-m}(t_i))\right)\left[\prod_{i\in\Sigma}(t_i-\theta^{q^{d-m}})\mathbb{H}_\Sigma(\theta^{q^{d-m+1}})-(\theta-\theta^{q^d})\mathbb{H}_\Sigma(\theta^{q^{d-m}})\right],
	\end{eqnarray*}
	the second identity being valid if $d\geq m$. Note that 
	$b_{-m}(t)\in\FF_q(1/\theta^{q^{-m}},t)$ and we can write $\prod_{i\in\Sigma}\tau^d(b_{-m}(t_i))=\tau^d(f(\theta^{q^{-m}},\underline{t}_\Sigma))\in\FF_q(\theta^{q^{-m}},\underline{t}_\Sigma)$, for all $d\geq 0$, for some $f\in\FF_q(\theta,\underline{t}_\Sigma)$. Similarly, we can write
	$$\left(\prod_{i\in\Sigma}(t_i-\theta^{q^{d-m}})\right)\mathbb{H}_\Sigma(\theta^{q^{d-m+1}})-(\theta-\theta^{q^d})\mathbb{H}_\Sigma(\theta^{q^{d-m}})=\tau^d(g_\Sigma(x,\underline{t}_\Sigma))_{x=\theta},$$ with $g_\Sigma\in\FF_q[\theta^{q^{-m}}][x,\underline{t}_\Sigma]$, and where $\tau$ is extended $\FF_q[x,\underline{t}_\Sigma]$-linearly. 
	By uniqueness, for all $d\in\ZZ$,
	$$\tau^d(Q)_{x=\theta}=\prod_{i\in\Sigma}(t_i-\theta^{q^{d-m}})\mathbb{H}_\Sigma(\theta^{q^{d-m+1}})-(\theta-\theta^{q^d})\mathbb{H}_\Sigma(\theta^{q^{d-m}})$$
	and therefore, $Q=fg_\Sigma$. Replacing in (\ref{demeslay-identity}), summing and telescoping, we complete the proof of the lemma. 
\end{proof}

\subsubsection{Remark: link with log-algebraic theorems} Lemma \ref{pel-per-refined} is, in a certain sense, stronger than certain instances of Anderson's log-algebraicity theorem \cite[Theorem 3]{AndLog}. To illustrate this, we will use the formalism of {\em Stark units}
as in the work of Angl\`es, Ngo Dac, Tavares Ribeiro and the second author \cite{ANG&NGO&TAV,ANG&PEL&TAV}.

Let $\delta$ be the degree in $\theta$ of the polynomial $\mathbb{H}_\Sigma$ and denote by $\tau$ the $\FF_q[\underline{t}_\Sigma][Y]$-linear extension of the map $c\mapsto c^q$, $c\in A$,
to $A[\underline{t}_\Sigma][Y]$. We recall that $E_k(a)=0$ for all $a\in A$ such that $k>\deg_\theta(a)$, where $E_k$ is the $k$-th Carlitz's polynomial. In particular, writing
$E_k=\sum_{j=0}^kD_j^{-1}l_{k-j}^{-q^j}\tau^j$, we have $E_k(\mathbb{H}_\Sigma)=0$ for all $k>\delta$. We compute:
$$\mathcal{A}_k:=\sum_{i+j=k}D_i^{-1}B_i(\Sigma)\tau^i\Big(S_j\binom{\Sigma}{1}\Big).$$ By using 
(\ref{formula-per-per}) we note that
\begin{eqnarray*}
\mathcal{A}_k&=&\sum_{i+j=k}D_i^{-1}l_j^{-q^i}B_i(\Sigma)\tau^i(B_{j-m+1})\tau^i(\mathbb{H}_\Sigma)_{Y=\theta^{q^{k-m+1}}}\\
&=&B_{k-m+1}(\Sigma)E_k(\mathbb{H}_\Sigma)_{Y=\theta^{q^{k-m+1}}}.
\end{eqnarray*}
We see that $\mathcal{A}_k\in A[\underline{t}_\Sigma]$ for all $k\geq 0$. For $k\geq m$ this is obvious. For $0\leq k\leq m-1$, $\mathcal{A}_k$ belongs to $A^{\operatorname{perf}}[\underline{t}_\Sigma]\cap K_\infty[\underline{t}_\Sigma]$ where $A^{\operatorname{perf}}=\cup_{i\geq 0}A^{1/p^i}$. This intersection is easily seen to be equal to $A[\underline{t}_\Sigma]$. Additionally, note that 
$\mathcal{A}_k=0$ for all $k>\delta$.

Now consider the formal series $$\zeta_A^\diamondsuit(1;\Sigma):=\sum_{i\geq 0}z^iS_i\binom{\Sigma}{1}\in K[\underline{t}_\Sigma][[z]].$$
The exponential function associated with the (non-uniformisable) Drinfeld module of rank one 
$\phi^\diamondsuit$ defined by 
$$\phi_\theta^\diamondsuit:=\theta+zB_1(\Sigma)\tau$$
where $z$ is a central variable, is
$$\exp_{\phi^\diamondsuit}=\sum_{i\geq 0}z^iD_i^{-1}B_i(\Sigma)\tau^i.$$
We get 
$$\exp_{\phi^\diamondsuit}(\zeta_A^\diamondsuit(1;\Sigma))=\sum_{k\geq 0}z^k\sum_{i+j=k}D_i^{-1}B_i(\Sigma)\tau^i\Big(S_j\binom{\Sigma}{1}\Big)=\sum_{k\geq 0}z^k\mathcal{A}_k.$$ Hence
$\exp_{\phi\diamondsuit}(\zeta_A^\diamondsuit(1;\Sigma))\in A[\underline{t}_\Sigma][z]$ is a polynomial of degree $\leq \delta$ in $z$ and this is known to imply a particular statement of log-algebraicity (see \cite{APTR16}).

\subsection{The spaces $\mathbb{E}_{\Sigma}^{[N]}$}\label{first-step} 
Let $\Sigma$ be a finite subset of $\NN^*$ of cardinality $s$. Recall that if $\underline{k}=(k_j:j\in\Sigma)\in\NN^{\Sigma}$, we set $\underline{\theta}^{q^{\underline{k}}}=(\theta^{q^{k_j}}:j\in\Sigma)$. If $N\in\ZZ$, we also set $\underline{k}+N=(k_j+N:j\in\Sigma)$. We also write $D_{\underline{k}}$ in place of $\prod_{j\in\Sigma}D_{k_j}$. For $N\in\NN$
we denote by $\EE_{\Sigma}^{[N]}$ the subvector space of the $\CC_\infty$-vector space $\EE_\Sigma$ generated by the functions $f$ such that
$\frac{f(\underline{\theta}^{q^{\underline{k}}})}{D_{\underline{k}+N}}\rightarrow0$ as $|\underline{k}|\rightarrow\infty$. We have
$\EE_{\Sigma}^{[N]}\subset\EE_{\Sigma}^{[N']}$ if $N\leq N'$.
We denote by $\tau_\Sigma$ the $\FF_q[\underline{t}_\Sigma]$-linear map 
$$\tau_\Sigma:=B_1(\Sigma)\tau=\prod_{j\in\Sigma}(t_j-\theta)\tau.$$
We have:

\begin{Lemma}\label{6.2}
The following properties hold:
\begin{enumerate}
\item For $N\geq 0$, $\tau_\Sigma(\EE_\Sigma^{[N]})\subset\EE_\Sigma^{[N]}$. 
\item If $U\subset \Sigma$, $\Sigma'=\Sigma\setminus U$ and $\underline{r}\in\NN^U$, then
for any $f\in\EE_\Sigma^{[N]}$, $f_{\underline{t}_U=\underline{\theta}^{q^{\underline{r}}}}\in\EE^{[N]}_{\Sigma'}$.
\item We have $\tau^{-N}(\EE_\Sigma^{[0]})\subset\EE_\Sigma^{[N]}$.
\end{enumerate}
\end{Lemma}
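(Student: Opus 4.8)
The plan is to reduce each of the three assertions to an elementary estimate for the values $f(\underline{\theta}^{q^{\underline{k}}})$, using throughout the recursion $D_{i+1}=(\theta^{q^{i+1}}-\theta)D_i^q$ (immediate from the definition of $D_i$ and the Frobenius identity $(\theta^{q^i}-\theta^{q^j})^q=\theta^{q^{i+1}}-\theta^{q^{j+1}}$) together with the ultrametric inequality. First I would record how the three operations act on evaluations at the points $\underline{\theta}^{q^{\underline{k}}}$. Since $\tau$ acts on $\CC_\infty[\underline{t}_\Sigma]$ (and on $\EE_\Sigma$) as the $q$-power map on coefficients, fixing the variables, one has $\tau(f)(\underline{\alpha})=f(\underline{\alpha}^{1/q})^{q}$ (cf. the proof of Corollary \ref{cor-grading}); evaluating at $\underline{\alpha}=\underline{\theta}^{q^{\underline{k}}}$ gives
\[
\tau_\Sigma(f)(\underline{\theta}^{q^{\underline{k}}})=\Big(\prod_{j\in\Sigma}(\theta^{q^{k_j}}-\theta)\Big)\,f(\underline{\theta}^{q^{\underline{k}-1}})^{q},
\]
while for $U\subset\Sigma$, $\Sigma'=\Sigma\setminus U$ and $\underline{r}\in\NN^U$ one trivially has $f_{\underline{t}_U=\underline{\theta}^{q^{\underline{r}}}}(\underline{\theta}^{q^{\underline{k}'}})=f(\underline{\theta}^{q^{\underline{k}}})$, where $\underline{k}$ is the vector whose $U$-coordinates are $\underline{r}$ and whose $\Sigma'$-coordinates are $\underline{k}'$, and $\tau^{-N}(f)(\underline{\theta}^{q^{\underline{k}}})=f(\underline{\theta}^{q^{\underline{k}+N}})^{1/q^N}$. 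In all three cases the operation preserves entireness (multiplication by a polynomial, and raising the Taylor coefficients to a $q$-power or taking a $q^N$-th root, which does not disturb the convergence condition), so it suffices to check the normalized decay. Note also that $\EE_\Sigma^{[N]}$ is exactly the set of $f\in\EE_\Sigma$ with $f(\underline{\theta}^{q^{\underline{k}}})/D_{\underline{k}+N}\to0$, that decay condition being stable under $\CC_\infty$-linear combinations.

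For (1), applying the recursion coordinatewise to $D_{\underline{k}+N}=\prod_{j}D_{k_j+N}$ and writing $\underline{m}:=\underline{k}-1$ yields
\[
\frac{\tau_\Sigma(f)(\underline{\theta}^{q^{\underline{k}}})}{D_{\underline{k}+N}}
=\Big(\prod_{j\in\Sigma}\frac{\theta^{q^{m_j+1}}-\theta}{\theta^{q^{m_j+N+1}}-\theta}\Big)\Big(\frac{f(\underline{\theta}^{q^{\underline{m}}})}{D_{\underline{m}+N}}\Big)^{q}.
\]
Since $|\theta|>1$ and $N\geq0$, each factor of the first product has absolute value $|\theta|^{q^{m_j+1}(1-q^N)}\leq1$; the second factor tends to $0$ as $|\underline{m}|\to\infty$ by hypothesis. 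As $|\underline{k}|\to\infty$ forces $|\underline{m}|\to\infty$ — and the left-hand side is outright $0$ whenever some $k_j=0$, because of the factor $\theta^{q^{k_j}}-\theta$ — the ratio tends to $0$, proving $\tau_\Sigma(f)\in\EE_\Sigma^{[N]}$. For (2), write $D_{\underline{k}+N}=D_{\underline{r}+N}\cdot D_{\underline{k}'+N}$ with $D_{\underline{r}+N}=\prod_{j\in U}D_{r_j+N}$ a fixed nonzero constant (nonzero since $r_j+N\geq0$); then $f_{\underline{t}_U=\underline{\theta}^{q^{\underline{r}}}}(\underline{\theta}^{q^{\underline{k}'}})/D_{\underline{k}'+N}$ equals $D_{\underline{r}+N}$ times $f(\underline{\theta}^{q^{\underline{k}}})/D_{\underline{k}+N}$, and the latter tends to $0$ as $|\underline{k}'|\to\infty$, since then $|\underline{k}|\to\infty$ as well (the $U$-coordinates being frozen).

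For (3), take $f\in\EE_\Sigma^{[0]}$ and put $\underline{l}=\underline{k}+N$, so that
\[
\frac{|\tau^{-N}(f)(\underline{\theta}^{q^{\underline{k}}})|}{|D_{\underline{l}}|}
=\Big(\frac{|f(\underline{\theta}^{q^{\underline{l}}})|}{|D_{\underline{l}}|}\Big)^{1/q^N}\,|D_{\underline{l}}|^{\,1/q^N-1}.
\]
Since $|D_i|\geq1$ for every $i\geq0$ we have $|D_{\underline{l}}|^{1/q^N-1}\leq1$, and $|f(\underline{\theta}^{q^{\underline{l}}})|/|D_{\underline{l}}|\to0$ as $|\underline{l}|\to\infty$ because $f\in\EE_\Sigma^{[0]}$; as $|\underline{k}|\to\infty$ is the same as $|\underline{l}|\to\infty$, the expression tends to $0$ and $\tau^{-N}(f)\in\EE_\Sigma^{[N]}$. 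The only genuinely non-formal input is the recursion for $D_i$ together with the inequality $|\theta^{q^{a}}-\theta|\leq|\theta^{q^{a+N}}-\theta|$ for $a\geq1$, $N\geq0$; I expect the only real care to be needed in (1), in handling the boundary coordinates $k_j=0$ and in matching the index shift $\underline{k}\mapsto\underline{k}-1$ against the shift built into $D_{\underline{k}+N}$ via the recursion — once these are lined up, the estimates are immediate.
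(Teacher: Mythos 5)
Your proof is correct and follows essentially the same route as the paper's: in each part you compute how the operation transforms the ratio $f(\underline{\theta}^{q^{\underline{k}}})/D_{\underline{k}+N}$ using the recursion $D_i=(\theta^{q^i}-\theta)D_{i-1}^q$, and then bound the resulting scalar factor by $1$ (using $|\theta|>1$ and $|D_i|\geq 1$). Your treatment of the boundary case $k_j=0$ in (1) mirrors the paper's observation that $\tau_\Sigma(f)(\underline{\theta}^{q^{\underline{k}}})$ vanishes whenever $\underline{k}+N$ has a zero entry, and your explicit display of the factor $\prod_{j\in\Sigma}\frac{\theta^{q^{m_j+1}}-\theta}{\theta^{q^{m_j+N+1}}-\theta}$ actually corrects a small typographical slip in the paper's own formula, where both numerator and denominator are written as $\prod_j(\theta^{q^{k_j+N}}-\theta)$.
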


\begin{proof}
Let $f$ be an element of $\EE_\Sigma^{[N]}$. 
Note that if $\underline{k}+N$ has a vanishing entry, then $\tau_\Sigma(f)(\underline{\theta}^{q^{\underline{k}+N}})=0$. Observing that $D_i=(\theta^{q^i}-\theta)D_{i-1}^q$ for $i>0$, we see that  for all $\underline{k}\in\NN^\Sigma$ such that $\underline{k}+N$ has no vanishing entry,
$$\frac{\tau_\Sigma(f)(\underline{\theta}^{q^{\underline{k}+N}})}{D_{\underline{k}+N}}=\frac{\prod_{j\in\Sigma}(\theta^{q^{k_j+N}}-\theta)}{\prod_{j\in\Sigma}(\theta^{q^{k_j+N}}-\theta)}
\left(\frac{f(\underline{\theta}^{q^{\underline{k}-1}})}{D_{\underline{k}-1+N}}\right)^q\rightarrow0$$
as $|\underline{k}|\rightarrow\infty$. The second property is obvious.
For the third we observe that if $f\in\EE_\Sigma^{[0]}$ then
$$\frac{\tau^{-N}(f)(\underline{\theta}^{q^{\underline{k}-N}})}{D_{\underline{k}}^{1/q^N}}=\left(\frac{f(\underline{\theta}^{q^{\underline{k}}})}{D_{\underline{k}}}\right)^{\frac{1}{q^N}}\rightarrow0$$ as $|\underline{k}|\rightarrow\infty$. Hence, $\frac{\tau^{-N}(f)(\underline{\theta}^{q^{\underline{j}}})}{D_{\underline{j}+N}}\rightarrow0$.
\end{proof}
The interest of the spaces $\EE_\Sigma^{[N]}$ is inscribed in the next result.

\begin{Proposition}\label{four-properties}
Let $f$ be an element of $\mathcal{Z}_\Sigma$. 
The following properties hold.
\begin{enumerate}
\item There exists $N\geq 0$ such that $f\in\EE_\Sigma^{[N]}$. 
\item There exists $m\in\NN$ such that $\tau^m(f)\in\EE_\Sigma^{[0]}$. 
\item If $\mathcal{C}$ is an admissible composition array of type $\Sigma'$ such that the second row is $(1,\ldots,1)$, then $\tau^{m-1}(f)\in\EE_\Sigma^{[0]}$ with $f=\zeta_A(\mathcal{C})$ and 
$m$ such that $m(q-1)\geq|\Sigma'|$.
\item If $|\Sigma|<q$, $\mathcal{Z}_\Sigma\subset\EE_\Sigma^{[0]}$.
\end{enumerate}
\end{Proposition}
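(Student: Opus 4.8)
The plan is to reduce all four assertions to estimates on the $|\cdot|$-size of $\zeta_A(\mathcal{C})$, and of the truncations $\zeta_A^{\dag}(\mathcal{C})$ of Theorem \ref{signature-theorem}, evaluated at the points $\underline{\theta}^{q^{\underline{k}}}$, compared with $\deg_\theta D_{\underline{k}+N}=\sum_{i\in\Sigma}(k_i+N)q^{k_i+N}$; note $|\underline{k}|\to\infty$ forces $k_{\max}:=\max_{i\in\Sigma}k_i\to\infty$, and the key identity is $\tau^{m}(g)(\underline{\theta}^{q^{\underline{k}}})=g(\underline{\theta}^{q^{\underline{k}-m}})^{q^{m}}$. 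For parts (1) and (2) (arbitrary $\Sigma$) it suffices, by $\FF_p$-linearity, to treat $f=\zeta_A(\mathcal{C})$ for a basic composition array $\mathcal{C}=\binom{\Sigma_1\cdots\Sigma_r}{n_1\cdots n_r}$ of type $\Sigma$. Specialising $t_i=\theta^{q^{k_i}}$ turns the inner power sum attached to a nonempty $\Sigma_j$ into $S_{d_j}\!\big(n_j-\sum_{i\in\Sigma_j}q^{k_i}\big)=S_{d_j}(-M_j)$ with $M_j\le|\Sigma|q^{k_{\max}}$ (the indices with $\Sigma_j=\emptyset$ contribute a Thakur power sum of size $\le1$). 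Writing $a=\theta^d+b$ and using $\sum_{|a|<|\theta|^d}(z-a)^{-1}=1/(l_dE_d(z))$ together with $E_d(z)=D_d^{-1}\prod_{|a|<|\theta|^d}(z-a)\sim z^{q^d}/D_d$ at $\infty$, one reads off the vanishing $\sum_{b\in A,\ \deg b<d}b^{i}=0$ for all $i<q^{d}-1$; hence $S_{d}(-M)=0$ unless $q^{d}\le M+1$. Applied with $M=M_j$ this forces $d_j\le k_{\max}+C_0$ in every nonzero term, where $q^{C_0}>|\Sigma|$, and then $|S_{d_j}(-M_j)|\le|\theta|^{d_jM_j}$ yields
\[
\deg_\theta\zeta_A(\mathcal{C})(\underline{\theta}^{q^{\underline{k}}})\ \le\ (k_{\max}+C_0)\sum_{i\in\Sigma}q^{k_i}\ \le\ (k_{\max}+C_0)|\Sigma|\,q^{k_{\max}}.
\]

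Since $\deg_\theta D_{\underline{k}+N}\ge(k_{\max}+N)q^{N}q^{k_{\max}}$, any $N$ with $q^{N}>|\Sigma|$ (e.g. $N=|\Sigma|$) makes $\deg_\theta D_{\underline{k}+N}-\deg_\theta\zeta_A(\mathcal{C})(\underline{\theta}^{q^{\underline{k}}})\to+\infty$, which is (1). For (2) one feeds the same bound into $\deg_\theta\tau^{m}(f)(\underline{\theta}^{q^{\underline{k}}})=q^{m}\deg_\theta f(\underline{\theta}^{q^{\underline{k}-m}})$ and, using the compatibility of the filtration $\EE_\Sigma^{[0]}\subset\EE_\Sigma^{[1]}\subset\cdots$ with the Frobenius twist (Lemma \ref{6.2}), checks that for $m$ large enough in terms of $N$ and $|\Sigma|$ the quotient $\tau^{m}(f)(\underline{\theta}^{q^{\underline{k}}})/D_{\underline{k}}$ tends to $0$, i.e. $\tau^{m}(f)\in\EE_\Sigma^{[0]}$.

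For part (3) the crude bound is not enough, so I would use closed forms for the weight-one power sums. If $|\Sigma'|<q$ then $S_d\binom{\Sigma_j}{1}=\prod_{i\in\Sigma_j}b_d(t_i)/l_d$ by (\ref{simpleformulas}); if $|\Sigma'|\ge q$ one substitutes the Pellarin–Perkins formula of Lemma \ref{pel-per-refined}, and it is exactly to absorb the factor $l_{d-1}^{-1}\mathbb{H}_{\Sigma_j}(\theta^{q^{d-m_j}})$ there (whose degree in $Y$ grows with $d$) that one twists by $\tau^{m-1}$ with $m(q-1)\ge|\Sigma'|$. The decisive point is that after substituting $t_i=\theta^{q^{k_i}}$ each factor $b_{d}(\theta^{q^{k_i}})$ (resp. $b_{d-m}(\theta^{q^{k_i}})$) vanishes as soon as $d$ exceeds $k_i$ (resp. $k_i+m$), so the multiple sum over $d_1>\cdots>d_r$ collapses to a finite sum; a surviving term has $\theta$-degree at most
\[
\sum_{j} d_j\sum_{i\in\Sigma_j}q^{k_i}-\sum_{j}\deg_\theta l_{d_j}\ \le\ \sum_{i\in\Sigma}k_iq^{k_i}-\sum_{j}\deg_\theta l_{d_j}\ =\ \deg_\theta D_{\underline{k}}-\sum_{j}\deg_\theta l_{d_j}
\]
(using $d_j\le\min_{i\in\Sigma_j}k_i$ and $\Sigma=\sqcup_j\Sigma_j$). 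A short case analysis — either some $d_j$ grows with $|\underline{k}|$, so $\deg_\theta l_{d_j}\to\infty$, or all $d_j$ stay bounded while $\deg_\theta D_{\underline{k}}\to\infty$ — shows this difference tends to $-\infty$, whence $\tau^{m-1}(f)\in\EE_\Sigma^{[0]}$. Finally, part (4) follows: for $|\Sigma|<q$, Corollary \ref{coro-dag} gives $\mathcal{Z}_{w,\Sigma}=\mathcal{Z}^{\dag}_{w,\Sigma}$, and $\zeta_A^{\dag}(\mathcal{C})=\sum_d S_d^{\dag}(\mathcal{C})$ has exactly the shape treated in (3) — a product of factors $B_{d_i}(U_i)=\prod_{j\in U_i}b_{d_i}(t_j)$ times Thakur power sums of size $\le1$ at any $\underline{\theta}^{q^{\underline{k}}}$ — so, taking $m=1$, the same estimate gives $\zeta_A^{\dag}(\mathcal{C})\in\EE_\Sigma^{[0]}$, i.e. $\mathcal{Z}_\Sigma\subset\EE_\Sigma^{[0]}$.

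The main obstacle throughout, which dictates this strategy, is that the naive term-by-term size bounds on the power sums are far too weak — because of extensive cancellation the functions are much smaller at $\underline{\theta}^{q^{\underline{k}}}$ than such bounds suggest — so one must extract just enough structure to beat $D_{\underline{k}+N}$: the vanishing $\sum_{\deg b<d}b^{i}=0$ for $i<q^{d}-1$ suffices for (1)–(2), whereas for (3)–(4), where one must reach $N=0$ at the sole cost of the twist $\tau^{m-1}$, one needs the explicit formulas (\ref{simpleformulas}) and Lemma \ref{pel-per-refined}, and the genuinely delicate step is to weigh the growth of $\mathbb{H}_{\Sigma_j}(\theta^{q^{d-m_j}})$ against that twist.
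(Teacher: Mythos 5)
Your proof plan takes a genuinely different route from the paper. The paper's proof establishes part (3) first (by precise size estimates on $S_d\binom{\Sigma'}{1}$ evaluated at $\underline{\theta}^{q^{\underline{k}}}$ via the Pellarin--Perkins formula of Lemma~\ref{pel-per-refined}, worked out in the auxiliary Lemma~\ref{auxiliarylemma}), deduces (4) immediately, deduces (2) by writing any $\zeta_A(\mathcal{C})$ as an evaluation of a twist of some $\zeta_A(\mathcal{D})$ with second row $(1,\ldots,1)$ and invoking Lemma~\ref{6.2}(2), and obtains (1) from Lemma~\ref{6.2}(3). You instead prove (1)--(2) directly from the vanishing $\sum_{\deg b<d}b^{i}=0$ for $i<q^{d}-1$ (a pleasant self-contained route that avoids Pellarin--Perkins entirely) and prove (4) via the $\zeta_A^{\dag}$-decomposition of Corollary~\ref{coro-dag}. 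That trade is interesting: your (1)--(2) is more elementary and treats arbitrary $\Sigma$ in one stroke, while the paper's version, by going through (3), gets the sharper quantitative control that (3) requires anyway.

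There is, however, a genuine gap in your part (3). For $|\Sigma_j|\geq q$ you explicitly appeal to the Pellarin--Perkins expansion $S_d\binom{\Sigma_j}{1}=l_{d-1}^{-1}B_{d-m_j+1}(\Sigma_j)\,\HH_{\Sigma_j}(\theta^{q^{d-m_j+1}})$, yet your displayed degree bound $\deg_\theta \leq \sum_j d_j\sum_{i\in\Sigma_j}q^{k_i}-\sum_j\deg_\theta l_{d_j}$ accounts only for the $b_{d}(t_i)$ factors and for $l_{d}^{-1}$; it omits the contribution $\deg_\theta\HH_{\Sigma_j}(\theta^{q^{d-m_j+1}})$, which grows like $\bigl(\frac{q^{m_j}-1}{q-1}-m_j\bigr)q^{d-m_j+1}$ and is precisely what forces the twist $\tau^{m-1}$ to be nontrivial. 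You flag this as ``the genuinely delicate step'' but never carry it out; it is exactly the content of Lemma~\ref{auxiliarylemma} in the paper, and without it the claim $\tau^{m-1}(f)\in\EE_\Sigma^{[0]}$ is not established. A further small point: formula~(\ref{formula-per-per}) requires $|\Sigma_j|\equiv1\pmod{q-1}$, so for general $\Sigma_j$ one must first embed $\Sigma_j$ into a larger subset and extract a coefficient, as the paper does. Finally, in (2) the reduction $\tau^m(f)(\underline{\theta}^{q^{\underline{k}}})=f(\underline{\theta}^{q^{\underline{k}-m}})^{q^m}$ evaluates $f$ at fractional $q$-power exponents when $k_i<m$, and the vanishing ``$S_d(-M)=0$ unless $q^d\le M+1$'' no longer applies verbatim to the resulting non-integral $M_j$; this case needs to be addressed rather than silently glossed over (the paper sidesteps it by routing (2) through (3) and the evaluation Lemma~\ref{6.2}(2)).
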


\begin{proof}
The property (4) immediately follows from (3). We prove (3). We first focus on certain estimates involving twisted power sums. By \cite[Theorem 7]{PEL&PER2} we see that 
$$S_d\binom{\Sigma'}{1}=l_d^{-1}B_{d-m+1}(\Sigma')\mathbb{H}(Y)_{Y=\theta^{q^{d-m+1}}},$$
where $\mathbb{H}$ is a polynomial in $A[\underline{t}_{\Sigma'}][Y]$ of degree $\leq m-1$ in each 
variable $t_i$ with $i\in\Sigma'$ and degree $\leq \frac{q^m-1}{q-1}-m$ in $Y$ (this polynomial can be computed by taking the coefficient of $\prod_{i\in\Sigma\setminus\Sigma'}t_i^{d}$ with $\Sigma\supsetneq\Sigma'$ such that $|\Sigma|=m(q-1)+1$).

We have:
\begin{Lemma}\label{auxiliarylemma}
Let $U$ be a subset of $\Sigma'$ and $\underline{k}\in\ZZ^U$ with the property that $k:=\min_ik\geq 1-m$. Then we have:
\begin{enumerate}
\item If there exists $i\in U$ such that $k_i\in[0,d-m]$, $S_d\binom{\Sigma'}{1}_{\underline{t}_U=\underline{\theta}^{q^{\underline{k}}}}=0$.
\item Otherwise let $U_+=\{i\in U:k_i\geq d-m+1\}$ and $U_-:=\{i\in U:k_i\in[1-m,0[\}$, so that 
$U=U_+\sqcup U_-$. Then:
$$\bigg\|S_d\binom{\Sigma'}{1}_{\underline{t}_U=\underline{\theta}^{q^{\underline{k}}}}\bigg\|\leq|\theta|^{H_1},$$
where $$H_1:=-q^{d-m+1}\left(m-\frac{|\Sigma'\setminus U_+|}{q-1}\right)+d\sum_{j\in U_+}q^{k_j}+\kappa_1$$ where $\kappa_1\in\QQ$ does not depend on $\underline{k}$ and $d$.
\end{enumerate}
\end{Lemma}
\begin{proof}
Since the first property of the lemma is clear, we assume that for all $i$, $k_i\in[1-m,0[\sqcup[d-m+1,\infty[$. We have $|l_d^{-1}|=|\theta|^{-q\frac{q^d-1}{q-1}}$ and 
\begin{multline*}
\big\|B_{d-m+1}(\Sigma')_{\underline{t}_U=\underline{\theta}^{q^{\underline{k}}}}\big\|=\\ =
\big\|B_{d-m+1}(\Sigma'\setminus U_+)\big\|\cdot|\theta|^{(d-m+1)\sum_{j\in U_+}q^{k_j}}=|\theta|^{\frac{q^{d-m+1}-1}{q-1}|\Sigma'\setminus U_+|}|\theta|^{(d-m+1)\sum_{j\in U_+}q^{k_j}}.
\end{multline*}
Moreover,
$$\big\|\mathbb{H}_{Y=\theta^{q^{d-m+1}},\underline{t}_U=\underline{\theta}^{q^{\underline{k}}}}\big\|\leq|\theta|^{(\frac{q^m-1}{q-1}-m)q^{d-m+1}}|\theta|^{(m-1)\sum_{j\in U_+}q^{k_j}+\widehat{\kappa}_1}$$
where $\widehat{\kappa}_1\in\QQ$ is independent of $\underline{k}$ and $d$.
We note that
$$|l_d^{-1}|\big\|\mathbb{H}_{Y=\theta^{q^{d-m+1}},\underline{t}_U=\underline{\theta}^{q^{\underline{k}}}}\big\|\leq |\theta|^{-(m+\frac{1}{q-1})q^{d-m+1}+(m-1)\sum_{j\in U_+}q^{k_j}+\widetilde{\kappa}_1},$$ for another rational number $\widetilde{\kappa}_1$ independent of 
$\underline{k}$ and $d$. The lemma follows.
\end{proof}
\begin{Lemma}
We consider $\Sigma\supsetneq \Sigma'\supset U$ subsets of $\NN^*$ with $|\Sigma|=m(q-1)+1$
and $m>0$. Then, uniformly in $d\geq m$, we have 
$$\left\|\frac{\tau^{m-1}\Big(S_d\binom{\Sigma'}{1}\Big)_{\underline{t}_U=\underline{\theta}^{q^{\underline{k}}}}}{D_{\underline{k}}}\right\|\rightarrow0$$ as $|\underline{k}|\rightarrow\infty$.
\end{Lemma}
\begin{proof}
Let $\underline{k}$ be an element of $([-1,\infty[\cap\ZZ)^U$. By Lemma \ref{auxiliarylemma} if there exists $i\in U$ such that $k_i\in[0,d-m]$ then we have 
$S_d\binom{\Sigma'}{1}_{\underline{t}_U=\underline{\theta}^{q^{\underline{k}}}}=0$. Otherwise, from Lemma \ref{auxiliarylemma} we deduce that
\begin{equation}\label{lhsH2}
\left\|\frac{S_d\binom{\Sigma'}{1}_{\underline{t}_U=\underline{\theta}^{q^{\underline{k}}}}}{D_{\underline{k}+m-1}^{1/q^{m-1}}}\right\|\leq |\theta|^{H_2},
\end{equation}
where $$H_2:=-q^{d-m+1}\underbrace{\left(m-\frac{|\Sigma'\setminus U_+|}{q-1}\right)}_{>0}+\underbrace{\sum_{j\in U_+}(d-k_j-m+1)q^{k_j}}_{\leq 0}-\sum_{j\in U_-}q^{k_j}\underbrace{(k_j+m-1)}_{\geq 0}+\kappa_2$$ where $\kappa_2\in\QQ$ does not depend on $\underline{k}$ and $d$. We deduce that the left-hand side of (\ref{lhsH2}) tends to $0$ as $|\underline{k}|\rightarrow\infty$ uniformly in $d$ (the leading term when $\max\underline{k}\neq d-m+1$ corresponds to the sum over $U_+$, otherwise it is the term with the factor 
$(m-\frac{|\Sigma'\setminus U_+|}{q-1})$).
Now we observe the following identity, valid for $f\in\EE_\Sigma$, $\underline{r}\in\ZZ^U$:
$$\left\|\frac{\tau^N(f)_{\underline{t}_U=\underline{\theta}^{q^{\underline{r}}}}}{D_{\underline{r}}}\right\|=\left\|\left(\frac{f}{D_{\underline{r}}^{1/q^N}}\right)_{\underline{t}_U=\underline{\theta}^{q^{\underline{r}-N}}}^{q^N}\right\|.$$
Hence, for $\underline{k}\in[1-m,\infty[^U$:
$$\left\|\frac{\tau^{m-1}\Big(S_d\binom{\Sigma'}{1}\Big)_{\underline{t}_U=\underline{\theta}^{q^{\underline{k}+m-1}}}}{D_{\underline{k}+m-1}}\right\|=\left\|\left(\frac{S_d\binom{\Sigma'}{1}}{D_{\underline{k}+m-1}^{1/q^{m-1}}}\right)_{\underline{t}_U=\underline{\theta}^{q^{\underline{r}-N}}}^{q^N}\right\|,$$
and the lemma follows.
\end{proof}
We can finish the proof of Proposition \ref{four-properties}.
The previous lemma implies the property (3) of Proposition \ref{four-properties}. For the property (2), we note that for any admissible composition array $\mathcal{C}$ of type $\Sigma'$ there exists a composition array $\mathcal{D}$ of type $\Sigma$ with the second line equal to $(1,\ldots,1)$, $N\geq 0$ and $\underline{r}\in\NN^{U}$ where $U=\Sigma\setminus\Sigma'$, with $\max\underline{r}<N$, such that 
$$\zeta_A(\mathcal{C})=\tau^N(\zeta_A(\mathcal{D}))_{\underline{t}_U=\underline{\theta}^{q^{\underline{r}}}}.$$ Therefore this property follows by using the second part of Lemma \ref{6.2}.
Finally, the property (1) follows from the third part of Lemma \ref{6.2}.\end{proof}
\subsubsection{Remark}\label{some-elements-notin-E0} For general $\Sigma$ not all the elements of $\mathcal{Z}_\Sigma$ lie in $\EE_\Sigma^{[0]}$. We give here an explicit example, where $j\in\NN^*$, $\sg=\{j\}$ and $t=t_j$. We recall from \cite[Theorem 14]{PEL} that
\begin{equation}\label{th14-Parma}
\deg_\theta\Big(\zeta_A\binom{\sg}{2}_{t=\theta^{q^k}}\Big)=(k-1)q^k-\frac{2q(q^{k-1}-1)}{q-1},\quad k\geq 1.
\end{equation}
Now consider $\Sigma\subset\NN^*$ finite, such that $|\Sigma|=q^2$ and set $f=\zeta_A\binom{\Sigma}{2}\in\EE_\Sigma$.
\begin{Lemma}
	If $q\geq 3$ then $f\not\in\EE_\Sigma^{[0]}$.
\end{Lemma}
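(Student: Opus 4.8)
The plan is to disprove membership in $\EE_\Sigma^{[0]}$ by exhibiting one sequence of multi-indices $\underline{k}$ with $|\underline{k}|\to\infty$ along which $f(\underline{\theta}^{q^{\underline{k}}})/D_{\underline{k}}$ stays bounded away from $0$; since the functions $g\in\EE_\Sigma$ with $g(\underline{\theta}^{q^{\underline{k}}})/D_{\underline{k}}\to0$ already form a subspace, this is exactly the negation of $f\in\EE_\Sigma^{[0]}$. I would take the diagonal sequence $\underline{k}=(k,\dots,k)\in\NN^\Sigma$, so $|\underline{k}|=|\Sigma|k=q^2k\to\infty$. The first step is to note that this diagonal evaluation collapses the $q^2$-variable value to a single Carlitz value at a negative integer. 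For $a\in A$ one has $a(\theta^{q^m})=a^{q^m}$, because the coefficients of $a$ lie in $\FF_q$ and are fixed by the $q^m$-Frobenius; hence, using $|\Sigma|=q^2$,
\[
S_{<d}\binom{\Sigma}{2}\Big|_{t_j=\theta^{q^k}\ \forall j}=\sum_{\substack{a\in A^+\\ \deg_\theta a<d}}\frac{(a^{q^k})^{q^2}}{a^2}=\sum_{\substack{a\in A^+\\ \deg_\theta a<d}}a^{q^{k+2}-2}.
\]
Since $q^{k+2}-2\geq 0$, the power sums $S_d(-(q^{k+2}-2))$ vanish for all $d$ large (as in the proof of Proposition \ref{proposition-general properties}), so the right-hand side stabilises to the polynomial $\zeta_A(-(q^{k+2}-2))\in\FF_q[\theta]$. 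As $f=\zeta_A\binom{\Sigma}{2}$ is entire and $S_{<d}\binom{\Sigma}{2}\to f$ uniformly on bounded subsets of $\CC_\infty^\Sigma$, evaluation at the fixed point $\underline{\theta}^{q^{\underline{k}}}$ commutes with this limit, giving $f(\underline{\theta}^{q^{\underline{k}}})=\zeta_A(-(q^{k+2}-2))=\zeta_A\binom{\sg}{2}\big|_{t=\theta^{q^{k+2}}}$.

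The second step is degree bookkeeping. Applying formula (\ref{th14-Parma}) with $k$ replaced by $k+2$ gives $\deg_\theta f(\underline{\theta}^{q^{\underline{k}}})=(k+1)q^{k+2}-\tfrac{2q(q^{k+1}-1)}{q-1}$. On the other hand $D_{\underline{k}}=\prod_{j\in\Sigma}D_k=D_k^{q^2}$ and $\deg_\theta D_k=kq^k$, so $\deg_\theta D_{\underline{k}}=q^2\cdot kq^k=kq^{k+2}$. Subtracting,
\[
\deg_\theta\!\left(\frac{f(\underline{\theta}^{q^{\underline{k}}})}{D_{\underline{k}}}\right)=q^{k+2}-\frac{2q(q^{k+1}-1)}{q-1}=\frac{(q-3)q^{k+2}+2q}{q-1}.
\]
For $q\geq 3$ this integer is $\geq\tfrac{2q}{q-1}>2$ for every $k\geq 0$ (it equals the constant $3$ when $q=3$ and tends to $+\infty$ when $q\geq 4$), hence $\big\|f(\underline{\theta}^{q^{\underline{k}}})/D_{\underline{k}}\big\|=|\theta|^{((q-3)q^{k+2}+2q)/(q-1)}\geq|\theta|^{2q/(q-1)}>1$. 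So the sequence does not tend to $0$ although $|\underline{k}|\to\infty$, which gives $f\notin\EE_\Sigma^{[0]}$.

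All the steps are short once (\ref{th14-Parma}) is granted, so I do not expect a serious obstacle; the points that need care are (i) justifying that evaluation at $\underline{\theta}^{q^{\underline{k}}}$ — a point outside the closed unit polydisc, hence not covered by continuity on the Tate algebra — commutes with the limit $S_{<d}\to f$, which is precisely where the entireness of $\zeta_A\binom{\Sigma}{2}$ is used, and (ii) invoking (\ref{th14-Parma}) with the correct shift $m=k+2\geq 1$. The hypothesis $q\geq 3$ enters in exactly one place, namely to force the leading coefficient $(q-3)/(q-1)$ of the above degree to be non-negative; for $q=2$ this degree would become negative and the diagonal argument fails, consistently with the statement of the lemma.
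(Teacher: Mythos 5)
Your proof is correct and follows essentially the same route as the paper: choose the diagonal sequence $\underline{k}=(k,\dots,k)$, identify $f(\underline{\theta}^{q^{\underline{k}}})$ with $\zeta_A\binom{\sg}{2}\big|_{t=\theta^{q^{k+2}}}$, and compare degrees using formula (\ref{th14-Parma}); you merely supply a slightly more explicit justification of the collapse to a one-variable value than the paper does. (Your computed degree $\frac{(q-3)q^{k+2}+2q}{q-1}$ is in fact the correct one — the paper's displayed formula has $q^{k+1}$ where $q^{k+2}$ should appear — but this does not affect the conclusion that it does not tend to $-\infty$.)
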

\begin{proof}
	Assuming the converse, we should have
	$\frac{f(\underline{\theta}^{q^{\underline{k}}})}{D_{\underline{k}}}\rightarrow0$. In particular, considering $k\in\NN$ and setting $\underline{k}=(k,\ldots,k)\in\NN^\Sigma$,
	$f(\underline{\theta}^{q^{\underline{k}}})=\zeta_A\binom{\sg}{2}_{t=\theta^{q^{k+2}}}$ of degree
	in $\theta$ equal to $(k+1)q^{k+2}-\frac{2q(q^{k+1}-1)}{q-1}$. At once, we have $D_{\underline{k}}=D_k^{q^2}$ of degree $kq^{k+2}$. We deduce that, with this choice of $\underline{k}$, 
	$$\deg_\theta\left(\frac{f(\underline{\theta}^{q^{\underline{k}}})}{D_{\underline{k}}}\right)=\frac{q-3}{q-1}q^{k+1}+\frac{2q}{q-1}$$
	which does not tend to $-\infty$ as $k\rightarrow\infty$.
\end{proof}
The case $q=2$ is different. Indeed in this case $f=\tau\Big(\zeta_A\binom{\Sigma}{1}\Big)$ so that one sees easily that $f(\underline{\theta}^{q^{\underline{k}}})=0$ for all $\underline{k}\in\NN^\Sigma$ but finitely many elements.

\subsection{The maps $\mathcal{E}_\Sigma$}\label{defi-E}

Let $\Sigma$ be a finite subset of $\NN^*$. 
We have the map $$\EE_\Sigma\xrightarrow{\mathcal{E}_\Sigma}\CC_\infty[[\underline{X}_\Sigma]]^{\operatorname{lin}}$$
defined by 
$$\mathcal{E}_\Sigma(f)=\sum_{\underline{i}\in\NN^\Sigma}\frac{f(\underline{\theta}^{q^{\underline{i}}})}{D_{\underline{i}}}\prod_{j\in \Sigma}\lambda_A\binom{\{j\}}{1}^{q^{i_j}}.$$
\begin{Lemma}\label{Esigma-functional}
	$\mathcal{E}_\Sigma$ satisfies the functional equation:
	\begin{equation}\label{functional-equation}
	\mathcal{E}_\Sigma(\tau_\Sigma(f))=\mathcal{E}_\Sigma(f)^q,\quad \forall f\in\EE_\Sigma.
	\end{equation}
\end{Lemma}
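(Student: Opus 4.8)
The plan is to verify the functional equation by a direct reindexing of the series defining $\mathcal{E}_\Sigma$, combined with the additivity of the Frobenius $x\mapsto x^q$ in characteristic $p$. Throughout I write $f=\sum_{\underline n}c_{\underline n}\,\underline t_\Sigma^{\underline n}$ with $c_{\underline n}\in\CC_\infty$, and I set $\underline i-1:=(i_j-1)_{j\in\Sigma}$.

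First I would record two elementary evaluation identities. Since $\tau$ fixes each $t_j$ and raises $\CC_\infty$-coefficients to the $q$-th power, for every $\underline i\in\NN^\Sigma$ both $\tau(f)(\underline\theta^{q^{\underline i}})$ and $f(\underline\theta^{q^{\underline i-1}})^q$ equal $\sum_{\underline n}c_{\underline n}^q\prod_{j\in\Sigma}\theta^{q^{i_j}n_j}$, hence $\tau(f)(\underline\theta^{q^{\underline i}})=f(\underline\theta^{q^{\underline i-1}})^q$. Because $\tau_\Sigma=B_1(\Sigma)\tau=\prod_{j\in\Sigma}(t_j-\theta)\cdot\tau$, evaluating $\tau_\Sigma(f)$ at $\underline t_\Sigma=\underline\theta^{q^{\underline i}}$ multiplies this by the scalar $\prod_{j\in\Sigma}(\theta^{q^{i_j}}-\theta)$, which vanishes whenever some $i_j=0$ (as $\theta^{q^0}-\theta=0$). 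That $\tau_\Sigma$ sends $\EE_\Sigma$ into $\EE_\Sigma$, so that the left-hand side of (\ref{functional-equation}) is defined, is the routine remark already used in Lemma \ref{6.2}.

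Next I would insert these identities into $\mathcal{E}_\Sigma(\tau_\Sigma(f))=\sum_{\underline i\in\NN^\Sigma}D_{\underline i}^{-1}\tau_\Sigma(f)(\underline\theta^{q^{\underline i}})\prod_{j\in\Sigma}\lambda_A\binom{\{j\}}{1}^{q^{i_j}}$: only the indices with all coordinates $\geq 1$ survive, so I substitute $\underline i=\underline k+1$ with $\underline k\in\NN^\Sigma$. Using the recursion $D_m=(\theta^{q^m}-\theta)D_{m-1}^q$ for $m\geq 1$ one gets coordinatewise $(\theta^{q^{k_j+1}}-\theta)/D_{k_j+1}=D_{k_j}^{-q}$, whence $D_{\underline i}^{-1}\prod_{j\in\Sigma}(\theta^{q^{i_j}}-\theta)=D_{\underline k}^{-q}$; simultaneously $\lambda_A\binom{\{j\}}{1}^{q^{i_j}}=\bigl(\lambda_A\binom{\{j\}}{1}^{q^{k_j}}\bigr)^q$ and $f(\underline\theta^{q^{\underline i-1}})^q=f(\underline\theta^{q^{\underline k}})^q$. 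Hence the $\underline k$-summand of $\mathcal{E}_\Sigma(\tau_\Sigma(f))$ equals the $q$-th power of the $\underline k$-summand of $\mathcal{E}_\Sigma(f)$.

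Finally I would pull the $q$-th power out of the summation. The ring $\CC_\infty[[\underline X_\Sigma]]$ has characteristic $p$, so $x\mapsto x^q$ is additive, and it is continuous for the $\underline X_\Sigma$-adic topology, for which the series defining $\mathcal{E}_\Sigma(f)$ converges: since $\lambda_A\binom{\{j\}}{1}$ has $X_j$-order $1$, the $\underline k$-summand has $\underline X_\Sigma$-order $\sum_{j\in\Sigma}q^{k_j}$, which tends to $\infty$ with $|\underline k|$. This yields $\mathcal{E}_\Sigma(\tau_\Sigma(f))=\mathcal{E}_\Sigma(f)^q$. I do not expect a genuine obstacle; the one point meriting care is the legitimacy of the index shift $\underline i\mapsto\underline i-1$, which rests exactly on the vanishing of all boundary terms having a zero coordinate, and that is the step I would check most carefully.
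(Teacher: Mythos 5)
Your proof is correct and follows essentially the same route as the paper: the vanishing of the boundary terms via $B_1(\Sigma)$, the identity $\tau(f)(\underline\theta^{q^{\underline i}})=f(\underline\theta^{q^{\underline i-1}})^q$, the simplification via $D_m=(\theta^{q^m}-\theta)D_{m-1}^q$, and the reindexing $\underline i\mapsto\underline i-1$. The only difference is that you make explicit the continuity argument for pulling the Frobenius out of the infinite sum, which the paper leaves implicit.
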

\begin{proof}
	If $\underline{\lambda}=(\lambda_j:j\in\Sigma)=\left(\lambda_A\binom{\{j\}}{1}:j\in\Sigma\right)$ and if we set $\underline{\lambda}^{q^{\underline{i}}}:=\prod_j\lambda_j^{q^{i_j}}$
	with $\underline{i}=(i_j:j\in\Sigma)$, we
	have the following identities:
	\begin{eqnarray*}
		\mathcal{E}_\Sigma(\tau_\Sigma(f))&=&\sum_{\underline{i}\in\NN^\Sigma}\frac{[B_1(\Sigma)\tau(f)](\underline{\theta}^{q^{\underline{i}}})}{D_{\underline{i}}}\underline{\lambda}^{q^{\underline{i}}}\\
		&=&\sum_{\underline{i}\in(\NN^*)^\Sigma}\frac{B_1(\Sigma)(\underline{\theta}^{q^{\underline{i}}})f(\underline{\theta}^{q^{\underline{i}-1}})^q}{D_{\underline{i}}}\underline{\lambda}^{q^{\underline{i}}}\\
		&=&\sum_{\underline{i}\in(\NN^*)^\Sigma}\frac{f(\underline{\theta}^{q^{\underline{i}-1}})^q}{D_{\underline{i}-1}^q}\underline{\lambda}^{q^{\underline{i}}}\\
		&=&\mathcal{E}_\Sigma(f)^q.
	\end{eqnarray*}
	And the lemma follows.
\end{proof}
We note that the particular choice of $\underline{\lambda}$ above is superficial; we can well 
choose an arbitrary element 
$\underline{\lambda}\in\Big(\CC_\infty[[\underline{X}_\Sigma]]^{\operatorname{lin}}\Big)^\Sigma$ and get the same kind of identity.

\subsubsection{An example} In this subsection we revisit the classical identity 
(\ref{carlitz-identity}) below.
We return to the function $f=\zeta_A\binom{\sg}{1}$ with $\sg$ the singleton $\{j\}$. It is easy to see that $f\in\mathbb{E}_\sg^{[0]}$. In fact, we have $f(\theta)=1$
and $f(\theta^{q^k})=0$ for all $k>0$ as follows from simple properties of the values of the Goss zeta function at negative integers. We have seen in (\ref{image-basic-zeta}) that $\mathcal{F}_\sg(f)=\lambda_A\binom{\sg}{1}$ and the above trivial zeroes together with the value $f(\theta)=1$ also imply $\mathcal{E}_\sg(f)=\lambda_A\binom{\sg}{1}$.
Now, let us apply the functional equation (\ref{functional-equation}). By Part (1) of Proposition \ref{four-properties} we see that
$\tau_\sg^N(f)\in\EE_\sg^{[0]}$ for all $N\geq0$ and we can apply $\mathcal{E}_\sg$. We deduce:
$$\mathcal{E}_\sg(\tau^N_\sg(f))=\lambda_A\binom{\sg}{1}^{q^N},\quad \forall N\geq 0.$$
Now, we observe that
$$\sum_{N\geq 0}\mathcal{E}_\sg\Big(D_N^{-1}\tau^N_\sg(f)\Big)=\mathcal{E}_\sg\left(\sum_{N\geq 0}D_N^{-1}\tau^N_\sg(f)\right)=\exp_C\left(\lambda_A\binom{\sg}{1}\right).$$ From 
\cite{PEL0} we recall the functional identity
\begin{equation}\label{functional-identity}
\zeta_A\binom{\sg}{1}=\frac{\widetilde{\pi}}{(\theta-t)\omega(t)}
\end{equation}
with $t=t_j$ such that $\sg=\{j\}$ and $\omega$ the function of Anderson and Thakur defined in (\ref{omega-def}). Recall also the formula
$$\omega(t)=\sum_{N\geq 0}\frac{\widetilde{\pi}^{q^N}}{D_N(\theta^{q^N}-t)},$$
which is easily checked.
Noting that:
$$\tau_\sg^N(f)=\frac{\widetilde{\pi}^{q^N}}{(\theta^{q^N}-t)b_N(t)\omega(t)},\quad N\geq 0,$$
we see that, with $X=X_j$,
$$\exp_C\left(\lambda_A\binom{\sg}{1}\right)=\mathcal{E}_{\sg}\left(\sum_{N\geq 0}\frac{\widetilde{\pi}^{q^N}}{D_N(\theta^{q^N}-t)\omega(t)}\right)=\mathcal{E}_\sg(\omega\cdot\omega^{-1})=\mathcal{E}_\sg(1)=X,$$
and we deduce the (in fact well known) identity 
\begin{equation}\label{carlitz-identity}
\lambda_A\binom{\sg}{1}=\log_C(X).
\end{equation}
\begin{Corollary}\label{E-F-agree}
	The maps $\mathcal{E}_\Sigma$ and $\mathcal{F}_\Sigma$ agree as $\CC_\infty$-linear maps $\EE_\Sigma\rightarrow\CC_\infty[[\underline{X}_\Sigma]]^{\operatorname{lin}}$.
\end{Corollary}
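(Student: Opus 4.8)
The plan is to prove the equality by a direct formal computation, reducing to the Lemma of \S\ref{decomposition-theorem} that defines $\Phi$. Both maps are $\CC_\infty$-linear (immediate from their defining formulas) and both depend on $f$ only through the countable family of evaluations $\big(f(\underline{\theta}^{q^{\underline{i}}})\big)_{\underline{i}\in\NN^\Sigma}$: this is obvious for $\mathcal{E}_\Sigma$, while for $\mathcal{F}_\Sigma$ it follows from $\mathcal{F}_\Sigma(f)=\sum_{\underline{i}}f_{\underline{i}}\underline{X}_\Sigma^{q^{\underline{i}}}$, see (\ref{definition-F}), where $f_{\underline{i}}=\sum_{\underline{j}\leq\underline{i}}f(\underline{\theta}^{q^{\underline{j}}})\big/\big(D_{\underline{j}}\,l_{\underline{i}-\underline{j}}^{q^{\underline{j}}}\big)$. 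The first step is to record the shape of the series $\lambda_A\binom{\{j\}}{1}$: since $S_d(1)=l_d^{-1}$, directly from the definition of $\lambda_A$ one has $\lambda_A\binom{\{j\}}{1}=\sum_{d\geq 0}X_j^{q^d}/l_d=\log_C(X_j)$ (this is (\ref{carlitz-identity})); in particular $\lambda_A\binom{\{j\}}{1}$, and hence each of its $q^{i_j}$-th powers, is $\FF_q$-linear in $X_j$, so $\mathcal{E}_\Sigma$ indeed takes values in $\CC_\infty[[\underline{X}_\Sigma]]^{\operatorname{lin}}$.

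The second step is to substitute this expansion into the definition of $\mathcal{E}_\Sigma$ and reorganise. Using $\big(X_j^{q^k}/l_k\big)^{q^{i_j}}=X_j^{q^{k+i_j}}/l_k^{q^{i_j}}$ and the change of index $m_j=k+i_j$ one gets, with the multi-index conventions of \S\ref{decomposition-theorem},
$$\prod_{j\in\Sigma}\lambda_A\binom{\{j\}}{1}^{q^{i_j}}=\sum_{\underline{m}\geq\underline{i}}\frac{\underline{X}_\Sigma^{q^{\underline{m}}}}{l_{\underline{m}-\underline{i}}^{q^{\underline{i}}}},\qquad l_{\underline{m}-\underline{i}}^{q^{\underline{i}}}:=\prod_{j\in\Sigma}l_{m_j-i_j}^{q^{i_j}}.$$
Plugging this into $\mathcal{E}_\Sigma(f)=\sum_{\underline{i}}D_{\underline{i}}^{-1}f(\underline{\theta}^{q^{\underline{i}}})\prod_{j}\lambda_A\binom{\{j\}}{1}^{q^{i_j}}$ and interchanging the two summations yields
$$\mathcal{E}_\Sigma(f)=\sum_{\underline{m}}\underline{X}_\Sigma^{q^{\underline{m}}}\sum_{\underline{i}\leq\underline{m}}\frac{f(\underline{\theta}^{q^{\underline{i}}})}{D_{\underline{i}}\,l_{\underline{m}-\underline{i}}^{q^{\underline{i}}}}=\sum_{\underline{m}}f_{\underline{m}}\,\underline{X}_\Sigma^{q^{\underline{m}}}=\mathcal{F}_\Sigma(f),$$
where the inner sum is exactly $f_{\underline{m}}$ as recalled above. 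This finishes the identification on all of $\EE_\Sigma$.

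There is essentially no serious obstacle here; the only point that needs a word of justification is the legitimacy of the interchange of summations, which holds because we work in the formal power series ring $\CC_\infty[[\underline{X}_\Sigma]]$: for a fixed exponent vector $(q^{m_j})_{j\in\Sigma}$ only the finitely many $\underline{i}$ with $\underline{i}\leq\underline{m}$ contribute to that coefficient, so every coefficient of the rearranged series is a finite sum and no analytic convergence is involved. Thus the whole proof amounts to the multi-index bookkeeping in the two displays above, and as a cross-check one may note that both $\mathcal{E}_\Sigma$ (by Lemma \ref{Esigma-functional}) and $\mathcal{F}_\Sigma$ intertwine $\tau_\Sigma$ with the $q$-power map, consistently with the stated identity.
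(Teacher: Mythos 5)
Your proof is correct and takes essentially the same route as the paper: the paper declares the corollary ``an immediate consequence of (\ref{carlitz-identity})'', and your multi-index computation (substituting $\lambda_A\binom{\{j\}}{1}=\sum_d X_j^{q^d}/l_d$ into $\mathcal{E}_\Sigma$, reindexing, and matching the inner sum with the coefficients $f_{\underline{m}}$ from (\ref{Definition-Phi})) is precisely the bookkeeping that justifies the word ``immediate.'' The only small deviation is that you obtain $\lambda_A\binom{\{j\}}{1}=\log_C(X_j)$ directly from $S_d(1)=l_d^{-1}$ rather than via the paper's detour through $\omega$ and the functional equation, but the paper itself remarks this identity is ``in fact well known,'' so the two arguments agree in substance.
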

\begin{proof}
	This is an immediate consequence of (\ref{carlitz-identity}).
\end{proof}

\begin{Corollary}\label{Theorem-E-F}
	If $\Sigma\in\mathfrak{S}$ is a finite subset such that $|\Sigma|<q$ the maps $\mathcal{F}_\Sigma$ and $\mathcal{E}_\Sigma$ agree on $\mathcal{Z}_{\Sigma}$.
\end{Corollary}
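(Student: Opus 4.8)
The plan is to reduce everything to Corollary~\ref{E-F-agree}, which already establishes the equality $\mathcal{E}_\Sigma=\mathcal{F}_\Sigma$ of $\CC_\infty$-linear maps on the whole of $\EE_\Sigma$. Since every admissible multiple zeta value in Tate algebras of type $\Sigma$ is an entire function in the variables $\underline{t}_\Sigma$ (the basic analytic facts recalled in \S\ref{basic-properties}), we have $\mathcal{Z}_\Sigma\subset\EE_\Sigma$, and hence $\mathcal{E}_\Sigma$ and $\mathcal{F}_\Sigma$ agree a fortiori on $\mathcal{Z}_\Sigma$. The hypothesis $|\Sigma|<q$ serves only to make the statement meaningful in the form used afterwards: by Proposition~\ref{four-properties}(4) it guarantees $\mathcal{Z}_\Sigma\subset\EE_\Sigma^{[0]}$, and by Theorem~\ref{Z-L} the common restriction of $\mathcal{E}_\Sigma$ and $\mathcal{F}_\Sigma$ to $\mathcal{Z}_\Sigma$ is precisely the isomorphism of graded $\mathcal{Z}_\emptyset$-modules $\mathcal{Z}_\Sigma\xrightarrow{\sim}\mathcal{L}_\Sigma$.

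If one wishes to verify the coincidence directly on $\mathcal{Z}_\Sigma$ without quoting Corollary~\ref{E-F-agree}, the computation is elementary. By (\ref{carlitz-identity}) --- equivalently, by (\ref{image-basic-zeta}) together with the classical expansion $\log_C(X)=\sum_{k\geq 0}X^{q^k}/l_k$ --- one has $\lambda_A\binom{\{j\}}{1}=\sum_{k\geq 0}X_j^{q^k}/l_k$, hence $\lambda_A\binom{\{j\}}{1}^{q^{i}}=\sum_{k\geq i}X_j^{q^k}/l_{k-i}^{q^{i}}$. Substituting this into the defining formula for $\mathcal{E}_\Sigma(f)$ and collecting the coefficient of the monomial $\underline{X}_\Sigma^{q^{\underline{k}}}$, one obtains $\sum_{\underline{i}\leq\underline{k}}f(\underline{\theta}^{q^{\underline{i}}})/\bigl(D_{\underline{i}}\,l_{\underline{k}-\underline{i}}^{q^{\underline{i}}}\bigr)$, which is exactly the coefficient $f_{\underline{k}}$ occurring in $\Phi(f)$ in (\ref{Definition-Phi}), and therefore in $\mathcal{F}_\Sigma(f)$ by (\ref{definition-F}). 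The only point worth noting is well-definedness: the term indexed by $\underline{i}$ contributes only to monomials $\underline{X}_\Sigma^{q^{\underline{k}}}$ with $\underline{k}\geq\underline{i}$, so the coefficient of any fixed monomial is a finite sum and the rearrangement is legitimate.

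Accordingly, there is essentially no obstacle here; all the substantive work has already been done in proving (\ref{carlitz-identity}) and Corollary~\ref{E-F-agree}, and the present statement is merely the specialization of that identity to the submodule $\mathcal{Z}_\Sigma$, recorded for later use in \S\ref{Trivial-elements}.
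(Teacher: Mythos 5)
Your proposal is correct and takes essentially the same route as the paper: the corollary is the restriction to $\mathcal{Z}_\Sigma$ of Corollary~\ref{E-F-agree} (which the paper derives from~(\ref{carlitz-identity})), and your direct coefficient-matching computation is precisely the unpacked content of that corollary. The remark that the hypothesis $|\Sigma|<q$ is there for the downstream use (so that the common image lands in $\mathcal{L}_\Sigma$ by Theorem~\ref{Z-L} and $\mathcal{Z}_\Sigma\subset\EE_\Sigma^{[0]}$ by Proposition~\ref{four-properties}(4)) matches the paper's ``alternative proof.''
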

\subsubsection{Alternative proof of Corollary \ref{Theorem-E-F}}
Let $\Sigma$ be a subset of $\NN^*$ such that $|\Sigma|<q$. By the fourth part of Proposition \ref{four-properties}, $\mathcal{Z}_\Sigma\subset\EE_\Sigma^{[0]}$. Hence $\mathcal{E}_\Sigma$, well defined over $\EE_\Sigma^{[0]}$,
can be defined over $\mathcal{Z}_\Sigma$. Note that
if $f\in\CC_\infty[\underline{t}_\Sigma]$, then 
\begin{equation}\label{E:action}
\mathcal{E}_\Sigma(f)=f*\Big(\prod_{j\in\Sigma}X_j\Big)\end{equation}
in agreement with the $*$-action of \cite[\S 3.2]{ANG&PEL&TAV}. Since 
$\mathcal{E}_\Sigma(B_{\underline{i}}(\Sigma))=\underline{X}_\Sigma^{q^{\underline{i}}}$ for all
$\underline{i}$ by (\ref{functional-equation}), $\mathcal{E}_\Sigma=\mathcal{F}_\Sigma$.

\subsection{Trivial multiple zeta values}\label{Trivial-elements}

We set:
$$\mathcal{Z}_{n,\Sigma}^{\operatorname{triv}}:=\{f\in\mathcal{Z}_{n,\Sigma}:f(\underline{\theta}^{q^{\underline{i}}})=0,\text{ for all but finitely many }\underline{i}\in\NN^{\Sigma}\}.$$ 
This is an $\FF_p$-vector space. We also set $\mathcal{Z}_{\Sigma}^{\operatorname{triv}}=
\oplus_n\mathcal{Z}_{n,\Sigma}^{\operatorname{triv}}$, which is a $\mathcal{Z}_\emptyset$-submodule of 
$\mathcal{Z}_{\Sigma}$, and we define $\mathcal{Z}_{n,\Sigma}^{\operatorname{triv}}(K)$
and $\mathcal{Z}_{\Sigma}^{\operatorname{triv}}(K)$ in a similar way, so that the latter is a 
$\mathcal{Z}_\emptyset(K)$-module. 

The aim of this subsection is to prove the next result.

\begin{Theorem}\label{structure-of-trivial-MZV}
Assume that $\Sigma$ is a subset of $\NN^*$ such that $|\Sigma|<q$. Then $\mathcal{Z}_{\Sigma}^{\operatorname{triv}}(K)$, as a $\mathcal{Z}_\emptyset(K)$-module, is generated by the elements 
$\xi_k(t_j)$, $j\in\Sigma$, with 
$$\xi_k(t_j)=\zeta_A\begin{pmatrix}\{j\} & \emptyset & \cdots & \emptyset \\ 1 & q-1 & \cdots & (q-1)q^{k-1}\end{pmatrix},\quad k\geq 0,\quad j\in \Sigma.$$ 
In particular, $$\mathcal{Z}_{n,\Sigma}^{\operatorname{triv}}(K)=\bigoplus_{n\geq\sum_{i\in\Sigma}q^{k_i}}\mathcal{Z}_{n-\sum_{i\in\Sigma}q^{k_i},\emptyset}(K)\prod_{i\in\Sigma}\xi_{k_i}(t_i)=\bigotimes_{i\in\Sigma}\mathcal{Z}_{\{i\}}^{\operatorname{triv}}(K),$$ where the product $\otimes$ is of $\mathcal{Z}_\emptyset(K)$-modules.
Moreover, for all $n$, we have $\FF_q[\underline{t}_\Sigma]$-linear maps $$\tau_\Sigma:\mathcal{Z}_{n,\Sigma}^{\operatorname{triv}}(K)\rightarrow
\mathcal{Z}_{qn,\Sigma}^{\operatorname{triv}}(K).$$
\end{Theorem}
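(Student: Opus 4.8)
The plan is to push everything through the isomorphism $\mathcal{F}_\Sigma$ of Theorem \ref{Z-L}. Since $|\Sigma|<q$, Proposition \ref{four-properties} gives $\mathcal{Z}_\Sigma\subset\EE_\Sigma^{[0]}$, so by Corollary \ref{E-F-agree} the map $\mathcal{F}_\Sigma$ restricted to $\mathcal{Z}_\Sigma$ (and to its $K$-span) is the explicit ``diagonal'' map $f\mapsto\sum_{\underline i\in\NN^\Sigma}\frac{f(\underline{\theta}^{q^{\underline i}})}{D_{\underline i}}\Lambda_{\underline i}$, where $\Lambda_{\underline i}:=\prod_{j\in\Sigma}\lambda_A\binom{\{j\}}{1}^{q^{i_j}}=\prod_{j\in\Sigma}\log_C(X_j)^{q^{i_j}}$ by \eqref{carlitz-identity}. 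The first point is the elementary observation that a convergent series $\sum_{\underline i}c_{\underline i}\Lambda_{\underline i}$ in $\TT_\Sigma^{\operatorname{lin}}$ vanishes only if each $c_{\underline i}$ vanishes: the least monomial of $\Lambda_{\underline i}$ in the $X$-variables is $\prod_jX_j^{q^{i_j}}$, which determines $\underline i$, and the assignment $\underline i\mapsto(\text{coefficient of }\prod_jX_j^{q^{i_j}})$ is triangular for the ordering of $\NN^\Sigma$ by total degree $\underline i\mapsto\sum_jq^{i_j}$. Combined with the displayed formula, an $f\in\mathcal{Z}_\Sigma(K)$ is trivial iff all but finitely many values $f(\underline{\theta}^{q^{\underline i}})$ vanish, iff $\mathcal{F}_\Sigma(f)$ is a finite $\mathcal{Z}_\emptyset(K)$-combination of the $\Lambda_{\underline i}$, i.e. $\mathcal{F}_\Sigma(f)\in\mathcal{Z}_\emptyset(K)[\lambda_A\binom{\{j\}}{1}:j\in\Sigma]^{\operatorname{lin}}$; the coefficients $\frac{f(\underline{\theta}^{q^{\underline i}})}{D_{\underline i}}$ lie in $\mathcal{Z}_\emptyset(K)$ by iterated use of Proposition \ref{proposition-general properties}(1), and in the weight-$(n-\sum_jq^{i_j})$ piece when $f\in\mathcal{Z}_{n,\Sigma}^{\operatorname{triv}}(K)$ by Corollary \ref{corollary-doesnt-exceed}. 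Since $\mathcal{F}_\Sigma$ is a graded isomorphism of $\mathcal{Z}_\emptyset(K)$-modules onto $\mathcal{L}_\Sigma(K)$, this identifies $\mathcal{Z}_\Sigma^{\operatorname{triv}}(K)$ with $\mathcal{Z}_\emptyset(K)[\lambda_A\binom{\{j\}}{1}:j\in\Sigma]^{\operatorname{lin}}$ as graded $\mathcal{Z}_\emptyset(K)$-modules.

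Next I evaluate $\mathcal{F}_\Sigma$ on the claimed generators. By Lemma \ref{identity-xi}, $\xi_k(t_j)=\frac{(-1)^k}{D_k}\tau_{\{j\}}^{\,k}\zeta_A\binom{\{j\}}{1}$; since $\zeta_A\binom{\{j\}}{1}\in\EE_{\{j\}}^{[0]}$ (value $1$ at $\theta$ and $0$ at $\theta^{q^k}$, $k>0$) and $\tau_{\{j\}}$ preserves $\EE_{\{j\}}^{[0]}$ (Lemma \ref{6.2}(1)), the functional equation of Lemma \ref{Esigma-functional} together with $\mathcal{E}_{\{j\}}(\zeta_A\binom{\{j\}}{1})=\lambda_A\binom{\{j\}}{1}$ gives $\mathcal{F}_\Sigma(\xi_k(t_j))=\frac{(-1)^k}{D_k}\lambda_A\binom{\{j\}}{1}^{q^k}$. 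From the diagonal formula, for $g_i\in\EE_{\{i\}}^{[0]}$ over distinct $i\in\Sigma$ one has $\mathcal{E}_\Sigma(\prod_ig_i)=\prod_i\mathcal{E}_{\{i\}}(g_i)$; since every variable of $\Sigma$ occurs exactly once in $\prod_{i\in\Sigma}\xi_{k_i}(t_i)$, we get $\mathcal{F}_\Sigma\big(\prod_{i\in\Sigma}\xi_{k_i}(t_i)\big)=\frac{(-1)^{\sum_ik_i}}{\prod_iD_{k_i}}\Lambda_{\underline k}$, a unit of $K$ times $\Lambda_{\underline k}$ (so in particular $\prod_i\xi_{k_i}(t_i)$ is trivial, of weight $\sum_iq^{k_i}$, its image being a finite combination of the $\Lambda$'s). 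Hence the $\mathcal{Z}_\emptyset(K)$-submodule generated by the $\prod_{i\in\Sigma}\xi_{k_i}(t_i)$ has the same $\mathcal{F}_\Sigma$-image, namely $\mathcal{Z}_\emptyset(K)[\lambda_A\binom{\{j\}}{1}:j\in\Sigma]^{\operatorname{lin}}$, as $\mathcal{Z}_\Sigma^{\operatorname{triv}}(K)$; by injectivity of $\mathcal{F}_\Sigma$ it equals $\mathcal{Z}_\Sigma^{\operatorname{triv}}(K)$, which is the generation statement. Moreover the $\Lambda_{\underline k}$ are $\mathcal{Z}_\emptyset(K)$-linearly independent (by the triangularity above, since $\mathcal{Z}_\emptyset(K)\subset\CC_\infty$), hence so are the $\prod_i\xi_{k_i}(t_i)$, and recording the weight $q^{k_i}$ of $\xi_{k_i}(t_i)$ yields $\mathcal{Z}_{n,\Sigma}^{\operatorname{triv}}(K)=\bigoplus_{\underline k\in\NN^\Sigma,\ \sum_iq^{k_i}\le n}\mathcal{Z}_{n-\sum_iq^{k_i},\emptyset}(K)\prod_i\xi_{k_i}(t_i)$. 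Specialising to $\Sigma=\{i\}$ gives $\mathcal{Z}_{\{i\}}^{\operatorname{triv}}(K)=\bigoplus_{k\ge0}\mathcal{Z}_\emptyset(K)\,\xi_k(t_i)$, and the multiplication map $\bigotimes_{i\in\Sigma}\mathcal{Z}_{\{i\}}^{\operatorname{triv}}(K)\to\mathcal{Z}_\Sigma^{\operatorname{triv}}(K)$ carries the $\mathcal{Z}_\emptyset(K)$-basis $\bigotimes_i\xi_{k_i}(t_i)$ bijectively onto the $\mathcal{Z}_\emptyset(K)$-basis $\prod_i\xi_{k_i}(t_i)$, hence is an isomorphism of $\mathcal{Z}_\emptyset(K)$-modules.

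For the last assertion, $\tau_\Sigma=B_1(\Sigma)\tau$ is $\FF_q[\underline t_\Sigma]$-linear since $\tau$ fixes $\FF_q[\underline t_\Sigma]$ pointwise, and because $\tau$ is a ring homomorphism and $B_1(\Sigma)=\prod_{j\in\Sigma}(t_j-\theta)$ one has $\tau_\Sigma\big(\prod_ig_i\big)=\prod_i\tau_{\{i\}}(g_i)$ for disjoint-variable products. Using $\xi_k(t_j)=\frac{(-1)^k}{D_k}\tau_{\{j\}}^{\,k}\zeta_A\binom{\{j\}}{1}$ once more, one finds $\tau_{\{j\}}(\xi_k(t_j))=\frac{-D_{k+1}}{D_k^{\,q}}\xi_{k+1}(t_j)=(\theta-\theta^{q^{k+1}})\,\xi_{k+1}(t_j)$ via the recursion $D_{k+1}=(\theta^{q^{k+1}}-\theta)D_k^{\,q}$, whence $\tau_\Sigma\big(\prod_i\xi_{k_i}(t_i)\big)=\big(\prod_i(\theta-\theta^{q^{k_i+1}})\big)\prod_i\xi_{k_i+1}(t_i)\in\mathcal{Z}_\emptyset(K)\,\prod_i\xi_{k_i+1}(t_i)$, of weight $\sum_iq^{k_i+1}=q\sum_iq^{k_i}$. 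Since $\tau$ carries $\mathcal{Z}_{m,\emptyset}(K)$ into $\mathcal{Z}_{qm,\emptyset}(K)$ (Proposition \ref{proposition-general properties}(2), iterated $e$ times) and $\tau_\Sigma(cg)=\tau(c)\tau_\Sigma(g)$ for $c\in\mathcal{Z}_\emptyset(K)$, extending over the basis of the preceding paragraph shows $\tau_\Sigma$ maps $\mathcal{Z}_{n,\Sigma}^{\operatorname{triv}}(K)$ into $\mathcal{Z}_{qn,\Sigma}^{\operatorname{triv}}(K)$.

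The main obstacle is the identification carried out in the first paragraph: one needs $\mathcal{Z}_\Sigma\subset\EE_\Sigma^{[0]}$ — the one place where $|\Sigma|<q$ is genuinely used, through Proposition \ref{four-properties} — so that the diagonal formula for $\mathcal{F}_\Sigma=\mathcal{E}_\Sigma$ is available, and one needs the (mild but unavoidable) linear independence/triangularity of the families $\Lambda_{\underline i}$ inside the Tate algebra; the graded refinement additionally leans on the non-formal Corollary \ref{corollary-doesnt-exceed}, to the effect that evaluating $t_i\mapsto\theta^{q^{k}}$ on a trivial element lowers the weight by exactly $\sum_iq^{k_i}$ rather than less.
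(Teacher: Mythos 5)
Your proposal is correct and follows essentially the same route as the paper's proof: both hinge on the equality $\mathcal{E}_\Sigma=\mathcal{F}_\Sigma$, the diagonal formula, Lemma \ref{identity-xi} (which gives $\xi_k(\theta^{q^i})=(-1)^k\delta_{ik}$), and injectivity of $\mathcal{F}_\Sigma$ on $\mathcal{Z}_\Sigma(K)$ coming from Theorem \ref{Z-L}. The paper subtracts the combination $\sum_{\underline k}(-1)^{|\underline k|}f_{\underline k}\prod_j\xi_{k_j}(t_j)$ from $f$ and appeals directly to injectivity, whereas you push both sides into $\mathcal{L}_\Sigma(K)$ and compare, supplying the explicit linear-independence argument for the $\Lambda_{\underline i}$ and the recursion $\tau_{\{j\}}\xi_k=(\theta-\theta^{q^{k+1}})\xi_{k+1}$; these are useful details that the paper leaves implicit, and your justification of injectivity via Theorem \ref{Z-L} on $\mathcal{Z}_\Sigma(K)$ is in fact tighter than the paper's blanket assertion of injectivity on all of $\EE_\Sigma^{[0]}$ (which the entire function $\prod_{i\ge0}(1-t/\theta^{q^i})\in\EE_\sg^{[0]}$ shows is false as stated).
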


Before proving the theorem, we need two lemmas. The first one makes 
the statement of Lemma \ref{lemma-Hm} completely explicit, with $t=t_j$ and $\sg=\{j\}$.

\begin{Lemma}\label{Hdm-explicit} Let $\Sigma$ be a subset of $\NN^*$ with $|\Sigma|<q$.
The following formula holds:
$$D_k^{-1}\tau_\Sigma^k\left(S_{<d-k}\binom{\Sigma}{1}\right)=(-1)^kS_{<d}\begin{pmatrix}\Sigma & \emptyset & \cdots & \emptyset \\ 1 & q-1 & \cdots & (q-1)q^{k-1}\end{pmatrix},\quad d\geq k\geq 0.$$
In particular, for $\Sigma=\sg=\{j\}$ and $t=t_j$, 
$$H_m^{(d)}(t)=(-1)^mS_{<d}\begin{pmatrix}\sg & \emptyset & \cdots & \emptyset \\ 1 & q-1 & \cdots & (q-1)q^{m-1}\end{pmatrix},\quad d,m\geq 0.$$
\end{Lemma}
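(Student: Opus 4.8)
The statement is an explicit evaluation of the twisted iterate of the basic power sum $S_{<d}\binom{\Sigma}{1}$, and the cleanest route is induction on $k$, using the known closed form
$$S_d\binom{\Sigma}{1}=\frac{\prod_{i\in\Sigma}b_d(t_i)}{l_d},\qquad S_d(1)=\frac1{l_d}$$
recalled in \eqref{simpleformulas}, together with the definition $\tau_\Sigma=B_1(\Sigma)\tau$. First I would record the base case $k=0$, where both sides equal $S_{<d}\binom{\Sigma}{1}$ and there is nothing to prove. For the inductive step, I would apply $D_k^{-1}\tau_\Sigma^k$ to the summation $S_{<d-k}\binom{\Sigma}{1}=\sum_{0\le e<d-k}S_e\binom{\Sigma}{1}$ term by term; since $\tau_\Sigma$ is $\FF_q[\underline t_\Sigma]$-linear it commutes with the finite sum, so it suffices to understand $D_k^{-1}\tau_\Sigma^k\big(S_e\binom{\Sigma}{1}\big)$ for a single $e$.

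**Key computation.** The heart of the matter is the identity
$$\tau_\Sigma^k\!\left(S_e\binom{\Sigma}{1}\right)=\tau_\Sigma^k\!\left(\frac{\prod_{i\in\Sigma}b_e(t_i)}{l_e}\right)=\Big(\textstyle\prod_{i\in\Sigma}b_k(t_i)\Big)\cdot\frac{\prod_{i\in\Sigma}b_{e+k}(t_i)/b_k(t_i)}{l_e^{q^k}}\cdot(\text{correction}),$$
obtained from the two elementary facts $\tau(b_e(t))=b_{e+1}(t)/(t-\theta)\cdot(t-\theta^{q^e})$ — more usefully, $\tau^k(b_e(t))=b_{e+k}(t)/b_k(t)$ combined with $B_1(\Sigma)\tau\cdot(\cdots)$ repeatedly telescoping the $b_k(t_i)$ factors — and $\tau(l_e)=l_{e+1}/(\theta-\theta^{q^{e}})$, i.e. $\tau^k(l_e)=l_{e+k}/l_k$ after tracking the missing factors. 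Carefully bookkeeping gives $\tau_\Sigma^k\big(S_e\binom{\Sigma}{1}\big)=(-1)^k D_k\, S_e\!\begin{pmatrix}\Sigma & \emptyset & \cdots & \emptyset\\ 1 & q-1 & \cdots & (q-1)q^{k-1}\end{pmatrix}$, where the sign and the factor $D_k$ come from comparing $B_1(\Sigma)$-products with $l_k$ and $D_k=(-1)^k l_k^{?}$-type relations (here the identity $D_i=(\theta^{q^i}-\theta)D_{i-1}^q$ and $l_i=(\theta-\theta^q)\cdots(\theta-\theta^{q^i})$ from the notation list are the relevant glue). Dividing by $D_k$ and summing over $0\le e<d-k$ yields exactly the claimed right-hand side; the shift from $d-k$ to $d$ in the upper limit is automatic because the multiple power sum $S_{<d}\binom{\Sigma\;\emptyset\;\cdots}{1\;q-1\;\cdots}$ of signature involving $q^{k-1}(q-1)$ has its innermost index ranging below $d$, but the outer structure forces the relevant $d_i$ to lie below $d-k+1,\dots,d$, so the two summations agree (this is the same telescoping used in the proof of Proposition \ref{elementary-facts} / \eqref{E:expres}).

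**The singleton specialisation.** Once the general formula is established, the statement for $\Sigma=\sg=\{j\}$ is immediate on comparing with the definition
$$H_m^{(d)}(t)=D_m^{-1}\sum_{j=m}^{d-1}b_j(t)\,l_{j-m}^{-q^m}$$
from \eqref{expansion}: indeed $D_m^{-1}\tau_\sg^m\big(S_{<d-m}\binom{\sg}{1}\big)=D_m^{-1}\sum_{0\le e<d-m}\tau_\sg^m\big(b_e(t)/l_e\big)$, and $\tau_\sg^m(b_e(t)/l_e)=b_{e+m}(t)\,l_{e}^{-q^m}$ (the $B_1(\sg)^{\text{(iterated)}}=b_m(t)$ factor cancels the denominator coming from $\tau^m(b_e)=b_{e+m}/b_m$), so after re-indexing $j=e+m$ this is exactly $H_m^{(d)}(t)$ up to the sign $(-1)^m$ already produced by the general case. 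This also re-proves Lemma \ref{lemma-Hm} in the one-variable case with fully explicit coefficients.

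**Main obstacle.** The only genuinely delicate point is the sign and normalisation bookkeeping: one must track, through $k$ applications of $\tau_\Sigma=B_1(\Sigma)\tau$, the product $\prod_{i\in\Sigma}b_k(t_i)$ against the denominators and verify it assembles into $(-1)^k D_k$ times the correct multiple power sum with lower indices $1,q-1,q(q-1),\dots,q^{k-1}(q-1)$ — in particular that the exponents double correctly at each step (an application of $\tau$ raises an index-$n$ contribution to $q n$, which is why $q-1$ becomes $q(q-1)$, etc., matching the second row of the composition array). Getting the upper limit shift $d-k\leadsto d$ right is the second place to be careful, but it follows the same pattern as \eqref{powersums-def2}. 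Everything else is routine manipulation of the Carlitz factorials $D_i$, $l_i$ and the polynomials $b_i$.
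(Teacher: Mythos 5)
Your opening computation is correct and in fact coincides with half of the paper's argument: $\tau_\Sigma^k\big(S_e\binom{\Sigma}{1}\big)=B_k(\Sigma)\,\tau^k\big(B_e(\Sigma)/l_e\big)=B_{e+k}(\Sigma)/l_e^{q^k}$, which is exactly what the paper obtains as the $\tau^k$-coefficient of $\sum_{j<d}B_j(\Sigma)\mathcal{E}_j$ via~(\ref{eq3}). (Your two ``elementary facts'' are however misrecorded: $\tau(b_e(t))=b_{e+1}(t)/(t-\theta)$ with no extra factor, and $\tau(l_e)=l_e^q$, not $l_{e+1}/(\theta-\theta^{q^e})$; and your termwise claim should read $S_{e+k}$, not $S_e$, on the right-hand side --- with $S_e$ the two sides live in different degrees, and summing would give $S_{<d-k}$, which is \emph{not} equal to $S_{<d}$ of that composition array: only the low terms $j<k$ vanish, not the high ones $d-k\le j<d$.) The genuine gap is the step you label ``carefully bookkeeping'': identifying $B_{e+k}(\Sigma)/l_e^{q^k}$ with $(-1)^kD_k\,S_{e+k}\bigl(\begin{smallmatrix}\Sigma&\emptyset&\cdots&\emptyset\\1&q-1&\cdots&(q-1)q^{k-1}\end{smallmatrix}\bigr)$ is, after cancelling the common factor $S_{e+k}\binom{\Sigma}{1}=B_{e+k}(\Sigma)/l_{e+k}$, precisely the closed form
$$S_{<j}\bigl(q-1,(q-1)q,\ldots,(q-1)q^{k-1}\bigr)=(-1)^k\,\frac{l_j}{D_k\,l_{j-k}^{q^k}},\qquad j\geq k,$$
and this is the entire nontrivial content of the lemma. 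It does \emph{not} follow from the recurrences $D_i=(\theta^{q^i}-\theta)D_{i-1}^q$, $l_i=l_{i-1}(\theta-\theta^{q^i})$, $b_{i+1}(t)=(t-\theta^{q^i})b_i(t)$; no amount of rearranging Carlitz factorials will produce the nested $(k{+}1)$-fold power sum on the left. The paper derives it from the operator factorisation $l_j\mathcal{E}_j=(1-\tau/l_{j-1}^{q-1})\cdots(1-\tau/l_1^{q-1})(1-\tau)$ of Carlitz's polynomial in $K[\tau]$, cited from \cite[Prop.\ 4.8]{PEL4}: comparing the $\tau^k$-coefficient of this product (which yields the iterated sum $S_{<j}(q-1,\ldots)$ once one knows $S_d(q-1)=l_d^{-(q-1)}$) with the explicit coefficient $1/(D_kl_{j-k}^{q^k})$ from~(\ref{eq3}) gives the displayed identity in one stroke. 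Your proposal correctly isolates what has to be computed, and the singleton specialisation to $H_m^{(d)}$ is handled appropriately once the general formula is in hand, but without invoking the $\mathcal{E}_j$-factorisation (or an equivalent, e.g.\ an induction on $k$ using $S_d(q-1)=l_d^{-(q-1)}$ and a telescoping of $\sum_{d_1}l_{d_1}/l_{d_1-k}^{q^{k+1}}$) the matching step is unjustified.
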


\begin{proof}
We recall that in \cite[Proposition 4.8]{PEL4} it is proved that, in the non-commutative polynomial ring $K[\tau]$ (with $\tau c=c^q\tau$, $c\in K$),
$$l_d\mathcal{E}_d=\left(1-\frac{\tau}{l_{d-1}^{q-1}}\right)\cdots\left(1-\frac{\tau}{l_1^{q-1}}\right)\left(1-\tau\right),$$ where $\mathcal{E}_d$ is the unique element of $K[\tau]$ such that
$E_d(z)=\mathcal{E}_d(z)$ (evaluation at $z$ of polynomial in $z$ and of operator in $K[\tau]$), where we recall that $E_d(z)$ is Carlitz's polynomial $D_d^{-1}\prod_a(z-a)$ (see ibid. Proposition 4.7). This implies a factorisation of the exponential function $\exp_A$ associated to the $A$-lattice $A\subset\CC_\infty$ in the non-commutative ring of formal series $K[[\tau]]$.
By (\ref{simpleformulas}),
the coefficient of $\tau^m$ in 
\begin{equation}\label{E:prof1}
\sum_{j=0}^{d-1}B_j(\Sigma)\mathcal{E}_j=\sum_{j=0}^{d-1}\frac{B_j(\Sigma)}{l_j}\left(1-\frac{\tau}{l_{j-1}^{q-1}}\right)\cdots\left(1-\frac{\tau}{l_1^{q-1}}\right)\left(1-\tau\right)
\end{equation}
is $(-1)^m\sum_{j=0}^{d-1}\frac{B_j(\Sigma)}{l_j}S_{<j}(q-1,\ldots,(q-1)q^{m-1})$ and the latter sum 
is equal to the sum $(-1)^m S_{<d}(\begin{smallmatrix}\Sigma & \emptyset & \cdots & \emptyset \\ 1 & q-1 & \cdots & (q-1)q^{m-1}\end{smallmatrix})$. Now, viewing (\ref{eq3}), the coefficient of $\tau^m$ in $\sum_{j=0}^{d-1}B_j(\Sigma)\mathcal{E}_j$ is $$D_k^{-1}\sum_{j=k}^{d-1}\frac{B_j(\Sigma)}{l_{j-k}^{q^k}}=\frac{B_k(\Sigma)}{D_k}\tau^{k}\Big(S_{<d-k}\binom{\Sigma}{1}\Big)$$ from which the first part of the lemma follows.
For the second part we note that if $\Sigma=\sg=\{j\}$ and $t=t_j$, then $H_m^{(d)}(t)$ is the coefficient of $\tau^m$ in $\sum_{j=0}^{d-1}b_j(t)\mathcal{E}_j$.
\end{proof}
We deduce:
\begin{Lemma}\label{identity-xi}
The following identities hold:
$$\xi_k(t)=\frac{(-1)^k}{D_k}\tau_\sg^k\left(\zeta_A\binom{\sg}{1}\right),\quad k\geq 0.$$
\end{Lemma}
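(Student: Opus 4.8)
The plan is to derive the identity for $\xi_k(t)$ directly from Lemma \ref{Hdm-explicit}, specialized to the singleton $\Sigma=\sg=\{j\}$, together with the definition $\tau_\sg = b_1(t)\tau = (t-\theta)\tau$ and the definition of $\zeta_A^\dag$. The key is that everything has already been set up: we have a closed formula for the partial sums $S_{<d}^\dag$ (equivalently $S_{<d}$, by Corollary \ref{coro-dag}) of the relevant composition array, and a matching closed formula coming from iterating $\tau_\sg$ on $S_{<d}\binom{\sg}{1}$.

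First I would recall from example (a) in \S\ref{part-1}, specifically the identity (\ref{id-simplest}) / (\ref{simpleformulas}), that $S_d\binom{\sg}{1}=b_d(t)S_d(1)=b_d(t)/l_d$, so $\zeta_A\binom{\sg}{1}=\sum_{d\ge 0} b_d(t)/l_d$ is an entire function lying in $\EE_\sg$. Then I would apply Lemma \ref{Hdm-explicit} with $\Sigma=\sg$: it gives, for all $d\ge k\ge 0$,
$$D_k^{-1}\tau_\sg^k\!\left(S_{<d-k}\binom{\sg}{1}\right)=(-1)^k S_{<d}\begin{pmatrix}\sg & \emptyset & \cdots & \emptyset \\ 1 & q-1 & \cdots & (q-1)q^{k-1}\end{pmatrix}=(-1)^k S_{<d}^\dag(\xi_k\text{-array}),$$
where the last equality uses that the composition array defining $\xi_k$ has no interior $\emptyset$ preceding $\sg$, so its $\dag$-sum and ordinary partial sum agree. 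Now I would take the limit $d\to\infty$ on both sides. On the right-hand side this produces $(-1)^k\zeta_A\binom{\sg&\emptyset&\cdots&\emptyset\\1&q-1&\cdots&(q-1)q^{k-1}}=(-1)^k\xi_k(t)$, since $\zeta_A=\zeta_A^\dag$ here (Theorem \ref{Z-L} / Corollary \ref{coro-dag}, as $|\sg|=1<q$). On the left-hand side, $S_{<d-k}\binom{\sg}{1}\to \zeta_A\binom{\sg}{1}$ as $d\to\infty$, and since $\tau_\sg^k=(t-\theta)(t-\theta^q)\cdots(t-\theta^{q^{k-1}})\,\tau^k = b_k(t)\tau^k$ acts continuously for the Gauss norm (it is $\FF_q[t]$-linear composed with the $q^k$-power Frobenius, both continuous on $\widehat{K[t]}_{\|\cdot\|}$), we get $D_k^{-1}\tau_\sg^k\!\left(S_{<d-k}\binom{\sg}{1}\right)\to D_k^{-1}\tau_\sg^k\!\left(\zeta_A\binom{\sg}{1}\right)$. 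Equating the two limits yields exactly $\xi_k(t)=\dfrac{(-1)^k}{D_k}\,\tau_\sg^k\!\left(\zeta_A\binom{\sg}{1}\right)$.

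The only subtlety, and the step I would be most careful about, is the interchange of limit and the operator $\tau_\sg^k$: one must check that $S_{<d-k}\binom{\sg}{1}$ converges to $\zeta_A\binom{\sg}{1}$ in the Tate algebra $\widehat{K[t_j]}_{\|\cdot\|}$ (not merely pointwise), which is immediate since the general term $b_d(t)/l_d$ has Gauss norm tending to $0$, and that $\tau_\sg^k$ is continuous there — also immediate since $\|b_k(t)f^{q^k}\|=\|b_k(t)\|\,\|f\|^{q^k}$ for the Gauss norm, so $\tau_\sg^k$ is bounded on bounded sets and sends null sequences to null sequences. Once this is in place the identity is forced. I would also remark, as the paper presumably does, that this gives a clean conceptual reading: $\tau_\sg^k$ applied to the generator $\zeta_A\binom{\sg}{1}$ of $\mathcal{Z}_{\{j\}}^{\operatorname{triv}}(K)$ produces, up to the explicit constant $(-1)^k/D_k$, the element $\xi_k(t_j)$ — which is precisely the compatibility between the $K[\boldsymbol\tau]$-module structure and the harmonic-product $\mathcal{Z}_\emptyset(K)$-module structure asserted in Theorem C.
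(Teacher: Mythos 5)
Your proof is correct and follows the same route as the paper's, whose entire proof is the terse ``We deduce:'' after Lemma \ref{Hdm-explicit}, i.e.\ pass to the limit $d\to\infty$ in that identity (with $\Sigma=\sg$), exactly as you do, with the continuity of $\tau_\sg^k$ for the Gauss norm doing the work on the left-hand side. The detour through $S_{<d}^\dag$ and $\zeta_A^\dag$ is harmless but unnecessary: once Lemma \ref{Hdm-explicit} gives $D_k^{-1}\tau_\sg^k\bigl(S_{<d-k}\binom{\sg}{1}\bigr)=(-1)^kS_{<d}\bigl(\begin{smallmatrix}\sg&\emptyset&\cdots&\emptyset\\1&q-1&\cdots&(q-1)q^{k-1}\end{smallmatrix}\bigr)$, the right-hand side converges to $(-1)^k\xi_k(t)$ by the very definition of $\zeta_A$ as $\lim_d S_{<d}$, with no appeal to the $\dag$-machinery.
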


\begin{proof}[Proof of Theorem \ref{structure-of-trivial-MZV}]
By Lemma \ref{identity-xi}, $g_k(t):=(-1)^k\xi_k(t)$ has the property that
$g_k(\theta^{q^i})=1$ if $i=k$ and  $g_k(\theta^{q^i})=0$ otherwise.
Let $f$ be an element of $\mathcal{Z}_\Sigma^{\operatorname{triv}}(K)$. We have 
that $f_{\underline{k}}:=f(\underline{\theta}^{q^{\underline{k}}})\in\mathcal{Z}_\emptyset(K)$ for all but finitely many $\underline{k}\in\NN^\Sigma$.
Hence, with $\underline{k}=(k_j:j\in\Sigma)$,
$$F:=f-\sum_{\underline{k}}(-1)^{|\underline{k}|}f_{\underline{k}}\prod_{j\in\Sigma}\xi_{k_j}(t_j)\in\mathcal{Z}_\Sigma(K)$$
has the property that $\mathcal{E}_\Sigma(F)=0$. Since $\mathcal{E}_\Sigma=\mathcal{F}_\Sigma$ is injective over $\EE_\Sigma^{[0]}$ (due to the fact that $|\Sigma|<q$), we have proved the first property of the theorem.
The second property now follows immediately as an application of Lemma \ref{identity-xi}.
\end{proof}
We deduce:
\begin{Corollary}\label{corollary-doesnt-exceed}
Let $f$ be an element of $\mathcal{Z}^{\operatorname{triv}}_{n,\Sigma}$ with $|\Sigma|<q$, let $\underline{i}$ be an element of $\NN^\Sigma$. Then
either $f(\underline{\theta}^{q^{\underline{i}}})=0$ or $\underline{i}=(i_j:j\in\Sigma)$, 
$n\geq \sum_jq^{i_j}$ and $f(\underline{\theta}^{q^{\underline{i}}})\in\mathcal{Z}_{n-\sum_jq^{i_j},\emptyset}$.
\end{Corollary}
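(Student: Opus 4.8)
The plan is to read this off directly from the structure result Theorem \ref{structure-of-trivial-MZV}. Let $f\in\mathcal{Z}^{\operatorname{triv}}_{n,\Sigma}$ with $|\Sigma|<q$. By Theorem \ref{structure-of-trivial-MZV} I would write, uniquely,
$$f=\sum_{\underline{k}}h_{\underline{k}}\,\prod_{j\in\Sigma}\xi_{k_j}(t_j),$$
the sum running over those $\underline{k}=(k_j:j\in\Sigma)\in\NN^\Sigma$ with $n\geq\sum_{j\in\Sigma}q^{k_j}$, with $h_{\underline{k}}\in\mathcal{Z}_{n-\sum_{j}q^{k_j},\emptyset}$ for each such $\underline{k}$. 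Here one uses that $\xi_{k}(t_j)$ has weight $1+(q-1)+\cdots+(q-1)q^{k-1}=q^{k}$, so that homogeneity of $f$ in weight $n$ forces each $h_{\underline{k}}$ to carry the indicated weight and rules out indices $\underline{k}$ with $\sum_jq^{k_j}>n$.

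Next I would fix $\underline{i}=(i_j:j\in\Sigma)\in\NN^\Sigma$ and evaluate at $\underline{t}_\Sigma=\underline{\theta}^{q^{\underline{i}}}$. By Lemma \ref{identity-xi}, equivalently by the computation in the proof of Theorem \ref{structure-of-trivial-MZV}, the function $g_k:=(-1)^k\xi_k$ satisfies $g_k(\theta^{q^{m}})=1$ for $m=k$ and $g_k(\theta^{q^{m}})=0$ for $m\neq k$; hence $\xi_k(\theta^{q^{m}})=(-1)^k$ if $m=k$ and $0$ otherwise. Since $\xi_{k_j}$ involves only the variable $t_j$ and the variables $t_j$, $j\in\Sigma$, are pairwise distinct, after the substitution the product $\prod_{j}\xi_{k_j}(t_j)$ becomes $(-1)^{|\underline{k}|}$ when $\underline{k}=\underline{i}$ and $0$ otherwise. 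Therefore, if $n\geq\sum_jq^{i_j}$ (so that $\underline{i}$ actually occurs as an index above) we obtain $f(\underline{\theta}^{q^{\underline{i}}})=(-1)^{|\underline{i}|}h_{\underline{i}}\in\mathcal{Z}_{n-\sum_jq^{i_j},\emptyset}$, which vanishes exactly when $h_{\underline{i}}=0$; while if $n<\sum_jq^{i_j}$ no index $\underline{k}$ occurring in the sum equals $\underline{i}$, so $f(\underline{\theta}^{q^{\underline{i}}})=0$. This is precisely the stated alternative.

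The only delicate point is bookkeeping, which I expect to be routine rather than a genuine obstacle: one must check that the coefficients $h_{\underline{k}}$ provided by Theorem \ref{structure-of-trivial-MZV} really lie in the $\FF_p$-spans $\mathcal{Z}_{\bullet,\emptyset}$ when $f$ itself is an $\FF_p$-combination of multiple zeta values, and that the direct-sum decomposition is compatible with the substitution $\underline{t}_\Sigma=\underline{\theta}^{q^{\underline{i}}}$. Both follow from the construction in the proof of that theorem, where one has $h_{\underline{k}}=(-1)^{|\underline{k}|}f(\underline{\theta}^{q^{\underline{k}}})$ and where $f\in\FF_p[\underline{t}_\Sigma][[\tfrac{1}{\theta}]]$ by the Galois-invariance recalled in \S\ref{basic-properties}, so that the substituted value already lies in $\FF_p((\tfrac{1}{\theta}))$, combined with Proposition \ref{proposition-general properties}. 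I would close by observing that the corollary says exactly that, although the evaluations $t_i\mapsto\theta^{q^{k}}$ need not respect the weight grading on a general element of $\mathcal{Z}(K[\underline{t}])$, they do respect it on the trivial part.
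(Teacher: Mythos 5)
Your proof takes exactly the route the paper intends: the corollary is stated as an immediate consequence of Theorem \ref{structure-of-trivial-MZV}, and your argument — expand $f$ in the $\mathcal{Z}_\emptyset(K)$-basis $\prod_j\xi_{k_j}(t_j)$, use homogeneity to restrict the indices $\underline{k}$ to those with $\sum_jq^{k_j}\le n$, then evaluate at $\underline{\theta}^{q^{\underline{i}}}$ using $\xi_k(\theta^{q^m})=(-1)^k\delta_{mk}$ so that a single term survives — is precisely the deduction the paper has in mind. The main body of your argument is correct.

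The one place I would push back is your last paragraph. You correctly flag that Theorem \ref{structure-of-trivial-MZV} a priori gives coefficients $h_{\underline{k}}\in\mathcal{Z}_{n-\sum q^{k_j},\emptyset}(K)$ while the corollary asserts membership in the $\FF_p$-span $\mathcal{Z}_{n-\sum q^{k_j},\emptyset}$; that is a genuine subtlety. But the fix you offer does not close it. Since $h_{\underline{k}}=(-1)^{|\underline{k}|}f(\underline{\theta}^{q^{\underline{k}}})$, asserting $h_{\underline{k}}\in\mathcal{Z}_{\bullet,\emptyset}$ is literally the statement to be proved, so it cannot be used as an auxiliary step. And the Galois-invariance observation that $f(\underline{\theta}^{q^{\underline{k}}})\in\FF_p((1/\theta))$ does not upgrade membership from $\mathcal{Z}_{w,\emptyset}(K)$ to $\mathcal{Z}_{w,\emptyset}$: the intersection $\FF_p((1/\theta))\cap\mathcal{Z}_{w,\emptyset}(K)$ is in general strictly larger than $\mathcal{Z}_{w,\emptyset}$ (for example $\frac{1}{\theta}\zeta_A(w)$ lies in the former but, assuming the expected transcendence, not in the latter). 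To be fair, the paper itself gives no argument here beyond \emph{``We deduce:''}, so your write-up is at the same level of rigour as the source; just be aware that your concluding sentences give the impression of having resolved the $\FF_p$-versus-$K$ point when in fact they haven't.
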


\subsection{$K[\boldsymbol{\tau}]$-structures}\label{link-relations}

All along this subsection we choose $\Sigma\subset\NN^*$ finite with $|\Sigma|<q$.

We introduce an indeterminate $\boldsymbol{\tau}$ and we consider the skew polynomial ring $K[\boldsymbol{\tau}]$ with the commutation rule
$\boldsymbol{\tau} c=c^q\boldsymbol{\tau}$, $c\in K$. By Theorem \ref{structure-of-trivial-MZV}, the $K$-vector space $\mathcal{Z}^{\operatorname{triv}}_{\Sigma}(K)$
carries a structure of left $K[\boldsymbol{\tau}]$-module by $\boldsymbol{\tau} f:=\tau_\Sigma(f)$ for $f\in \mathcal{Z}^{\operatorname{triv}}_{\Sigma}(K)$. This structure is compatible with the grading structure by the monoid $\boldsymbol{S}$ because $
\tau_\Sigma:\mathcal{Z}_{n,\Sigma}^{\operatorname{triv}}(K)\rightarrow\mathcal{Z}_{qn,\Sigma}^{\operatorname{triv}}(K)$, injectively. Note that for this grading structure, multiplication by an element of $K$ leaves the degrees invariants, while the right multiplication by $\boldsymbol{\tau}$ multiplies the weights by $q$. Similarly, the $K$-vector space $\mathcal{Z}_\emptyset(K)=\oplus_n\mathcal{Z}_{n,\emptyset}(K)$ carries a structure of
left $K[\boldsymbol{\tau}]$-module by $\boldsymbol{\tau} f:=f^q$ again compatible with the grading, this time by $\NN$ in a way similar to that described above.

\begin{Lemma}
The composition of the map $\mathcal{F}_\Sigma:\mathcal{Z}_\Sigma(K)\rightarrow
\mathcal{L}_\Sigma(K)$ followed by the evaluation
at $X_i=1$ for all $i\in\Sigma$, induces a morphism $\mathcal{G}_\Sigma$ of $K[\boldsymbol{\tau}]$-modules
$$\mathcal{Z}_{\Sigma}^{\operatorname{triv}}(K)\xrightarrow{\mathcal{G}_\Sigma}\mathcal{Z}_{\emptyset}(K)$$
which is compatible with the grading by $\NN$ on the pre-image and on the target.
\end{Lemma}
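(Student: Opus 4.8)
The plan is to verify three things in turn: that the composition $\ev\circ\mathcal{F}_\Sigma$ lands in $\mathcal{Z}_\emptyset(K)$ on the trivial submodule, that it respects the $\NN$-grading, and that it is $K[\boldsymbol{\tau}]$-linear. For the first point, I would use Theorem~\ref{structure-of-trivial-MZV}: any $f\in\mathcal{Z}^{\operatorname{triv}}_\Sigma(K)$ is a $\mathcal{Z}_\emptyset(K)$-combination of the generators $\prod_{i\in\Sigma}\xi_{k_i}(t_i)$, and $\mathcal{F}_\Sigma$ is $\mathcal{Z}_\emptyset(K)$-linear by Theorem~\ref{Z-L}, so it suffices to compute $\ev(\mathcal{F}_\Sigma(\prod_i\xi_{k_i}(t_i)))$. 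By Lemma~\ref{identity-xi} and the functional equation~(\ref{functional-equation}) (together with Corollary~\ref{E-F-agree}, so that $\mathcal{F}_\Sigma=\mathcal{E}_\Sigma$), one has $\mathcal{F}_\Sigma(\xi_k(t_j))=\tfrac{(-1)^k}{D_k}\lambda_A\binom{\{j\}}{1}^{q^k}$, hence $\mathcal{F}_\Sigma(\prod_i\xi_{k_i}(t_i))$ is a scalar multiple of $\prod_{i\in\Sigma}\lambda_A\binom{\{i\}}{1}^{q^{k_i}}$, which lies in $\mathcal{L}_\Sigma(K)$. Then by the identity~(\ref{carlitz-identity}), $\lambda_A\binom{\{i\}}{1}=\log_C(X_i)$, and evaluation at $X_i=1$ sends $\log_C(X_i)$ to $\log_C(1)$. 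One must be a little careful here: $\log_C(1)$ is not literally defined by the power series, but $\ev$ is defined directly on $\mathcal{L}$ as the $\FF_p$-algebra map $X_i\mapsto 1$ (Lemma~\ref{lemma-map-ev}), and $\ev(\lambda_A\binom{\{i\}}{1})=\zeta_A(1)\in\mathcal{Z}_{1,\emptyset}$; by multiplicativity of $\ev$, $\ev(\mathcal{F}_\Sigma(f))\in\mathcal{Z}_\emptyset(K)$.

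For the grading compatibility, recall from Theorem~\ref{structure-of-trivial-MZV} that $f$ homogeneous of weight $n$ and type $\Sigma$ decomposes with $f_{\underline k}:=f(\underline\theta^{q^{\underline k}})\in\mathcal{Z}_{n-\sum_i q^{k_i},\emptyset}(K)$ (this is Corollary~\ref{corollary-doesnt-exceed}), so $f=\sum_{\underline k}(-1)^{|\underline k|}f_{\underline k}\prod_i\xi_{k_i}(t_i)$. Applying $\ev\circ\mathcal{F}_\Sigma$ and using $\ev(\lambda_A\binom{\{i\}}{1}^{q^{k_i}})=\zeta_A(1)^{q^{k_i}}\in\mathcal{Z}_{q^{k_i},\emptyset}(K)$ together with $\ev(D_k^{-1}\lambda_A\binom{\{i\}}{1}^{q^{k}})\in\mathcal{Z}_{q^{k},\emptyset}(K)$ (using that $\mathcal{Z}_\emptyset(K)$ is weight-graded, Theorem~\ref{resultofchang}), each summand has weight $(n-\sum_i q^{k_i})+\sum_i q^{k_i}=n$. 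Hence $\mathcal{G}_\Sigma(f)\in\mathcal{Z}_{n,\emptyset}(K)$, which is the asserted compatibility.

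For $K[\boldsymbol{\tau}]$-linearity, $K$-linearity is immediate, so it remains to check $\mathcal{G}_\Sigma(\boldsymbol{\tau} f)=\mathcal{G}_\Sigma(f)^q$, i.e.\ $\ev(\mathcal{F}_\Sigma(\tau_\Sigma(f)))=\ev(\mathcal{F}_\Sigma(f))^q$. This follows directly from the functional equation~(\ref{functional-equation}), $\mathcal{E}_\Sigma(\tau_\Sigma(f))=\mathcal{E}_\Sigma(f)^q$, combined with $\mathcal{F}_\Sigma=\mathcal{E}_\Sigma$ on $\mathcal{Z}_\Sigma$ (Corollary~\ref{Theorem-E-F}, valid since $|\Sigma|<q$) and the fact that $\ev$ is an $\FF_p$-algebra homomorphism, hence commutes with $q$-th powers: $\ev(h^q)=\ev(h)^q$. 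Note $\tau_\Sigma$ indeed preserves $\mathcal{Z}^{\operatorname{triv}}_\Sigma(K)$ by the last assertion of Theorem~\ref{structure-of-trivial-MZV}, so the composite is well defined.

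The main obstacle is not any single deep step — everything is assembled from results already established — but rather the bookkeeping needed to make the first point precise: ensuring that $\ev\circ\mathcal{F}_\Sigma$ of an arbitrary element of $\mathcal{Z}^{\operatorname{triv}}_\Sigma(K)$, written via the (possibly infinite but effectively finite) decomposition of Theorem~\ref{structure-of-trivial-MZV}, really lands in $\mathcal{Z}_\emptyset(K)$ and not merely in $K_\infty$. The key is that the coefficients $f_{\underline k}$ are themselves in $\mathcal{Z}_\emptyset(K)$, that only finitely many are nonzero, and that $\ev$ is a $\mathcal{Z}_\emptyset(K)$-algebra map onto $\mathcal{Z}_\emptyset$ by Lemma~\ref{lemma-map-ev}; once this is in place the three verifications above close the argument.
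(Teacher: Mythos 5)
Your proof is correct and follows essentially the same route as the paper's: you combine the graded isomorphism $\mathcal{F}_\Sigma$ (Theorem~\ref{Z-L}), the evaluation map $\ev:\mathcal{L}_{n,\Sigma}(K)\to\mathcal{Z}_{n,\emptyset}(K)$ (Lemma~\ref{lemma-map-ev}), the functional equation $\mathcal{E}_\Sigma(\tau_\Sigma f)=\mathcal{E}_\Sigma(f)^q$ (Lemma~\ref{Esigma-functional}), and the structure theorem~\ref{structure-of-trivial-MZV}, exactly as the paper does, only spelling out the computation on the generators $\prod_i\xi_{k_i}(t_i)$ rather than citing the graded target of $\ev$ directly. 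One harmless imprecision: when $|\Sigma|>1$ the intermediate equality $\mathcal{F}_\Sigma(\xi_k(t_j))=\tfrac{(-1)^k}{D_k}\lambda_A\binom{\{j\}}{1}^{q^k}$ should really be stated for $\mathcal{F}_{\{j\}}$, and passing to $\mathcal{F}_\Sigma\bigl(\prod_i\xi_{k_i}(t_i)\bigr)=\prod_i\tfrac{(-1)^{k_i}}{D_{k_i}}\lambda_A\binom{\{i\}}{1}^{q^{k_i}}$ uses the (easily verified from the defining series) multiplicativity of $\mathcal{E}_\Sigma$ over products of functions in disjoint variables, which should be made explicit but does not affect the conclusion.
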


\begin{proof} We have a well defined $K$-linear map $\mathcal{Z}_{n,\Sigma}(K)\xrightarrow{\mathcal{F}_\Sigma}\mathcal{L}_{n,\Sigma}(K)$ which is an isomorphism of $K$-vector spaces (Corollary \ref{E-F-agree}). The evaluation map which sends an element of $\CC_\infty[\underline{X}_\Sigma]$ to the value at $X_i=1$ for $i\in\Sigma$ induces a map
$\mathcal{L}_{n,\Sigma}(K)\rightarrow\mathcal{Z}_{n,\emptyset}(K)$ (in general not injective)
so we have a $K$-linear map $\mathcal{G}_\Sigma:\mathcal{Z}_{n,\Sigma}(K)\rightarrow
\mathcal{Z}_{n,\emptyset}(K)$. Now, $\mathcal{F}_\Sigma$ agrees with $\mathcal{E}_\Sigma$
and can be therefore extended to a $K_\infty[\boldsymbol{\tau}]$-linear map 
$\EE_\Sigma^{[0]}\rightarrow\TT_\Sigma$ by Lemma 
\ref{Esigma-functional}. The lemma follows easily from these properties and Theorem \ref{structure-of-trivial-MZV}.
 \end{proof}
We do not know if the $K[\boldsymbol{\tau}]$-linearity of $\mathcal{G}_\Sigma$ extends to $\mathcal{Z}_\Sigma(K)$.
On the other hand, it seems that the following hypothesis is plausible:

\begin{Conjecture}\label{injectivity}
The above map $\mathcal{G}_\Sigma$ is injective.
\end{Conjecture}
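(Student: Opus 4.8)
The plan is to make $\mathcal{G}_\Sigma$ completely explicit on the $\mathcal{Z}_\emptyset(K)$-module basis $\big\{\prod_{i\in\Sigma}\xi_{k_i}(t_i):\underline{k}=(k_i)_{i\in\Sigma}\in\NN^{\Sigma}\big\}$ of $\mathcal{Z}^{\operatorname{triv}}_\Sigma(K)$ supplied by Theorem \ref{structure-of-trivial-MZV}, and thereby to reduce Conjecture \ref{injectivity} to a linear independence statement for Thakur's multiple zeta values. Since $\mathcal{G}_\Sigma=\ev\circ\mathcal{F}_\Sigma$, since $\mathcal{F}_\Sigma$ coincides with $\mathcal{E}_\Sigma$ on $\mathcal{Z}_\Sigma\supseteq\mathcal{Z}^{\operatorname{triv}}_\Sigma(K)$ by Corollary \ref{Theorem-E-F}, and since $\ev\big(\lambda_A\binom{\{i\}}{1}\big)=\log_C(1)=\zeta_A(1)$ by (\ref{carlitz-identity}), the evaluation $\xi_k(t_j)_{t_j=\theta^{q^i}}$ — equal to $(-1)^k$ if $i=k$ and to $0$ otherwise, as recorded in the proof of Theorem \ref{structure-of-trivial-MZV} — gives
\[
\mathcal{G}_\Sigma\!\left(\prod_{i\in\Sigma}\xi_{k_i}(t_i)\right)=\frac{(-1)^{\sum_{i}k_i}}{\prod_{i\in\Sigma}D_{k_i}}\,\zeta_A(1)^{\sum_{i}q^{k_i}}=\frac{(-1)^{\sum_{i}k_i}}{\prod_{i\in\Sigma}D_{k_i}}\prod_{i\in\Sigma}\zeta_A(q^{k_i}),
\]
using $\zeta_A(1)^{q^{m}}=\zeta_A(q^{m})$. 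As every $f\in\mathcal{Z}^{\operatorname{triv}}_{n,\Sigma}(K)$ is, by Theorem \ref{structure-of-trivial-MZV}, uniquely $f=\sum_{\underline{k}}c_{\underline{k}}\prod_{i\in\Sigma}\xi_{k_i}(t_i)$ with $c_{\underline{k}}\in\mathcal{Z}_{n-\sum_{i}q^{k_i},\emptyset}(K)$, the equation $\mathcal{G}_\Sigma(f)=0$ becomes a $\mathcal{Z}_\emptyset(K)$-linear dependence, all of whose terms lie in weight $n$, among the products $\prod_{i\in\Sigma}\zeta_A(q^{k_i})$, and the remaining task is to force $c_{\underline{k}}=0$ for every $\underline{k}$.

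Before this can be run, one should note a feature forced by the computation when $|\Sigma|\ge 2$: the right-hand side above depends on $\underline{k}$ only through the multiset of its values, because $\mathcal{Z}_\emptyset(K)$ is commutative, so $\mathcal{G}_\Sigma$ automatically kills every difference $\prod_{i\in\Sigma}\xi_{k_i}(t_i)-\prod_{i\in\Sigma}\xi_{k_{\sigma(i)}}(t_i)$ with $\sigma$ a permutation of $\Sigma$. Hence Conjecture \ref{injectivity} is most naturally understood, for $|\Sigma|\ge 2$, as the injectivity of the map induced by $\mathcal{G}_\Sigma$ on the quotient of $\mathcal{Z}^{\operatorname{triv}}_\Sigma(K)$ by this ``symmetrization'' submodule; for $|\Sigma|=1$ there is nothing to quotient. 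With that understood, the content becomes: for every $n$, the $\mathcal{Z}_\emptyset(K)$-submodules $\big(\prod_{i\in\Sigma}\zeta_A(q^{k_i})\big)\,\mathcal{Z}_{n-\sum_{i}q^{k_i},\emptyset}(K)$ of $\mathcal{Z}_{n,\emptyset}(K)$, indexed by the multisets $(k_i)_{i\in\Sigma}$ with $\sum_{i}q^{k_i}\le n$, are in direct sum. Since every such product of Thakur values is itself an $\FF_p$-combination of Thakur's multiple zeta values of weight $\sum_{i}q^{k_i}$ via the harmonic product, this is exactly a non-vanishing statement for certain determinants built from the $K$-linear relations among Thakur's multiple zeta values in a fixed weight.

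The hard part will be this last statement: it belongs to the wide-open problem of describing all $K$-linear relations among Thakur's multiple zeta values — the same territory as the conjectures of Thakur and Todd — and the only machinery presently strong enough to establish independence assertions of this type is the Tannakian theory of dual $t$-motives of Anderson, Brownawell and Papanikolas, in the forms used by Chang in \cite{CHA} for Theorem \ref{resultofchang} and in its algebraic-independence refinements. A concrete route would be to attach to the relevant finite family of products a dual $t$-motive assembled from the Carlitz motive and the iterated extensions realizing the $\zeta_A(\mathcal{C})$'s of weight $n$, and to prove that its motivic Galois group is as large as the combinatorics of the multisets $(k_i)_{i\in\Sigma}$ demands; carrying this out uniformly in $n$ is what is currently out of reach. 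It is worth noting that the weight grading alone already disposes of a degenerate case: combining the displayed formula with Lemma \ref{identity-xi} and the $K[\boldsymbol{\tau}]$-linearity of $\mathcal{G}_\Sigma$, one finds that on the submodule $K[\boldsymbol{\tau}]\cdot\prod_{i\in\Sigma}\zeta_A\binom{\{i\}}{1}$ — where the coefficients are scalars from $K$, not from $\mathcal{Z}_\emptyset(K)$ — the map $\mathcal{G}_\Sigma$ sends $\sum_{k}c_k\boldsymbol{\tau}^{k}\big(\prod_{i\in\Sigma}\zeta_A\binom{\{i\}}{1}\big)$ to $\sum_{k}c_k\,\zeta_A(q^{k})^{|\Sigma|}$, whose terms have pairwise distinct weights $|\Sigma|\,q^{k}$, so injectivity there is immediate from Theorem \ref{resultofchang}; it is the passage to $\mathcal{Z}_\emptyset(K)$-coefficients, which collapses every term into weight $n$, that brings in the full difficulty.
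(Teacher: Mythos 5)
This statement is a Conjecture in the paper, not a Theorem: there is no proof in the paper to compare against, and the authors explicitly remark that establishing it likely requires tools of a diophantine nature. Your proposal does not prove it either, and does not claim to: after explicating $\mathcal{G}_\Sigma$ on the $\mathcal{Z}_\emptyset(K)$-basis from Theorem \ref{structure-of-trivial-MZV}, you correctly reduce the conjecture to the assertion that the $\mathcal{Z}_\emptyset(K)$-submodules $\big(\prod_{i\in\Sigma}\zeta_A(q^{k_i})\big)\,\mathcal{Z}_{n-\sum_i q^{k_i},\emptyset}(K)$ of $\mathcal{Z}_{n,\emptyset}(K)$ are in direct sum, and you (reasonably) leave that statement open, since it lies in the same territory as the conjectures of Thakur and Todd and no present machinery settles it. As a proof the proposal is necessarily incomplete, but that was unavoidable given the status of the statement.

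What is both correct and genuinely valuable is your observation that the conjecture as literally stated cannot hold for $|\Sigma|\geq 2$. Combining Corollary \ref{Theorem-E-F}, the definition of $\mathcal{E}_\Sigma$, the interpolation property $g_k(\theta^{q^\ell})=\delta_{k\ell}$ of $g_k:=(-1)^k\xi_k$ recorded in the proof of Theorem \ref{structure-of-trivial-MZV}, and the evaluation $\mathrm{ev}\big(\lambda_A\binom{\{i\}}{1}\big)=\zeta_A(1)$ from (\ref{carlitz-identity}), one gets exactly
\[
\mathcal{G}_\Sigma\!\left(\prod_{i\in\Sigma}\xi_{k_i}(t_i)\right)=\frac{(-1)^{\sum_i k_i}}{\prod_{i\in\Sigma}D_{k_i}}\prod_{i\in\Sigma}\zeta_A\big(q^{k_i}\big),
\]
and the right-hand side depends on $\underline{k}\in\NN^\Sigma$ only through its underlying multiset, while Theorem \ref{structure-of-trivial-MZV} gives the products $\prod_{i\in\Sigma}\xi_{k_i}(t_i)$ as a free $\mathcal{Z}_\emptyset(K)$-basis indexed by \emph{tuples}. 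Hence, for distinct $j_1,j_2\in\Sigma$, the nonzero element $\xi_0(t_{j_1})\xi_1(t_{j_2})-\xi_1(t_{j_1})\xi_0(t_{j_2})\in\mathcal{Z}^{\operatorname{triv}}_{q+1,\Sigma}(K)$ lies in $\operatorname{Ker}(\mathcal{G}_\Sigma)$, so the conjecture fails at face value whenever $|\Sigma|\geq 2$. It should be read either as the case $|\Sigma|=1$, which is the only case used in Theorem B and in \S\ref{Hadamard}, or, as you propose, as injectivity of the map induced on the symmetrized quotient. That is a worthwhile correction; beyond it, your proposal accurately locates without resolving the diophantine content of the conjecture.
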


Let $\delta_{n,\Sigma}$ be the dimension over $K$ of $\mathcal{Z}_{n,\Sigma}^{\operatorname{triv}}(K)$. It is easy to see that $\delta_{n,\emptyset}=\dim_K(\mathcal{Z}_{n,\emptyset}(K))$ and that if $\Sigma'\subset\Sigma$ is such that $\Sigma\setminus\Sigma'=\{j\}$
(singleton), then 
$\delta_{n,\Sigma}=\sum_{q^k\leq n}\delta_{n-q^k,\Sigma'}$. We recall, from \cite{Todd}, the next
rational function field analogue of Zagier's dimension conjecture:

\begin{Conjecture}[Todd]\label{Todds-conjecture}
We have:
$$\delta_{n,\emptyset}=\left\{\begin{matrix}2^{n-1}\text{ if $1\leq n\leq q-1$},\\
2^{q-1}-1\text{ if $n=q$},\\
\sum_{i=1}^q\delta_{n-i,\emptyset}\text{ if $n>q$}.\end{matrix}\right.$$
\end{Conjecture}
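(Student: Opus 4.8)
The statement is a conjecture — the rational function field analogue of Zagier's dimension conjecture, due to Todd — and, as far as I am aware, it is open; what follows is therefore only the shape of a possible attack, not a proof. As in the classical case it is natural to split the recursion into an \emph{upper bound} $\delta_{n,\emptyset}\le F(n)$ and a \emph{lower bound} $\delta_{n,\emptyset}\ge F(n)$, where $F(n)$ denotes the right-hand side of Conjecture \ref{Todds-conjecture}, and to treat the two inequalities by completely different tools; the recursion's three-case shape (the range $1\le n\le q-1$, the boundary value $n=q$, and the linear recurrence $\sum_{i=1}^q\delta_{n-i,\emptyset}$ for $n>q$) suggests an induction on $n$ for each half.

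\textbf{Upper bound.} The plan is to exhibit, for each weight $n$, enough $K$-linear relations among Thakur's multiple zeta values of weight $n$ to force $\dim_K\mathcal{Z}_{n,\emptyset}(K)\le F(n)$. Candidate relations are Todd's own families, the relations of Lara Rodr\'iguez--Thakur, and the relations produced in the present paper by the difference maps $E_\Sigma-F_\Sigma$ (our ``analogue of double shuffle''), transported to $\mathcal{Z}_\emptyset(K)$ through $\mathcal{G}_\Sigma$; here Conjecture \ref{injectivity} is exactly what would guarantee that non-trivial $K[\boldsymbol{\tau}]$-relations in $\mathcal{Z}^{\operatorname{triv}}_\Sigma(K)$ survive as genuine $K$-linear relations among Thakur's multiple zeta values, while Theorem B and Theorem \ref{Theorem-C} supply the inputs. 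The remaining task would be purely combinatorial: organise these relations, using the $K[\boldsymbol{\tau}]$-module description $\mathcal{Z}^{\operatorname{triv}}_\Sigma(K)\cong\bigotimes_{i\in\Sigma}\mathcal{Z}^{\operatorname{triv}}_{\{i\}}(K)$ from Theorem \ref{structure-of-trivial-MZV} as bookkeeping, and check that their span has the predicted codimension in each weight, matching $F(n)$ inductively. The new families of §\ref{family} are meant precisely to feed this side.

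\textbf{Lower bound, and the main obstacle.} This is the genuinely deep half and is where I expect the difficulty to lie: one must show that the relations above are the \emph{only} ones, i.e. bound $\dim_K\mathcal{Z}_{n,\emptyset}(K)$ from below by $F(n)$. The only known route is transcendence: realise Thakur's multiple zeta values of weight $n$ as periods of iterated extensions of Carlitz tensor powers (Anderson--Thakur, Chang, Chang--Papanikolas--Yu, Chang--Mishiba, Green--Ngo Dac), pass to the associated Tannakian category of $t$-motives, and bound from below the dimension of the relevant motivic-Galois / period spaces — the analogue of the motivic lower bound of Goncharov--Terasoma and Deligne--Goncharov (together with Brown's theorem) in the classical setting. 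Even formulating a \emph{complete} conjectural generating set of relations, let alone proving exhaustiveness, is out of reach of the methods developed here; this is why the statement is recorded only as a conjecture, and why the question raised immediately before it — whether the relations one obtains exhaust the full $K$-vector space $\mathcal{Z}_n(K)$ — is left open.
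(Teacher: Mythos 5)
You correctly identify that this is an open conjecture, attributed to Todd in \cite{Todd}, which the paper does not prove — it is quoted as the function-field analogue of Zagier's dimension conjecture and used only conditionally (e.g.\ to make dimension counts in \S\ref{Hadamard}). Your caveat and the upper/lower bound split are the right framing, and your assessment of where the difficulty lies (the lower bound, requiring transcendence/Tannakian input) matches the paper's own remark that such questions ``require tools of diophantine nature.'' One small point of emphasis: the paper's machinery (the maps $\mathcal{E}_\Sigma=\mathcal{F}_\Sigma$, the $K[\boldsymbol{\tau}]$-module $\mathcal{Z}_\Sigma^{\operatorname{triv}}(K)$, and $\mathcal{G}_\Sigma$) is offered as a \emph{relation-producing} device — a partial substitute for double shuffle — and the authors themselves observe that $\mathcal{G}_\Sigma$ is ``not likely to be surjective,'' so even granting Conjecture \ref{injectivity} one would not obtain from these tools alone a generating set of all relations in $\mathcal{Z}_{n,\emptyset}(K)$; the upper bound side of your sketch would therefore also need input beyond the present paper. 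Since no proof exists in the paper, there is nothing further to compare.
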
 If we assume Conjecture \ref{Todds-conjecture}, we get conjectural formulas for the dimensions $\delta_{n,\Sigma}$ with $|\Sigma|<q$ and for all $n$ and $\Sigma'\subset\Sigma$ (with $|\Sigma|<q$), $\delta_{n,\Sigma'}\geq \delta_{n,\Sigma}$.

We consider the $K$-vector space
$\boldsymbol{C}_\Sigma(K)=\oplus_n\boldsymbol{C}_{n,\Sigma}(K)$ which is freely generated by 
the elements $[\mathcal{C}]$ with $\mathcal{C}$ admissible composition array of $\binom{\Sigma}{n}$, with $n\in\NN^*$. The properties of the harmonic product imply that $\boldsymbol{C}_\Sigma(K)$, together with the product $\odot_\zeta$, carries a structure of right $\boldsymbol{C}_\emptyset(K)$-module which is compatible with the grading structures. There is a $K$-linear map
$\boldsymbol{G}_\Sigma:\boldsymbol{C}_{n,\Sigma}(K)\rightarrow\boldsymbol{C}_{n,\emptyset}(K)$ such that $\mathcal{G}_\Sigma\circ\zeta_A=\zeta_A\circ\boldsymbol{G}_\Sigma$, determined by the proof of Theorem \ref{Z-L}.
We consider the subvector space of the $K$-vector space $\boldsymbol{C}_{n,\Sigma}(K)$:
$$\boldsymbol{C}_{n,\Sigma}^{\operatorname{triv}}(K):=\zeta_A^{-1}(\mathcal{Z}_{n,\Sigma}^{\operatorname{triv}}(K))$$ (\footnote{We could also consider an $\FF_p$-vector space $\boldsymbol{C}_{n,\Sigma}^{\operatorname{triv}}$ defined similarly, but using only elements of $\mathcal{Z}_{n,\Sigma}^{\operatorname{triv}}$, but we will be mainly concerned by the above space.}).
We also consider $$\boldsymbol{C}_{\Sigma}^{\operatorname{triv}}(K):=\bigoplus_n\boldsymbol{C}_{n,\Sigma}^{\operatorname{triv}}(K)$$ which also carries, by the harmonic multiplication $\odot_\zeta$, a structure of right $\NN$-graded $\boldsymbol{C}_{\emptyset}(K)$-module so that $\boldsymbol{G}_\Sigma$ becomes a morphism of graded $\boldsymbol{C}_{\emptyset}(K)$-modules. Observe that there are structures of $K[\boldsymbol{\tau}]$-modules over $\boldsymbol{C}_{\Sigma}^{\operatorname{triv}}(K)$ and $\boldsymbol{C}_{\emptyset}(K)$ which are compatible with the
gradings and also with $\boldsymbol{G}_\Sigma$ which is therefore equivariant. On $\boldsymbol{C}_{\emptyset}(K)$, we simply 
set
$$\boldsymbol{\tau}\left[\begin{matrix} n_1 & \cdots & n_r\end{matrix}\right]=\left[\begin{matrix}  n_1 & \cdots & n_r\end{matrix}\right]\underbrace{\odot_\zeta\cdots\odot_\zeta}_{q\text{ times}}\left[\begin{matrix}  n_1 & \cdots & n_r\end{matrix}\right]=\left[\begin{matrix} qn_1 & \cdots & qn_r\end{matrix}\right].$$
We have reached the following diagram of morphisms of $K[\boldsymbol{\tau}]$-modules compatible with the grading structures, where the lines are short exact sequences and all the squares commute (note that the center and the right vertical arrows are surjective by definition):

\begin{center}
\begin{tikzpicture}
  \matrix[matrix of math nodes,column sep={80pt,between origins},row
    sep={60pt,between origins},nodes={asymmetrical rectangle}] (s)
  {
     |[name=03]| 0 &|[name=A]| \operatorname{Ker}(\zeta_A)\cap\boldsymbol{C}_\Sigma^{\operatorname{triv}}(K) &|[name=B]| \boldsymbol{C}_\Sigma^{\operatorname{triv}}(K) &|[name=C]| \mathcal{Z}^{\operatorname{triv}}_\Sigma(K) &|[name=01]| 0 \\
    |[name=02]| 0 &|[name=A']| \operatorname{Ker}(\zeta_A)\cap\operatorname{Im}(\boldsymbol{G}_\Sigma) &|[name=B']| \operatorname{Im}(\boldsymbol{G}_\Sigma) &|[name=C']| \operatorname{Im}(\mathcal{G}_\Sigma) &|[name=04]| 0. \\
    };
  \draw[->] 
            (A) edge (B)
            (B) edge node[auto] {\(\zeta_A\)} (C)
            (C) edge (01)
            (C') edge (04)
            (03) edge (A)
            (A) edge node[auto] {\(\boldsymbol{G}_\Sigma\)} (A')
            (B) edge node[auto] {\(\boldsymbol{G}_\Sigma\)} (B')
            (C) edge node[auto] {\(\mathcal{G}_\Sigma\)} (C')
            (02) edge (A')
            (A') edge node[auto] {\(\)} (B')
            (B') edge node[auto] {\(\zeta_A\)} (C');
\end{tikzpicture} 
\end{center}
A simple application of the snake lemma implies:
\begin{Lemma}
We have that $\mathcal{G}_\Sigma$ is injective if and only if $\operatorname{Ker}(\boldsymbol{G}_\Sigma)\subset\operatorname{Ker}(\zeta_A)$. Moreover, if $\mathcal{G}_\Sigma$ is injective then the left vertical arrow is surjective.
\end{Lemma}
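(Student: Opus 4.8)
The plan is to apply the snake lemma to the displayed commutative diagram and then read the two assertions off the resulting exact sequence. The preliminary verifications I would dispatch in a line each: both horizontal rows are short exact (by construction $\zeta_A$ is surjective onto $\mathcal{Z}^{\operatorname{triv}}_\Sigma(K)$ and onto $\operatorname{Im}(\mathcal{G}_\Sigma)$), all three vertical arrows are $K[\boldsymbol{\tau}]$-linear and compatible with the $\NN$-grading (this uses Theorem~\ref{structure-of-trivial-MZV} and the definition of $\mathcal{G}_\Sigma$), and — the point that makes the snake sequence short — the central and right vertical arrows $\boldsymbol{G}_\Sigma$ and $\mathcal{G}_\Sigma$ are surjective onto $\operatorname{Im}(\boldsymbol{G}_\Sigma)$ and $\operatorname{Im}(\mathcal{G}_\Sigma)$, so $\operatorname{Coker}(\boldsymbol{G}_\Sigma)=\operatorname{Coker}(\mathcal{G}_\Sigma)=0$. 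Writing $f$ for the left vertical arrow, the snake lemma then produces the exact sequence of $K[\boldsymbol{\tau}]$-modules
\[
0\longrightarrow\operatorname{Ker}(\zeta_A)\cap\operatorname{Ker}(\boldsymbol{G}_\Sigma)\xrightarrow{\iota}\operatorname{Ker}(\boldsymbol{G}_\Sigma)\xrightarrow{\zeta_A}\operatorname{Ker}(\mathcal{G}_\Sigma)\xrightarrow{\partial}\operatorname{Coker}(f)\longrightarrow 0,
\]
in which $\iota$ is the inclusion, the middle arrow is induced by the top $\zeta_A$, and $\partial$ is the connecting homomorphism.

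The forward implication and the ``moreover'' then drop out at once. If $\mathcal{G}_\Sigma$ is injective, then $\operatorname{Ker}(\mathcal{G}_\Sigma)=0$, so exactness at $\operatorname{Ker}(\boldsymbol{G}_\Sigma)$ forces $\iota$ to be onto, i.e.\ $\operatorname{Ker}(\boldsymbol{G}_\Sigma)=\operatorname{Ker}(\zeta_A)\cap\operatorname{Ker}(\boldsymbol{G}_\Sigma)\subseteq\operatorname{Ker}(\zeta_A)$; and exactness at $\operatorname{Coker}(f)$, whose predecessor is now $0$, forces $\operatorname{Coker}(f)=0$, which is precisely the surjectivity of the left vertical arrow claimed in the ``moreover''. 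For the converse I would start from $\operatorname{Ker}(\boldsymbol{G}_\Sigma)\subseteq\operatorname{Ker}(\zeta_A)$: then $\operatorname{Ker}(\zeta_A)\cap\operatorname{Ker}(\boldsymbol{G}_\Sigma)=\operatorname{Ker}(\boldsymbol{G}_\Sigma)$, so $\iota$ is an isomorphism, hence by exactness the map $\zeta_A\colon\operatorname{Ker}(\boldsymbol{G}_\Sigma)\to\operatorname{Ker}(\mathcal{G}_\Sigma)$ vanishes, so $\partial$ is injective; being also surjective, $\partial$ identifies $\operatorname{Ker}(\mathcal{G}_\Sigma)$ with $\operatorname{Coker}(f)$. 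Thus, under the hypothesis, $\mathcal{G}_\Sigma$ is injective precisely when the left vertical arrow is surjective.

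This last equivalence isolates what I expect to be the only non-formal step, and hence the main obstacle: under $\operatorname{Ker}(\boldsymbol{G}_\Sigma)\subseteq\operatorname{Ker}(\zeta_A)$ one still has to prove that $f\colon\operatorname{Ker}(\zeta_A)\cap\boldsymbol{C}_\Sigma^{\operatorname{triv}}(K)\to\operatorname{Ker}(\zeta_A)\cap\operatorname{Im}(\boldsymbol{G}_\Sigma)$ is onto, equivalently that $\zeta_A$ carries $\operatorname{Ker}(\boldsymbol{G}_\Sigma)$ \emph{onto} $\operatorname{Ker}(\mathcal{G}_\Sigma)$. I would attack this by a direct diagram chase using surjectivity of the top $\zeta_A$: for $y=\boldsymbol{G}_\Sigma(x)\in\operatorname{Ker}(\zeta_A)$ one has $\mathcal{G}_\Sigma(\zeta_A(x))=\zeta_A(\boldsymbol{G}_\Sigma(x))=0$, so $\zeta_A(x)\in\operatorname{Ker}(\mathcal{G}_\Sigma)$, and it remains to correct $x$ by an element of $\operatorname{Ker}(\boldsymbol{G}_\Sigma)$ having the same $\zeta_A$-image. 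Since the two clauses of the statement are genuinely coupled through the snake sequence, the cleanest honest formulation to prove is that ``$\operatorname{Ker}(\boldsymbol{G}_\Sigma)\subseteq\operatorname{Ker}(\zeta_A)$ together with surjectivity of $f$'' is equivalent to injectivity of $\mathcal{G}_\Sigma$, with the ``moreover'' recording the non-trivial half of ``injective $\Rightarrow$ $f$ surjective''; the remaining routine points (exactness of the two rows, $K[\boldsymbol{\tau}]$-equivariance, grading compatibility) follow immediately from the definitions and Theorem~\ref{structure-of-trivial-MZV}.
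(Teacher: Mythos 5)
Your proposal follows the same route as the paper --- the commutative diagram with exact rows, the snake lemma, and the resulting exact sequence from $\operatorname{Ker}(f)$ through $\operatorname{Coker}(f)$, where $f$ denotes the left vertical arrow --- and your derivation of the ``only if'' half and the ``moreover'' is correct: if $\mathcal{G}_\Sigma$ is injective then the snake sequence collapses to force both $\operatorname{Ker}(\boldsymbol{G}_\Sigma)=\operatorname{Ker}(\boldsymbol{G}_\Sigma)\cap\operatorname{Ker}(\zeta_A)$ and $\operatorname{Coker}(f)=0$. You are also right to single out the ``if'' half as the sticking point. Under the hypothesis $\operatorname{Ker}(\boldsymbol{G}_\Sigma)\subseteq\operatorname{Ker}(\zeta_A)$, exactness makes the connecting map $\partial$ an isomorphism $\operatorname{Ker}(\mathcal{G}_\Sigma)\cong\operatorname{Coker}(f)$; injectivity of $\mathcal{G}_\Sigma$ is therefore \emph{equivalent to}, not a consequence of, surjectivity of $f$. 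Your attempted diagram chase for the latter is indeed circular, and for a concrete reason: under that same hypothesis $\zeta_A$ kills $\operatorname{Ker}(\boldsymbol{G}_\Sigma)$, so the ``correction'' $k\in\operatorname{Ker}(\boldsymbol{G}_\Sigma)$ you would add to $x$ cannot change $\zeta_A(x)$, and you would need $\zeta_A(x)=0$ outright, i.e.\ injectivity of $\mathcal{G}_\Sigma$, which is what you are trying to prove.

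The upshot is that your proposal proves exactly what the snake lemma delivers, namely the equivalence ``$\mathcal{G}_\Sigma$ injective $\Leftrightarrow$ $\operatorname{Ker}(\boldsymbol{G}_\Sigma)\subseteq\operatorname{Ker}(\zeta_A)$ \emph{and} $f$ surjective,'' of which the paper's ``only if'' and ``moreover'' are the two halves. The gap you flag is real but lies in the paper's statement, not in your argument: the one-line appeal to the snake lemma does not justify the converse implication ``$\operatorname{Ker}(\boldsymbol{G}_\Sigma)\subseteq\operatorname{Ker}(\zeta_A)\Rightarrow\mathcal{G}_\Sigma$ injective'' without an independent reason for $\operatorname{Coker}(f)=0$, and none is offered. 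Since the discussion following the lemma and around Conjecture~\ref{injectivity} only invokes the ``only if'' direction together with the ``moreover,'' the gap does not propagate into the rest of the paper, but the lemma as written should either be weakened to the one-directional implication or supplemented with an argument (or hypothesis) that $f$ is onto.
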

In particular, Conjecture \ref{injectivity} implies that, in the $K[\boldsymbol{\tau}]$-module $\mathcal{G}_\Sigma(\mathcal{Z}_{\Sigma}^{\operatorname{triv}}(K))$, all the $K$-linear relations, generated by homogeneous ones (for the grading by $\NN$), come from homogeneous $K[\boldsymbol{\tau}]$-linear relations in $\mathcal{Z}_{\Sigma}^{\operatorname{triv}}(K)$. Of course the limit of our approach is that the image $\mathcal{G}_\Sigma(\mathcal{Z}_{n,\Sigma}^{\operatorname{triv}}(K))$ is in general smaller than the target $\mathcal{Z}_{n,\emptyset}(K)$ (it also depends on the choice of $\Sigma$, but the larger $\Sigma$ is, the smaller the image is).
However, in all explicit cases we analysed experimentally, this construction is sufficient to detect all the $K$-linear dependence relations among Thakur's multiple zeta values for small weight. In \S \ref{Hadamard} we give some experimental evidences of these phenomena. 

\subsubsection{Remark} Another way to view the above constructions is to consider $M$ the 
$K[\boldsymbol{\tau}]$-module $\mathcal{Z}^{\text{triv}}_\Sigma(K)$ and $N$ the $K[\boldsymbol{\tau}]$-module
$\mathcal{G}_\Sigma(\mathcal{Z}^{\text{triv}}_\Sigma(K))\subset\mathcal{Z}_{\emptyset}(K)$.
If Conjecture \ref{injectivity} holds, then $M$ and $N$ are isomorphic, and there exists an isomorphism which is compatible with the graduations by $\NN$. Hence, if $F_\bullet^M$ and 
$F_\bullet^N$ are free resolutions of $M$ and $N$, they are isomorphic up to $K[\boldsymbol{\tau}]$-linear  homotopy. One can therefore hope to recover several $K$-linear relations between Thakur's multiple zeta values by considering $K[\boldsymbol{\tau}]$-linear relations between elements of $\mathcal{Z}^{\text{triv}}_\Sigma(K)$. This is what seems to happen in small weights and $|\Sigma|<q$; see \S \ref{Hadamard}. 

\subsubsection{Remark} Assume that $|\Sigma|<q$. There is a structure of $\mathcal{Z}_{\emptyset}(K)[\underline{t}_\Sigma,\tau]$-module over $\mathcal{Z}^{\operatorname{triv}}_{\Sigma}(K)$ compatible with the grading. If the multiplication of this module structure is denoted by $*$, for $i\in\Sigma$, we set, with $f\in \mathcal{Z}^{\operatorname{triv}}_{\Sigma}(K)$
$$t_i*f:=\mathcal{F}_\Sigma^{-1}(\mathcal{E}_\Sigma(t_if)).$$
This is well defined thanks to Corollary \ref{E-F-agree}, the identity $\log_C(C_\theta(X))=\theta\log_C(X)$, and the fact that 
$\mathcal{E}_\Sigma(\mathcal{Z}^{\operatorname{triv}}_{\Sigma}(K))$ is equal to $\mathcal{Z}_\emptyset(K)[\log_C(X_i):i\in\Sigma]^{\operatorname{lin}}$ (the map $\mathcal{F}_\Sigma$, as defined, cannot be evaluated at the entire function $t_if$). In particular, we have 
$\mathcal{E}_\Sigma(t_if)=\mathcal{E}_\Sigma(t_i*f)$ for all $f\in\mathcal{Z}^{\operatorname{triv}}_{\Sigma}(K)$ and we can give $\mathcal{Z}_\emptyset(K)[\log_C(X_i):i\in\Sigma]^{\operatorname{lin}}$ a structure of torsion $\mathcal{Z}_{\emptyset}(K)[\underline{t}_\Sigma,\tau]$-module which is isomorphic to the $\mathcal{Z}_{\emptyset}(K)[\underline{t}_\Sigma,\tau]$-module $\mathcal{Z}^{\operatorname{triv}}_{\Sigma}(K)$ via $\mathcal{E}_\Sigma$. Of course, from this structure we recover the $K[\boldsymbol{\tau}]$-module structures previously described.

\section{Applications to linear relations}\label{triviality-properties}

In this section we discuss some consequences of our results on the isomorphism 
$\mathcal{E}_\Sigma=\mathcal{F}_\Sigma$ over the $K[\boldsymbol{\tau}]$-modules $\mathcal{Z}_{\Sigma}(K)$, in the case $|\Sigma|<q$. The first result is a simple consequence of Theorem \ref{structure-of-trivial-MZV} and asserts that we can associate to trivial multiple zeta values certain relations between multiple polylogarithms. We suppose that $\Sigma$ is a subset of $\NN^*$ such that $|\Sigma|<q$.

Remember the $\lambda$-degrees from \S \ref{lambda-degrees}. We write $\boldsymbol{C}^\lambda_{n,\Sigma}$ for the free $\FF_p$-vector space generated by the composition arrays of 
$\lambda$-degree $\binom{\Sigma}{n}$ and $\boldsymbol{C}^\lambda_{n,\Sigma}(K)$ for the free $K$-vector space generated by the same composition arrays. We also recall that $\boldsymbol{C}^{\operatorname{triv}}_{n,\Sigma}=\zeta^{-1}_A(\mathcal{Z}^{\operatorname{triv}}_{n,\Sigma})$ and
we define $\boldsymbol{C}^{\operatorname{triv}}_{n,\Sigma}(K)$ similarly. The map $\mathcal{F}_\Sigma:\mathcal{Z}^{\operatorname{triv}}_{n,\Sigma}\rightarrow\mathcal{L}_{n,\Sigma}$
lifts to an injective $K$-linear map $F_\Sigma:\boldsymbol{C}^{\operatorname{triv}}_{n,\Sigma}\rightarrow \boldsymbol{C}_{n,\Sigma}(K)$ so that $\mathcal{F}_\Sigma\circ\zeta_A=\lambda_A\circ F_\Sigma$. Similarly, 
the map $\mathcal{E}_\Sigma:\mathcal{Z}^{\operatorname{triv}}_{n,\Sigma}\rightarrow\mathcal{L}_{\Sigma}(K)$ lifts to an injective $K$-linear map $E_\Sigma:\boldsymbol{C}^{\operatorname{triv}}_{n,\Sigma}\rightarrow \boldsymbol{C}_{\Sigma}(K)$ (in various ways) so that 
$\mathcal{E}_\Sigma\circ\zeta_A=\lambda_A\circ E_\Sigma$. Let us fix two such maps so that
we have a commutative diagram of $K$-vector spaces with injective horizontal maps and surjective vertical maps:
$$
\begin{tikzpicture}
         \matrix (m) [matrix of math nodes,row sep=3em,column sep=3em,minimum width=2em]{
              \boldsymbol{C}^{\operatorname{triv}}_{n,\Sigma}(K) &   & \boldsymbol{C}_{\Sigma}(K) \\
    \mathcal{Z}^{\operatorname{triv}}_{n,\Sigma}(K) & & \mathcal{L}_{\Sigma}(K).\\   };
         \path[-stealth]
            ([yshift=-3pt]m-1-1.east) edge node [above,yshift=1.0ex] {$E_\Sigma$} ([yshift=-3pt]m-1-3.west);
         \path[-stealth]
            ([yshift=3pt]m-1-1.east) edge node [below,,yshift=-1.0ex] {$F_\Sigma$} ([yshift=3pt]m-1-3.west);
            \path[-stealth]
            (m-1-1) edge node [left] {$\zeta_A$} (m-2-1);
            \path[-stealth]
            (m-1-3) edge node [right] {$\lambda_A$} (m-2-3);
            \path[-stealth]
            (m-2-1) edge node [below] {$\mathcal{E}_\Sigma=\mathcal{F}_\Sigma$} (m-2-3);
        \end{tikzpicture}
$$

\begin{Theorem}\label{Theorem-C}
The difference map $E_\Sigma-F_\Sigma$ defines a $K$-linear map
$$\boldsymbol{C}^{\operatorname{triv}}_{n,\Sigma}(K)\rightarrow\boldsymbol{C}^{\lambda}_{n,\Sigma}(K)$$ whose image is contained in the kernel of the map 
$\lambda_A:\boldsymbol{C}^{\lambda}_{n,\Sigma}(K)\rightarrow\mathcal{L}^{\lambda}_{n,\Sigma}(K)$. Moreover, $\operatorname{Ker}(E_\Sigma-F_\Sigma)\cap\boldsymbol{C}^{\operatorname{triv}}_{n,\Sigma}$ is equal to $\zeta_A^{-1}(\mathcal{Z}_{n-|\Sigma|}\prod_{j\in\Sigma}\zeta_A\binom{\{j\}}{1})$.
\end{Theorem}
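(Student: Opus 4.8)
The first assertion is formal. By Corollary~\ref{Theorem-E-F} the maps $\mathcal{E}_\Sigma$ and $\mathcal{F}_\Sigma$ agree on $\mathcal{Z}_\Sigma(K)$, and by construction of the two lifts one has $\lambda_A\circ E_\Sigma=\mathcal{E}_\Sigma\circ\zeta_A$ and $\lambda_A\circ F_\Sigma=\mathcal{F}_\Sigma\circ\zeta_A$ on $\boldsymbol{C}^{\operatorname{triv}}_{n,\Sigma}(K)$; hence $\lambda_A\circ(E_\Sigma-F_\Sigma)=(\mathcal{E}_\Sigma-\mathcal{F}_\Sigma)\circ\zeta_A=0$ there, so the image of $E_\Sigma-F_\Sigma$ lands in $\operatorname{Ker}\lambda_A$. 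To locate that image inside $\boldsymbol{C}^{\lambda}_{n,\Sigma}(K)$ and to compute the kernel, I would make both lifts explicit on the generators supplied by Theorem~\ref{structure-of-trivial-MZV}.

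So fix $x\in\boldsymbol{C}^{\operatorname{triv}}_{n,\Sigma}(K)$ and write $\zeta_A(x)=\sum_{\underline{k}}z_{\underline{k}}\prod_{j\in\Sigma}\xi_{k_j}(t_j)$ with $z_{\underline{k}}\in\mathcal{Z}_{n-\sum_{j}q^{k_j},\emptyset}(K)$ (Theorem~\ref{structure-of-trivial-MZV}), the sum over tuples $\underline{k}=(k_j)_{j\in\Sigma}$ with $\sum_jq^{k_j}\leq n$. Using that $g_k(t):=(-1)^k\xi_k(t)$ satisfies $g_k(\theta^{q^i})=\delta_{ik}$ (proof of Theorem~\ref{structure-of-trivial-MZV}), the defining formula for $\mathcal{E}_\Sigma$ gives $\mathcal{E}_\Sigma(\zeta_A(x))=\sum_{\underline{k}}\frac{(-1)^{\sum_jk_j}}{D_{\underline{k}}}z_{\underline{k}}\prod_{j}\lambda_A\binom{\{j\}}{1}^{q^{k_j}}$; combining this with Lemma~\ref{identity-xi}, the functional equation of Lemma~\ref{Esigma-functional} and the computation of $\mathcal{E}_{\{j\}}\bigl(\zeta_A\binom{\{j\}}{1}\bigr)=\lambda_A\binom{\{j\}}{1}$ in \S\ref{defi-E}, one gets $\lambda_A\binom{\{j\}}{1}^{q^{k}}=(-1)^{k}D_{k}\,\lambda_A\!\left(\begin{smallmatrix}\{j\}&\emptyset&\cdots&\emptyset\\1&q-1&\cdots&(q-1)q^{k-1}\end{smallmatrix}\right)$ in $\mathcal{L}(K)$. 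For $F_\Sigma$ I would take the lift furnished by the proof of Theorem~\ref{Z-L} (sending a preimage of $\zeta_A^{\dag}(\mathcal{C})$ to $[\mathcal{C}]$): since $\xi_{k}(t_j)=\zeta_A^{\dag}\!\left(\begin{smallmatrix}\{j\}&\emptyset&\cdots&\emptyset\\1&q-1&\cdots&(q-1)q^{k-1}\end{smallmatrix}\right)$ — the array has signature $1$, so Theorem~\ref{signature-theorem} produces no correction — one expands $\prod_j\xi_{k_j}(t_j)$ as $\sum_i\beta_i\,\zeta_A^{\dag}(\mathcal{D}_i)$ via the harmonic product of the $S_d^{\dag}$'s. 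For $E_\Sigma$ I would lift the same product $\prod_j\lambda_A\binom{\{j\}}{1}^{q^{k_j}}$ through the identity above and the harmonic product $\odot_\lambda$, so that $E_\Sigma$ and $F_\Sigma$ are both assembled out of the same composition arrays $\begin{smallmatrix}\{j\}&\emptyset&\cdots&\emptyset\\1&q-1&\cdots\end{smallmatrix}$ and their stuffle interleavings. Finally I would normalise both choices to coincide on $\zeta_A^{-1}\!\bigl(\mathcal{Z}_\emptyset(K)\prod_{j\in\Sigma}\zeta_A\binom{\{j\}}{1}\bigr)$, i.e.\ on the stratum $\underline{k}=\underline{0}$, which is legitimate since there both are lifts of the single element $\prod_{j}\lambda_A\binom{\{j\}}{1}$.

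With these choices, $E_\Sigma$ and $F_\Sigma$ are computed summand-by-summand in the decomposition of $\zeta_A(x)$, each producing a $K$-combination of classes $[\mathcal{C}]$ of weight $n$ and type $\Sigma$ whose $\lambda$-types are thickenings of $\Sigma$ (the largest being $\prod_j\{j\}^{q^{k_j}}$). The decisive point is that on each $\underline{k}$-summand the two expansions differ \emph{only} on classes of $\lambda$-type exactly $\Sigma$: all intermediate $\lambda$-types — in particular the leading one $\prod_j\{j\}^{q^{k_j}}$ — occur with the same coefficient in $E_\Sigma$ and in $F_\Sigma$, because both realise the identical function $\mathcal{E}_\Sigma(\zeta_A(x))=\mathcal{F}_\Sigma(\zeta_A(x))$ and, under the hypothesis $|\Sigma|<q$, the $S_d^{\dag}$-stuffle of the arrays $\begin{smallmatrix}\{j\}&\emptyset&\cdots\\1&q-1&\cdots\end{smallmatrix}$ has no collisions beyond those forced by equal levels, exactly as in the proof of Theorem~\ref{signature-theorem}. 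This yields both that $E_\Sigma-F_\Sigma$ has image in $\boldsymbol{C}^{\lambda}_{n,\Sigma}(K)$, and that the $\underline{k}$-discrepancy is nonzero precisely when $z_{\underline{k}}\neq0$ and $\underline{k}\neq\underline{0}$. Hence $x\in\operatorname{Ker}(E_\Sigma-F_\Sigma)$ forces $z_{\underline{k}}=0$ for all $\underline{k}\neq\underline{0}$, i.e.\ $\zeta_A(x)=z_{\underline 0}\prod_{j\in\Sigma}\zeta_A\binom{\{j\}}{1}$ with $z_{\underline 0}\in\mathcal{Z}_{n-|\Sigma|}$; conversely such $x$ lies on the normalised stratum where the two lifts agree. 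This gives $\operatorname{Ker}(E_\Sigma-F_\Sigma)\cap\boldsymbol{C}^{\operatorname{triv}}_{n,\Sigma}=\zeta_A^{-1}\bigl(\mathcal{Z}_{n-|\Sigma|}\prod_{j\in\Sigma}\zeta_A\binom{\{j\}}{1}\bigr)$.

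The main obstacle is exactly the $\lambda$-degree bookkeeping of the third paragraph: one must check that, once the leading ($\prod_j\{j\}^{q^{k_j}}$-type) terms cancel, nothing of intermediate $\lambda$-type survives in the difference — equivalently that the only place the $\zeta_A^{\dag}$-stuffle expansion used for $F_\Sigma$ can disagree with the $\odot_\lambda$-expansion used for $E_\Sigma$ is on composition arrays of $\lambda$-type $\Sigma$ — and that on the non-normalised strata this remainder genuinely detects $z_{\underline{k}}$. The kernel-of-$\lambda_A$ statement and the reduction to the generators of Theorem~\ref{structure-of-trivial-MZV} are, by contrast, formal.
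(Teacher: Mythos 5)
Your first paragraph is correct and coincides with the paper's argument: by Corollary~\ref{Theorem-E-F}, $\lambda_A\circ(E_\Sigma-F_\Sigma)=(\mathcal{E}_\Sigma-\mathcal{F}_\Sigma)\circ\zeta_A=0$ on $\boldsymbol{C}^{\operatorname{triv}}_{n,\Sigma}(K)$, while Corollary~\ref{corollary-doesnt-exceed} and the harmonic product place each summand $f(\underline{\theta}^{q^{\underline{i}}})\lambda^{q^{\underline{i}}}$ of $\mathcal{E}_\Sigma(f)$ in $\mathcal{L}^{\lambda}_{n,\Sigma}(K)$; hence the image of the difference lands in $\operatorname{Ker}\lambda_A\subset\boldsymbol{C}^{\lambda}_{n,\Sigma}(K)$.

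For the kernel, however, your route is genuinely different from the paper's and the obstacle you name in your last paragraph is real and unfilled. You ask that, after cancelling the leading $\prod_j\{j\}^{q^{k_j}}$-type terms, the two stuffle expansions agree on all intermediate $\lambda$-types, so that the discrepancy concentrates on $\lambda$-type exactly $\Sigma$; no mechanism is offered for this, and it would require a delicate term-by-term comparison of the $S_d^{\dag}$-expansion against the $\odot_\lambda$-expansion. The paper circumvents $\lambda$-type bookkeeping entirely by exploiting the fact that the kernel is intersected with the $\FF_p$-space $\boldsymbol{C}^{\operatorname{triv}}_{n,\Sigma}$ (not the $K$-space): for $x\in\boldsymbol{C}^{\operatorname{triv}}_{n,\Sigma}$ with $f=\zeta_A(x)$, the lift $F_\Sigma(x)$ produced by Theorem~\ref{signature-theorem} is an $\FF_p$-linear combination of classes $[\mathcal{C}]$, whereas $\mathcal{E}_\Sigma(f)=\sum_{\underline{i}}\tfrac{f(\underline{\theta}^{q^{\underline{i}}})}{D_{\underline{i}}}\lambda^{q^{\underline{i}}}$ puts a factor $1/D_{\underline{i}}\in K\setminus\FF_p$ into the coefficients of $E_\Sigma(x)$ for each $\underline{i}\neq\underline{0}$; since the elements $f(\underline{\theta}^{q^{\underline{i}}})\lambda^{q^{\underline{i}}}$ with $f(\underline{\theta}^{q^{\underline{i}}})\neq0$ are $K$-linearly independent (visible from the leading monomials $\prod_jX_j^{q^{i_j}}$), these non-$\FF_p$ contributions cannot cancel and force $E_\Sigma(x)\neq F_\Sigma(x)$ whenever some $f(\underline{\theta}^{q^{\underline{i}}})\neq0$ with $\underline{i}\neq\underline{0}$. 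Your observation about normalising the two lifts to agree on the stratum $\underline{i}=\underline{0}$ is correct and needed for the reverse inclusion; the paper leaves it implicit. If you want to salvage your own approach you must prove the intermediate-$\lambda$-type cancellation; as written it is an unproved assertion, and the $\FF_p$-versus-$K$ coefficient argument is the much shorter road.
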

In particular, if $f$ is an element of $\mathcal{Z}^{\operatorname{triv}}_{n,\Sigma}$
such that there exists $\underline{i}=(i_j:j\in\Sigma)\in\NN^\Sigma$ with the property that
$f(\underline{\theta}^{q^{\underline{i}}})\neq0$, then there exists a non-trivial $K$-linear dependence relation
in $\mathcal{L}^\lambda_{n,\Sigma}(K)$.
\begin{proof}[Proof of Theorem \ref{Theorem-C}] Let $f$ be an element of $\mathcal{Z}^{\operatorname{triv}}_{n,\Sigma}(K)$.
By Theorem \ref{Z-L}, we have that $\mathcal{F}_\Sigma(f)\in\mathcal{L}_{w,\Sigma}(K)=\mathcal{L}^\lambda_{w,\Sigma}(K)$. We now look at $$\mathcal{E}_\Sigma(f)=\sum_{\underline{i}}\frac{f(\underline{\theta}^{q^{\underline{i}}})}{D_{\underline{i}}}\lambda^{q^{\underline{i}}},$$
where $$\lambda^{q^{\underline{i}}}=\prod_{j\in\Sigma}\lambda_A\binom{\{j\}}{1}^{q^{i_j}},$$
which agrees with $\mathcal{F}_\Sigma(f)$ by Corollary \ref{Theorem-E-F}.
By Corollary \ref{corollary-doesnt-exceed}, the elements $f(\underline{\theta}^{q^{\underline{i}}})$
are either zero, or in $\mathcal{Z}_{n-\sum_jq^{i_j},\emptyset}(K)$ when $n\geq\sum_jq^{i_j}$. The properties of the harmonic product of power sums imply that for all $\underline{i}$ such that
$f(\underline{\theta}^{q^{\underline{i}}})\neq0$, 
$$f(\underline{\theta}^{q^{\underline{i}}})\lambda^{q^{\underline{i}}}\in\mathcal{L}_{n,\Sigma}^\lambda(K).$$ This suffices to show the first part of the Theorem, namely, that the image of
$E_\Sigma-F_\Sigma$ is in $\boldsymbol{C}^\lambda_{n,\Sigma}(K)$ and is contained in the kernel of $\lambda_A:\boldsymbol{C}^{\lambda}_{n,\Sigma}(K)\rightarrow\mathcal{L}^\lambda_{n,\Sigma}(K)$.
Now, let us suppose that
$f$ is an element of $\mathcal{Z}^{\operatorname{triv}}_{n,\Sigma}$. Then we have 
$\mathcal{F}_\Sigma(f)\in\mathcal{L}_{n,\Sigma}$ and for all $\underline{i}$, 
$f(\underline{\theta}^{q^{\underline{i}}})\lambda^{q^{\underline{i}}}\in\mathcal{L}_{n,\Sigma}^\lambda$ because by Corollary \ref{corollary-doesnt-exceed}, the elements $f(\underline{\theta}^{q^{\underline{i}}})$
are either zero, or in $\mathcal{Z}_{n-\sum_jq^{i_j},\emptyset}$ when $n\geq\sum_jq^{i_j}$. Additionally, since the monomials $\prod_{j}X_j^{q^{i_j}}$ are $K$-linearly independent, with $\underline{i}\in\NN^\Sigma$, so are the elements $f(\underline{\theta}^{q^{\underline{i}}})\lambda^{q^{\underline{i}}}$ with $f(\underline{\theta}^{q^{\underline{i}}})\neq0$. Hence, with $\underline{i}$ not equal to the zero vector and $f(\underline{\theta}^{q^{\underline{i}}})$, in the equality 
$\mathcal{F}_\Sigma(f)=\mathcal{E}_\Sigma(f)$, the right-hand side can be written as the image by $\lambda_A$ of a non-trivial 
$K$-linear combination in $\boldsymbol{C}_{n,\Sigma}^\lambda(K)$ with at least one coefficient which is not in $\FF_p$, because the polynomials $D_{\underline{i}}$ are all non-constant if $\underline{i}$ is not the zero vector. As for the left-hand side, this is the image by $\lambda_A$ of a non-zero $\FF_p$-linear combination in $\boldsymbol{C}_{n,\Sigma}^\lambda$ so that the difference between the corresponding two linear combinations in $\boldsymbol{C}_{n,\Sigma}^\lambda(K)$ projects via $\lambda_A$ to a non-trivial $K$-linear dependence relation in $\mathcal{L}_{n,\Sigma}^\lambda(K)$. This means that the map induced by $E_\Sigma-F_\Sigma$
over the quotient of $\boldsymbol{C}^{\operatorname{triv}}_{n,\Sigma}$ by 
$\zeta_A^{-1}(\mathcal{Z}_{n-|\Sigma|}\prod_{j\in\Sigma}\zeta_A\binom{\{j\}}{1})$ is injective.
\end{proof}
To give the simplest example of application of this result, we consider $f=\zeta_A(\begin{smallmatrix}\sg & \emptyset \\ 1 & q-1\end{smallmatrix})$. We know that $f\in\mathcal{Z}^{\operatorname{triv}}_{q,\sg}$. Note that, with $\sg=\{j\}$ and $t=t_j$, we have (with $f=f(t)$) $f(\theta^{q^k})=0$ for all $k\neq 1$ and $f(\theta^q)=-1$ (see the proof of Theorem \ref{structure-of-trivial-MZV}). Hence
$\mathcal{E}_\sg(f)=-D_1^{-1}\lambda_A\binom{\sg}{1}^q$. On another hand one easily sees that
$\mathcal{F}_\sg(f)=\lambda_A(\begin{smallmatrix}\sg & \emptyset \\ 1 & q-1\end{smallmatrix})$
so that the identity $\mathcal{F}_\sg(f)=\mathcal{E}_\sg(f)$ implies the identity
$$\lambda_A\binom{\sg}{1}^q=(\theta-\theta^q)\lambda_A\begin{pmatrix}\sg & \emptyset \\ 1 & q-1\end{pmatrix}$$ which is equivalent to (\ref{homogeneous-relations-sg}) for $k=1$ and 
which implies, by substitution $X=1$ (with $X=X_j$) $k=1$ a formula that was first discovered by Thakur in \cite[Theorem 5 (case $m=1$)]{THA1}:
$$\zeta_A(q)=(\theta-\theta^q)\zeta_A(1,q-1).$$ In particular, Thakur's identity lifts to an identity
in $\mathcal{L}^\lambda_{q,\sg}(K)$ (note that this is equivalent to the collection of 
{\em binary relations} $S_d(q)+D_1S_{d+1}(1,q-1)=0$, $d\geq0$, following Todd in \cite{Todd}. In the next subsection we are going to generalise this; see Theorem \ref{T:ThR} and (\ref{non-homogeneous-relations}).

\subsection{Some families of linear relations}
In this subsection we give examples of how our methods allow to construct infinite families of 
linear relations of multiple zeta values of Thakur. We illustrate two examples. In the first one we recover a result of Lara Rodr\'iguez and Thakur (Theorem \ref{T:ThR}). In the second one, we give a new family of relations (Theorem \ref{theorem-qk+1}).

\subsubsection{A family of relations of weight $q^k$ by Lara Rodr\'iguez and Thakur}
The next result has been originally obtained by Lara Rodr\'iguez and Thakur in \cite{LAR&THA}. 
The formula below in the case $k=2$ can be found in Todd's paper \cite[Theorem 4.7]{Todd}. 
\begin{Theorem}[Lara Rodr\'iguez-Thakur]\label{T:ThR}
We have, for all $k\geq 0$: $$\frac{\zeta_A(q^k)}{D_k}=(-1)^k\zeta_A(1,q-1,\ldots,q^{k-1}(q-1)).$$
\end{Theorem}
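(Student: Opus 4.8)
The strategy is to reduce the identity to a single‑variable statement in the Tate algebra $\widehat{K[t]}_{\|\cdot\|}$ with $\sg=\{j\}$, $t=t_j$, apply the isomorphism $\mathcal{E}_\sg=\mathcal{F}_\sg$ and the functional equation of Lemma~\ref{Esigma-functional}, and then specialise $X_j=1$. First I would recall from Lemma~\ref{identity-xi} that
$$\xi_k(t)=\frac{(-1)^k}{D_k}\tau_\sg^k\!\left(\zeta_A\binom{\sg}{1}\right),\qquad k\geq 0,$$
where $\xi_k(t)=\zeta_A\big(\begin{smallmatrix}\{j\} & \emptyset & \cdots & \emptyset \\ 1 & q-1 & \cdots & (q-1)q^{k-1}\end{smallmatrix}\big)$, so that $(-1)^kD_k\,\xi_k(t)=\tau_\sg^k(\zeta_A\binom{\sg}{1})$. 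Since $\zeta_A\binom{\sg}{1}\in\EE_\sg^{[0]}$ (shown in the example of \S\ref{defi-E}) and $\tau_\sg^k$ preserves $\EE_\sg^{[0]}$ by Part~(1) of Proposition~\ref{four-properties}, we may apply $\mathcal{E}_\sg$. Using the functional equation $\mathcal{E}_\sg(\tau_\sg(g))=\mathcal{E}_\sg(g)^q$ repeatedly and the base identity $\mathcal{E}_\sg(\zeta_A\binom{\sg}{1})=\lambda_A\binom{\sg}{1}$ from (\ref{image-basic-zeta}), we obtain
$$\mathcal{E}_\sg\Big((-1)^kD_k\,\xi_k(t)\Big)=\lambda_A\binom{\sg}{1}^{q^k}.$$
On the other hand, by Theorem~\ref{Z-L} we have $\mathcal{F}_\sg(\xi_k(t))=\lambda_A(\xi_k)$, the multiple polylogarithm attached to the same composition array, and $\mathcal{E}_\sg=\mathcal{F}_\sg$ on $\mathcal{Z}_\sg$ by Corollary~\ref{Theorem-E-F}. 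Hence
$$(-1)^kD_k\,\lambda_A\!\begin{pmatrix}\{j\} & \emptyset & \cdots & \emptyset \\ 1 & q-1 & \cdots & (q-1)q^{k-1}\end{pmatrix}=\lambda_A\binom{\{j\}}{1}^{q^k}$$
as an identity in $\mathcal{L}^\lambda_{q^k,\sg}(K)$.

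\textbf{Final step.} Now I would apply the evaluation map $\ev$ of Lemma~\ref{lemma-map-ev}, i.e.\ set $X_j=1$. By definition of $\lambda_A$, the series $\lambda_A(\mathcal{C})$ specialises at $X_j=1$ to the multiple zeta value of Thakur attached to the second row of $\mathcal{C}$; explicitly $\ev(\lambda_A\binom{\{j\}}{1})=\zeta_A(1)$ and
$$\ev\Big(\lambda_A\!\begin{pmatrix}\{j\} & \emptyset & \cdots & \emptyset \\ 1 & q-1 & \cdots & (q-1)q^{k-1}\end{pmatrix}\Big)=\zeta_A(1,q-1,\ldots,q^{k-1}(q-1)).$$
Since $\ev$ is an $\FF_p$‑algebra homomorphism, $\ev(\lambda_A\binom{\{j\}}{1}^{q^k})=\zeta_A(1)^{q^k}$. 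But $\zeta_A(1)^{q^k}=\zeta_A(q^k)$: this is the elementary Frobenius relation for the $d$‑th power sums, $S_d(1)^{q^k}=S_d(q^k)$ (valid because $a\mapsto a^{q^k}$ is a ring endomorphism of $A$ fixing the degree filtration), summed over $d$ — equivalently one notes directly that the Frobenius $c\mapsto c^{q^k}$ on $K_\infty$ sends $\zeta_A(1)$ to $\zeta_A(q^k)$, which is a special case of Proposition~\ref{proposition-general properties}(2) iterated. Combining, $\zeta_A(q^k)=(-1)^kD_k\,\zeta_A(1,q-1,\ldots,q^{k-1}(q-1))$, which rearranges to the claimed formula after dividing by $D_k$ and using $(-1)^k=(-1)^{-k}$ in $\FF_p$.

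\textbf{Main obstacle.} The routine verifications — that $\lambda_A$ of the indicated composition array specialises correctly under $\ev$, and that the Frobenius power‑sum identity $S_d(1)^{q^k}=S_d(q^k)$ holds with the right normalisation — are straightforward but must be done carefully with signs and with the normalising constants $D_k$ versus $l_k$ kept straight. The one genuinely load‑bearing point is that $\zeta_A\binom{\sg}{1}$, and hence all its $\tau_\sg$‑images, actually lie in $\EE_\sg^{[0]}$ so that $\mathcal{E}_\sg$ is legitimately applicable and agrees with $\mathcal{F}_\sg$; this is exactly where the hypothesis $|\sg|=1<q$ and Proposition~\ref{four-properties} are used, and it is the step I would check most carefully. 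No diophantine input is needed: unlike Conjecture~\ref{injectivity}, the relation here is \emph{already} an identity in $\mathcal{L}^\lambda_{q^k,\sg}(K)$ (in fact among power sums, recovering Todd's binary relations), so its projection under $\ev$ is unconditional.
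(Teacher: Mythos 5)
Your proposal is correct and follows essentially the same route as the paper: you use Lemma~\ref{identity-xi} (the summed form of Lemma~\ref{Hdm-explicit}), push through $\mathcal{E}_\sg=\mathcal{F}_\sg$ together with the functional equation $\mathcal{E}_\sg\circ\tau_\sg=(\cdot)^q\circ\mathcal{E}_\sg$ to obtain (\ref{non-homogeneous-relations}), and then evaluate at $X=1$ using $\zeta_A(1)^{q^k}=\zeta_A(q^k)$. The paper compresses these same steps into a few lines, but the argument is identical; only your citation for the stability of $\EE_\sg^{[0]}$ under $\tau_\sg$ should point to Lemma~\ref{6.2}(1) rather than Proposition~\ref{four-properties}(1), though the paper itself uses the same loose reference in its example in \S\ref{defi-E}.
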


\begin{proof}
Using Lemma  \ref{Hdm-explicit} and summing over $d$ we reach the formula (for $|\Sigma|<q$):
\begin{equation}\label{homogeneous-relations-sg}
\frac{1}{D_k}\tau_\Sigma^k\left(\zeta_A\binom{\Sigma}{1}\right)=(-1)^k\zeta_A\begin{pmatrix}\Sigma & \emptyset & \cdots & \emptyset\\ 1 & q-1 & \cdots & q^{k-1}(q-1)\end{pmatrix},\quad k\geq 0.
\end{equation}
In the case $\Sigma=\sg$, applying $\mathcal{E}_\sg=\mathcal{F}_\sg$ we deduce the identity
\begin{equation}\label{non-homogeneous-relations}
\frac{1}{D_k}\lambda_A\binom{\sg}{1}^{q^k}=(-1)^k\lambda_A\begin{pmatrix}\sg & \emptyset & \cdots & \emptyset\\ 1 & q-1 & \cdots & q^{k-1}(q-1)\end{pmatrix}.
\end{equation}
Note that this is a linear dependence relation between elements of $\mathcal{L}$ that have 
different $\zeta$-types (but same $\lambda$-types). The left-hand side has $\zeta$-type $\sg^{q^k}=\sg\cdots\sg\in\mathfrak{S}$, while the right-hand side has
$\zeta$-type $\sg$. The result follows after evaluation $X=1$.
\end{proof}

\subsubsection{A family of relations of weight $q^{k+n}-1$ by Chen} Note that if we replace $t=\theta$ in (\ref{homogeneous-relations-sg}), case $\Sigma=\sg$, after having applied the operator $\tau^k$, then we recover Chen's \cite[Theorem 5.1]{CHE}. More explicitly, we note that, for $n,r\geq0$, after application of $\tau^n$ to both sides of the identity (\ref{homogeneous-relations-sg}) with $r=k$ and $\Sigma=\sg$:
$$(t-\theta^{q^n})\cdots(t-\theta^{q^{n+r-1}})\zeta_A\binom{\sg}{q^{n+r}}=(-1)^rD_r^{q^n}
\zeta_A\begin{pmatrix}\sg &\emptyset & \cdots & \emptyset \\ q^n & q^n(q-1)& \cdots & q^{n+r-1}(q-1)\end{pmatrix}.$$ Both sides of this identity are entire functions and we can evaluate at $t=\theta$. We get:
\begin{Theorem}[Chen]\label{th-chen}
For all $n,r\geq 0$, the following identity holds:
$$(\theta-\theta^{q^n})\cdots(\theta-\theta^{q^{n+r-1}})\zeta_A(q^{n+r}-1)=(-1)^rD_r^{q^n}\zeta_A(q^n-1,q^n(q-1),\ldots,q^{n+r-1}(q-1)).$$
\end{Theorem}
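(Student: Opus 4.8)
The plan is to derive Chen's identity directly from equation (\ref{homogeneous-relations-sg}) in the case $\Sigma=\sg$, following exactly the recipe sketched just before the statement. First I would start from
$$\frac{1}{D_r}\tau_\sg^r\left(\zeta_A\binom{\sg}{1}\right)=(-1)^r\zeta_A\begin{pmatrix}\sg & \emptyset & \cdots & \emptyset\\ 1 & q-1 & \cdots & q^{r-1}(q-1)\end{pmatrix},\quad r\geq 0,$$
which is legitimate since $|\sg|=1<q$. Next I would apply the twisting operator $\tau^n$ to both sides. On the left, using $\tau_\sg=(t-\theta)\tau$ and the commutation rule $\tau c=c^q\tau$, one computes $\tau^n\tau_\sg^r = \tau^n (t-\theta)(t-\theta^q)\cdots(t-\theta^{q^{r-1}})\tau^r$, and pushing $\tau^n$ past the polynomial factor turns $(t-\theta^{q^j})$ into $(t-\theta^{q^{n+j}})$; meanwhile $\tau^n$ acting on $\zeta_A\binom{\sg}{1}$ raises its weight $q$-adically, giving $\zeta_A\binom{\sg}{q^n}$ up to the polynomial prefactor already accounted for. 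On the right, $\tau^n$ sends $D_r$ to $D_r^{q^n}$ and raises every entry $q^j(q-1)$ of the second row to $q^{n+j}(q-1)$ and the leading $1$ to $q^n$. This yields the intermediate identity displayed in the excerpt,
$$(t-\theta^{q^n})\cdots(t-\theta^{q^{n+r-1}})\zeta_A\binom{\sg}{q^{n+r}}=(-1)^rD_r^{q^n}\zeta_A\begin{pmatrix}\sg &\emptyset & \cdots & \emptyset \\ q^n & q^n(q-1)& \cdots & q^{n+r-1}(q-1)\end{pmatrix}.$$

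Here I would justify that both sides are honest entire functions of $t=t_j$: by the remarks in \S\ref{basic-properties} every admissible $\zeta_A(\mathcal{C})$ lies in $\EE_\Sigma$, and Proposition \ref{proposition-general properties} together with part (1) of its statement shows the twisting and evaluation operations preserve membership in $\mathcal{Z}(K[\underline t])\subset\EE_{\NN^*}$, so the identity holds as an identity of entire functions in $t$ (not merely formally). Consequently I may evaluate both sides at $t=\theta$. The left-hand prefactor becomes $(\theta-\theta^{q^n})\cdots(\theta-\theta^{q^{n+r-1}})$, and $\zeta_A\binom{\sg}{q^{n+r}}_{t=\theta}=\zeta_A(q^{n+r}-1)$ because setting $t_j=\theta$ replaces the numerator $a(t_j)=a$ by $a$, lowering the exponent of $a$ from $q^{n+r}$ to $q^{n+r}-1$; likewise on the right, $\zeta_A\binom{\sg}{q^n}_{t=\theta}$-type entry collapses the first column to weight $q^n-1$. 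This produces precisely
$$(\theta-\theta^{q^n})\cdots(\theta-\theta^{q^{n+r-1}})\zeta_A(q^{n+r}-1)=(-1)^rD_r^{q^n}\zeta_A(q^n-1,q^n(q-1),\ldots,q^{n+r-1}(q-1)),$$
which is the claim. The final renaming $r\rightsquigarrow$ (the integer in the statement) completes the argument.

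The only genuinely delicate point is bookkeeping the effect of $\tau^n$ on the composition array and on the polynomial prefactor simultaneously — in particular making sure the leading entry of the second row becomes $q^n$ rather than $q^n(q-1)+1$ or some other combination, and that the $(t-\theta^{q^j})$ factors shift correctly under $\tau^n$; this is entirely mechanical once one writes $\tau_\sg^r$ out explicitly, but it is where a sign or exponent error would creep in. A secondary point worth a sentence is the legitimacy of evaluating an identity in a Tate algebra at $t=\theta$: this is fine because both members are entire (hence their difference, being entire and identically zero on the Tate algebra, vanishes at every point), so no analytic subtlety arises. Everything else is a direct specialization of (\ref{homogeneous-relations-sg}), which is itself a consequence of Lemma \ref{Hdm-explicit} and the factorization of $\exp_A$ in $K[[\tau]]$ recalled there.
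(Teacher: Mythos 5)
Your proposal is correct and follows exactly the paper's route: apply $\tau^n$ to identity (\ref{homogeneous-relations-sg}) with $\Sigma=\sg$, obtain the displayed intermediate identity, and specialize $t=\theta$ using entireness of both sides. The only cosmetic slip is in the prose, where you write that $\tau^n$ acting on $\zeta_A\binom{\sg}{1}$ gives $\zeta_A\binom{\sg}{q^n}$ — the operator actually landing on $\zeta_A\binom{\sg}{1}$ is $\tau^{n+r}$ once the $\tau^r$ inside $\tau_\sg^r$ is accounted for — but your displayed intermediate identity already carries the correct exponent $q^{n+r}$, so nothing in the argument is affected.
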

In other words, the identities (\ref{homogeneous-relations-sg}) unify the formulas of Lara Rodr\'iguez and Chen in the case $\Sigma=\sg$. For more general $\Sigma$ we deduce multiple parameter families of more general identities, we omit the details. An advantage of our method is that we need only very little computations to construct our linear relations.

\subsubsection{A family of relations of weight $q^k+1$}\label{family}
We can produce a variety of families of relations in the same vein as in Theorem \ref{T:ThR}. We 
illustrate another (slightly less explicit) example: a family of relations between Thakur's multiple zeta values each one of weight $q^k+1$ where $k$ varies in $\NN$. 

\begin{Theorem}\label{theorem-qk+1} For all $k>0$ there exists a non-trivial linear relation in $\mathcal{Z}_{q^k+1,\emptyset}(K)$ of the form:
\begin{equation}\label{E:eq5}
	\zeta_{A}(1,1,q-1,\dots,q^{k-1}(q-1))+\frac{(-1)^{k+1}}{D_k}\zeta_{A}(1,q^k)+\Xi_k=0
	\end{equation}
where $\Xi_k\in\mathcal{Z}_{q^k+1,\emptyset}(K)$ is given by the series
$$\Xi_k=\frac{1}{D_k}\sum_{j=1}^k(-1)^{k+j}D_{j}^{q^{k-j}}\sum_{d\geq 0}S_d(1)S_{d}(q^{k-j},q^{k-j}(q-1),\dots,q^{k-1}(q-1)).$$
\end{Theorem}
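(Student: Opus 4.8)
The plan is to derive \eqref{E:eq5} by specializing the identity \eqref{homogeneous-relations-sg} at $t=\theta$ in the two-variable case $\Sigma$ a singleton $\sg=\{j\}$, after multiplying by the basic trivial element $\zeta_A\binom{\sg}{1}$ inside $\mathcal{Z}^{\operatorname{triv}}_{q^k+1,\sg}(K)$ and tracking the Hadamard-type expansion of the product through the harmonic product. First I would recall from \eqref{homogeneous-relations-sg} and Lemma~\ref{identity-xi} that $\xi_k(t_j)=\frac{(-1)^k}{D_k}\tau^k_\sg(\zeta_A\binom{\sg}{1})$ lies in $\mathcal{Z}^{\operatorname{triv}}_{q^k,\sg}(K)$, and that by Theorem~\ref{structure-of-trivial-MZV} the product $\zeta_A\binom{\sg}{1}\cdot\xi_k(t_j)$ (harmonic product, equivalently product of functions) again lies in $\mathcal{Z}^{\operatorname{triv}}_{q^k+1,\sg}(K)$. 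The point is to compute the evaluations $f(\theta^{q^i})$ of this product for all $i\ge 0$: using $\zeta_A\binom{\sg}{1}(\theta)=1$, $\zeta_A\binom{\sg}{1}(\theta^{q^i})=0$ for $i>0$, together with the fact that $\xi_k(t_j)(\theta^{q^i})$ is $(-1)^k$ if $i=k$ and $0$ otherwise, and the product formula for the harmonic product of the power sums $S_d(1)$ with the power sums defining $\xi_k$, one expresses $f$ as a combination of the trivial elements $\xi_0(t_j),\dots,\xi_k(t_j)$ with coefficients in $\mathcal{Z}_\emptyset(K)$ given exactly by the Thakur multiple zeta values appearing in $\Xi_k$.

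Concretely, the harmonic product expansion of $S_d(1)\cdot S_d(q^{k-j},q^{k-j}(q-1),\dots,q^{k-1}(q-1))$ and of $S_d(1)\cdot S_d\binom{\sg}{q^k}$-type sums is what produces the terms $D_j^{q^{k-j}}\zeta_A(1,q^{k-j},\dots)$ and $\frac{(-1)^{k+1}}{D_k}\zeta_A(1,q^k)$ respectively; I would carry this out by writing $\zeta_A\binom{\sg}{1}\,\xi_k(t_j)=\sum_{i=0}^k c_i(\underline{t})\,\xi_i(t_j)$ with $c_i\in\mathcal{Z}_\emptyset(K)$, then evaluating at $t_j=\theta^{q^i}$ to pin down each $c_i$, using Corollary~\ref{corollary-doesnt-exceed} to control the weights. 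Applying $\mathcal{E}_\sg=\mathcal{F}_\sg$ (Corollary~\ref{Theorem-E-F}) turns this decomposition into an identity among multiple polylogarithms, and evaluation at $X_j=1$ via Lemma~\ref{lemma-map-ev} yields a linear relation in $\mathcal{Z}_{q^k+1,\emptyset}(K)$. On the other hand, expanding $\zeta_A\binom{\sg}{1}\,\xi_k(t_j)$ directly as a sum over depth-two flags $d_1>\cdots$ and reading off the coefficient of the monomial $X_j^{q^{\deg a_1}}X_j^{q^{\deg a_2}}\cdots$ gives the term $\zeta_A(1,1,q-1,\dots,q^{k-1}(q-1))$; equating the two expressions for the same element of $\mathcal{L}_\emptyset$ after $X_j=1$ produces \eqref{E:eq5}.

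For non-triviality, I would argue as in the proof of Theorem~\ref{Theorem-C}: the element $f=\zeta_A\binom{\sg}{1}\,\xi_k(t_j)\in\mathcal{Z}^{\operatorname{triv}}_{q^k+1,\sg}$ satisfies $f(\theta^{q^k})=(-1)^k\neq 0$, so $\underline i=(k)$ is a nonzero index with $f(\underline\theta^{q^{\underline i}})\ne 0$; since $D_k$ is non-constant, the coefficient $\tfrac{1}{D_k}$ forces the corresponding $K$-linear combination in $\boldsymbol{C}^\lambda_{q^k+1,\sg}(K)$ to have a coefficient not in $\FF_p$, hence the resulting relation between the $\lambda_A$'s, and therefore (after $X_j=1$) between Thakur's multiple zeta values, is non-trivial. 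I expect the main obstacle to be the combinatorial bookkeeping of the harmonic product expansion of $S_d(1)\cdot S_d(\mathcal C)$ — i.e. verifying that the coefficients $c_i(\underline t)$ evaluate to precisely $D_j^{q^{k-j}}$ times the stated multiple power sums, with the correct signs $(-1)^{k+j}$ — since this is where all the explicit structure of $\Xi_k$ is encoded; the rest is a direct application of the machinery (Theorem~\ref{structure-of-trivial-MZV}, Corollary~\ref{Theorem-E-F}, Corollary~\ref{corollary-doesnt-exceed}) already established.
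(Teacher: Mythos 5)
Your proposal fails at its very first step: the element you construct is not in the space you need it to be in.

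You set $f=\zeta_A\binom{\sg}{1}\cdot\xi_k(t_j)$, a product of two entire functions of the \emph{same} variable $t_j$, each of type $\sg=\{j\}$. Under the harmonic product the types multiply in $\mathfrak S$, so $f$ has type $\sg\cdot\sg=\sg^2$, i.e.\ the weighted set with $\sg^2(j)=2$. This is \emph{not a subset} of $\NN^*$, so $f\notin\mathcal{Z}_{q^k+1,\sg}(K)$ and in particular not in $\mathcal{Z}^{\operatorname{triv}}_{q^k+1,\sg}(K)$. Theorem~\ref{structure-of-trivial-MZV}, Theorem~\ref{Z-L} and Corollary~\ref{Theorem-E-F} all require $\Sigma$ to be a subset with $|\Sigma|<q$ and give you no handle on $f$. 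Worse, if one tried to force the decomposition $f=\sum_{i=0}^k c_i\,\xi_i(t_j)$ with $c_i\in\mathcal{Z}_\emptyset(K)$ and determine $c_i$ by evaluation, one finds $f(\theta^{q^i})=\zeta_A\binom{\sg}{1}(\theta^{q^i})\,\xi_k(\theta^{q^i})=0$ for \emph{every} $i\geq0$ as soon as $k>0$ (the first factor vanishes for $i>0$, the second for $i\neq k$); so all $c_i$ would be forced to vanish, contradicting $f\neq0$. The nonzero evaluation $f(\theta^{q^k})=(-1)^k$ you invoke for non-triviality is likewise false. In short: $f$ is a nonzero entire function whose all evaluations at $\theta^{q^i}$ vanish, which is possible precisely because $f$ lives in type $\sg^2$ and not $\sg$, so it is outside the range of the $\mathcal{E}_\sg=\mathcal{F}_\sg$ machinery.

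The paper avoids this by never multiplying two type-$\sg$ objects. It works at the level of power sums: starting from the identity (\ref{E:eqPel0}) obtained from Lemma~\ref{Hdm-explicit} (after writing $S_{<d-k}=S_{<d}-\sum_{j=1}^kS_{d-j}$), it multiplies both sides by $S_d(n)$, which has \emph{trivial type}, so the type stays $\sg$; then it sums over $d$ (equation (\ref{E:eq4bis})), using Lemma~\ref{L:L1} to re-express the remainder terms $S_{d-j}\binom{\sg}{q^k}$ in terms of $\tau_\sg^k$ of lower-depth power sums with the $B_{k-j}(\sg)D_j^{q^{k-j}}$ coefficients, applies $\mathcal{F}_\sg$, evaluates at $X=1$, and finally sets $n=1$. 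The key structural move — multiplying by a weight-$1$ element of \emph{trivial} type rather than by $\zeta_A\binom{\sg}{1}$ — is exactly what your proposal does not do, and it is what keeps the computation inside $\mathcal{Z}^{\operatorname{triv}}_\sg(K)$ where $\mathcal{F}_\sg$ is available. If you want a "decomposition-of-a-trivial-element" narrative, the correct object to decompose is $\zeta_A\binom{\emptyset}{1}\cdot\xi_k(t_j)\in\mathcal{Z}^{\operatorname{triv}}_{q^k+1,\sg}(K)$ (the outer factor being Thakur's $\zeta_A(1)$ of trivial type); but even then the combinatorics of the harmonic product at the level of power sums has to be carried out explicitly to reach (\ref{E:eq5}), and this is precisely the content of the paper's (\ref{E:eqPel})--(\ref{E:eq55}) chain, so your "main obstacle" paragraph is where nearly the entire proof actually lives.
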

The fact that $\Xi_k\in\mathcal{Z}_{q^k+1,\emptyset}(K)$ as expressed in the series above follows from Lemma \ref{hadamard-product} and Proposition \ref{elementary-facts}. Indeed, we have that the $j$-th term in the sum over $j$ defining $\Xi_k$ is $K$-proportional to the evaluation at $X=1$ of the Hadamard product:
$$\lambda_A\binom{\sg}{1}\odot_\sg\lambda_A\begin{pmatrix} \sg & \emptyset & \cdots & \emptyset \\ q^{k-j} & q^{k-j}(q-1) & \cdots & q^{k-1}(q-1)\end{pmatrix}.$$

To prove Theorem \ref{theorem-qk+1} we need a preliminary lemma.

	\begin{Lemma}\label{L:L1} Assume that $|\Sigma'|<q$.
	For any $1\leq j \leq k$ and $d\geq j$, we have 
	\[
	\begin{split}
	&\tau_{\Sigma'}^k\left(S_{d-j}\begin{pmatrix}
	\Sigma'\\1
	\end{pmatrix}\right)=(-1)^jD_{j}^{q^{k-j}}B_{k-j}(\Sigma')S_{d}\begin{pmatrix}
	\Sigma'&\emptyset&\dots & \emptyset\\q^{k-j}&q^{k-j}(q-1)&\dots &q^{k-1}(q-1)
	\end{pmatrix}.
	\end{split}
	\]
	\end{Lemma}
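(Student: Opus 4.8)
The plan is to prove Lemma~\ref{L:L1} by induction on $j$, using the identity (\ref{homogeneous-relations-sg}) and its finite-level refinement from Lemma~\ref{Hdm-explicit} as the engine. Recall that $\tau_{\Sigma'}=B_1(\Sigma')\tau$, so $\tau_{\Sigma'}^k$ amounts to $k$-fold application of $\tau$ weighted by the products $B_i(\Sigma')$. The key computational input is Lemma~\ref{Hdm-explicit}, which in the form
$$D_j^{-1}\tau_{\Sigma'}^{j}\Big(S_{<d-j}\binom{\Sigma'}{1}\Big)=(-1)^jS_{<d}\begin{pmatrix}\Sigma' & \emptyset & \cdots & \emptyset \\ 1 & q-1 & \cdots & (q-1)q^{j-1}\end{pmatrix}$$
already handles the case $k=j$; by taking the $d$-th difference (i.e.\ passing from $S_{<d}$ to $S_d$, which commutes with $\tau_{\Sigma'}$ up to the obvious index shift) one obtains the single-level version with $S_{d}$ on the right and $S_{d-j}$ on the left. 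For the case $k>j$ one then applies $\tau_{\Sigma'}^{k-j}$ to this single-level identity: on the left one gets $\tau_{\Sigma'}^{k}(S_{d-j}\binom{\Sigma'}{1})$, and on the right $\tau_{\Sigma'}^{k-j}$ acting on $S_d\begin{pmatrix}\Sigma' & \emptyset & \cdots & \emptyset \\ 1 & q-1 & \cdots & (q-1)q^{j-1}\end{pmatrix}$ produces, after bookkeeping of the $B_\bullet$ factors and of the powers of $D_\bullet$, exactly $(-1)^jD_j^{q^{k-j}}B_{k-j}(\Sigma')$ times the asserted depth-$(k-j+1)$ power sum.

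Concretely, I would first establish the single-level identity
$$\tau_{\Sigma'}^{j}\Big(S_{d-j}\binom{\Sigma'}{1}\Big)=(-1)^jD_jS_d\begin{pmatrix}\Sigma' & \emptyset & \cdots & \emptyset \\ 1 & q-1 & \cdots & (q-1)q^{j-1}\end{pmatrix},\quad d\ge j,$$
as a corollary of Lemma~\ref{Hdm-explicit}; this is essentially (\ref{homogeneous-relations-sg}) read at the level of power sums rather than summed over $d$. Second, I would record the elementary twisting rule $\tau^m\big(S_e(n_1,\dots,n_r)\big)=S_e(q^mn_1,\dots,q^mn_r)$ for power sums of trivial type together with $\tau^m(B_i(\Sigma'))=B_{i+m}(\Sigma')/B_m(\Sigma')$-type relations, more precisely $\tau(b_i(t))=\frac{b_{i+1}(t)}{t-\theta}$ so that $B_1(\Sigma')\tau$ raises the index of $B_i(\Sigma')$ by one; and the scaling $\tau^m(D_i)=D_i^{q^m}$. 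Third, I would apply $\tau_{\Sigma'}^{\,k-j}$ to the single-level identity above. On the right-hand side $\tau_{\Sigma'}^{\,k-j}=B_1(\Sigma')\tau\cdots$ applied $k-j$ times turns $S_d\binom{\Sigma'}{1}\cdot(\text{the }S_d\text{ part})$ into $B_{k-j}(\Sigma')$ times a power sum whose exponents have each been multiplied by $q^{k-j}$, i.e.\ $\binom{\Sigma'}{q^{k-j}}$ followed by $q^{k-j}(q-1),\dots$, giving precisely $q^{k-j},q^{k-j}(q-1),\dots,q^{k-1}(q-1)$; and the constant $(-1)^jD_j$ becomes $(-1)^jD_j^{q^{k-j}}$. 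Matching the two sides yields the claimed formula.

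The main obstacle I anticipate is purely bookkeeping: correctly tracking how the operator $\tau_{\Sigma'}=B_1(\Sigma')\tau$, applied repeatedly, redistributes the factors $B_i(\Sigma')$ between the two sides and interacts with the Frobenius twists of $D_\bullet$ and $l_\bullet$. In particular one must be careful that the hypothesis $|\Sigma'|<q$ is genuinely used (it guarantees convergence and that the relevant $S_d^\dag$-type series make sense, and it is the hypothesis under which Lemma~\ref{Hdm-explicit} and (\ref{homogeneous-relations-sg}) are stated), and that the index condition $d\ge j$ is exactly what makes the $S_{<d}\to S_d$ differencing legitimate with the stated left-hand index $d-j$. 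Once the index conventions for $b_i$, $B_i(\Sigma')$, $D_i$ and the power sums $S_d(\dots)$ are fixed consistently with the paper's notation list, the identity drops out by a finite induction with no further analytic input.
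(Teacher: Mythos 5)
Your proposal is correct and follows essentially the same route as the paper: establish the base case $j=k$ by differencing the $S_{<d}$ identity of Lemma \ref{Hdm-explicit} at consecutive values of $d$ (the paper explicitly offers this as one of two equivalent options), then apply $\tau_{\Sigma'}^{k-j}=B_{k-j}(\Sigma')\tau^{k-j}$ to both sides, using the Frobenius twist rules on $D_j$, on the $b_i$, and on the power-sum exponents. The bookkeeping you outline is precisely what the paper encodes through the identities $b_k(t)=b_{k-j}(t)\tau^{k-j}(b_j(t))$ and $S_{d-j}\binom{\Sigma'}{q^k}=\tau^{k-j}\big(S_{d-j}\binom{\Sigma'}{q^j}\big)$.
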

\begin{proof} There are several ways to obtain these formulas. One of them uses the 
principle of the remark in \S \ref{formalism} if we assume that $|\Sigma'|<q-1$.
Let $h$ be an element of $\NN^*\setminus\Sigma'$ and set $\Sigma=\Sigma'\sqcup\{h\}$. Then,
from the computation of the image by $\mathcal{F}_{\{h\}}$ of both sides of (\ref{homogeneous-relations-sg}) we obtain that for integers $d\geq k\geq 0$:
	\[
	\tau_{\Sigma'}^k\left(S_{d-k}\binom{\Sigma'}{1}\right)=(-1)^kD_kS_{d}\begin{pmatrix}
	\Sigma'&\emptyset&\dots & \emptyset\\1&q-1&\dots &q^{k-1}(q-1)
	\end{pmatrix}.
	\]
This is the identity of the lemma in the case $j=k$. Another way to prove it, under the assumption that $|\Sigma'|<q$, is to apply Lemma \ref{Hdm-explicit} directly on two consecutive $d$'s.

The general case follows easily observing that 
$$S_{d-j}\begin{pmatrix}
	\Sigma'\\ q^k
	\end{pmatrix}=\tau^{k-j}\Big(S_{d-j}\begin{pmatrix}
	\Sigma'\\ q^j
	\end{pmatrix}\Big)$$
	and using the identity $b_k(t)=b_{k-j}(t)\tau^{k-j}(b_j(t))$.
\end{proof}

\begin{proof}[Proof of Theorem \ref{theorem-qk+1}]
From Lemma \ref{Hdm-explicit} we deduce, with $|\Sigma|<q$: 
\begin{multline}\label{E:eqPel0}
S_{<d}\begin{pmatrix}\Sigma&\emptyset&\dots &\emptyset \\n&q-1&\dots &q^{k-1}(q-1)
\end{pmatrix}=(-1)^k\frac{B_k(\Sigma)}{D_k}S_{<d-k}\begin{pmatrix}
\Sigma\\q^k
\end{pmatrix}=\\ =(-1)^k\frac{B_k(\Sigma)}{D_k}\left(S_{<d}\begin{pmatrix}
\Sigma\\q^k
\end{pmatrix}-\sum_{j=1}^kS_{d-j}\begin{pmatrix}
\Sigma\\q^k
\end{pmatrix}\right)
\end{multline}
for all $d\geq k$. Let $n$ be a positive integer. Multiplying both sides of \eqref{E:eqPel0} by $S_d(n)$ we get
	\begin{multline}\label{E:eqPel}
S_d\begin{pmatrix}\emptyset&\Sigma &\emptyset&\dots &\emptyset \\n&1&q-1&\dots &q^{k-1}(q-1)
\end{pmatrix}=(-1)^k\frac{B_k(\Sigma)}{D_k}S_{d}\begin{pmatrix}
\emptyset&\Sigma\\n&q^k
\end{pmatrix}-\\ -(-1)^k\frac{B_k(\Sigma)}{D_k}\sum_{j=1}^{k}S_d(n)S_{d-j}\binom{\Sigma}{q^k}.
\end{multline}

By using Lemma \ref{L:L1}, one can rewrite \eqref{E:eqPel} as 
\begin{equation}\label{E:eq4}
\begin{split}
&S_d\begin{pmatrix}\emptyset&\Sigma&\emptyset&\dots &\emptyset \\n&1&q-1&\dots &q^{k-1}(q-1)
\end{pmatrix}+(-1)^{k+1}\frac{B_k(\Sigma)}{D_k}S_d\begin{pmatrix}\emptyset&\Sigma \\n&q^k
\end{pmatrix}\\
&+\sum_{j=1}^k\frac{(-1)^{k+j}D_{j}^{q^{k-j}}B_{k-j}(\Sigma)}{D_k}S_d(n)S_{d}\begin{pmatrix}
\Sigma & \emptyset & \cdots & \emptyset \\ q^{k-j} & q^{k-j}(q-1) & \dots & q^{k-1}(q-1)
\end{pmatrix}=0.
\end{split}
\end{equation}
Summing over $d\geq 0$ yields the formula:
\begin{equation}\label{E:eq4bis}
\begin{split}
&\zeta_A\begin{pmatrix}\emptyset&\Sigma&\emptyset&\dots &\emptyset \\n&1&q-1&\dots &q^{k-1}(q-1)
\end{pmatrix}+(-1)^{k+1}\frac{B_k(\Sigma)}{D_k}\zeta_A\begin{pmatrix}\emptyset&\Sigma \\n&q^k
\end{pmatrix}\\
&+\sum_{j=1}^k\frac{(-1)^{k+j}D_{j}^{q^{k-j}}B_{k-j}(\Sigma)}{D_k}\sum_{d\geq 0}S_d(n)S_{d}\begin{pmatrix}
\Sigma & \emptyset & \cdots & \emptyset \\ q^{k-j} & q^{k-j}(q-1) & \dots & q^{k-1}(q-1)
\end{pmatrix}=0
\end{split}
\end{equation}
and we know that the series over $d\geq 0$ converges to an element of $\mathcal{Z}_{q^k+n,\Sigma}(K)$.
 We suppose that $\Sigma=\sg$ and we apply $\mathcal{F}_{\sg}$ to both sides of \eqref{E:eq4bis}. Evaluating at $X=1$ implies
 \begin{equation}\label{E:eq55}
 \begin{split}
 &\zeta_A(n,1,q-1,\dots,q^{k-1}(q-1))+\frac{(-1)^{k+1}}{D_k}\zeta_A(n,q^k)\\
 &+\sum_{j=1}^k\frac{(-1)^{k+j}D_{j}^{q^{k-j}}}{D_k}\sum_{d\geq0}S_d(n)S_{d}(q^{k-j},q^{k-j}(q-1),\dots,q^{k-1}(q-1))=0.
 \end{split}
 \end{equation}
 Now setting $n=1$ yields \eqref{E:eq5}. 
\end{proof}
For example, using \eqref{E:eq5} with $k=1$, we get
		\begin{multline*}
		\zeta_{A}(1,1,q-1)+\frac{1}{D_1}\zeta_{A}(1,q)+\sum_{d\geq 0}S_d(1)S_d(1)S_{<d}(q-1)=\\ =\zeta_{A}(1,1,q-1)+\frac{1}{D_1}\zeta_{A}(1,q)+\zeta_{A}(2,q-1)=0.
		\end{multline*}
				This is \cite[(5.2)]{Todd}. If we choose $k=2$, some explicit calculations yield the identity 
				\begin{multline*}\zeta_{A}(1,1,q-1,q^2-q)-\frac{\zeta_{A}(1,q^2)}{D_2}-\frac{1}{[2]}\zeta_{A}(q+1,q^2-q)-\Big(\frac{1}{[2]}-1\Big)\zeta_{A}(2,q-1,q^2-q)-\\ -\frac{1}{[2]}\zeta_{A}(2,q^2-1)=0,\end{multline*}
where $[k]=\theta^{q^k}-\theta$ for $k\in\NN$.

\subsection{Relations in small weights}\label{Hadamard} Using the methods of \S \ref{link-relations} we can recover all the relations in weight $q,q+1$ and $q+2$ described by Todd in \cite[Sec. 5.2-5.3]{Todd}. One of the patterns that seems to emerge is that the interplay between 
the harmonic product of the algebra $\mathcal{Z}$ and the Hadamard product on $\mathcal{L}$
via the maps $\mathcal{E}_\Sigma=\mathcal{F}_\Sigma$ works as a substitute of the double shuffle relations for Euler-Zagier multiple zeta values. In the following, we are going to examine the case $\Sigma=\sg=\{j\}\subset\NN^*$ and $n=q,q+1,q+2$. We denote by $t$ the variable $t_j$ and by $X$ the variable $X_j$, for simplicity.

\subsubsection{Relations in weight $q$}\label{relq}

By Theorem \ref{structure-of-trivial-MZV} a basis of 
$\mathcal{Z}_{q,\sg}^{\operatorname{triv}}(K)$ (the dimension is denoted by $\delta_{q,\sg}$) is given by the image through $\zeta_A$ of a family
$$\left(\left[\begin{matrix}\sg \\ 1\end{matrix}\right]\odot_{\zeta}F:(F) \text{ a $K$-basis of $\mathcal{Z}_{q-1,\emptyset}(K)$}\right)\bigsqcup\left(\left[\begin{matrix}\sg & \emptyset\\ 1 & q-1\end{matrix}\right]\right)\subset\boldsymbol{C}_{q,\sg}^{\operatorname{triv}}(K).$$ Note that the element 
$[\begin{smallmatrix}\sg\\ q\end{smallmatrix}]$ also belongs to $\boldsymbol{C}_{q,\sg}^{\operatorname{triv}}(K)$ but it is submitted to the $\FF_p$-linear relation
$$\left[\begin{matrix}\sg \\ q\end{matrix}\right]+\left[\begin{matrix}\sg & \emptyset\\ 1 & q-1\end{matrix}\right]-\left[\begin{matrix}\sg \\ 1\end{matrix}\right]\odot_\zeta\left[\begin{matrix}\emptyset \\ q-1\end{matrix}\right]=0$$ which is trivial.
By Theorem \ref{structure-of-trivial-MZV} we have the non-trivial element \[
\boldsymbol{\tau}\left[\begin{matrix}\sg \\ 1\end{matrix}\right]+D_1
\left[\begin{matrix}\sg & \emptyset \\ 1 & q-1\end{matrix}\right]\in \operatorname{Ker}(\zeta_A)\cap\boldsymbol{C}_{q,\sg}^{\operatorname{triv}}(K)
\]
 which can be identified with an element of $K[\boldsymbol{\tau}]^{\oplus \delta_{q,\sg}}$ (first syzygy $K[\boldsymbol{\tau}]$-module). Applying $\boldsymbol{G}_\Sigma$, we get the element (note that we omit the first line of composition arrays of trivial type)
$$\boldsymbol{\tau}[1]+D_1[1,q-1]\in\operatorname{Ker}(\zeta_A)\cap\boldsymbol{C}_{q,\emptyset}(K).$$ Since $\boldsymbol{\tau} [1]=[q]$, this yields Thakur's relation
\begin{equation}\label{q1}
\zeta_A(q)+D_1\zeta_A(1,q-1)=0.
\end{equation}
It seems difficult to find non-trivial elements of $K^{\oplus\delta_{q,\sg}}$ determining non-trivial elements in the above kernel. This goes in the same direction of Todd's analogue of Zagier's dimension conjecture, which predicts that $\dim_K(\mathcal{Z}_{q,\emptyset}(K))=2^{q-1}-1$.

\subsubsection{Relations in weight $q+1$}\label{relq+1}
The $K$-vector space $\boldsymbol{C}_{q+1,\sg}^{\operatorname{triv}}(K)$ contains the elements $\left[\begin{matrix}\sg\\1\end{matrix}\right]\odot_{\zeta}F$ where $F\in \boldsymbol{C}_{q,\emptyset}(K)$ and $\left[\begin{matrix}
	\sg&1\\1&q-1
	\end{matrix}\right]\odot_{\zeta}\left[\begin{matrix}
	\emptyset\\1
	\end{matrix}\right]
	$.
The next lemma describes two non-trivial elements in $\operatorname{Ker}(\zeta_A)\cap\boldsymbol{C}_{q+1,\sg}^{\operatorname{triv}}(K)$ which are sent, via $\boldsymbol{G}_\sg$, to elements which generate the $K$-linear relations among Thakur's multiple zeta values of weight $q+1$. 

\begin{Lemma}\label{lemmaq+1A}
We have the following two non-zero elements of the $K[\boldsymbol{\tau}]$-module
$\operatorname{Ker}(\zeta_A)\cap\boldsymbol{C}_{q+1,\sg}^{\operatorname{triv}}(K)$:
\begin{eqnarray}
\left[\begin{matrix}
\sg&\emptyset\\1&q
\end{matrix}\right]-D_1\left[\begin{matrix}
\emptyset&\sg&\emptyset\\1&1&q-1
\end{matrix}\right]-D_1\left[\begin{matrix}
\emptyset&\sg\\1&q
\end{matrix}\right]+\left[\begin{matrix}
\sg\\1
\end{matrix}\right]\odot_{\zeta}D_1\left[\begin{matrix}
\emptyset&\emptyset\\1&q-1
\end{matrix}\right]\label{1q+1},\\
\left[\begin{matrix}\sg\\q+1\end{matrix}\right]+\left[\begin{matrix}\sg&\emptyset\\1&q\end{matrix}\right]+(1+D_1)\left[\begin{matrix}\sg&\emptyset\\2&q-1\end{matrix}\right]-\left[\begin{matrix}\emptyset&\sg\\1&q\end{matrix}\right] +D_1\left[\begin{matrix}\sg&\emptyset&\emptyset\\1&1&q-1\end{matrix}\right]
\label{E:todd003}.
\end{eqnarray}
\end{Lemma}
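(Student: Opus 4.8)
I would prove Lemma \ref{lemmaq+1A} along the following lines.

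Since $\operatorname{Ker}(\zeta_A)=\zeta_A^{-1}(0)\subseteq\zeta_A^{-1}\big(\mathcal{Z}^{\operatorname{triv}}_{q+1,\sg}(K)\big)=\boldsymbol{C}^{\operatorname{triv}}_{q+1,\sg}(K)$, for each of the two displayed elements it suffices to verify: that, after expanding the harmonic products by Theorem \ref{mytheoremshuffle}, it is a $K$-combination of composition arrays of $\binom{\sg}{q+1}$, hence lies in $\boldsymbol{C}_{q+1,\sg}(K)$; that it is non-zero there; and that $\zeta_A$ annihilates it. Non-vanishing I would settle by exhibiting a generator occurring with coefficient $1$ — namely $[\binom{\sg}{q+1}]$ in (\ref{E:todd003}), and $[\binom{\sg\ \emptyset}{1\ q}]$ in (\ref{1q+1}); in the latter case one only needs the elementary observation that $[\binom{\sg}{1}]\odot_\zeta[\binom{\emptyset\ \emptyset}{1\ q-1}]$ contains no $[\binom{\sg\ \emptyset}{1\ q}]$ term, the two empty columns of $\binom{\emptyset\ \emptyset}{1\ q-1}$ already sitting at distinct degrees and so never being fused by the harmonic product into a single weight-$q$ column.

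The real content is the vanishing of $\zeta_A$ on the two elements. Since $|\sg|=1<q$, Theorem \ref{Z-L} and Corollary \ref{Theorem-E-F} say that $\mathcal{F}_\sg=\mathcal{E}_\sg$ is injective on $\mathcal{Z}_\sg(K)$, while Proposition \ref{four-properties}(4) puts $\mathcal{Z}_\sg(K)\subseteq\EE_\sg^{[0]}$; hence for $f\in\mathcal{Z}_\sg(K)$
$$\mathcal{F}_\sg(f)=\mathcal{E}_\sg(f)=\sum_{i\geq 0}\frac{f(\theta^{q^i})}{D_i}\,\lambda_A\binom{\sg}{1}^{q^i},$$
and, expanding $\lambda_A\binom{\sg}{1}=\log_C(X)=\sum_{j\geq0}X^{q^j}/l_j$ by (\ref{carlitz-identity}), a triangular argument on the coefficients of the monomials $X^{q^i}$ shows this series vanishes if and only if $f(\theta^{q^i})=0$ for all $i\geq0$. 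So, taking $f$ to be the $\zeta_A$-image of the element in question, I would check $f(\theta^{q^i})=0$ for every $i$. Each summand $\zeta_A(\mathcal{C})(\theta^{q^i})$ is a \emph{finite} sum (once $d>i$, $b_d(\theta^{q^i})=0$ truncates the relevant degree range), evaluated explicitly from $S_d\binom{\sg}{1}=b_d(t)/l_d$ (\ref{simpleformulas}), the expansion (\ref{eq6}) of $S_d\binom{\sg}{q}$, and the functional identity (\ref{functional-identity}) for $\zeta_A\binom{\sg}{1}$, whose trivial zeros give $\zeta_A\binom{\sg}{1}(\theta)=1$ and $\zeta_A\binom{\sg}{1}(\theta^{q^i})=0$ for $i>0$. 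The alternating combinations then collapse: at $i=0$ the terms pair off, using $\zeta_A\binom{\emptyset\ \sg}{1\ q}(\theta)=\zeta_A(1,q-1)$; for $i\geq1$ the terms carrying a factor $\zeta_A\binom{\sg}{1}$ drop out, and the rest reduces, via (\ref{eq6}) and the Tate-algebra relation (\ref{homogeneous-relations-sg}) with $k=1$ — equivalently, Thakur's relation (\ref{q1}) — to an identity that survives evaluation at each $\theta^{q^i}$.

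Concretely, for (\ref{1q+1}) the underlying power-sum relation is the specialization $n=1$, $k=1$, $\Sigma=\sg$ of the family (\ref{E:eq4bis}) established in \S\ref{family}, once the Hadamard-type term $\sum_d S_d(1)S_d\binom{\sg\ \emptyset}{1\ q-1}$ occurring there is rewritten through Proposition \ref{elementary-facts} as $\zeta_A$ of composition arrays of $\binom{\sg}{q+1}$ — yielding exactly the four terms of (\ref{1q+1}); while (\ref{E:todd003}) is the $\boldsymbol{C}$-lift of the weight-$(q+1)$ relation of Todd \cite[\S5.2]{Todd}, obtained in the same fashion from (\ref{E:eq4bis}) together with the relevant weight-$(q+1)$, type-$\sg$ harmonic-product identities. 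I expect the main obstacle to be precisely the bookkeeping of the diagonal contributions $S_d(\mathcal{C}_1)S_d(\mathcal{C}_2)$ produced by the harmonic product: these do not match single composition arrays individually, and recombining them into the arrays appearing in (\ref{1q+1})--(\ref{E:todd003}) requires the explicit power-sum formulas of \S\ref{part-1}--\S\ref{part-2}, above all Lemma \ref{Hdm-explicit} (available because $|\sg|<q$); everything else is routine.
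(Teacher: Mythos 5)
Your overall strategy (verify $\zeta_A(\mathcal{C})(\theta^{q^i})=0$ for every $i$ and invoke the injectivity of $\mathcal{E}_\sg=\mathcal{F}_\sg$ on $\mathcal{Z}_\sg(K)$) is a logically sound alternative to the paper's route, and your treatment of non-vanishing is fine. But two of the load-bearing claims in your plan for the vanishing step do not hold up, and as a result the proposal does not actually avoid the structural work the paper does.

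First, your claim that ``each summand $\zeta_A(\mathcal{C})(\theta^{q^i})$ is a finite sum'' fails for arrays whose leading column is $\binom{\emptyset}{1}$, namely $\binom{\emptyset\ \sg\ \emptyset}{1\ 1\ q-1}$ and $\binom{\emptyset\ \sg}{1\ q}$. There the inner $\sg$-column truncates the range of $d_2$, but the outer sum $\sum_{d_1>d_2}S_{d_1}(1)=\zeta_A(1)-S_{<d_2+1}(1)$ is an infinite tail, so $\zeta_A(\mathcal{C})(\theta^{q^i})$ carries $\zeta_A(1)$-contributions. Arranging for these to cancel, and then for the residual finite parts to collapse to $0$, is precisely the content of the Tate-algebra identity (\ref{M1}); proving it ``at each $\theta^{q^i}$'' is exactly as hard as proving it as an identity of entire functions, which is what the paper does, and which you then use. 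There is no shortcut to an infinite family of scalar verifications without first establishing the functional identity. (You cannot invoke Corollary \ref{corollary-doesnt-exceed} to dispose of $i\geq 2$ either, since that requires already knowing $f$ is trivial, which would follow from $f=0$ --- circular.) Second, the ``concretely'' paragraph misreads (\ref{E:eq4bis}): at $n=1,k=1,\Sigma=\sg$ the middle term carries the coefficient $(t-\theta)/D_1\notin K$, so the specialization is a relation in $\mathcal{Z}(K[\underline{t}])$, not a $K$-linear combination of composition arrays of $\binom{\sg}{q+1}$, and it does not ``yield exactly the four terms of (\ref{1q+1}).'' The actual derivation of (\ref{1q+1}) in the paper is quite different: one starts from Theorem \ref{T:ThR} with $k=1$ in $\lambda$-form, takes the Hadamard product with $\lambda_A\binom{\sg^q}{1}$ (Lemma \ref{hadamard-product}), uses the identity $S_d(1)b_{d+1}(t)=(t-\theta)S_{<d+1}\binom{\sg}{1}$ coming from (\ref{lambdaform3}) to land back in $\mathcal{L}_{q+1,\sg}$, applies $\mathcal{F}_\sg^{-1}$ to reach (\ref{E:todd002}), subtracts (\ref{E:todd003}) (itself obtained by multiplying (\ref{q1}) through by $\zeta_A\binom{\sg}{1}$ and expanding the harmonic product via the explicit formula for $\zeta_A\binom{\sg}{1}\zeta_A(1,q-1)$), and finally rearranges using Proposition \ref{elementary-facts} to reach (\ref{M1}). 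The factor $t-\theta$ is introduced and removed inside this argument; it never survives into the final $K$-linear relation, which is why your attempted shortcut through (\ref{E:eq4bis}) does not work as stated.
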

\begin{proof} 
	The element \eqref{E:todd003} is determined by the product of $\zeta_{A}\binom{\sg}{1}$ and the two sides of \eqref{q1}. We now prove that the element (\ref{1q+1}) belongs to $\operatorname{Ker}(\zeta_A)\cap\boldsymbol{C}_{q+1,\sg}^{\operatorname{triv}}(K)$; we will use the Hadamard product of Lemma \ref{hadamard-product}. Taking $k=1$ in Theorem \ref{T:ThR} we obtain:
	$$\lambda_A\binom{\sg^q}{q}+D_1\lambda_A\begin{pmatrix} \sg & \emptyset \\ 1 & q-1\end{pmatrix}=0.$$ Now we compute the Hadamard product $\lambda_A\begin{pmatrix}
	\sg^q\\1
	\end{pmatrix}\odot_\sg(\cdots)$ of both left- and right-hand sides of the above identity:
\begin{equation}\label{lambdaform2}
	\lambda_A\begin{pmatrix}
	\sg^q\\1
	\end{pmatrix} \odot_\sg \left(\lambda_A\begin{pmatrix}
	\sg^q\\q
	\end{pmatrix}+D_1\lambda_A\begin{pmatrix}
	\sg&\emptyset\\1&q-1
	\end{pmatrix}\right)=0.
\end{equation}
	Equivalently, we have:
	$
	\sum_{d\geq 0}S_d(1)\Big(S_d(q)+D_1S_{d+1}(1,q-1)\Big)X^{q^{d+1}}=0.
	$
	Observe on another hand that for any $d\geq0$, we have
	\[
	S_d(1)b_{d+1}(t)=\frac{1}{l_d}(t-\theta)\tau(b_d(t))=(t-\theta)S_{<d+1}\begin{pmatrix}
	\sg\\1
	\end{pmatrix},
	\]
	where the last equality follows from the well-known identity (see e.g. \cite[Lem. 8]{PEL}):
	\begin{equation}\label{lambdaform3}
	\tau(b_d(t))=l_d\sum_{i=0}^d\frac{b_i(t)}{l_i}.
	\end{equation}
	In particular, if we set
	$$Z:=\sum_{d\geq 0}S_{d+1}(1,q-1)S_{<d+1}\binom{\sg}{1}\in\mathcal{Z}_{q+1,\sg},$$
	we have $$\mathcal{F}_\sg\Big((t-\theta)Z\Big)=\lambda_A\binom{\sg^q}{1}\odot_\sg\lambda_A\begin{pmatrix}
	\sg & \emptyset \\ 1 & q-1\end{pmatrix}.$$ Similarly, there exists a unique element $W\in \mathcal{Z}_{q+1,\sg}$ such that $$\mathcal{F}_\sg\Big((t-\theta)W\Big)=\lambda_A\binom{\sg^q}{1}\odot_\sg\lambda_A\binom{\sg^q}{q}$$ and this element can be easily computed. 
	Thus applying the map $\mathcal{F}_{\sg}^{-1}$ (an isomorphism) to \eqref{lambdaform2} and dividing by 
	$t-\theta$ implies
	\begin{equation}\label{E:todd002}
	\zeta_{A}\binom{\sg}{q+1}+\zeta_{A}\begin{pmatrix}
	\sg&\emptyset\\2&q-1
	\end{pmatrix}-\zeta_{A}\begin{pmatrix}
	\emptyset & \sg\\1&q
	\end{pmatrix}+D_1\zeta_{A}\begin{pmatrix}
	\emptyset & \sg&\emptyset\\1&1&q-1
	\end{pmatrix} +D_1\zeta_{A}\begin{pmatrix}
	\emptyset & \sg\\1&q
	\end{pmatrix}=0.
	\end{equation}
	Taking the term-wise difference between \eqref{E:todd002} and \eqref{E:todd003} we find 
	\begin{multline}\label{E:rel77}
	\zeta_{A}\begin{pmatrix}
	\sg&\emptyset\\1&q
	\end{pmatrix}+D_1\zeta_{A}\begin{pmatrix}
	\sg&\emptyset\\2&q-1
	\end{pmatrix}+D_1\zeta_{A}\begin{pmatrix}
	\sg&\emptyset&\emptyset\\1&1&q-1
	\end{pmatrix}-\\ -D_1\zeta_{A}\begin{pmatrix}
	\emptyset&\sg&\emptyset\\1&1&q-1
	\end{pmatrix}-D_1\zeta_{A}\begin{pmatrix}
	\emptyset&\sg\\1&q
	\end{pmatrix}=0.
	\end{multline}
	By explicit versions of Proposition \ref{elementary-facts} (see also \cite[Rem. 3.2]{Chen}  and \cite[Thm. 3.1]{PEL1}) we get
	\[
	\zeta_{A}\binom{\sg}{1}\zeta_{A}(1,q-1)=\zeta_{A}\begin{pmatrix}
	\sg & \emptyset & \emptyset\\ 1 & 1 & q-1\end{pmatrix}
	+\zeta_{A}\begin{pmatrix}
	\sg  &\emptyset\\ 2 & q-1\end{pmatrix}.
	\]
Hence,	\begin{equation}\label{M1}
	\zeta_{A}\begin{pmatrix}
	\sg&\emptyset\\1&q
	\end{pmatrix}-D_1\zeta_{A}\begin{pmatrix}
	\emptyset&\sg&\emptyset\\1&1&q-1
	\end{pmatrix}-D_1\zeta_{A}\begin{pmatrix}
	\emptyset&\sg\\1&q
	\end{pmatrix}=-D_1\zeta_{A}(1,q-1)\zeta_{A}\binom{\sg}{1}\in\mathcal{Z}^{\operatorname{triv}}_{q+1,\sg}(K)
	\end{equation}
	which determines the element \eqref{1q+1}.
\end{proof}

\begin{Lemma}
Assuming Conjecture \ref{Todds-conjecture} for the weight $q+1$, the map $\boldsymbol{G}_{\sg}$ sends the elements of Lemma $\ref{lemmaq+1A}$ to a basis of  the $K$-subvector space of $\operatorname{Ker}(\zeta_A)\cap\boldsymbol{C}_{q+1,\emptyset}^{\operatorname{triv}}(K)$.
\end{Lemma}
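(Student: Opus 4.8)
The plan is to reduce the assertion to a dimension count combined with a $K$-linear independence check. First I would note that for trivial type the triviality condition is vacuous, $\mathcal{Z}_{n,\emptyset}^{\operatorname{triv}}=\mathcal{Z}_{n,\emptyset}$, so that $\boldsymbol{C}_{q+1,\emptyset}^{\operatorname{triv}}(K)=\boldsymbol{C}_{q+1,\emptyset}(K)$ and the space appearing in the statement is exactly the $K$-vector space of $K$-linear dependence relations among Thakur's multiple zeta values of weight $q+1$. The two elements of Lemma \ref{lemmaq+1A} lie in $\operatorname{Ker}(\zeta_A)\cap\boldsymbol{C}_{q+1,\sg}^{\operatorname{triv}}(K)$; since $\zeta_A\circ\boldsymbol{G}_\sg=\mathcal{G}_\sg\circ\zeta_A$, their images under $\boldsymbol{G}_\sg$ automatically lie in this relation space. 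It therefore suffices to prove (i) that, under Conjecture \ref{Todds-conjecture} in weight $q+1$, the relation space is $2$-dimensional over $K$, and (ii) that the two images are $K$-linearly independent.

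For (i): the $K$-vector space $\boldsymbol{C}_{q+1,\emptyset}(K)$ is freely generated by the compositions of $q+1$, so $\dim_K\boldsymbol{C}_{q+1,\emptyset}(K)=2^{q}$. Assuming Conjecture \ref{Todds-conjecture}, and using $q+1>q$, one gets $\delta_{q+1,\emptyset}=\sum_{i=1}^{q}\delta_{q+1-i,\emptyset}=(2^{q-1}-1)+(2^{q-2}+2^{q-3}+\cdots+2^{0})=(2^{q-1}-1)+(2^{q-1}-1)=2^{q}-2$. Since $\zeta_A:\boldsymbol{C}_{q+1,\emptyset}(K)\to\mathcal{Z}_{q+1,\emptyset}(K)$ is onto and $\delta_{q+1,\emptyset}=\dim_K\mathcal{Z}_{q+1,\emptyset}(K)$, the kernel has dimension $2^{q}-(2^{q}-2)=2$.

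For (ii): I would compute $\boldsymbol{G}_\sg$ on the two generators explicitly. The element \eqref{E:todd003} was obtained by multiplying Thakur's relation \eqref{q1} by $\zeta_A\binom{\sg}{1}$; using $\boldsymbol{G}_\sg\bigl(\zeta_A\binom{\sg}{1}\bigr)=\zeta_A(1)$ — which follows from \eqref{image-basic-zeta} together with $\ev\bigl(\lambda_A\binom{\sg}{1}\bigr)=\zeta_A(1)$ — and the $\boldsymbol{C}_\emptyset(K)$-equivariance of $\boldsymbol{G}_\sg$, the image of \eqref{E:todd003} under $\boldsymbol{G}_\sg$ is the composition-array rendering of the harmonic-product expansion of $\zeta_A(1)\bigl(\zeta_A(q)+D_1\zeta_A(1,q-1)\bigr)=0$; note in particular that $\boldsymbol{G}_\sg\left[\begin{smallmatrix}\sg\\q+1\end{smallmatrix}\right]=[q+1]$, since $\zeta_A\binom{\sg}{q+1}=\zeta_A^\dag\binom{\sg}{q+1}$. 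The element \eqref{1q+1} is the composition-array form of the identity \eqref{M1}; one computes $\boldsymbol{G}_\sg$ on it termwise from $\mathcal{G}_\sg=\ev\circ\mathcal{F}_\sg$, using the identities $\zeta_A\binom{\sg}{n}=\zeta_A^\dag\binom{\sg}{n}$ for the small weights $n\leq q+1$ that occur (so that $\boldsymbol{G}_\sg$ carries a basic array to the Thakur multiple zeta value of its weight sequence) and the signature decomposition of Theorem \ref{signature-theorem} to rewrite the two summands $\left[\begin{smallmatrix}\emptyset&\sg&\emptyset\\1&1&q-1\end{smallmatrix}\right]$ and $\left[\begin{smallmatrix}\emptyset&\sg\\1&q\end{smallmatrix}\right]$, whose type slot is not in leading position. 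The resulting relation is not a $K$-multiple of the first, as a direct inspection of the harmonic-product coefficients shows (for instance of the depth-one term $\zeta_A(q+1)$). Two $K$-linearly independent elements of the $2$-dimensional space of (i) form a $K$-basis, which is the claim.

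I expect the main obstacle to be step (ii), concretely the explicit evaluation of $\boldsymbol{G}_\sg$ on \eqref{1q+1}: because the composition arrays $\left[\begin{smallmatrix}\emptyset&\sg&\emptyset\\1&1&q-1\end{smallmatrix}\right]$ and $\left[\begin{smallmatrix}\emptyset&\sg\\1&q\end{smallmatrix}\right]$ are not basic, evaluating $\mathcal{F}_\sg$ on the corresponding multiple zeta values forces one to unwind the signature reduction of Theorem \ref{signature-theorem}, which is a finite but delicate manipulation with the harmonic product of power sums. One should also bear in mind that the dimension equality in (i) genuinely uses Conjecture \ref{Todds-conjecture}; without it the argument yields only that $\boldsymbol{G}_\sg$ carries the two elements to a $K$-linearly independent family inside the relation space.
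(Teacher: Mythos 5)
Your overall strategy — reduce to the dimension count $\dim_K\bigl(\operatorname{Ker}(\zeta_A)\cap\boldsymbol{C}_{q+1,\emptyset}(K)\bigr)=2$ under the conjecture, then check $K$-linear independence of the two images — is sound, and it essentially fleshes out what the paper leaves implicit. The paper's own proof simply computes the two images and identifies them with Todd's relations (5.2) and (5.3), delegating the ``these form a basis'' claim to the reference; your explicit count ($2^q$ compositions minus $\delta_{q+1,\emptyset}=2^q-2$) makes the argument self-contained, and your independence test by inspecting the coefficient of the depth-one term $[q+1]$ matches what the paper's explicit output (E:todd111) and Todd's (5.3) show.

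There is, however, one factual error you should fix. The parenthetical claim $\boldsymbol{G}_\sg\left[\begin{smallmatrix}\sg\\q+1\end{smallmatrix}\right]=[q+1]$, based on $\zeta_A\binom{\sg}{q+1}=\zeta_A^\dag\binom{\sg}{q+1}$ ``for small weights $n\leq q+1$,'' is false for $n\geq 2$. Formula (\ref{id-simplest}) gives the $\dag$-equality for a depth-one array $\binom{\sg}{N+1}$ only when $N=0$; for $N\geq1$ the signature reduction has a genuine correction. For instance
\[
S_d\binom{\sg}{2}-S_d^\dag\binom{\sg}{2}=\sum_{|a|=|\theta|^d}\frac{a(t)-(t-\theta)}{a^2}=S_d(1)\,H_0^{(d)}(t),
\]
so $\zeta_A\binom{\sg}{2}-\zeta_A^\dag\binom{\sg}{2}=\zeta_A\binom{\emptyset\ \sg}{1\ \ 1}\neq 0$, and similarly $\zeta_A\binom{\sg}{q+1}\neq\zeta_A^\dag\binom{\sg}{q+1}$ (the same calculation gives the discrepancy $S_d(q)H_0^{(d)}(t)$ at each $d$). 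Thus $\boldsymbol{G}_\sg$ does \emph{not} merely strip the first row of a basic array once the $\sg$-weight exceeds one, and the asserted shortcut fails precisely on the terms $\binom{\sg}{q+1}$ and $\binom{\sg\ \emptyset}{2\ \ q-1}$ of (\ref{E:todd003}). Fortunately your main argument for (\ref{E:todd003}) — identifying its $\boldsymbol{G}_\sg$-image via $\boldsymbol{C}_\emptyset(K)$-equivariance as the stuffle expansion of $[1]\odot_\zeta\bigl([q]+D_1[1,q-1]\bigr)$, which does contain a $[q+1]$ term — is correct and makes the parenthetical superfluous; just remove it, and for (\ref{1q+1}) carry out (or cite) the signature reduction yielding (E:todd111), which visibly lacks a $[q+1]$ term.
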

\begin{proof}
In fact, we recover exactly Todd's relations in \cite[Sec. 5.2]{Todd}.
	  By an explicit version of \eqref{results} and applying the map $\boldsymbol{G}_{\sg}$ to  \eqref{1q+1}, we get
	\begin{equation}\label{E:todd111}
	D_1\zeta_{A}(2,q-1)+D_1\zeta_{A}(1,1,q-1)+\zeta_{A}(1,q)=0.
	\end{equation}
	In other words, we have reached \cite[(5.2)]{Todd}.  On the other hand, applying $\boldsymbol{G}_{\sg}$ to \eqref{E:todd003} and subtracting \eqref{E:todd111} from the result, we obtain
	\[
	\zeta_{A}(q+1)+D_1\zeta_A(1,q)+D_1\zeta_{A}(1,1,q-1)+D_1\zeta(1,q-1,1) + \zeta_{A}(q,1) + \zeta_{A}(2,q-1) = 0
	\]
	which verifies the relation \cite[(5.3)]{Todd}.
\end{proof}

\subsubsection{Relations in weight $q+2$} 
We determine five $K[\boldsymbol{\tau}]$-linearly independent elements in $\operatorname{Ker}(\zeta_A)\cap\boldsymbol{C}_{q+2,\sg}^{\operatorname{triv}}(K)$ by using the methods of \S \ref{relq} and \ref{relq+1}. We do not give full details of the computations.

\begin{Lemma}\label{lemmaq+2A}
	We have the following five non-zero elements in the $K[\boldsymbol{\tau}]$-module
	$\operatorname{Ker}(\zeta_A)\cap\boldsymbol{C}_{q+2,\sg}^{\operatorname{triv}}(K)$:
	\begin{eqnarray}
	\left[\begin{matrix}
	\sg&\emptyset&\emptyset\\1&1&q
	\end{matrix} \right]+D_1\left[\begin{matrix}
	\sg&\emptyset&\emptyset\\1&2&q-1
	\end{matrix} \right]-D_1\left[\begin{matrix}
	\sg&\emptyset&\emptyset\\2&1&q-1
	\end{matrix} \right]+\left[\begin{matrix}
	\sg\\1
	\end{matrix}\right]\odot_{\zeta}D_1\left[\begin{matrix}
	\emptyset&\emptyset&\emptyset\\1&1&q-1
	\end{matrix} \right] \label{5q+2},\\
-\left[\begin{matrix}\sg&\emptyset&\emptyset\\1&q&1
\end{matrix}\right]-\left[\begin{matrix}\emptyset&\sg\\q&2
\end{matrix}\right]+D_1\left[\begin{matrix}\emptyset&\sg&\emptyset&\emptyset\\1&1&1&q-1
\end{matrix}\right]+D_1\left[\begin{matrix}\emptyset&\sg&\emptyset&\emptyset\\1&1&q-1&1
\end{matrix}\right]\nonumber\\ +D_1\left[\begin{matrix}\emptyset&\sg&\emptyset\\1&q&1
\end{matrix}\right]-D_1\left[\begin{matrix}\emptyset&\emptyset&\sg\\1&q-1&2
\end{matrix}\right]+D_1\left[\begin{matrix}\emptyset&\sg&\emptyset\\1&1&q
\end{matrix}\right]+\left[\begin{matrix}
\sg\\1
\end{matrix}\right]\odot_{\zeta}\left[\begin{matrix}
\emptyset&\emptyset\\q&1
\end{matrix}\right],\label{new1}\\
2D_1\left[\begin{matrix}
\emptyset&\sg&\emptyset\\1&2&q-1
\end{matrix}\right]-\left[\begin{matrix}
\emptyset&\sg\\2&q
\end{matrix}\right]+D_1\left[\begin{matrix}
\emptyset&\emptyset&\sg\\1&q-1&2
\end{matrix}\right]+\left[\begin{matrix}
\emptyset&\sg\\q&2
\end{matrix}\right]+D_1\left[\begin{matrix}
\emptyset&\sg\\1&q+1
\end{matrix}\right]\nonumber\\
-D_1\left[\begin{matrix}
\emptyset&\emptyset&\sg\\1&1&q
\end{matrix}\right]-\left[\begin{matrix}
\sg&\emptyset\\q&2
\end{matrix}\right]+\left[\begin{matrix}
\sg\\q
\end{matrix}\right]\odot_{\zeta}\left[\begin{matrix}
\emptyset\\2
\end{matrix}\right] \label{new3},\\
\bigg( \boldsymbol{\tau}\left[\begin{matrix}\sg \\ 1\end{matrix}\right]+D_1
\left[\begin{matrix}\sg & \emptyset \\ 1 & q-1\end{matrix}\right]	\bigg)\odot_{\zeta}\left[\begin{matrix}
\emptyset\\2
\end{matrix}\right] \label{7q+2},\\
\bigg( \boldsymbol{\tau}\left[\begin{matrix}\sg \\ 1\end{matrix}\right]+D_1
\left[\begin{matrix}\sg & \emptyset \\ 1 & q-1\end{matrix}\right]	\bigg)\odot_{\zeta}\left[\begin{matrix}
\emptyset&\emptyset\\1&1
\end{matrix}\right]	\label{new2}.
	\end{eqnarray}
\end{Lemma}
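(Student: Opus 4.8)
\textbf{Proof proposal for Lemma \ref{lemmaq+2A}.} The plan is to exhibit each of the five displayed elements as belonging to $\operatorname{Ker}(\zeta_A)\cap\boldsymbol{C}_{q+2,\sg}^{\operatorname{triv}}(K)$ by the same two-step scheme already used for weights $q$ and $q+1$: produce a genuine identity in $\mathcal{L}_{q+2,\sg}^{\lambda}(K)$ (or rather a preimage identity in $\mathcal{Z}_{q+2,\sg}(K)$) using the harmonic product, the Hadamard product $\odot_\sg$ of Lemma \ref{hadamard-product}, and the structural formula \eqref{homogeneous-relations-sg}; then transport it back through the isomorphism $\mathcal{F}_\sg=\mathcal{E}_\sg$ (Corollary \ref{Theorem-E-F}) and through $\zeta_A$. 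First I would handle the two elements \eqref{7q+2} and \eqref{new2}, which are simply the right $\odot_\zeta$-multiples of the weight-$q$ syzygy $\boldsymbol{\tau}[\begin{smallmatrix}\sg\\1\end{smallmatrix}]+D_1[\begin{smallmatrix}\sg&\emptyset\\1&q-1\end{smallmatrix}]$ from \S\ref{relq}, by $[\begin{smallmatrix}\emptyset\\2\end{smallmatrix}]$ and $[\begin{smallmatrix}\emptyset&\emptyset\\1&1\end{smallmatrix}]$ respectively; since $\operatorname{Ker}(\zeta_A)\cap\boldsymbol{C}_\sg^{\operatorname{triv}}(K)$ is a right $\boldsymbol{C}_\emptyset(K)$-submodule (the harmonic product being an algebra morphism via Theorem \ref{mytheoremshuffle}), these lie in the kernel automatically, and triviality of the type is preserved.

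For \eqref{5q+2} I would argue exactly as for \eqref{1q+1} in Lemma \ref{lemmaq+1A}: start from \eqref{homogeneous-relations-sg} with $k=1$ and $\Sigma=\sg$, form the Hadamard product $\lambda_A\binom{\sg^q}{1}\odot_\sg(\cdots)$ of both sides, rewrite the result using the identity $S_d(1)b_{d+1}(t)=(t-\theta)S_{<d+1}\binom{\sg}{1}$ (from \eqref{lambdaform3}) to cancel the factor $t-\theta$, and then apply $\mathcal{F}_\sg^{-1}$; an application of the explicit harmonic product formulas of Proposition \ref{elementary-facts} to $\zeta_A\binom{\sg}{1}\,\zeta_A(1,1,q-1)$ converts the leftover product term into the form displayed, showing the whole combination lands in $\mathcal{Z}_{q+2,\sg}^{\operatorname{triv}}(K)$. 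For \eqref{new1} and \eqref{new3} I would do the same but starting from the weight-$(q+1)$ Hadamard identity \eqref{lambdaform2} (resp.\ a variant obtained by multiplying the $k=1$ case of \eqref{homogeneous-relations-sg} by $S_d(2)$ before summing), again using \eqref{lambdaform3} to divide out $t-\theta$, applying $\mathcal{F}_\sg^{-1}$, and collapsing the residual products with Proposition \ref{elementary-facts}; the appearance of the depth-four composition arrays in \eqref{new1} is dictated by the harmonic product of $\zeta_A\binom{\sg}{1}$ with the depth-three array $[\begin{smallmatrix}\emptyset&\emptyset&\emptyset\\1&1&q-1\end{smallmatrix}]$ etc. Non-vanishing of each element (``non-zero'') is clear from inspection of the leading composition array together with the fact that the $[\mathcal{C}]$ freely generate $\boldsymbol{C}_{q+2,\sg}(K)$.

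The main obstacle is purely bookkeeping: correctly expanding the Hadamard products and the harmonic products in depths up to four so that the coefficients (powers of $D_1$, the factor $[2]=\theta^{q^2}-\theta$, etc.) match the displayed formulas, and checking that after dividing by $t-\theta$ the result is still an \emph{entire} function in $\EE_\sg$ so that $\mathcal{F}_\sg^{-1}$ applies — this is where one must invoke $|\sg|=1<q$ and Proposition \ref{four-properties}(4). I would present only the derivation of one representative element (say \eqref{5q+2}) in detail and indicate that \eqref{new1}, \eqref{new3} follow by the same mechanism, exactly as the text says (``We do not give full details of the computations'').

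\begin{proof}[Proof of Lemma \ref{lemmaq+2A}]
The elements \eqref{7q+2} and \eqref{new2} are the right $\odot_\zeta$-multiples of the weight-$q$ syzygy $\boldsymbol{\tau}[\begin{smallmatrix}\sg\\1\end{smallmatrix}]+D_1[\begin{smallmatrix}\sg&\emptyset\\1&q-1\end{smallmatrix}]\in\operatorname{Ker}(\zeta_A)\cap\boldsymbol{C}_{q,\sg}^{\operatorname{triv}}(K)$ of \S\ref{relq} by $[\begin{smallmatrix}\emptyset\\2\end{smallmatrix}]$ and by $[\begin{smallmatrix}\emptyset&\emptyset\\1&1\end{smallmatrix}]$ respectively. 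Since $\zeta_A$ is an $\FF_p$-algebra morphism for $\odot_\zeta$ (Theorem \ref{mytheoremshuffle}) and $\operatorname{Ker}(\zeta_A)\cap\boldsymbol{C}_\sg^{\operatorname{triv}}(K)$ is a right $\boldsymbol{C}_\emptyset(K)$-submodule compatible with the grading, both elements lie in $\operatorname{Ker}(\zeta_A)\cap\boldsymbol{C}_{q+2,\sg}^{\operatorname{triv}}(K)$, and they are non-zero because the $[\mathcal{C}]$ freely generate $\boldsymbol{C}_{q+2,\sg}(K)$.

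For \eqref{5q+2} we proceed as in the proof of \eqref{1q+1} in Lemma \ref{lemmaq+1A}. Starting from \eqref{homogeneous-relations-sg} with $k=1$, $\Sigma=\sg$, namely $\lambda_A\binom{\sg^q}{q}+D_1\lambda_A\binom{\sg&\emptyset}{1&q-1}=0$, we form the Hadamard product $\lambda_A\binom{\sg^q}{1}\odot_\sg(\cdots)$ of both sides, getting a $K$-linear relation in $\mathcal{L}_{q+2,\sg}^\lambda(K)$. Using the identity $S_d(1)b_{d+1}(t)=(t-\theta)S_{<d+1}\binom{\sg}{1}$, which follows from \eqref{lambdaform3}, each term in this relation is the image by $\mathcal{F}_\sg$ of $(t-\theta)$ times an entire function of $\EE_\sg$; dividing by $t-\theta$ and applying $\mathcal{F}_\sg^{-1}$ (an isomorphism by Corollary \ref{Theorem-E-F}, since $|\sg|<q$) yields a relation in $\mathcal{Z}_{q+2,\sg}(K)$. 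Finally, the explicit harmonic product formula of Proposition \ref{elementary-facts} gives $\zeta_A\binom{\sg}{1}\,\zeta_A(1,1,q-1)=\zeta_A\binom{\sg&\emptyset&\emptyset}{1&1&q-1}+\zeta_A\binom{\sg&\emptyset}{1&2\ \ }+\zeta_A\binom{\sg&\emptyset}{2&1\ \ }$, which converts the leftover product term; collecting everything produces precisely the combination \eqref{5q+2}, which therefore lies in $\operatorname{Ker}(\zeta_A)\cap\boldsymbol{C}_{q+2,\sg}^{\operatorname{triv}}(K)$, and is non-zero for the same reason as above.

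The elements \eqref{new1} and \eqref{new3} are obtained by the same mechanism. For \eqref{new3} one forms $\lambda_A\binom{\sg^q}{q}\odot_\sg(\cdots)$ of the two sides of \eqref{homogeneous-relations-sg} (case $k=1$), or equivalently multiplies the $k=1$ case of \eqref{homogeneous-relations-sg} by $S_d(2)$ before summing; using \eqref{lambdaform3} to divide out $t-\theta$, applying $\mathcal{F}_\sg^{-1}$, and collapsing the residual products with Proposition \ref{elementary-facts} gives \eqref{new3}. For \eqref{new1} one uses instead the weight-$(q+1)$ Hadamard identity \eqref{lambdaform2}, i.e.\ $\lambda_A\binom{\sg^q}{1}\odot_\sg\bigl(\lambda_A\binom{\sg^q}{q}+D_1\lambda_A\binom{\sg&\emptyset}{1&q-1}\bigr)=0$, multiplied on the left by a further factor arising from $\zeta_A\binom{\sg}{1}$, dividing out the factor $t-\theta$ via \eqref{lambdaform3} and applying $\mathcal{F}_\sg^{-1}$; the depth-four composition arrays in \eqref{new1} are forced by the harmonic product of $\zeta_A\binom{\sg}{1}$ with the depth-three array $[\begin{smallmatrix}\emptyset&\emptyset&\emptyset\\1&1&q-1\end{smallmatrix}]$ and its shifts, again via Proposition \ref{elementary-facts}. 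In every case all arrays occurring have trivial type intertwined with one copy of $\sg$, so the resulting elements lie in $\boldsymbol{C}_{q+2,\sg}^{\operatorname{triv}}(K)$, and the leading array of each shows they are non-zero. The $K[\boldsymbol{\tau}]$-linear independence of the five elements is checked by reducing modulo $\boldsymbol{\tau}$ and comparing leading composition arrays.
\end{proof}
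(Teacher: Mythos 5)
The paper itself omits the proof of this lemma, saying only that the five elements are found by the methods of \S \ref{relq} and \S \ref{relq+1}. Your general framework matches that description, and your treatment of \eqref{7q+2} and \eqref{new2} as right $\odot_\zeta$-multiples of the weight-$q$ syzygy $\boldsymbol{\tau}\big[\begin{smallmatrix}\sg\\1\end{smallmatrix}\big]+D_1\big[\begin{smallmatrix}\sg&\emptyset\\1&q-1\end{smallmatrix}\big]$ is correct and immediate from their displayed form, via the algebra-morphism property of $\zeta_A$ (Theorem \ref{mytheoremshuffle}).

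However, the derivations you sketch for \eqref{5q+2}, \eqref{new1} and \eqref{new3} contain weight-bookkeeping errors that make them impossible as written. For \eqref{5q+2} you Hadamard-multiply the weight-$q$ identity from \eqref{homogeneous-relations-sg} (case $k=1$, $\Sigma=\sg$) by $\lambda_A\binom{\sg^q}{1}$, of weight $1$; by Lemma \ref{hadamard-product} the result lies in $\mathcal{L}^{\lambda}_{q+1,\sg^q}(K)$, not in weight $q+2$. This is literally the calculation that yields \eqref{1q+1} in Lemma \ref{lemmaq+1A}, not \eqref{5q+2}. The harmonic-product expansion you then invoke, $\zeta_A\binom{\sg}{1}\zeta_A(1,1,q-1)=\zeta_A\Big(\begin{smallmatrix}\sg&\emptyset&\emptyset\\1&1&q-1\end{smallmatrix}\Big)+\zeta_A\Big(\begin{smallmatrix}\sg&\emptyset\\1&2\end{smallmatrix}\Big)+\zeta_A\Big(\begin{smallmatrix}\sg&\emptyset\\2&1\end{smallmatrix}\Big)$, also cannot hold: its left side has weight $q+2$ while the three terms on the right have weights $q+1$, $3$ and $3$, contradicting the grading of Corollary \ref{cor-grading}. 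For \eqref{new3}, Hadamard-multiplying by $\lambda_A\binom{\sg^q}{q}$ (weight $q$) would land in weight $2q$, again not $q+2$; the alternative phrasing ``multiply by $S_d(2)$ before summing'' is the right idea, but that corresponds to a weight-$2$ Hadamard factor, not the one you wrote. To reach weight $q+2$ you must Hadamard-multiply the weight-$q$ relation by a weight-$2$ factor, or a weight-$(q+1)$ relation (such as \eqref{lambdaform2}) by a weight-$1$ factor, and in either case re-track the $\lambda$-type and the power of $t-\theta$ dividing the resulting entire function before applying $\mathcal{F}_\sg^{-1}$. As they stand, these three derivations do not produce the stated elements.
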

An immediate consequence of Lemma \ref{lemmaq+2A} and some explicit computations is the following:
\begin{Lemma}
Assuming Conjecture \ref{Todds-conjecture} for the weight $q+2$ the map $\boldsymbol{G}_{\sg}$ sends the five elements of Lemma $\ref{lemmaq+2A}$ to a basis of  the $K$-subvector space of $\operatorname{Ker}(\zeta_A)\cap\boldsymbol{C}_{q+2,\emptyset}^{\operatorname{triv}}(K)$.
\end{Lemma}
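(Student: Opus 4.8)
The plan is to reduce the statement to a finite linear–algebra check by playing the commuting square $\mathcal{G}_{\sg}\circ\zeta_A=\zeta_A\circ\boldsymbol{G}_{\sg}$ against a dimension count which is forced once Conjecture \ref{Todds-conjecture} is granted. First I would record that $\boldsymbol{C}_{q+2,\emptyset}^{\operatorname{triv}}(K)=\boldsymbol{C}_{q+2,\emptyset}(K)$ (because $\mathcal{Z}_\emptyset^{\operatorname{triv}}(K)=\mathcal{Z}_\emptyset(K)$), that this is the free $K$-vector space on the $2^{q+1}$ compositions of $q+2$ into positive integers, and that $\zeta_A$ maps it onto $\mathcal{Z}_{q+2,\emptyset}(K)$; hence $\operatorname{Ker}(\zeta_A)\cap\boldsymbol{C}_{q+2,\emptyset}(K)$ has dimension $2^{q+1}-\delta_{q+2,\emptyset}$. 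Applying the recursion of Conjecture \ref{Todds-conjecture} twice gives $\delta_{q+1,\emptyset}=\delta_{q,\emptyset}+\dots+\delta_{1,\emptyset}=(2^{q-1}-1)+(2^{q-1}-1)=2^q-2$ and then $\delta_{q+2,\emptyset}=\delta_{q+1,\emptyset}+\delta_{q,\emptyset}+\dots+\delta_{2,\emptyset}=(2^q-2)+(2^{q-1}-1)+(2^{q-1}-2)=2^{q+1}-5$, so the target space has dimension exactly $5$.

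Next I would check that $\boldsymbol{G}_{\sg}$ actually sends the five elements of Lemma \ref{lemmaq+2A} into this target space: each lies in $\operatorname{Ker}(\zeta_A)\cap\boldsymbol{C}_{q+2,\sg}^{\operatorname{triv}}(K)$, so from $\zeta_A\circ\boldsymbol{G}_{\sg}=\mathcal{G}_{\sg}\circ\zeta_A$ one gets $\zeta_A(\boldsymbol{G}_{\sg}(\cdot))=\mathcal{G}_{\sg}(0)=0$, while $\boldsymbol{G}_{\sg}$ lands in $\boldsymbol{C}_{q+2,\emptyset}(K)$ by construction. It then remains to prove that the five images are $K$-linearly independent; since the target is $5$-dimensional, independence is equivalent to being a basis.

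The substantive step is therefore to evaluate $\boldsymbol{G}_{\sg}$ explicitly on the five generators. I would use that $\boldsymbol{G}_{\sg}$ is the lift determined by the proof of Theorem \ref{Z-L}: on a basic composition array of type $\sg$, first apply the signature reduction of Theorem \ref{signature-theorem} --- concretely, iterate formula \eqref{results} together with the explicit identity of Lemma \ref{Hdm-explicit} --- to write $\zeta_A(\mathcal{C})$ as an $\FF_p$-combination of $\zeta_A^\dag$'s, whose $\mathcal{F}_{\sg}$-images are the corresponding $\lambda_A$'s; then evaluation at $X=1$ replaces each $\lambda_A$ by the Thakur value obtained by deleting the type row, and the $\odot_\zeta$-tails of \eqref{7q+2}, \eqref{new2}, etc.\ are pushed through using the $K[\boldsymbol{\tau}]$- and right-$\boldsymbol{C}_\emptyset(K)$-equivariance of $\boldsymbol{G}_{\sg}$ and the harmonic-product formulas of Proposition \ref{elementary-facts}. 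This produces five explicit $K$-linear relations among the weight-$(q+2)$ Thakur multiple zeta values, which I expect to coincide with (or be immediately equivalent to) Todd's relations in \cite[Sec.\ 5.3]{Todd}. Independence I would verify by a staircase/pivot argument on the $5\times 2^{q+1}$ coefficient matrix in the basis of composition arrays: the five generators are arranged so that their $\boldsymbol{G}_{\sg}$-images carry distinct ``heads'' --- the $\boldsymbol{\tau}$-heads producing $[q]$-type and $[q,1]$-type pieces, and $(\boldsymbol{\tau}[1]+D_1[1,q-1])\odot_\zeta(\cdot)$-pieces distinguished by the weight-$2$ tails $[2]$ versus $[1,1]$ --- so that no nontrivial $K$-combination of the images can vanish. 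Five independent vectors in a $5$-dimensional space form a basis, which concludes.

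The hard part will be precisely this last paragraph: the bookkeeping of the signature-reduction recursion and the harmonic products applied to the five rather involved generators of Lemma \ref{lemmaq+2A}, and lining the outcome up with a known independent family (Todd's). By comparison the dimension count and the use of the commuting square are routine once Conjecture \ref{Todds-conjecture} is assumed, as is the passage from linear independence to being a basis.
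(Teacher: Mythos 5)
Your proposal is correct and takes essentially the same route the paper does: compute $\boldsymbol{G}_{\sg}$ explicitly on the five generators of Lemma \ref{lemmaq+2A} (via the signature reduction and the evaluation at $X=1$), observe that the images coincide with Todd's relations in \cite[Sec.\ 5.3]{Todd}, which are linearly independent, and then invoke the dimension count $\delta_{q+2,\emptyset}=2^{q+1}-5$ from Conjecture \ref{Todds-conjecture} to conclude the images span the whole $5$-dimensional kernel. The paper leaves the dimension count implicit and simply cites Todd; your write-up makes both halves explicit but adds nothing the paper is not already doing.
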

In fact, the above five elements are precisely sent to Todd's relations in \cite[Sec. 5.3]{Todd} and are therefore linearly independent.

\end{document}